\newcommand{\abs}[1]{\lvert#1\rvert}
\newtheorem{thm}{Theorem}[section]
\newtheorem{lemma}[thm]{Lemma}
\newtheorem{cor}[thm]{Corollary}
\newtheorem{prop}[thm]{Proposition}
\theoremstyle{definition}
\newtheorem{example}[thm]{Example}
\newtheorem{define}[thm]{Definition}
\newtheorem{convention}[thm]{Convention}
\theoremstyle{remark}
\newtheorem{remark}[thm]{Remark}
\DeclareMathOperator{\Hom}{Hom}
\newcommand{\D}{\mathscr{D}}
\newcommand{\bigO}{\mathscr{O}}
\DeclareMathOperator{\Der}{Der}
\DeclareMathOperator{\ann}{ann}
\DeclareMathOperator{\Sp}{\text{Sp}}
\DeclareMathOperator{\gr}{gr}
\DeclareMathOperator{\A}{A_{\emph{n}}(\mathbb{C})}
\DeclareMathOperator{\V}{V}
\DeclareMathOperator{\x}{\mathfrak{x}}
\DeclareMathOperator{\y}{\mathfrak{y}}
\DeclareMathOperator{\lcm}{lcm}
\DeclareMathOperator{\red}{red}
\DeclareMathOperator{\ad}{ad}
\DeclareMathOperator{\Ad}{Ad}
\DeclareMathOperator{\Lie}{Lie}
\DeclareMathOperator{\tr}{tr}
\DeclareMathOperator{\Div}{div}
\DeclareMathOperator{\degree}{deg}
\DeclareMathOperator{\mdr}{mdr}
\numberwithin{equation}{section}
\begin{document}
\-

\title[Bernstein--Sato ideals for tame and free arrangements]{Combinatorially determined zeroes of Bernstein--Sato ideals for tame and free arrangements}

\author{Daniel Bath}
\address{Department of Mathematics, Purdue University, West Lafayette, IN, USA.}
\email{dbath@purdue.edu}
\subjclass[2010]{}
\keywords{}
\thanks{This research was supported by a Purdue Research Foundation grant from Purdue University, by NSF Grant 1401392-DMS, and by Simons Foundation Collaboration Grant for Mathematicians \#580839.}
\subjclass[2010]{Primary 14F10; Secondary 32S40, 32S05, 32S22, 32C38.}
\keywords{Bernstein--Sato, b-function, hyperplane, arrangement, D-module, tame, free divisors, logarithmic, differential, annihilator, Spencer}

\begin{abstract}

For a central, not necessarily reduced, hyperplane arrangement $f$ equipped with any factorization $f = f_{1} \cdots f_{r}$ and for $f^{\prime}$ dividing $f$, we consider a more general type of Bernstein--Sato ideal consisting of the polynomials $B(S) \in \mathbb{C}[s_{1}, \dots, s_{r}]$ satisfying the functional equation $B(S) f^{\prime} f_{1}^{s_{1}} \cdots f_{r}^{s_{r}} \in \A[s_{1}, \dots, s_{r}] f_{1}^{s_{1} + 1} \cdots f_{r}^{s_{r} + 1}.$

Generalizing techniques due to Maisonobe, we compute the zero locus of the standard Bernstein--Sato ideal in the sense of Budur (i.e. $f^{\prime} = 1)$ for any factorization of a free and reduced $f$ and for certain factorizations of a non-reduced $f$. We also compute the roots of the Bernstein--Sato polynomial for any power of a free and reduced arrangement. If $f$ is tame, we give a combinatorial formula for the roots lying in $[-1,0).$

For $f^{\prime} \neq 1$ and any factorization of a line arrangement, we compute the zero locus of this ideal. For free and reduced arrangements of larger rank, we compute the zero locus provided $\text{deg}(f^{\prime}) \leq 4$ and give good estimates otherwise.
Along the way we generalize a duality formula for $\mathscr{D}_{X,\x}[S]f^{\prime}f_{1}^{s_{1}} \cdots f_{r}^{s_{r}}$ that was first proved by Narv\'aez-Macarro for $f$ reduced, $f^{\prime} = 1$, and $r = 1.$

As an application, we investigate the minimum number of hyperplanes one must add to a tame $f$ so that the resulting arrangement is free. This notion of freeing a divisor has been explicitly studied by Mond and Schulze, albeit not for hyperplane arrangements. We show that small roots of the Bernstein--Sato polynomial of $f$ can force lower bounds for this number.
\end{abstract}

\maketitle

\setcounter{tocdepth}{1}
\tableofcontents

\section{Introduction}

Consider a central, not necessarily reduced, hyperplane arrangement cut out by $f \in \mathbb{C}[X] = \mathbb{C}[x_{1}, \dots, x_{n}].$ Given a factorization $f = f_{1} \cdots f_{r}$, not necessarily into linear terms, and letting $F = (f_{1}, \dots, f_{r})$, there is a free $\mathbb{C}[X][\frac{1}{f}][s_{1}, \dots, s_{r}]$-module generated by the symbol $F^{S} = f_{1}^{s_{1}} \cdots f_{r}^{s_{r}}$. This module has an $\A[S] = \A[s_{1}, \dots, s_{r}]$-module structure, where $\A[S]$ is a polynomial ring extension over the Weyl algebra, given by the formal rules of calculus. We will denote the $\A[S]$-module generated by $F^{S}$ as $\A[S]F^{S}$. For $f^{\prime}$ and $g \in \mathbb{C}[X]$ dividing $f$ we study the polynomials $B(S) \in \mathbb{C}[S] = \mathbb{C}[s_{1}, \dots, s_{r}]$ satisfying the functional equation
\begin{equation} \label{eqn-more general functional}
B(S) f^{\prime} F^{S} \in \A[S] g f^{\prime} F^{S}.
\end{equation}
The ideal populated by said polynomials is the \emph{Bernstein--Sato ideal} $B_{f^{\prime}F}^{g}$. When $f^{\prime} = 1$ and $g = f$ this defines the multivariate Bernstein--Sato ideal in the sense of Budur \cite{BudurLocalSystems} and we simply write $B_{F}$; if we further restrict to the trivial factorization $F = (f)$ then we obtain the classical functional equation whose corresponding ideal, which we denote by $B_{f}$, has as its monic generator the \emph{Bernstein--Sato polynomial}. 

The roots of the Bernstein--Sato polynomial encode various data about the singular locus of $f$. Malgrange and Kashiwara, cf. \cite{Malgrange}, \cite{Kashiwara2}, famously proved that exponentiating the local version of the Bernstein--Sato polynomial's roots recovers the eigenvalues of the algebraic monodromy action on nearby Milnor fibers. In \cite{BudurLocalSystems}, Budur conjectured the analogous claim for the multivariate Bernstein--Sato ideal $B_{F}$ associated to a factorization of $f$ into irreducibles: exponentiating the ideal's zero locus recovers the cohomology support locus of the complement of $\text{Var}(f)$. A proof of this (for germs $f$ that need not be arrangements) has recently been announced by Budur, Veer, Wu, and Zhou, cf. \cite{BudurConjecture}. Beyond these monodromy results, zeroes of Bernstein--Sato polynomials are related to many other invariants: multiplier ideals, log canonical thresholds, F-pure thresholds, etc.

However, even in the case of arrangements, formulae for Bernstein--Sato ideals, polynomials, or their zero loci are very rare.  Walther has found a formula for the Bernstein--Sato polynomial for generic arrangements in \cite{uliGeneric}, Maisonobe has shown the Bernstein--Sato ideal $B_{F}$ for a generic arrangement factored into linear forms is principal and found the corresponding formula for a generator, cf. \cite{MaisonobeGeneric}, and Saito has shown that the roots of the Bernstein--Sato polynomial of a reduced and central arrangement $f$ lie in $(-2 + \frac{1}{\text{deg}(f)},0) \cap \mathbb{Q}$, cf. \cite{SaitoArrangements}. On the other hand, Walther has shown that, in general, the roots of the Bernstein--Sato polynomial are not combinatorially determined, that is, they cannot be computed from the arrangement's intersection lattice, cf. \cite{uli} and Example \ref{ex- roots not combinatorial}. The multivariate Bernstein--Sato ideal $B_{F}$ is not even guaranteed to be principal, cf. \cite{BahloulOaku} for a counter-example in the local case.  To our knowledge, there are no systematic studies of the more general type of Bernstein--Sato ideal $B_{f^{\prime} F}^{g}$ though it does play a role in \cite{uliGeneric}.

Our starting point is the program of Maisonobe in \cite{MaisonobeFree} wherein he proves the Bernstein--Sato ideal of a central, reduced, and free (in the sense of Saito \cite{SaitoLogarithmic}) arrangement equipped with its factorization into linear forms is principal and gives a combinatorial formula for its generator. While the approach is similar, we encounter many technical difficulties because our results are significantly more general: we consider the more general functional equation \eqref{eqn-more general functional} and we often relax the assumptions of $f$ being factored into linear forms, being free, and being reduced. 

\medskip

In Section 2, we consider a larger class of analytic germs $f \in \mathscr{O}_{X}$ than just central, reduced, and free arrangements and we consider any factorization $f = f_{1} \cdots f_{r}$. In \cite{me}, we proved that $\ann_{\D_{X,\x}[S]} F^{S}$ is generated by derivations, that is, by differential operators of order at most one under a natural filtration, under the hypotheses of tameness (a sliding condition on projective dimension), strongly Euler-homogeneous (a hypothesis that a particular logarithmic derivation exists locally everywhere), and Saito-holonomicity (a finiteness condition on the logarithmic stratification). We use similar techniques to generalize these results from \cite{me} in Theorem \ref{thm-gen by derivations}:

\begin{thm}

Suppose $f = f_{1} \cdots f_{r}$ is tame, strongly Euler-homogeneous, and Saito-holonomic, $f^{\prime} \in \bigO_{X,\x}[\frac{1}{f}]$ is compatible with $f$, and $F = (f_{1}, \dots, f_{r}).$ Then the $\D_{X,\x}[S]$-annihilator of $ f^{\prime} F^{S}$ is generated by derivations.

\end{thm}

In Section 3, we replace the hypothesis of tame with free and prove a version of the symmetry of $B_{f^{\prime}F}^{g}$ that was first identified by Narv\'aez-Macarro in \cite{MAcarroDuality} in the case of Bernstein--Sato polynomials and generalized to $B_{F}$ by Maisonobe in \cite{MaisonobeFree}. This follows from computing the $\mathscr{D}_{X,\x}[S]$-dual of $\mathscr{D}_{X,\x}[S] f^{\prime} F^{S}$. Without freeness, computing these $\mathscr{D}_{X,\x}[S]$-duals is currently intractible. While we are certain one could use Narv\'aez-Macarro's Lie-Rinehart strategy, we instead opt for Maisonobe's approach, which itself relies on a computation of the trace of an adjoint action first proved by Castro--Jim\'enez and Ucha in Theorem 4.1.4 of \cite{JimenezUcha}; we give a different proof of this in Appendix A. With $\mathbb{D}$ denoting the $\mathscr{D}_{X,\x}[S]$-dual $\text{RHom}_{\D_{X,\x}[S]}(-, \mathscr{D}_{X,\x}[S])^{\text{left}}$, in Theorem \ref{thm- useful duality formula} we prove:

\begin{thm}
Suppose $f = f_{1} \cdots f_{r} \in \bigO_{X}$ is free, strongly Euler-homogeneous, and Saito-holonomic and $f_{\red} \in \bigO_{X,\x}$ is a Euler-homogeneous reduced defining equation for $f$ at $\x$. Let $F = (f_{1}, \dots, f_{r})$, let $f^{\prime} \in \bigO_{X,\x}$ be compatible with $f$, and let $g \in \bigO_{X,\x}$ such that $f \in \bigO_{X,\x} \cdot g$. Then 
\[
\mathbb{D} \left( \frac{\D_{X,\x}[S]f^{\prime}F^{S}}{\D_{X,\x}[S] \cdot g f^{\prime}F^{S}} \right) \simeq \frac{\D_{X,\x}[S] (g f^{\prime} f_{\red})^{-1} F^{-S}}{\D_{X,\x}[S] (f^{\prime} f_{\red})^{-1} F^{-S}}[n+1].
\]
\end{thm}
The main application is Theorem \ref{thm-unmixed symmetry} which identifies technical conditions on $f^{\prime}, g$, and $F$ such that $B_{f^{\prime}F}^{g}$ is invariant under a non-trivial involution of $\mathbb{C}[S]$.

In Section 4 we return to hyperplane arrangements and first show that the nice structure of $\ann_{\mathscr{D}_{X,\x}{S}} f^{\prime}F^{S}$ from Theorem \ref{thm-gen by derivations} allows us to adapt Maisonobe's arguments to estimate $B_{f^{\prime}F}^{g}$ for any factorization. In particular we complement Walther's result that the roots of Bernstein--Sato polynomial are not combinatorial for even tame arrangements, cf. \cite{uli}. Namely, we prove in Theorem \ref{thm-combinatorial roots} the roots lying in $[-1,0)$ are combinatorial:

\begin{thm}
Let $f$ be a central, not necessarily reduced, tame hyperplane arrangement. Suppose $f^{\prime}$ divides $f$; let $g = \frac{f}{f^{\prime}}.$ Then the roots $\V(B_{f^{\prime}f}^{g})$ lying in $[-1,0)$ are combinatorially determined:
\[
\V(B_{f^{\prime}f}^{g}) \cap [-1,0) = \bigcup_{\substack{ X \in L(A) \\ X \text{ indecomposable}}} \bigcup_{j_{X}= r(X) + d_{X}^{\prime}}^{d_{X}}  \frac{ - j_{X}}{d_{X}}.
\]
Setting $f^{\prime} = 1$ gives all the roots of the Bernstein--Sato polynomial of $f$ lying in $[-1,0).$
\end{thm}

If we assume further that $f$ is free, then we can then use the symmetry property of Theorem \ref{thm-unmixed symmetry} to more accurately estimate $\V(B_{f^{\prime}F}^{g})$, where $\V(-)$ always refers to the zero locus of the ideal in question. In this setting there is a computation for the multivariate Bernstein--Sato ideal of a reduced, free $f$ that has been factored into linear forms due to Maisonobe \cite{MaisonobeFree}, but no results about other factorizations, non-reduced $f$, or even the Bernstein--Sato polynomial.  We fill in much of this gap. With $P_{f^{\prime}F,X}^{g} \in \mathbb{C}[S]$ the explicit linear polynomial from Definition \ref{def- simplifying polynomial}, we obtain the following, which in particular shows that the roots of the Bernstein--Sato polynomial for any power of a reduced, central, and free arrangement are combinatorially determined:

\begin{thm} 
Suppose $f = f_{1} \cdots f_{r}$ is a central, not necessarily reduced, free hyperplane arrangement, $F = (f_{1}, \cdots, f_{r})$, $f^{\prime}$ divides $f$, and $g = \frac{f}{f^{\prime}}.$ If $(f^{\prime}, F)$ is an unmixed pair up to units and if $\deg(f^{\prime}) \leq 4$, then $\V(B_{f^{\prime}F}^{g})$ is a hypersurface and
\begin{equation} \label{eqn-intro variety computation}
\V( B_{f^{\prime}F}^{g}) = \V \left( \prod_{\substack{X \in L(A) \\ X \text{ indecomposable }}} \prod_{j_{X} = 0}^{d_{X,\red} + d_{X} - 2r(X) - d_{X}^{\prime}} \left( P_{f^{\prime}F,X}^{g} + j_{X} \right) \right).
\end{equation}
If $L$ is a factorization of $f = l_{1} \cdots l_{d}$ into irreducibles and $\deg(f^{\prime}) \leq 4$, then
\[
 B_{f^{\prime}L}^{g} =  \prod_{\substack{X \in L(A) \\ X \text{ indecomposable }}} \prod_{j_{X} = 0}^{d_{X,\red} + d_{X} - 2r(X) - d_{X}^{\prime}} \left( P_{f^{\prime}L,X}^{g} + j_{X} \right)
\]
and so $B_{f^{\prime}L}^{g}$ principal. If $f^{\prime} = 1$ and $f$ is reduced, then for any $F$
\begin{equation} \label{eqn - intro b-function root fml}
\V( B_{F}) = \V \left( \prod_{\substack{X \in L(A) \\ X \text{ indecomposable }}} \prod_{j_{X} = 0}^{d_{X,\red} + d_{X} - 2r(X)} \left( P_{F,X}^{g} + j_{X} \right) \right).
\end{equation}
In particular, if $f$ is reduced or is a power of a central, reduced, and free hyperplane arrangement, then the roots of the Bernstein--Sato polynomial of $f$ are given by \eqref{eqn - intro b-function root fml}.
\end{thm}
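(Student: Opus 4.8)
The plan is to localize the computation at the generic points of the flats of $A$, bound the resulting local Bernstein--Sato ideals from above by adapting Maisonobe's construction (using that $\ann f^{\prime}F^{S}$ is generated by derivations, Theorem~\ref{thm-gen by derivations}), and bound them from below by a direct local argument, closing the gap between the two bounds with the duality and symmetry of Section~3 (Theorems~\ref{thm- useful duality formula} and~\ref{thm-unmixed symmetry}). First note the hypotheses are automatically met: a central arrangement is Saito-holonomic (its logarithmic stratification is finite) and strongly Euler-homogeneous (a unit multiple of a translated Euler field is a logarithmic derivation sending $f$ to $f$ at every point), a free arrangement is tame, and, by assumption, $(f^{\prime},F)$ is unmixed up to units; so Theorems~\ref{thm-gen by derivations}, \ref{thm- useful duality formula} and~\ref{thm-unmixed symmetry} all apply to $f$. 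Using that the Bernstein--Sato ideal is the intersection of its localizations and that Saito-holonomicity leaves only finitely many logarithmic strata, one gets $\V(B_{f^{\prime}F}^{g})=\bigcup_{X\in L(A)}\V(B_{f^{\prime}F,\eta_{X}}^{g})$, where $\eta_{X}$ is the generic point of the flat $X$. At $\eta_{X}$, after an analytic change of coordinates, the germ of $f$ is a product of smooth factors with an essential central free arrangement of rank $r(X)$, and $f^{\prime},g$ split compatibly, the integers $d_{X},d_{X,\red},d_{X}^{\prime}$ recording the multiplicities occurring there; as in \cite{MaisonobeFree}, decomposable flats contribute loci already produced by their indecomposable constituents, so only indecomposable $X$ survive.

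\emph{Upper bound.} Because $\ann f^{\prime}F^{S}$ is generated by derivations, I would run Maisonobe's inductive argument in the twisted setting: reduce to a generic member of the local arrangement, exploit the Euler relations and a free basis of $\Der(-\log f)$, and induct on $r(X)$ to produce an explicit element of $B_{f^{\prime}F,\eta_{X}}^{g}$ of the form $\prod_{j_{X}=0}^{N_{X}}\bigl(P_{f^{\prime}F,X}^{g}+j_{X}\bigr)$, which gives the inclusion $\subseteq$ in \eqref{eqn-intro variety computation}. The base case is the deepest flat, where the germ does not decompose further and one must write down a functional equation by hand; the auxiliary factor it requires has combinatorial complexity governed by $\deg(f^{\prime})$, and I expect it can be produced, and its action on $F^{S}$ verified, precisely when $\deg(f^{\prime})\le 4$, yielding $N_{X}=d_{X,\red}+d_{X}-2r(X)-d_{X}^{\prime}$; for larger $\deg(f^{\prime})$ one can only confine $N_{X}$ to an interval, which is why only estimates survive there.

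\emph{Lower bound and closing the gap.} By Theorem~\ref{thm-unmixed symmetry} the ideal $B_{f^{\prime}F}^{g}$, and likewise each $B_{f^{\prime}F,\eta_{X}}^{g}$, is stable under a non-trivial affine involution of $\mathbb{C}[S]$; reading its effect off from Definition~\ref{def- simplifying polynomial} and Theorem~\ref{thm- useful duality formula}, it sends $\V(P_{f^{\prime}F,X}^{g}+j_{X})$ to $\V\bigl(P_{f^{\prime}F,X}^{g}+(d_{X,\red}+d_{X}-2r(X)-d_{X}^{\prime}-j_{X})\bigr)$. It then suffices to show that the hyperplanes $\V(P_{f^{\prime}F,X}^{g}+j_{X})$ for $j_{X}$ in an initial subinterval lie in $\V(B_{f^{\prime}F,\eta_{X}}^{g})$ --- this is a local statement for the essential free germ, established by a direct construction and, in the overlap with $[-1,0)$, by Theorem~\ref{thm-combinatorial roots} --- and then the involution reflects these onto a terminal subinterval; for $\deg(f^{\prime})\le 4$ the two subintervals together cover $[0,d_{X,\red}+d_{X}-2r(X)-d_{X}^{\prime}]$. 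Combined with the upper bound this gives equality in \eqref{eqn-intro variety computation}, so $\V(B_{f^{\prime}F}^{g})$ is a union of hyperplanes and in particular a hypersurface.

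\emph{The ideal and specializations.} When $F=L$ is a factorization of $f$ into irreducibles, the local computations above give, at each $\eta_{X}$, a principal ideal; since the linear forms $P_{f^{\prime}L,X}^{g}$ attached to distinct flats are coprime, $B_{f^{\prime}L}^{g}=\bigcap_{X}B_{f^{\prime}L,\eta_{X}}^{g}$ is generated by their product, which is exactly the explicit element already known to lie in $B_{f^{\prime}L}^{g}$; hence $B_{f^{\prime}L}^{g}$ is principal and equal to the claimed product. Taking $f^{\prime}=1$ (so $d_{X}^{\prime}=0$ and $\deg(f^{\prime})=0\le 4$) specializes the variety formula to \eqref{eqn - intro b-function root fml} for any $F$ when $f$ is reduced; and if $f=h^{k}$ with $h$ central, reduced, and free, then $\Der(-\log f)=\Der(-\log h)$ is free and $f_{\red}=h$, so the first part applies with $f^{\prime}=1$, and taking $F=(f)$ computes $\V(B_{f})$, i.e.\ the roots of the Bernstein--Sato polynomial of $f$, via \eqref{eqn - intro b-function root fml}. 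The principal obstacle is the base case of the rank induction in the second and third paragraphs: extending Maisonobe's functional-equation construction --- designed for $f^{\prime}=1$, reduced $f$, and a linear factorization --- to a non-trivial twist $f^{\prime}$, a non-reduced $f$, and an arbitrary factorization, at the one flat where the induction offers no leverage, and it is the dependence of this construction on $\deg(f^{\prime})$ that forces the hypothesis $\deg(f^{\prime})\le 4$.
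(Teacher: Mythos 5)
Your architecture (localize to the flats via $B_{f^{\prime}F}^{g}=\bigcap_{X}B_{f_{X}^{\prime}F_{X}}^{g_{X}}$, produce an explicit element, bound the ideal from above by a constant-term argument, and close the gap with the symmetry $\varphi$) is the paper's, but there are two genuine problems. First, the construction of the explicit element $\prod_{j_{X}=0}^{N}(P_{f^{\prime}L,X}^{g}+j_{X})\in B_{f^{\prime}L}^{g}$ is exactly the step you leave open (``I expect it can be produced\dots precisely when $\deg(f^{\prime})\le 4$''), and your guess about it is wrong: in the paper this is Theorem \ref{thm- first supset thm}, proved by induction on rank using Lemma \ref{lemma gamma radical} (the ideal $\Gamma_{L}$ generated by the products $\prod_{k\notin J(X)}l_{k}$ over corank-one edges is $\mathfrak{m}$-primary) together with Lemma \ref{lemma Euler transposition} (the identity $\prod_{j}(E+n+j)=\sum\binom{t+1}{\mathbf{u}}\partial^{\mathbf{u}}x^{\mathbf{u}}$), and it works for \emph{every} $f^{\prime}$ dividing $f$ with no degree restriction. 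The subsequent sharpening of the exponent range to $[0,d_{X,\red}+d_{X}-2r(X)-d_{X}^{\prime}]$ (Proposition \ref{prop- linear forms, symmetry element of b-ideal}) comes from applying $\varphi$ and invoking the principality of $B_{f^{\prime}L}^{g}$, which rests on the purity/duality results of Section 3 (Propositions \ref{prop- principal radical} and \ref{prop- criterion for principal b-ideal}) — not on any bound on $\deg(f^{\prime})$, and not on coprimality of the local factors as you assert in your last paragraph.

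Second, and consequently, you misplace where $\deg(f^{\prime})\le 4$ is actually needed. It enters only in the opposite containment $B_{f^{\prime}F}^{g}\subseteq\sqrt{\mathbb{C}[S]\cdot\prod_{j_{X}\in\Xi_{X}}(P_{f^{\prime}F,X}^{g}+j_{X})}$ (equivalently, the $\supseteq$ direction for varieties): the right-constant-term argument of Theorems \ref{thm-first subset thm} and \ref{thm-main subset thm} only certifies divisibility of every $B(S)\in B_{f^{\prime}F}^{g}$ by $P_{f^{\prime}F,X}^{g}+j_{X}$ for $j_{X}\in[0,\mdr(f_{X})+d_{X}-d_{X}^{\prime}-3]$, and since $\mdr(f_{X})$ can be as small as $2$ one is only guaranteed the initial interval $[0,d_{X}-d_{X}^{\prime}-1]$; reflecting by $\varphi$ gives the terminal interval $[d_{X,\red}-2r(X)+1,\,d_{X,\red}+d_{X}-2r(X)-d_{X}^{\prime}]$, and these two cover the full range exactly when $d_{X}^{\prime}\le 2(r(X)-1)$, which for $r(X)\ge 3$ is guaranteed by $d_{X}^{\prime}\le\deg(f^{\prime})\le 4$ (ranks $1$ and $2$ being handled separately via Saito's criterion). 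Your ``direct construction'' and appeal to Theorem \ref{thm-combinatorial roots} (a univariate statement about $[-1,0)$) do not supply this containment. As written, both halves of the equality in \eqref{eqn-intro variety computation} rest on steps you have not carried out.
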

In Remark \ref{rmk- gesturing at Budur improvement} we discuss how to use new results to get a combinatorial formula for the roots of the Bernstein--Sato polynomial corresponding to any central, free $f$, that is, to $f$ that may not be a power of a reduced arrangement. In the case of line arrangements, we are also able to compute $\V(B_{f^{\prime}F}^{g})$ for any suitable choice of $f^{\prime}, g,$ and $F$ without the technical condition of unmixed up to units, cf. Theorem \ref{thm- rank at most 2 computation} and Definition \ref{def-unmixed}.

Unfortunately our methods are not appropriate for determining the multiplicity of roots of the Bernstein--Sato polynomial so we cannot conclude this polynomial is combinatorial for free arrangements. These multiplicities are mysterious, although in \cite{SaitoArrangements} Saito proves various results about them in the general (i.e. in the non-free) setting. Notably he shows that $-1$ has multiplicity equal to the arrangement's rank. 

In Section 5 we make use of our results involving the more general functional equation \eqref{eqn-more general functional} to study the smallest arrangement $\V(f^{\prime})$ that when added to the arrangement $\V(g)$ makes $\V(f^{\prime}g)$ free, i.e. the smallest arrangement $f^{\prime}$ that \emph{frees} $g.$  For arbitrary divisors $g$, it is unknown whether or not such a divisor $f^{\prime}$ exists. There are some positive results, but the methodologies are very particular to the type of divisors considered. For example, Mond and Schulze identified certain classes of germs that are freed by a adjoint divisors--these germs are related to discrimants of versal deformations, cf. \cite{MondSchulzeAdjoint}. Other cases of freeing divisors are considered in \cite{simis} and \cite{newfromold}. However, Yoshinaga \cite{Yoshi} has communicated to us a way, based on the combinatorics of $g$, to find an arrangement $f^{\prime}$ that frees an arrangement $g$. In Theorem \ref{thm- lower bound on freeing number} we prove the degree of $f^{\prime}$ is related to roots of the Bernstein--Sato polynomial of $g$.
\begin{thm}
Suppose that $g$ is a central, reduced, tame hyperplane arrangement of rank $n$, $v$ an integer such that $1 < v \leq n - 1$, and $\deg(g)$ is co-prime to $v$. If $ \frac{-2 \deg(g) +v}{\deg(g)}$ is a root of the Bernstein--Sato polynomial of $g$ and if $f^{\prime}$ is a central arrangement that frees $g$, then $\deg(f^{\prime}) \geq n - v.$
\end{thm}

In Appendix B we prove a conjecture of Budur's in the case of central, reduced, and free hyperplane arrangments. The recently announced paper \cite{BudurConjecture} gives a general proof using entirely different methods.

\medskip 

We would like to thank Luis Narv\'aez-Macarro, Uli Walther, and Masahiko Yoshinaga for their helpful comments and insights. We would also like to thank the referee for very detailed comments which greatly helped to improve the quality of the text.

\section{Bernstein--Sato Ideals and the $\D_{X,\x}[S]$-module  $\D_{X,\x}[S] f^{\prime} F^{S}$}

In this section we introduce some of our working hypotheses on $f \in \bigO_{X}$. These are needed to utilize results from \cite{me} and \cite{uli} which will be needed throughout the paper. We generalize Theorem 2.29 of \cite{me} and discuss how Bernstein--Sato varieties attached to different factorizations of $f$ relate to each other.

\subsection{Hypotheses on $f$} \text{ }
Let $X$ be a smooth analytic space or $\mathbb{C}$-scheme of dimension $n$ and $\bigO_{X}$ be the analytic structure sheaf. Pick $f \in \bigO_{X}$ to be regular with divisor $Y = \text{Div}(f)$ and ideal sheaf $\mathscr{I}_{Y}$. In general, we make no reducedness assumption on $Y$. 
\begin{define}

Let $\Der_{X}(-\log Y)$ be the $\bigO_{X}$-sheaf of \emph{logarithmic derivations on} $Y$, that is, the sheaf generated locally by the vector fields $\delta$ such that $\delta \bullet \mathscr{I}_{Y} \subseteq \mathscr{I}_{Y}.$ If $Y = \text{Div}(f)$ then we also label $\Der_{X}(- \log f) = \Der_{X}( - \log Y).$ Define the \emph{derivations that kill} $f$ to be
\[
\Der_{X}(- \log_{0} f) = \{ \delta \in \Der_{X}(- \log f) \mid \delta \bullet f = 0 \}. 
\] 
\end{define}

\begin{remark} \label{rmk- log derivations}
\begin{enumerate}[(a)]
    \item It is easily checked that $\Der_{X}(-\log Y)$ depends on $\mathscr{I}_{Y}$ and not the choice of generators of $\mathscr{I}_{Y}$.
    \item By Lemma 3.4 of \cite{FormalStructure}, $\Der_{X,\x}(- \log fg) = \Der_{X,\x}(- \log f) \cap \Der_{X,\x}(- \log g)$. This is not always true when restricting to derivations that kill $f$.
    \item $\Der_{X,\x}(-\log f)$ is closed under taking commutators.
\end{enumerate}
\end{remark}

At points we will be interested in when $\Der_{X}(-\log Y)$ has a particularly nice structure.

\begin{define}

The divisor $Y = \text{Div}(f)$ is \emph{free} when $\Der_{X}(-\log Y)$ is locally everywhere a free $\bigO_{X}$-module. Similarly $f \in \bigO_{X,\x}$ is free when $\Der_{X,\x}(-\log f)$ is a free $\bigO_{X,\x}$-module.

\end{define}

In \cite{SaitoLogarithmic}, Saito introduced the logarithmic differential forms which are, in some sense, a dual notion to logarithmic derivations. 

\begin{define}

Let $\Omega_{X}^{k}$ be the sheaf of differential $k$-forms on $X$ and $d: \Omega_{X}^{k} \to \Omega_{X}^{k+1}$ the standard differential. Define the sheaf of \emph{logarithmic $k$-forms along} $f$ by 
\[
\Omega_{X}^{k}(\log f) = \{ w \in \frac{1}{f} \Omega_{X}^{k} \mid df \wedge w \in \Omega_{X}^{k+1} \}.
\]

An element $f \in \bigO_{X}$ is \emph{tame} if the projective dimension of the logarithmic $k$-forms along $f$ is at most $k$ in each stalk. A divisor $Y$ is tame if it locally everywhere admits tame defining equations. 

\end{define}

\begin{remark} \label{rmk- logarithmic forms remark}
\begin{enumerate}[(a)]
\item The logarithmic 1-forms are dual to the logarithmic differentials: $\text{Hom}_{\bigO_{X,\x}}(\Der_{X,\x}(-\log f), \bigO_{X,\x}) \simeq \Omega_{X}^{1}(\log f).$ When $f$ is free, $\Omega_{X}^{k}(\log f) \simeq \bigwedge^{k} \Omega_{X}^{1}(\log f)$, cf. 1.6 and page 270 of \cite{SaitoLogarithmic}.
\item If $\text{dim}(X) = n \leq 3$ then any divisor $Y$ is automatically tame. This follows from the reflexivity of logarithmic $k$-forms, cf. \cite{SaitoLogarithmic}.
\end{enumerate}
\end{remark}

The logarithmic derivations can also be used to stratify $X$:

\begin{define} (Compare to 3.3 and 3.8 of \cite{SaitoLogarithmic})
There is a relation on $X$ induced by the logarithmic derivations along $Y$. Two points $\x$ and $\y$ are equivalent if there exists an open $U$ containing them and a $\delta \in \Der_{U}(- \log Y \cap U)$ such that: (i) $\delta$ vanishes nowhere on $U$; (ii) an integral curve of $\delta$ passes through $\x$ and $\y$. The transitive closure of this relation stratifies $X$ 
into equivalence classes whose irreducible components are the \emph{logarithmic strata}. These strata constitute the \emph{logarithmic stratification}.

We say $Y$ is \emph{Saito-holonomic} when the logarithmic stratification is locally finite. 
\end{define}

\begin{example} \label{ex- hyperplane saito}
By 3.14 of \cite{SaitoLogarithmic} hyperplane arrangements are Saito-holonomic.
\end{example}

Finally, we define some homogeneity conditions on $f \in \bigO_{X}$.

\begin{define}
We say $f \in \bigO_{X, \x}$ is \emph{Euler-homogeneous} when there exists $\delta \in \Der_{X,\x}(-\log f)$ such that $\delta \bullet f = f.$ If $\delta$ may be picked to vanish at $\x$, then $f$ is \emph{strongly Euler-homogeneous}.

The element $f \in \bigO_{X}$ is (strongly) Euler-homogeneous if it is so at each point. The divisor $Y$ is (strongly) Euler-homogeneous if it locally everywhere admits a defining equation that is (strongly) Euler-homogeneous.
\end{define}

\begin{remark} \label{rmk-strongly Euler}
If $f \in \bigO_{X,\x}$ and $u \in \bigO_{X,\x}$ is a unit, then $f$ is strongly Euler-homogeneous if and only if $u f$ is, cf. Remark 2.8 of \cite{uli}.
\end{remark}

\begin{example} \label{ex-hyperplane strongly Euler}
Hyperplane arrangements are strongly Euler-homogeneous.

\end{example}

Our working hypotheses on $f$ will often be ``tame, strongly Euler-homogeneous, and Saito-holonomic" or ``free, strongly Euler-homogeneous, and Saito-holonomic." In light of Examples \ref{ex- hyperplane saito} and \ref{ex-hyperplane strongly Euler}, if $f$ cuts out a hyperplane arrangement only tameness or freeness need be assumed.

\subsection{The $\D_{X,\x}[S]$-Annihilator of $f^{\prime} F^{S}$} \text{ }

Let $\D_{X}$ be the sheaf of $\mathbb{C}$-linear differential operators with coefficients in $\bigO_{X}$ and $\D_{X}[S]$ be the polynomial ring extension induced by adding $r$ central variables $S = s_{1}, \dots, s_{r}.$

\begin{define} \label{def-Fs def}
Consider the free $\bigO_{X}[S][\frac{1}{f}]$-module generated by the symbol $F^{S} = f_{1}^{s_{1}} \cdots f_{r}^{s_{r}}.$ This is endowed with a $\D_{X}[S]$-action by specifying the action of a $\mathbb{C}$-linear derivation $\delta$ on $\bigO_{X}$. For any $g \in \bigO_{X}[\frac{1}{f}]$, declare
\[
\delta \bullet (s_{i} g F^{S}) = s_{i}(\delta \bullet g) F^{S} + s_{i} g (\sum_{k} \frac{\delta \bullet f_{k}}{f_{k}} s_{k}) F^{S}.
\]
Let $\D_{X}[S] F^{S}$ be the $\D_{X}[S]$-module generated by $F^{S}$. For $g \in \bigO_{X}[\frac{1}{f}]$, let $\D_{X}[S] g F^{S}$ be the $\D_{X}[S]$-module generated by $g F^{S}.$
\end{define}

\begin{remark} \label{rmk- classical fs}
When executing the above construction with only one $s$, we use the notation $\D_{X}[s] f^{s}$. This is the classical, univariate situation.
\end{remark}

In Proposition 2.7 of \cite{me} we showed both that there is a canonical way to associate elements of $\Der_{X}(-\log f)$ to elements of $\ann_{\D_{X}[S]} F^{S}$ and that when $f$ is tame, strongly Euler-homogeneous, and Saito-holonomic, $\ann_{\D_{X,x}[S]} F^{S}$ is generated by said elements. In this subsection we prove the analogous claims for $\ann_{\mathscr{D}_{X,\x}[S]}f^{\prime}F^{S}$, provided $f^{\prime}$ is chosen such that $f^{N} f^{\prime} \in \mathscr{O}_{X,\x}$ and $f^{M} \in \mathscr{O}_{X,\x} \cdot f^{N}f^{\prime}$ for suitable choices of $N, M \geq 0$. First, we show how to associate elements of $\Der_{X,x}(-\log f)$ to $\ann_{\D_{X,x}[S]} f^{\prime} F^{S}$ in an entirely similar way as in the prequel; second, we show that these elements generate $\ann_{\D_{X,x}[S]} f^{\prime} F^{S}$ when $f$ is tame, strongly Euler-homogeneous, and Saito-holonomic.

\begin{define}

The \emph{total order filtration} $F_{(0,1,1)}$ on $\D_{X,\x}[S]$ assigns, in local coordinates, every $\partial_{x_{k}}$ weight one, every $s_{k}$ weight one, and every element of $\bigO_{X}$ weight zero. We will denote the elements of weight at most $l$ by $F_{(0,1,1)}^{l}$ or $F_{(0,1,1)}^{l}(\D_{X,\x}[S]).$ 

\end{define}

\begin{define} \label{def-f prime compatible}
Write $f \in \bigO_{X,\x}$ as $f = u l_{1}^{p_{1}} \cdots l_{q}^{p_{q}}$ where the $l_{t}$ are pairwise distinct irreducibles, $p_{t} \in \mathbb{Z}_{+}$, and $u$ is a unit in $\bigO_{X,\x}$. We say $f^{\prime} \in \bigO_{X,\x}[\frac{1}{f}]$ is  \emph{compatible} with $f$ if there exists a unit $u^{\prime} \in \bigO_{X,\x}$ and integers $v_{t} \in \mathbb{Z}$ such that 
\[
f^{\prime} = u l_{1}^{v_{1}} \cdots l_{q}^{v_{q}}.
\]
In this case, $v_{t}$ is the \emph{multiplicity of $l_{t}$}.
\end{define}

By Remark \ref{rmk- log derivations}, if $f = u l_{1}^{p_{1}} \cdots l_{q}^{v_{q}}$ a factorization of $f$ into irreducibles at $\x$, $u$ a unit, then if $\delta \in \Der_{X,\x}(-\log f), \frac{\delta \bullet l_{t}}{l_{t}} \in \bigO_{X,\x}.$ So for $f^{\prime}$ compatible with $f$,
\[
\delta \bullet f^{\prime}F^{S} = (\delta \bullet f^{\prime}) F^{S} + f^{\prime} (\sum_{k} \frac{\delta \bullet f_{k}}{f_{k}}s_{k}) F^{S} = (\frac{\delta \bullet f^{\prime}}{f^{\prime}} + \sum_{k} \frac{\delta \bullet f_{k}}{f_{k}}s_{k}) f^{\prime}F^{S},
\]
where $(\frac{\delta \bullet f^{\prime}}{f^{\prime}} + \sum_{k} \frac{\delta \bullet f_{k}}{f_{k}}s_{k}) \in \mathscr{O}_{X,\x}[S]$. Indeed, $\frac{\delta \bullet f^{\prime}}{f^{\prime}} = \sum v_{t} \frac{\delta \bullet l_{t}}{l_{t}} \in \mathscr{O}_{X,\x}$ and similarly $\frac{\delta \bullet f_{k}}{f_{k}} \in \mathscr{O}_{X}$.

\begin{define} \label{def- new psi def}
Suppose $f^{\prime}$ is compatible with $f$. If $f = f_{1} \cdots f_{r}$ and $F = (f_{1}, \dots, f_{r})$, then there is a  map of $\bigO_{X,x}$-modules 
\[
\psi_{f^{\prime}F, \x}: \Der_{X,x}(-\log f) \to \ann_{\D_{X,x}[S]}f^{\prime}F^{S} \cap F_{(0,1,1)}^{1}
\]
given by
\[ \psi_{f^{\prime}F, \x}(\delta) = \delta - \sum_{k} \frac{\delta \bullet f_{k}}{f_{k}}s_{k} - \frac{\delta \bullet f^{\prime}}{f^{\prime}}.
\]
The $\bigO_{X,\x}$-module of \emph{annihilating derivations} along $f^{\prime}F$ is defined as
\[
\theta_{f^{\prime}F,\x} = \psi_{f^{\prime}F, \x}(\Der_{X,\x}(-\log f))
\]
and $\ann_{\D_{X,\x}[S]}f^{\prime}F^{S}$ is \emph{generated by derivations} when
\[
\ann_{\D_{X,\x}[S]} f^{\prime}F^{S} = \D_{X,\x}[S] \cdot \theta_{f^{\prime}F,\x}.
\]
When $f^{\prime} = 1$ we write $\psi_{F,\x}$ and $\theta_{F,\x}$.
\end{define}

Arguing as in Proposition 2.7 of \cite{me} we see that:

\begin{prop} \label{prop - f prime psi prop} \text{\normalfont (Compare to Proposition 2.7 of \cite{me})} 
Suppose $f^{\prime}$ is compatible with $f$. If $f = f_{1} \cdots f_{r}$ and $F = (f_{1}, \dots, f_{r})$, then $\psi_{f^{\prime}F, \x}$ is an isomorphism.
\end{prop}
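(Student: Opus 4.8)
The plan is to carry the extra factor $f^{\prime}$ through the argument of Proposition 2.7 of \cite{me}. I would check three things in turn: that $\psi_{f^{\prime}F,\x}$ is a well-defined $\bigO_{X,\x}$-linear map landing in $\ann_{\D_{X,\x}[S]}f^{\prime}F^{S}\cap F_{(0,1,1)}^{1}$; that it is injective; and that it is surjective onto that module. The first two are formal. Well-definedness and membership in the target are already contained in the displayed identity just before Definition \ref{def- new psi def}: for $\delta\in\Der_{X,\x}(-\log f)$ that identity shows $\psi_{f^{\prime}F,\x}(\delta)$ annihilates $f^{\prime}F^{S}$, and $\psi_{f^{\prime}F,\x}(\delta)$ plainly has total weight at most one. $\bigO_{X,\x}$-linearity is immediate from $(g\delta)\bullet h=g(\delta\bullet h)$ applied to $h=f_{1},\dots,f_{r},f^{\prime}$. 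For injectivity I would use that $\D_{X,\x}[S]$ is free over $\bigO_{X,\x}[S]$ on the monomials $\partial^{\alpha}$: in $\psi_{f^{\prime}F,\x}(\delta)=\delta-\sum_{k}\tfrac{\delta\bullet f_{k}}{f_{k}}s_{k}-\tfrac{\delta\bullet f^{\prime}}{f^{\prime}}$, the summand $\delta$ lies in the $\bigO_{X,\x}[S]$-span of $\partial_{x_{1}},\dots,\partial_{x_{n}}$ while the remaining terms lie in the span of $\partial^{0}=1$, so $\psi_{f^{\prime}F,\x}(\delta)=0$ forces $\delta=0$.

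Surjectivity is where the content is. Given $P\in\ann_{\D_{X,\x}[S]}f^{\prime}F^{S}\cap F_{(0,1,1)}^{1}$, the only monomials of total weight at most one are $1$, the $\partial_{x_{i}}$, and the $s_{k}$, so I may write $P=\delta_{0}+\sum_{k}c_{k}s_{k}+c_{0}$ with $\delta_{0}\in\Der_{X,\x}$ and $c_{0},c_{1},\dots,c_{r}\in\bigO_{X,\x}$. Since $f^{\prime}$ is compatible with $f$ it is a unit in $\bigO_{X,\x}[\tfrac{1}{f}]$, so $f^{\prime}F^{S}$ generates a free rank-one $\bigO_{X,\x}[\tfrac{1}{f}][S]$-module; computing with the $\D_{X,\x}[S]$-action of Definition \ref{def-Fs def},
\[
P\bullet f^{\prime}F^{S}=\Big(\tfrac{\delta_{0}\bullet f^{\prime}}{f^{\prime}}+c_{0}+\sum_{k}\big(\tfrac{\delta_{0}\bullet f_{k}}{f_{k}}+c_{k}\big)s_{k}\Big)f^{\prime}F^{S},
\]
so $P\bullet f^{\prime}F^{S}=0$ amounts to the vanishing of the bracketed element of $\bigO_{X,\x}[\tfrac{1}{f}][S]$. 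Comparing the coefficients of $1,s_{1},\dots,s_{r}$ gives $c_{k}=-\tfrac{\delta_{0}\bullet f_{k}}{f_{k}}$ for every $k$ and $c_{0}=-\tfrac{\delta_{0}\bullet f^{\prime}}{f^{\prime}}$. As each $c_{k}$ lies in $\bigO_{X,\x}$ we get $\delta_{0}\bullet f_{k}\in(f_{k})$, i.e.\ $\delta_{0}\in\Der_{X,\x}(-\log f_{k})$ for all $k$, whence Remark \ref{rmk- log derivations}(b) yields $\delta_{0}\in\bigcap_{k}\Der_{X,\x}(-\log f_{k})=\Der_{X,\x}(-\log f)$. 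Then $\psi_{f^{\prime}F,\x}(\delta_{0})=\delta_{0}+\sum_{k}c_{k}s_{k}+c_{0}=P$, which finishes surjectivity.

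I do not anticipate a genuine obstacle; this is a direct adaptation of the cited proof. The only step that is more than bookkeeping is the deduction of $\delta_{0}\in\Der_{X,\x}(-\log f)$ from $\tfrac{\delta_{0}\bullet f_{k}}{f_{k}}\in\bigO_{X,\x}$ for all $k$, which is exactly the intersection formula $\Der_{X,\x}(-\log fg)=\Der_{X,\x}(-\log f)\cap\Der_{X,\x}(-\log g)$ of Remark \ref{rmk- log derivations}(b). It is also worth noting that compatibility of $f^{\prime}$ is what guarantees the remaining coefficient $c_{0}=-\tfrac{\delta_{0}\bullet f^{\prime}}{f^{\prime}}$ automatically lies in $\bigO_{X,\x}$ once $\delta_{0}\in\Der_{X,\x}(-\log f)$, so the decomposition of $P$ is consistent; conceptually, multiplying $F^{S}$ by $f^{\prime}$ perturbs the weight-$\le1$ annihilators only by a zeroth-order term, and no genuinely new operators appear.
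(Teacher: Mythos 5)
Your proof is correct and follows essentially the same route as the paper: both arguments write a weight-$\le 1$ annihilator as a derivation plus an $\bigO_{X,\x}[S]$-linear term, use that $f^{\prime}F^{S}$ generates a free rank-one $\bigO_{X,\x}[S][\frac{1}{f}]$-module to match coefficients, and conclude via $\bigcap_{k}\Der_{X,\x}(-\log f_{k})=\Der_{X,\x}(-\log f)$ that the derivation part is logarithmic, exhibiting $\psi_{f^{\prime}F,\x}$ as the inverse of $P\mapsto\delta$. You simply spell out the well-definedness and injectivity steps that the paper leaves implicit.
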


\begin{proof}
Suppose $\delta - \sum_{k} b_{k} s_{k} - b \in  \ann_{\D_{X,\x}[S]} f^{\prime} F \cap F_{(0,1,1)}^{1}$ where $b_{k}, b \in \bigO_{X, \x}.$ Since $ f^{\prime}F^{S}$ generates a free $\bigO_{X,\x}[S][\frac{1}{f}]$-module we deduce 
\[
(\sum_{k} \frac{\delta \bullet f_{k}}{f_{k}}s_{k} - b_{k} s_{k} ) + (\frac{\delta \bullet f^{\prime}}{f^{\prime}} - b) = 0
\]
and hence 
\[
\delta \in \bigcap_{k}\Der_{X, \x}(-\log f_{k}) = \Der_{X,\x}(-\log f).
\]
So the map $\delta - \sum_{k} b_{k} s_{k} - b \mapsto \delta$ sends   $\ann_{\D_{X,\x}[S]} f^{\prime} F \cap F_{(0,1,1)}^{1}$ to $\Der_{X,\x}(-\log f)$. Its inverse is $\psi_{f^{\prime}F,\x}.$
\end{proof}

\begin{remark} \label{rmk-respect commutators}
By definition, $\ann_{\D_{X,\x}[S]}f^{\prime}F^{S}$ is closed under taking commutators; hence $\theta_{f^{\prime}F,\x}$ is as well. As $\psi_{f^{\prime}F,\x}$ is an isomorphism, a basic computation shows $\psi_{f^{\prime}F,\x}$ respects taking commutators.
\end{remark}

In \cite{me} we generalized an approach of Walther's in \cite{uli}: we looked at the associated graded object of $\ann_{\D_{X,\x}[S]} F^{S}$ under the total order filtration $F_{(0,1,1)}$. As $\psi_{F,\x}(\Der_{X,\x}(-\log f)) \subseteq \ann_{\D_{X,\x}[S]} F^{S}$ the following definition is natural:

\begin{define}

Suppose $f$ is strongly Euler-homogeneous. The \emph{generalized Liouville ideal} $\widetilde{L_{F,\x}} \subseteq \gr_{(0,1,1)}(\D_{X,\x}[S])$ is generated by the symbols of elements in $\psi_{F}(\Der_{X,\x}(-\log f))$ under the total order filtration. That is,
\[
\widetilde{L_{F,\x}} = \gr_{(0,1,1)}(\D_{X,\x}[S]) \cdot \gr_{(0,1,1)}(\psi_{F,\x}(\Der_{X,\x}(-\log f))).
\]

\end{define}

\begin{remark} \label{rmk- generalized Liouville ideal}
\begin{enumerate}[(a)]
    \item The strongly Euler-homogeneous assumption in the above definition ensures that algebraic properties of $\widetilde{L_{F,x}}$ do not depend on choice of defining equations for each $f_{k}$ at $x$. See Remark 2.15 of \cite{me} for details.
    \item By Corollary 2.28 of \cite{me}, if $f \in \bigO_{X}$ is tame, strongly Euler-homogeneous, and Saito-holonomic then $\widetilde{L_{F,\x}} = \gr_{(0,1,1)}(\ann_{\D_{X,\x}[S]} F^{S}).$ 
    \item For $\delta \in \Der_{X,\x}(-\log f)$, note that 
    \begin{align*}
    \gr_{(0,1,1)}(\psi_{f^{\prime} F, \x}(\delta)) 
        &= \gr_{(0,1,1)}(\delta - \sum_{k} \frac{\delta \bullet f_{k}}{f_{k}} s_{k} - \frac{\delta \bullet f^{\prime}}{f^{\prime}}) \\
        & =  \gr_{(0,1,1)}(\delta - \sum_{k} \frac{\delta \bullet f_{k}}{f_{k}} s_{k}) \\
        &= \gr_{(0,1,1)}(\psi_{F,\x}(\delta)).
    \end{align*}
    Since $\widetilde{L_{F,\x}} \subseteq \gr_{(0,1,1)}(\mathscr{D}_{X,\x}[S])$ has, by definition, generators $\{ \gr_{(0,1,1)}(\psi_{F,\x}(\delta)) \mid \delta \in \Der_{X,\x}(-\log f) \}$, we deduce 
    \begin{align*}
        \widetilde{L_{F,\x}} 
            &= \gr_{(0,1,1)}(\mathscr{D}_{X,\x}[S]) \cdot \{ \gr_{(0,1,1)}(\psi_{f^{\prime}F,\x}(\delta)) \mid \delta \in \Der_{X,\x}(-\log f) \} \\
            &= \gr_{(0,1,1)}(\D_{X,\x}[S]) \cdot \gr_{(0,1,1)}(\theta_{f^{\prime}F,\x}) \\
            & \subseteq \gr_{(0,1,1)}(\ann_{\mathscr{D}_{X,\x}[S]} f^{\prime}F^{S}).
    \end{align*}

\end{enumerate}
\end{remark}

By the preceding remark, $\widetilde{L_{F,\x}}$ approximates $\gr_{(0,1,1)}(\ann_{\D_{X,\x}[S]} f^{\prime} F^{S}).$ Arguing as in Corollary 2.28 of \cite{me} we prove the approximation is in fact an equality:

\begin{thm} \label{thm-gr gen by derivations}
Suppose $f = f_{1} \cdots f_{r}$ is tame, strongly Euler-homogeneous and Saito-holonomic. Let $F = (f_{1}, \dots, f_{r})$ and suppose $f^{\prime} \in \bigO_{X,\x}[\frac{1}{f}]$ is compatible with $f$. Then
\[\gr_{(0,1,1)}(\ann_{\D_{X,\x}[S]} f^{\prime} F^{S}) = \gr_{(0,1,1)}(\D_{X,\x}[S]) \cdot\gr_{(0,1,1)}(\theta_{f^{\prime}F,\x}).
\]
\end{thm}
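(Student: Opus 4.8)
Our target equality has one inclusion for free: by Remark~\ref{rmk- generalized Liouville ideal}(c) the ideal $\gr_{(0,1,1)}(\D_{X,\x}[S]) \cdot \gr_{(0,1,1)}(\theta_{f^{\prime}F,\x})$ is precisely the generalized Liouville ideal $\widetilde{L_{F,\x}}$, and that same remark records $\widetilde{L_{F,\x}} \subseteq \gr_{(0,1,1)}(\ann_{\D_{X,\x}[S]} f^{\prime}F^{S})$. So the plan is to prove the reverse inclusion $\gr_{(0,1,1)}(\ann_{\D_{X,\x}[S]} f^{\prime}F^{S}) \subseteq \widetilde{L_{F,\x}}$. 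Since $f$ is tame, strongly Euler-homogeneous, and Saito-holonomic, Corollary~2.28 of \cite{me} gives $\widetilde{L_{F,\x}} = \gr_{(0,1,1)}(\ann_{\D_{X,\x}[S]} F^{S})$; hence it is enough to show that trading $F^{S}$ for $f^{\prime}F^{S}$ does not enlarge the symbol ideal of the annihilator, i.e. that $\gr_{(0,1,1)}(\ann_{\D_{X,\x}[S]} f^{\prime}F^{S}) \subseteq \gr_{(0,1,1)}(\ann_{\D_{X,\x}[S]} F^{S})$.

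To see this I would exploit that $f^{\prime}$, being compatible with $f$, is a unit in $\bigO_{X,\x}[\frac{1}{f}]$, so conjugation is available. Fix $P \in \ann_{\D_{X,\x}[S]} f^{\prime}F^{S}$ of total order $m$ and set $Q = \frac{1}{f^{\prime}} P f^{\prime} \in \D_{X,\x}[S][\frac{1}{f}]$, the conjugate of $P$ by multiplication by $f^{\prime}$. Then $Q \bullet F^{S} = \frac{1}{f^{\prime}}\bigl(P \bullet f^{\prime}F^{S}\bigr) = 0$; and $Q = P + \frac{1}{f^{\prime}}[P, f^{\prime}]$ with $[P,f^{\prime}]$ of total order $\leq m-1$ (a function has total order $0$), so the order-$m$ part of $Q$ equals that of $P$ and in particular has coefficients in $\bigO_{X,\x}$. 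Choosing $N \gg 0$ large enough to clear the denominators of the finitely many lower-order coefficients of $Q$, we obtain $f^{N}Q \in \ann_{\D_{X,\x}[S]} F^{S}$, still of total order $m$, with $\gr_{(0,1,1)}(f^{N}Q) = f^{N}\cdot \gr_{(0,1,1)}(P) \neq 0$. Therefore $f^{N}\cdot \gr_{(0,1,1)}(P) \in \gr_{(0,1,1)}(\ann_{\D_{X,\x}[S]} F^{S}) = \widetilde{L_{F,\x}}$.

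The last move is to cancel the factor $f^{N}$, which requires $f$ to be a non-zerodivisor on $\gr_{(0,1,1)}(\D_{X,\x}[S])/\widetilde{L_{F,\x}} \cong \gr_{(0,1,1)}(\D_{X,\x}[S] F^{S})$; I expect this to be the main obstacle. Under our hypotheses this quotient ring is, by the structural results of \cite{me} underlying Corollary~2.28, Cohen--Macaulay and equidimensional of dimension $n+r$, hence without embedded primes, so $f$ is a non-zerodivisor on it exactly when no component of the characteristic variety $\V(\widetilde{L_{F,\x}})$ lies inside $\V(f)$. This in turn holds because strong Euler-homogeneity pins the $s_{k}$-coordinates to $0$ on the conormal contributions of the logarithmic strata contained in $\Div(f)$, cutting those contributions below dimension $n+r$, so that the unique $(n+r)$-dimensional component of $\V(\widetilde{L_{F,\x}})$ is the closure of the relative conormal of $\Div(f)$, which dominates a neighborhood of $\x$ and is therefore not contained in $\V(f)$. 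Granting this, $\gr_{(0,1,1)}(P) \in \widetilde{L_{F,\x}}$, and since such symbols generate $\gr_{(0,1,1)}(\ann_{\D_{X,\x}[S]} f^{\prime}F^{S})$, the desired inclusion---and hence the theorem---follows.

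Alternatively, one can avoid the conjugation entirely and rerun the dimension-and-multiplicity argument of Corollary~2.28 of \cite{me} with $f^{\prime}F^{S}$ in place of $F^{S}$: the surjection $\gr_{(0,1,1)}(\D_{X,\x}[S])/\widetilde{L_{F,\x}} \twoheadrightarrow \gr_{(0,1,1)}(\D_{X,\x}[S] f^{\prime}F^{S})$ supplied by Remark~\ref{rmk- generalized Liouville ideal}(c) must be an isomorphism because both sides have dimension $n+r$ and the same characteristic multiplicity, the multiplicities agreeing since $\D_{X,\x}[S] f^{\prime}F^{S}$ and $\D_{X,\x}[S] F^{S}$ coincide after inverting $f$ while---by \cite{me}---no top-dimensional component of the characteristic variety sits over $\V(f)$. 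Either route, the one genuinely new ingredient beyond \cite{me} is Remark~\ref{rmk- generalized Liouville ideal}(c), which says that replacing $\psi_{F,\x}$ by $\psi_{f^{\prime}F,\x}$ perturbs symbols only in total order zero.
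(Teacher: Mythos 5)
Your proof is correct, but it takes a genuinely different route from the paper's. The paper re-runs the argument of Proposition 2.25/Corollary 2.28 of \cite{me} directly on $f^{\prime}F^{S}$: it shows by a leading-term computation that $\gr_{(0,1,1)}(P) \in \ker(\phi_{F,\x})$ for every $P$ annihilating $f^{\prime}F^{S}$ (where $\phi_{F,\x}$ is the map to the multi-Rees algebra of the Jacobian ideals), and then traps $\gr_{(0,1,1)}(\ann_{\D_{X,\x}[S]} f^{\prime}F^{S})$ in the chain $\widetilde{L_{F,\x}} \subseteq \gr_{(0,1,1)}(\D_{X,\x}[S])\cdot\gr_{(0,1,1)}(\theta_{f^{\prime}F,\x}) \subseteq \gr_{(0,1,1)}(\ann f^{\prime}F^{S}) \subseteq \ker(\phi_{F,\x})$, where the two outer ideals are both prime of dimension $n+r$ and hence equal. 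You instead take the $f^{\prime}=1$ case (Corollary 2.28 of \cite{me}) as a black box and reduce to it by conjugating an annihilating operator by the unit $f^{\prime}$ of $\bigO_{X,\x}[\frac{1}{f}]$, clearing denominators, and cancelling the resulting factor $f^{N}$ modulo $\widetilde{L_{F,\x}}$. This is a clean and legitimate alternative; the cancellation step can be shortened, since Theorem 2.23 of \cite{me} already gives that $\widetilde{L_{F,\x}}$ is \emph{prime}, and it is generated by elements of positive degree in the $(Y,S)$-grading, so $f \notin \widetilde{L_{F,\x}}$ and $f$ is automatically a non-zerodivisor on the quotient---your detour through equidimensionality, absence of embedded primes, and the geometry of the top-dimensional component of the characteristic variety is more than is needed. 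What the paper's route buys is that it establishes the slightly finer statement that all three ideals in the chain coincide with $\widetilde{L_{F,\x}}$ and with $\ker(\phi_{F,\x})$, which is re-used later (e.g.\ in the proof of Theorem \ref{thm- useful duality formula}); what your route buys is that it isolates exactly where compatibility of $f^{\prime}$ enters, namely in making $f^{\prime}$ invertible after localizing at $f$, and it avoids redoing the lead-term computation with the extra $\frac{\partial_{x_i}\bullet f^{\prime}}{f^{\prime}}$ terms.
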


\begin{proof}
For the first part of this proof we mimic Proposition 2.25 of \cite{me}. In Definition 2.24 of loc. cit. we introduced a $\bigO_{X,\x}$-linear ring homomorphism $\phi_{F,\x} : \gr_{(0,1,1)}(\D_{X,\x}[S]) \to R (\text{Jac}(f_{1}), \dots, \text{Jac}(f_{r}))$ where $R (\text{Jac}(f_{1}), \dots, \text{Jac}(f_{r}))$ is the multi-Rees algebra associated to the $r$ Jacobian ideals $\text{Jac}(f_{1}), \dots, \text{Jac}(f_{r})$. Using local coordinates $\partial_{x_{i}}$ and identifying $\gr_{(0,1,1)}(\D_{X,\x}[S])$ with $\bigO_{X,\x}[Y][S]$ via  $\gr_{(0,1,1)}(\partial_{x_{i}}) = y_{i}$, the map $\phi_{F,\x}$ is given by
\[
y_{i} \mapsto \sum_{k} \frac{f}{f_{k}} (\partial_{x_{i}} \bullet f_{k}) s_{k} \text{ and } s_{k} \mapsto f s_{k}.
\]
Proposition 2.26 of loc. cit. shows $\text{ker}(\phi_{F,\x})$ is a prime ideal of dimension $n + r.$

Select $P \in \ann_{\D_{X,\x}[S]} f^{\prime} F$ of weight $l$ under the total order filtration $F_{(0,1,1)}$. For any $Q$ of weight $l$, $f^{l} Q \bullet f^{\prime} F^{S} \in \bigO_{X,\x}[S] F^{S}$. Now, for $g \in \mathscr{O}_{X,\x}[S][\frac{1}{f}]$, write $\partial_{x_{i}} \bullet gf^{\prime}F^{S} = (\partial_{x_{i}} \bullet g + g \frac{\partial_{x_{i}} \bullet f^{\prime}}{f^{\prime}} + g \sum_{k} \frac{\partial_{x_{i}} \bullet f_{k}}{f_{k}}s_{k}) f^{\prime} F^{S}$. Thus, if applying a partial derivative to $g f^{\prime} F^{S}$ causes the $s$-degree (under the natural filtration) of the $\mathscr{O}_{X,\x}[S]$-coefficient of $f^{\prime}F^{S}$ to increase, the terms of higher $s$-degree are precisely $g \sum_{k} \frac{\partial_{x_{i}} \bullet f_{k}}{f_{k}}$. A straightforward computation then shows that the $S$-lead term of $ f^{l} Q \bullet f^{\prime} F^{S}$ is exactly $\phi_{F,\x}(\gr_{(0,1,1)}(Q)) f^{\prime} F^{S} \in \bigO_{X,\x}[S] f^{\prime}F^{S}.$ Since $f^{\prime} F^{S}$ generates a free $\bigO_{X,\x}[S][\frac{1}{f}]$-module and since $P \bullet f^{\prime} F^{S} = 0$, we conclude $\gr_{(0,1,1)}(P) \in \text{ker}(\phi_{F,\x}).$

By Remark \ref{rmk- generalized Liouville ideal} we deduce:
\begin{align} \label{eqn- containment chain}
    \widetilde{L_{F,\x}}  \subseteq \gr_{(0,1,1)}(\D_{X,\x}[S]) \cdot \gr_{(0,1,1)}(\theta_{f^{\prime}F,\x}) 
        &\subseteq \gr_{(0,1,1)}(\ann_{\D_{X,\x}[S]} f^{\prime}F^{S}) \\
        &\subseteq \text{ker}(\phi_{F,\x}). \nonumber
\end{align}
Since $f$ is tame, strongly Euler-homogeneous, and Saito-holonomic, by Theorem 2.23 of loc. cit., $\widetilde{L_{F,\x}}$ is a prime ideal of dimension $n+r$. So the outer ideals of \eqref{eqn- containment chain} are prime ideals of dimension $n+r$ and the containments are equalities.
\end{proof}

\begin{thm} \label{thm-gen by derivations}

Suppose $f = f_{1} \cdots f_{r}$ is tame, strongly Euler-homogeneous, and Saito-holonomic, $f^{\prime} \in \bigO_{X,\x}[\frac{1}{f}]$ is compatible with $f$, and $F = (f_{1}, \dots, f_{r}).$ Then the $\D_{X,\x}[S]$-annihilator of $ f^{\prime} F^{S}$ is generated by derivations.

\end{thm}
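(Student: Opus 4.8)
The plan is to deduce the theorem from Theorem \ref{thm-gr gen by derivations} by the standard lifting argument across the total order filtration $F_{(0,1,1)}$. First, recall from Remark \ref{rmk- generalized Liouville ideal} that $\theta_{f^{\prime}F,\x} \subseteq \ann_{\D_{X,\x}[S]} f^{\prime}F^{S}$, so, since $\ann_{\D_{X,\x}[S]} f^{\prime}F^{S}$ is a left ideal of $\D_{X,\x}[S]$, we already have $\D_{X,\x}[S] \cdot \theta_{f^{\prime}F,\x} \subseteq \ann_{\D_{X,\x}[S]} f^{\prime}F^{S}$; only the reverse inclusion needs proof. Equip both sides with the filtration induced by $F_{(0,1,1)}$ on $\D_{X,\x}[S]$ — this filtration is exhaustive and vanishes in negative weight. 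Since $\gr$ of a product of submodules contains the product of their associated graded objects, we get the chain
\[
\gr_{(0,1,1)}(\D_{X,\x}[S]) \cdot \gr_{(0,1,1)}(\theta_{f^{\prime}F,\x}) \subseteq \gr_{(0,1,1)}\bigl(\D_{X,\x}[S] \cdot \theta_{f^{\prime}F,\x}\bigr) \subseteq \gr_{(0,1,1)}\bigl(\ann_{\D_{X,\x}[S]} f^{\prime}F^{S}\bigr),
\]
and Theorem \ref{thm-gr gen by derivations} identifies the two ends, hence forces all three to coincide.

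Next I would run a descending induction on the weight $l$ of an element $P \in \ann_{\D_{X,\x}[S]} f^{\prime}F^{S}$. For the base case, $F_{(0,1,1)}^{0}(\ann_{\D_{X,\x}[S]} f^{\prime}F^{S}) = \bigO_{X,\x} \cap \ann_{\D_{X,\x}[S]} f^{\prime}F^{S} = 0$, because $f^{\prime}F^{S}$ generates a free rank-one $\bigO_{X,\x}[S][\tfrac{1}{f}]$-module and $\bigO_{X,\x}$ injects into $\bigO_{X,\x}[S][\tfrac{1}{f}]$. For the inductive step, the symbol $\gr_{(0,1,1)}(P)$ lies in $\gr_{(0,1,1)}(\D_{X,\x}[S]) \cdot \gr_{(0,1,1)}(\theta_{f^{\prime}F,\x})$ by the previous paragraph, so it equals the symbol of some $P^{\prime} \in \D_{X,\x}[S] \cdot \theta_{f^{\prime}F,\x}$ of weight at most $l$; then $P - P^{\prime}$ has weight strictly less than $l$ and still lies in $\ann_{\D_{X,\x}[S]} f^{\prime}F^{S}$, since $P^{\prime}$ does. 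By the induction hypothesis $P - P^{\prime} \in \D_{X,\x}[S] \cdot \theta_{f^{\prime}F,\x}$, hence so is $P$. This yields $\ann_{\D_{X,\x}[S]} f^{\prime}F^{S} = \D_{X,\x}[S] \cdot \theta_{f^{\prime}F,\x} = \D_{X,\x}[S] \cdot \psi_{f^{\prime}F,\x}(\Der_{X,\x}(-\log f))$, which is exactly the statement that the annihilator is generated by derivations. Note that no Noetherianity is used: the induction terminates because weight decreases by at least one each step and is bounded below by $0$.

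I do not expect a genuine obstacle here, as the substantive work has already been carried out in Theorem \ref{thm-gr gen by derivations} (and, further upstream, in the primality and dimension count of $\widetilde{L_{F,\x}}$ from \cite{me}). The only points that require a line of care are the squeezing argument for the three associated graded objects — which relies solely on the elementary fact that $\gr$ of a product contains the product of the $\gr$'s — and the base-case observation that $\ann_{\D_{X,\x}[S]} f^{\prime}F^{S}$ meets $\bigO_{X,\x}$ only in $0$, which is immediate from the freeness of the $\bigO_{X,\x}[S][\tfrac{1}{f}]$-module generated by $f^{\prime}F^{S}$.
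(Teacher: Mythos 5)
Your proposal is correct and follows the same route as the paper: both deduce the theorem from Theorem \ref{thm-gr gen by derivations} by the standard symbol-lifting induction on the total order weight, subtracting an element of $\D_{X,\x}[S]\cdot\theta_{f^{\prime}F,\x}$ with the same initial term and terminating because $\ann_{\D_{X,\x}[S]}f^{\prime}F^{S}\cap\bigO_{X,\x}=0$. You have merely written out in full the induction that the paper delegates to Theorem 2.29 of \cite{me}.
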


\begin{proof}
By Theorem \ref{thm-gr gen by derivations}, for $P \in \ann_{\D_{X,\x}[S]} f^{\prime}F^{S}$, we can find $L \in \mathscr{D}_{X,\x}[S] \cdot \theta_{f^{\prime}F,\x}$ such that $P$ and $L$ have the same initial term with respect to the total order filtration. Since $P - L$ annihilates $f^{\prime}F^{S}$ and, by construction, has a smaller weight than $P$, we can argue inductively as in Theorem 2.29 of \cite{me} now using Theorem \ref{thm-gr gen by derivations} instead of Corollary 2.28 of \cite{me}. The induction argument therein will also terminate in this setting since  $\ann_{\D_{X,\x}[S]}f^{\prime}F^{S} \cap \bigO_{X,\x} = 0.$
\end{proof}

The following corollary will let us study the Weyl algebra version of the annihilator of $f^{\prime}F^{S}$ when $f^{\prime}$ and $f$ are global algebraic.

\begin{cor} \label{cor-algebraic category}
If $X$ is the analytic space of a smooth $\mathbb{C}$-scheme, then the statement of Theorem \ref{thm-gen by derivations} holds in the algebraic category.
\end{cor}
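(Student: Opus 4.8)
The plan is to reduce the algebraic statement to the already-proven analytic Theorem \ref{thm-gen by derivations} via a faithfully flat base change, exploiting that analytification $\bigO_{X,\x} \to \widehat{\bigO}_{X,\x}$ or rather the comparison between the Weyl-algebra localization $A_n(\mathbb{C})_{\mathfrak{m}}$ and the ring of convergent (or formal) power series is flat. Concretely, write $X = \Spec R$ with $R = \mathbb{C}[x_1,\dots,x_n]$ and fix a closed point $\x$; all hypotheses (tame, strongly Euler-homogeneous, Saito-holonomic, $f^{\prime}$ compatible with $f$) are either local and pass to the analytic local ring, or are invariant under the faithfully flat local homomorphism $R_{\mathfrak{m}_{\x}} \to \bigO_{X,\x}^{\mathrm{an}}$. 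I would first record this: tameness is defined stalk-wise on logarithmic forms and tensoring with the flat local ring preserves projective dimension; strongly Euler-homogeneous is a statement about the existence of a logarithmic derivation vanishing at $\x$, which survives the faithfully flat descent; Saito-holonomicity is automatic for hyperplane arrangements but more generally the logarithmic stratification is unchanged on a neighborhood of $\x$; and compatibility of $f^{\prime}$ with $f$ is just a statement about the irreducible factorization of $f$ at $\x$, which is preserved.

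The key algebraic input is that $\Der$, $\ann$, and the operation $\psi_{f^{\prime}F,\x}$ all commute with the flat base change $\D_{X,\x}[S] \to \D_{X,\x}^{\mathrm{an}}[S]$, because they are all defined by finite systems of $\bigO_{X,\x}$-linear (or $\D_{X,\x}[S]$-linear) equations, and flatness preserves kernels of maps between finitely presented modules. So I would argue: let $P \in \ann_{\D_{X,\x}[S]}^{\mathrm{alg}} f^{\prime}F^{S}$ in the algebraic sense. Its image $P^{\mathrm{an}}$ in $\D_{X,\x}^{\mathrm{an}}[S]$ annihilates $f^{\prime}F^{S}$, so by Theorem \ref{thm-gen by derivations} we may write $P^{\mathrm{an}} = \sum Q_i^{\mathrm{an}} \psi_{f^{\prime}F,\x}(\delta_i^{\mathrm{an}})$ with $Q_i^{\mathrm{an}} \in \D_{X,\x}^{\mathrm{an}}[S]$ and $\delta_i^{\mathrm{an}} \in \Der_{X,\x}^{\mathrm{an}}(-\log f)$. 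Since $\Der_{X,\x}^{\mathrm{an}}(-\log f) = \Der_{X,\x}^{\mathrm{alg}}(-\log f) \otimes_{R_{\mathfrak{m}}} \bigO_{X,\x}^{\mathrm{an}}$ and the image $\theta_{f^{\prime}F,\x}^{\mathrm{alg}}$ generates (over the algebraic $\D[S]$) a submodule whose analytification is $\theta_{f^{\prime}F,\x}^{\mathrm{an}}$-generated, faithful flatness of $R_{\mathfrak{m}} \to \bigO_{X,\x}^{\mathrm{an}}$ lets us conclude that already $P \in \D_{X,\x}[S]\cdot\theta_{f^{\prime}F,\x}^{\mathrm{alg}}$: membership of a specific element in a finitely generated submodule is detected after faithfully flat base change. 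Finally one passes from the local statement at every closed point to the global Weyl-algebra statement by a standard patching/localness argument, since $\ann$ and $\Der(-\log f)$ are defined by sheaf conditions and agree with their stalks.

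Alternatively — and this is probably the cleaner route to write up — one can avoid analytification entirely and just rerun the proof of Theorem \ref{thm-gen by derivations} verbatim in the algebraic category, checking that each cited ingredient has an algebraic counterpart: Theorem \ref{thm-gr gen by derivations} (whose proof uses only the multi-Rees algebra $R(\mathrm{Jac}(f_1),\dots,\mathrm{Jac}(f_r))$, primality and dimension of $\ker(\phi_{F,\x})$, and primality and dimension $n+r$ of the generalized Liouville ideal $\widetilde{L_{F,\x}}$ — all of which hold over $R_{\mathfrak{m}}$ just as over $\bigO_{X,\x}$, since the relevant results of \cite{me}, namely Theorem 2.23 and Proposition 2.26, are proved there in both categories), the identity $\ann_{\D_{X,\x}[S]} f^{\prime}F^{S}\cap\bigO_{X,\x} = 0$ (clear algebraically since $f^{\prime}F^{S}$ is a nonzero element of a free module), and the Noetherian induction on the total order filtration that terminates because $\gr_{(0,1,1)}$ is Noetherian. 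Then Theorem \ref{thm-gen by derivations}'s proof goes through word for word. I expect the main obstacle to be purely bookkeeping: making sure that every lemma from \cite{me} invoked along the chain (especially the primality/dimension statements for $\widetilde{L_{F,\x}}$ and $\ker(\phi_{F,\x})$) was indeed established in the algebraic setting there, or else filling the small gap by the faithfully flat descent argument above; there is no genuinely new mathematics, only the verification that nothing in the argument used convergence or the analytic topology in an essential way.
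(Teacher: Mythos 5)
Your first route (faithfully flat descent from the analytic local rings to the algebraic ones, followed by globalization over affine space) is correct and is exactly the content of the proof the paper invokes, which simply cites Corollary 2.30 of \cite{me}. The second, ``rerun everything algebraically'' alternative would also work but is not what the paper does; no gap either way.
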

\begin{proof} 
See Corollary 2.30 of \cite{me}.
\end{proof}

We will also be interested in the $\D_{X,\x}[S]$-module generated by the symbol $F^{-S} = f_{1}^{-s_{1}} \cdots f_{r}^{-s_{r}}$ which is defined in the same way as $\D_{X,\x}[S]F^{S}$. Most of our previous definitions apply to $F^{-S}$ as well, in particular, if $f^{\prime}$ is compatible with $f$ let $\psi_{f^{\prime}F, \x}^{-S}$ and $\theta_{F,\x}^{-S}$ be as before, except with the signs of the $s_{k}$ switched.

\begin{thm} \label{thm-gen by derivations -s}
Suppose $f = f_{1} \cdots f_{r}$ is tame, strongly Euler-homogeneous, and Saito-holonomic, $f^{\prime}$ is compatible with $f$, and $F = (f_{1}, \cdots, f_{r}).$ Then the $\D_{X,\x}[S]$-annihilator of $f^{\prime}F^{-S}$ is generated by derivations in that 
\[
\ann_{\D_{X,\x}[S]}f^{\prime}F^{-S} = \D_{X,\x}[S] \cdot \theta_{f^{\prime}F, \x}^{-S}. 
\]
If $X$ is the analytic space of a smooth $\mathbb{C}$-scheme, then this holds in the algebraic category as well.
\end{thm}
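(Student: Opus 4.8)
The plan is to deduce this from Theorem~\ref{thm-gen by derivations} by a sign‑reversing change of variables rather than by redoing the whole argument. Let $\sigma \colon \D_{X,\x}[S] \to \D_{X,\x}[S]$ be the $\mathbb{C}$-algebra automorphism that restricts to the identity on $\D_{X,\x}$ and sends $s_{k} \mapsto -s_{k}$ for each $k$; it is well defined because the $s_{k}$ are central, and $\sigma^{2} = \mathrm{id}$. On the ambient free $\bigO_{X,\x}[S][\frac{1}{f}]$-modules I would introduce the additive bijection
\[
\Phi \colon \bigO_{X,\x}[S][\tfrac{1}{f}]\, F^{-S} \longrightarrow \bigO_{X,\x}[S][\tfrac{1}{f}]\, F^{S}, \qquad g(S)\,F^{-S} \longmapsto g(-S)\,F^{S}.
\]
Using the product rule of Definition~\ref{def-Fs def} --- the only subtlety being that the extra $s$-terms produced when a derivation is applied to $F^{S}$ versus to $F^{-S}$ carry opposite signs --- one checks that $\Phi(P \bullet m) = \sigma(P) \bullet \Phi(m)$ for every $P \in \D_{X,\x}[S]$ and every $m$; it is enough to verify this when $P$ lies in $\bigO_{X,\x}$, when $P$ is a $\mathbb{C}$-linear derivation, and when $P = s_{k}$, and then to extend multiplicatively since $\sigma$ is a ring homomorphism.

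Since $f^{\prime} \in \bigO_{X,\x}[\frac{1}{f}]$ involves no $S$, we have $\Phi(f^{\prime} F^{-S}) = f^{\prime} F^{S}$, and injectivity of $\Phi$ together with the intertwining property gives
\[
\ann_{\D_{X,\x}[S]} f^{\prime} F^{-S} \;=\; \sigma^{-1}\!\bigl( \ann_{\D_{X,\x}[S]} f^{\prime} F^{S} \bigr) \;=\; \sigma\!\bigl( \ann_{\D_{X,\x}[S]} f^{\prime} F^{S} \bigr).
\]
Compatibility of $f^{\prime}$ with $f$ in the sense of Definition~\ref{def-f prime compatible} is a statement about the irreducible factorization of $f$, hence is the same hypothesis in the $-S$ setting, so Theorem~\ref{thm-gen by derivations} applies and gives $\ann_{\D_{X,\x}[S]} f^{\prime} F^{S} = \D_{X,\x}[S] \cdot \theta_{f^{\prime}F,\x}$. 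As $\sigma$ is a ring automorphism it carries the left ideal $\D_{X,\x}[S] \cdot \theta_{f^{\prime}F,\x}$ onto $\D_{X,\x}[S] \cdot \sigma(\theta_{f^{\prime}F,\x})$, and for $\delta \in \Der_{X,\x}(-\log f)$ one has
\[
\sigma\!\bigl( \psi_{f^{\prime}F,\x}(\delta) \bigr) \;=\; \delta + \sum_{k} \frac{\delta \bullet f_{k}}{f_{k}} s_{k} - \frac{\delta \bullet f^{\prime}}{f^{\prime}} \;=\; \psi_{f^{\prime}F,\x}^{-S}(\delta),
\]
so that $\sigma(\theta_{f^{\prime}F,\x}) = \theta_{f^{\prime}F,\x}^{-S}$. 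Combining the last three displays yields $\ann_{\D_{X,\x}[S]} f^{\prime} F^{-S} = \D_{X,\x}[S] \cdot \theta_{f^{\prime}F,\x}^{-S}$, as claimed.

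For the algebraic statement the same automorphism argument works verbatim over the Weyl algebra, invoking the algebraic case of Theorem~\ref{thm-gen by derivations} recorded in Corollary~\ref{cor-algebraic category}. Alternatively, and essentially equivalently, one could simply rerun the proofs of Theorems~\ref{thm-gr gen by derivations} and~\ref{thm-gen by derivations} with $s_{k}$ replaced by $-s_{k}$ throughout: the multi-Rees homomorphism $\phi_{F,\x}$ and the generalized Liouville ideal $\widetilde{L_{F,\x}}$ are each carried to themselves up to the sign automorphism of the target, so the inclusion chain~\eqref{eqn- containment chain}, its collapse to equalities via primality and the dimension count of Theorem~2.23 of~\cite{me}, and the inductive subtraction of leading terms all go through unchanged. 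I do not anticipate a genuine obstacle here; the only points that require care are the sign bookkeeping in checking $\Phi(P \bullet m) = \sigma(P) \bullet \Phi(m)$ and the identification of $\sigma(\theta_{f^{\prime}F,\x})$ with the module $\theta_{f^{\prime}F,\x}^{-S}$ of sign-reversed annihilating derivations defined just before the statement.
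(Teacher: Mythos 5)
Your argument is correct, and the underlying mechanism --- the involution $s_{k}\mapsto -s_{k}$ --- is the same one the paper invokes, but you package it differently. The paper's proof is a one-line instruction to rerun the proofs of Theorems \ref{thm-gr gen by derivations} and \ref{thm-gen by derivations} with $\widetilde{L_{F,\x}}$ and $\phi_{F,\x}$ replaced by their images under the induced automorphism of $\gr_{(0,1,1)}(\D_{X,\x}[S])$; that is, the twist is applied inside the associated-graded argument. You instead apply the twist at the module level: the $\sigma$-semilinear bijection $\Phi$ identifies $\D_{X,\x}[S]f^{\prime}F^{-S}$ with the $\sigma$-twist of $\D_{X,\x}[S]f^{\prime}F^{S}$, so the $-S$ statement is deduced \emph{formally} from Theorem \ref{thm-gen by derivations} rather than re-proved. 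This buys a cleaner logical structure (no need to re-examine primality and dimension of the twisted Liouville ideal, since $\sigma$ is a ring automorphism carrying $\D_{X,\x}[S]\cdot\theta_{f^{\prime}F,\x}$ onto $\D_{X,\x}[S]\cdot\theta_{f^{\prime}F,\x}^{-S}$), at the modest cost of verifying the intertwining identity $\Phi(P\bullet m)=\sigma(P)\bullet\Phi(m)$ on generators --- which you do correctly, the sign of the $s_{k}$-term produced by a derivation acting on $F^{-S}$ being exactly compensated by $g(S)\mapsto g(-S)$. Your closing remark that one could alternatively rerun the graded proofs with $s_{k}$ replaced by $-s_{k}$ is precisely the paper's proof, so the two routes are reconciled. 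No gaps.
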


\begin{proof}
It is sufficient to prove the generated by derivations statement. For this argue as in Theorem \ref{thm-gen by derivations} except replace $\widetilde{L_{F,\x}}$ and $\phi_{F,\x}$ with their images under the $\gr_{(0,1,1)}(\D_{X,\x}[S])$ automorphism induced by $s_{k} \mapsto -s_{k}.$
\end{proof}

\subsection{Bernstein--Sato Ideals} \label{subsection- BS varieties} \text{ }

Recall the univariate \emph{functional equation}, with $b(s) \in \mathbb{C}[s]$, $P(s) \in \D_{X,\x}[s]$:
\[
b(s) f^{s} = P(s) f^{s+1}.
\]
The polynomials $b(s)$ generate the \emph{Bernstein--Sato ideal} $B_{f,\x}$ of $f$. The monic generator of this ideal is the \emph{Bernstein--Sato polynomial}; the reduced locus of its variety is $\V(B_{f,\x})$. We will be interested in multivariate generalizations of this functional equation. 

\begin{define} \label{def-multivariate BS ideals generalization}
Let $f^{\prime}, g_{1}, \dots, g_{u} \in \bigO_{X,\x}$ and $I$ the ideal generated by the $g_{1}, \dots g_{u}$. Consider the functional equation
\[
B(S)f^{\prime}F^{S} = \sum\limits_{t} P_{t} g_{t} f^{\prime} F^{S} \in \D_{X,\x}[S] \cdot I f^{\prime} F^{S}
\]
where $f =  f_{1} \cdots f_{r}$, $F = (f_{1}, \dots, f_{r})$, $P_{t} \in \D_{X,\x}[S]$, and $B(S) \in \mathbb{C}[S].$ The polynomials $B(S)$ satisfying this functional equation constitute the \emph{Bernstein--Sato ideal} $B_{f^{\prime}F,\x}^{I}$. Note that 
\[
B_{f^{\prime}F,\x}^{I} = \mathbb{C}[S] \cap ( \ann_{\D_{X,\x}[S]}f^{\prime} F + \D_{X,\x}[S] \cdot I).
\]

When $I = (f)$ we will write $B_{f^{\prime}F,\x}^{I} = B_{f^{\prime}F,\x}$ and when $I = (g)$ we will write $B_{f^{\prime}F,\x}^{g}.$ When in the univariate case, i.e. $r=1$, we will write $B_{f^{\prime}F,\x} = B_{f^{\prime}f,x}$ and $B_{f^{\prime}F,\x}^{g} = B_{f^{\prime}f,\x}^{g}$. When in the global algebraic case we define similar objects using $\A[S]$ instead of $\D_{X,\x}[S]$--in this case we drop the $(-)_{\x}$ subscript. Finally by $\V(-)$ we always mean the reduced locus of the appropriate variety.
\end{define}

We will want to compare the Bernstein--Sato ideals corresponding to different factorizations.

\begin{define} \label{def-compatible with G}
Let $f = f_{1} \cdots f_{r}$ and $F = (f_{1}, \dots, f_{r})$. Write $[r]$ as the disjoint union of the intervals $I_{t}$ where $1 \leq t \leq m$ and consider the \emph{coarser} factorization $H = (h_{1}, \dots, h_{m})$ where $f = h_{1} \cdots h_{m}$ and $h_{t} = \prod_{i \in I_{t}} f_{i}$. Define $S_{H}$ to be the ideal of $\mathbb{C}[S]$ generated by $s_{i} - s_{j}$ for all $i, j \in I_{t}$ and for all $t$. 
\end{define}

\begin{prop} \label{prop-BS ideal subset, coarser}
Let $f = f_{1} \cdots f_{r}$ be tame, strongly Euler-homogeneous, and Saito-holonomic. Let $F = (f_{1}, \dots, f_{r})$, let $I \subseteq \bigO_{X,\x}$, and let $H$ be a coarser factorization. If $f^{\prime} \in \bigO_{X,\x}$ such that $f \in \bigO_{X,\x} \cdot f^{\prime}$, then the image of $B_{f^{\prime}F, \x}^{I}$ modulo $S_{H}$ lies in $B_{f^{\prime}H, \x}^{I}$.
\end{prop}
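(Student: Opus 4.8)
The plan is to transport the defining functional equations along reduction modulo $S_{H}$. Since $S_{H}$ is generated by the elements $s_{i}-s_{j}$, which are central in $\D_{X,\x}[S]$, reduction modulo $S_{H}$ is a ring homomorphism $\sigma\colon\D_{X,\x}[S]\to\D_{X,\x}[\bar S]$, where $\bar S$ denotes the image of $S$ in $\mathbb{C}[S]/S_{H}$; this is a polynomial ring with one variable $\bar{s}_{(t)}$ for each block $I_{t}$, and is precisely the polynomial ring over which $B^{I}_{f^{\prime}H,\x}$ is defined. The homomorphism $\sigma$ is the identity on $\D_{X,\x}$ and restricts to the quotient map $\mathbb{C}[S]\to\mathbb{C}[\bar S]$ on $\mathbb{C}[S]$, and $H^{\bar S}=\prod_{t}h_{t}^{\bar{s}_{(t)}}$ is the symbol obtained from $F^{S}=\prod_{k}f_{k}^{s_{k}}$ by substituting $s_{k}\mapsto\bar{s}_{(t)}$ for $k\in I_{t}$ (here we use $\prod_{i\in I_{t}}f_{i}=h_{t}$). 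By Definition \ref{def-multivariate BS ideals generalization} the whole proposition then reduces to the single containment
\begin{equation}\label{eqn-coarser-key}
\sigma\bigl(\ann_{\D_{X,\x}[S]}f^{\prime}F^{S}\bigr)\subseteq\ann_{\D_{X,\x}[\bar S]}f^{\prime}H^{\bar S}.
\end{equation}
Indeed, granting \eqref{eqn-coarser-key}, for $b\in B^{I}_{f^{\prime}F,\x}=\mathbb{C}[S]\cap\bigl(\ann_{\D_{X,\x}[S]}f^{\prime}F^{S}+\D_{X,\x}[S]\cdot I\bigr)$ we have $\sigma(b)\in\mathbb{C}[\bar S]$, and since $\sigma$ fixes $I\subseteq\bigO_{X,\x}$ pointwise,
\[
\sigma(b)\in\sigma\bigl(\ann_{\D_{X,\x}[S]}f^{\prime}F^{S}\bigr)+\sigma\bigl(\D_{X,\x}[S]\cdot I\bigr)\subseteq\ann_{\D_{X,\x}[\bar S]}f^{\prime}H^{\bar S}+\D_{X,\x}[\bar S]\cdot I,
\]
so $\sigma(b)\in B^{I}_{f^{\prime}H,\x}$; hence $\sigma(B^{I}_{f^{\prime}F,\x})\subseteq B^{I}_{f^{\prime}H,\x}$, which is the assertion.

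To establish \eqref{eqn-coarser-key} I would invoke Theorem \ref{thm-gen by derivations}. Because $f\in\bigO_{X,\x}\cdot f^{\prime}$, the element $f^{\prime}$ is a unit times a monomial in the irreducible factors of $f$ with non-negative exponents, so $f^{\prime}$ is compatible with $f$ in the sense of Definition \ref{def-f prime compatible}; moreover tameness, strong Euler-homogeneity, and Saito-holonomicity are properties of $f$, not of a chosen factorization. Hence Theorem \ref{thm-gen by derivations} applies to $f^{\prime}F^{S}$, giving $\ann_{\D_{X,\x}[S]}f^{\prime}F^{S}=\D_{X,\x}[S]\cdot\theta_{f^{\prime}F,\x}$, and since $\sigma$ is a ring homomorphism it suffices to show that $\sigma$ carries each generator $\psi_{f^{\prime}F,\x}(\delta)$, $\delta\in\Der_{X,\x}(-\log f)$, into $\ann_{\D_{X,\x}[\bar S]}f^{\prime}H^{\bar S}$. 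The crucial computation is
\[
\sigma\bigl(\psi_{f^{\prime}F,\x}(\delta)\bigr)=\psi_{f^{\prime}H,\x}(\delta)\in\ann_{\D_{X,\x}[\bar S]}f^{\prime}H^{\bar S},
\]
the membership holding by Proposition \ref{prop - f prime psi prop}: applying $\sigma$ to $\psi_{f^{\prime}F,\x}(\delta)=\delta-\sum_{k}\tfrac{\delta\bullet f_{k}}{f_{k}}s_{k}-\tfrac{\delta\bullet f^{\prime}}{f^{\prime}}$ fixes $\delta$ and $\tfrac{\delta\bullet f^{\prime}}{f^{\prime}}$ and sends $\sum_{k}\tfrac{\delta\bullet f_{k}}{f_{k}}s_{k}$ to $\sum_{t}\bigl(\sum_{i\in I_{t}}\tfrac{\delta\bullet f_{i}}{f_{i}}\bigr)\bar{s}_{(t)}=\sum_{t}\tfrac{\delta\bullet h_{t}}{h_{t}}\bar{s}_{(t)}$, where the last equality is the logarithmic product rule --- legitimate since $h_{t}=\prod_{i\in I_{t}}f_{i}$ and each $\tfrac{\delta\bullet f_{i}}{f_{i}}\in\bigO_{X,\x}$ by Remark \ref{rmk- log derivations}. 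Consequently $\sigma\bigl(\ann_{\D_{X,\x}[S]}f^{\prime}F^{S}\bigr)=\D_{X,\x}[\bar S]\cdot\sigma(\theta_{f^{\prime}F,\x})=\D_{X,\x}[\bar S]\cdot\theta_{f^{\prime}H,\x}\subseteq\ann_{\D_{X,\x}[\bar S]}f^{\prime}H^{\bar S}$, which is \eqref{eqn-coarser-key}.

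The only genuine mathematical input is the identity $\sigma\circ\psi_{f^{\prime}F,\x}=\psi_{f^{\prime}H,\x}$, an immediate consequence of the product rule, packaged via the ``generated by derivations'' statement; everything else is formal manipulation of the description $B^{I}_{f^{\prime}F,\x}=\mathbb{C}[S]\cap(\ann_{\D_{X,\x}[S]}f^{\prime}F^{S}+\D_{X,\x}[S]\cdot I)$. I expect the point most in need of care to be the bookkeeping that makes Theorem \ref{thm-gen by derivations} applicable --- verifying that $f^{\prime}$ is compatible with $f$, which is exactly where the hypothesis $f\in\bigO_{X,\x}\cdot f^{\prime}$ is used and what guarantees that $\psi_{f^{\prime}F,\x}$ and $\psi_{f^{\prime}H,\x}$ are well defined --- together with the elementary but essential observation that $\sigma$ is a ring homomorphism because $S_{H}$ is generated by central elements of $\D_{X,\x}[S]$.
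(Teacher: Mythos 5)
Your proposal is correct and follows essentially the same route as the paper's proof: both reduce the claim to the observation that, since $\ann_{\D_{X,\x}[S]}f^{\prime}F^{S}$ is generated by derivations (Theorem \ref{thm-gen by derivations}, which applies because $f\in\bigO_{X,\x}\cdot f^{\prime}$ makes $f^{\prime}$ compatible with $f$), each generator $\psi_{f^{\prime}F,\x}(\delta)$ reduces modulo $S_{H}$ to $\psi_{f^{\prime}H,\x}(\delta)\in\ann_{\D_{X,\x}[\bar S]}f^{\prime}H^{\bar S}$ by the logarithmic product rule. Your write-up is in fact more self-contained than the paper's, which delegates the details to the analogous argument in Proposition \ref{prop- freeing arrangements, embedding}.
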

\begin{proof}
As $f^{\prime}$ is compatible with $f$, $\ann_{\D_{X,\x}[S]}f^{\prime}F^{S}$ and $\ann_{\D_{X,\x}[S]}f^{\prime}H^{S}$ are both generated by derivations. Since $\Der_{X,\x}(-\log f) \subseteq \Der_{X,\x}(-\log f^{\prime})$, we can easily get a result similar to Proposition 2.33 of \cite{me} and, from that, a result similar to Proposition 2.32 of loc. cit. The argument is essentially the same as the proof of Proposition \ref{prop- freeing arrangements, embedding} of this paper.
\end{proof}

\begin{example} \label{ex- going modulo}
For $f = xy^2(x+y)^2$ and $F = (xy, y(x+y), x+y)$, 
\[
B_{F} = (s_{1} + 1) \prod_{j=0}^{1} (s_{1} + s_{2} + 1 + j)(s_{2} + s_{3} + 1 +j)(\prod_{m=0}^{4} (2s_{1} + 2s_{2} + s_{3} + 2 + m).
\]
While Proposition \ref{prop-BS ideal subset, coarser} can estimate $B_{f}$, it estimates multiplicities poorly. Indeed, going modulo $(s_{1}-s_{2}, s_{1}-s_{3}, s_{2} - s_{3})$ we find 
\[
(s + 1)^{3} (2s + 1)^{2}\prod_{m=0}^{4} (5s + 2 + m) \in B_{f} = \mathbb{C}[s] \cdot (s+1) (2s+ 1) \prod_{m=0}^{4} (5s + 2 + m).
\]
\end{example}

\section{$\D_{X,\x}[S]$-Dual of $\D_{X,\x}[S]f^{\prime}F^{S}$}

In \cite{MAcarroDuality}, Narv\'aez-Macarro computed the $\D_{X,\x}[s]$-dual of $\D_{X,\x}[s]f^{s}$ when $f$ is reduced, free, and quasi-homogeneous; in \cite{MaisonobeFree} Maisonbe generalized this approach to compute the $\D_{X,\x}[S]$-dual of $\D_{X,\x}[S]F^{S}$ where $f$ is as in \cite{MAcarroDuality}, $f = f_{1} \cdots f_{r}$, and $F = (f_{1}, \dots, f_{r}).$ In this section we will use Maisonobe's approach to compute the $\D_{X,x}[S]$-dual of $\D_{X,x}[S]f^{\prime}F^{S}$ where $f \in \bigO_{X}$ is free, strongly Euler-homogeneous, Saito-holonomic, not necessarily reduced but admitting a reduced Euler-homogeneous defining equation $f_{\red}$ at $\x$, $f^{\prime} \in \bigO_{X,\x}$ is compatible with $f$, and $F = (f_{1}, \dots, f_{r})$ corresponds to any factorization, not necessarily into irreducibles, of $f = f_{1} \cdots f_{r}$. The strategy hinges on a formula for the trace of the adjoint first proved by Castro--Jim\'enez and Ucha in Theorem 4.1.4 of \cite{JimenezUcha}. We supply a different proof in Proposition \ref{prop: main adjoint right-left formula}.  

In the second subsection, we note that this duality computation lets us argue as in Maisonobe's Proposition 20 of \cite{Maisonobetheory} and prove that the radical of $B_{f^{\prime}F,\x}$ is principal. In the third subsection, we show that $B_{f^{\prime}F,\x}^{g}$ is fixed under a non-trivial involution when $f^{\prime}$, $F$, and $g$ satisfy a technical condition, cf. Definition \ref{def-unmixed}. 

\begin{convention}
A resolution is a (co)-complex with a unique (co)-homology module at its end. An acyclic (co)-complex has no non-trivial (co)-homology. Given a (co)-complex ($C^{\bullet}$) $C_{\bullet}$ resolving $A$, the augmented (co)-complex ($C^{\bullet} \to A$) $C_{\bullet} \to A$ is acyclic. 
\end{convention}

\subsection{Computing the Dual} \text{ }

Our argument begins at essentially the same place as Narv\'aez-Macarro's and Maisonobe's: the Spencer co-complex.

\begin{define} \label{def-extended Spencer}
Let $f = f_{1} \cdots f_{r} \in \bigO_{X,\x}$ be free, let $F = (f_{1}, \dots, f_{r})$, and let $f^{\prime} \in \bigO_{X,\x}$ be compatible with $f$. Consider $g_{1}, \dots, g_{u} \in \mathscr{O}_{X,\x}$ such that $f \in \mathscr{O}_{X,\x} \cdot g_{j}$ for all $1 \leq j \leq u$, and let $I \subseteq \mathscr{O}_{X,\x}$ be the ideal generated by $g_{1}, \dots, g_{u}$. We will define $\Sp_{\theta_{f^{\prime}F,\x}}^{I}$, the \emph{extended Spencer co-complex} associated to $f^{\prime}$ and $I$. When $I = (g)$, write $\Sp_{f^{\prime}F}^{g}$. This will be a mild generalization of the normal Spencer complex, cf. A.18 of \cite{MAcarroDuality}.

Let $E$ be the free submodule of $\bigO_{X,\x}^{u}$ prescribed by the basis $e_{1}, \dots, e_{u}$ where $e_{j} = (0, \dots, g_{j}, \dots, 0).$ We define an anti-commutative map
\[
\sigma: (\theta_{f^{\prime}F,\x} \oplus E )\times ( \theta_{f^{\prime}F,\x} \oplus E) \to \theta_{f^{\prime}F,\x} \oplus E
\] 
that is essentially the commutator on $F_{(0,1,1)}^{1}(\D_{X,\x}[S]).$ The map is determined by its anti-commutativity and the following assignments:
\begin{align*}
    \sigma(\lambda_{i}, \lambda_{j})
        = \left\{ 
            \begin{array}{cc}
            [\lambda_{i},\lambda_{j}],   & \lambda_{i}, \lambda_{j} \in \theta_{f^{\prime}F,\x}, \\
            0,       & \lambda_{i}, \lambda_{j} \in E, \\
            \frac{\delta \bullet (b g_{j})}{g_{j}} e_{j}, & \lambda_{i} = \psi_{f^{\prime}F, \x}(\delta_{i}) \text{ for } \delta_{i} \in \Der_{X,\x}(-\log f), \ \lambda_{j} = b e_{j}.
            \end{array}
            \right.
\end{align*}
Abbreviate $\Sp_{\theta_{f^{\prime}F,\x}}^{I}$ as $\Sp^{\bullet}.$ Then the objects of our complex are
\[
\Sp^{-m} = \D_{X,\x}[S] \otimes_{\bigO_{X,\x}} \bigwedge^{m} ( \theta_{f^{\prime}F,\x} \oplus E )
\]
and the differentials $d^{-m} : \text{Sp}^{-m} \mapsto \text{Sp}^{-m+1}$ are given by 
\begin{align*}
d^{-m} ( P \otimes \lambda_{1} \wedge \cdots \wedge \lambda_{m} ) 
    &= \sum_{i=1}^{r} (-1)^{i-1} P \lambda_{i} \otimes \widehat{\lambda_{i}} \\
    &+ \sum\limits_{1 \leq i < j \leq m} (-1)^{i + j} P \otimes \sigma(\lambda_{i}, \lambda_{j}) \wedge \widehat{\lambda_{i,j}}.
\end{align*}
Here $\widehat{\lambda_{i}}$ is the wedge, in increasing order, of all the $\lambda_{1}, \dots ,\lambda_{r}$ except for $\lambda_{i}$; $\widehat{\lambda_{i,j}}$ is the same except now excluding both $\lambda_{i}$ and $\lambda_{j}$. To be clear, we interpret $P e_{j}$ as $Pg_{j} \in \mathscr{D}_{X,\x}[S]$; in particular, $d^{-1}(P \otimes e_{j}) = Pg_{j}.$ There is a natural augmentation map 
\[
\text{Sp}^{0} = \D_{X,\x}[S] \mapsto \frac{\D_{X,\x}[S]}{\D_{X,\x}[S] \cdot \theta_{f^{\prime}F,\x} + \D_{X,\x}[S] \cdot I} 
.
\]

\end{define}

\begin{remark} \label{rmk-spencer remark}
\begin{enumerate}[(a)]
    \item Since $\Der_{X,\x}(-\log f)$ is closed under taking commutators, so is $\theta_{f^{\prime}F,\x}$, see also Example 4.7 of \cite{me}. And as $g_{j}$ divides $f$ for all $1 \leq j \leq u,$ we know $\Der_{X,\x}(-\log f) \subseteq \Der_{X,\x}(-\log g_{j})$ for all $j$. Thus $\sigma$, and consequently the differentials, are well-defined. 
    \item That the extended Spencer co-complex is in fact a co-complex is a straightforward computation mirroring the case of the standard Spencer co-complex.
    \item We have assumed $f$ is free so that $\Sp_{\theta_{f^{\prime}F,\x}}^{I}$ will be a finite, free co-complex of $\D_{X,\x}[S]$-modules. We may fix a basis of $\theta_{f^{\prime}F,\x}$, extend it to a basis of $\theta_{f^{\prime}F,\x} \oplus E$ using the prescribed basis of $E$, and then compute differentials. Label this basis $\lambda_{1}, \dots, \lambda_{n+u}$. Let $\sigma(\lambda_{i}, \lambda_{j}) = \sum_{k =1}^{n+u} c_{k}^{i, j} \lambda_{k}$ be the unique expression of $\sigma(\lambda_{i}, \lambda_{j})$. Then 
    \begin{align*}
    d^{-m} (\lambda_{1} \wedge \cdots \wedge \lambda_{m}) 
        &= \sum_{i=1}^{m} (-1)^{i-1}\lambda_{i} \otimes \widehat{\lambda_{i}} \\
        &+ \sum_{1 \leq i < j \leq m} (-1)^{i+j} c_{i}^{i,j} \otimes (-1)^{i-1} \widehat{\lambda_{j}} + (-1)^{i+j} c_{j}^{i,j} \otimes (-1)^{j} \widehat{\lambda_{i}} \\
        &=  \sum_{i=1}^{m} \left( (-1)^{i-1} \lambda_{i} + \sum_{ j < i}(-1)^{i-1}c_{j}^{j,i} + \sum_{ i < j} (-1)^{i} c_{j}^{i,j} \right) \otimes \widehat{\lambda_{i}}.
    \end{align*}
    We can naturally encode this as matrix multiplication on the right.
\end{enumerate}
\end{remark}

The following calculation relies on Castro--Jim\'enez and Ucha's formula for adjoints appearing in Theorem 4.1.4 of \cite{JimenezUcha}; cf. Proposition \ref{prop: main adjoint right-left formula} for our proof. See also Lemma 1 and Proposition 6 of \cite{MaisonobeFree}. Before stating the Proposition, let us recall the side-changing functor for $\mathscr{D}_{X,\x}[S]$-modules. We use the notation of Appendix A of \cite{MAcarroDuality}.

\begin{define} \text{\normalfont (Compare to Appendix A of \cite{MAcarroDuality})} \label{def-side changing functor}
We will define the equivalence of categories between right $\mathscr{D}_{X,\x}[S]$-modules and left $\mathscr{D}_{X,\x}[S]$-modules. First, regard $\Der_{X,\x}[S]$ as a free $\mathscr{O}_{X,\x}[S]$-module of rank $n$. Then the dualizing module $\omega_{\Der_{X,\x}[S]}$ of $\Der_{X,\x}[S]$ is 
defined as 
\[
\omega_{\Der_{X,\x}[S]} = \Hom_{\mathscr{O}_{X,\x}[S]} \left( \bigwedge^{n} \Der_{X,\x}[S], \mathscr{O}_{X,\x}[S] \right).
\]
This naturally carries a right $\mathscr{D}_{X,\x}[S]$-module structure by A.20 of \cite{MAcarroDuality}. The aforementioned equivalence of categories is given by associated to every right $\mathscr{D}_{X,\x}[S]$-module $Q$ the left $\mathscr{D}_{X,\x}[S]$-module $Q^{\text{left}}$ defined by 
\[
Q^{\text{left}} = \Hom_{\mathscr{O}_{X,\x}[S]} \left( \omega_{\Der_{X,\x}[S]}, Q \right).
\]
That $Q^{\text{left}}$ is a left $\mathscr{D}_{X,\x}[S]$-module follows from A.2 of \cite{MAcarroDuality}; that this gives an equivalence of categories follows from the discussion before A.25 of loc. cit.
\end{define}

\begin{remark} \label{rmk-side changing tau}
Despite the $s$-terms, this side-changing functor is defined entirely similarly to the side-changing functor for $\mathscr{D}_{X,\x}$-modules. So just as in the $\mathscr{D}_{X,\x}[S]$-module case, if we fix coordinates $(x, \partial_{x})$ we can describe the transition from right to left $\mathscr{D}_{X,\x}[S]$-modules in elementary terms. Define $\tau : \mathscr{D}_{X,\x}[S] \to \mathscr{D}_{X,\x}[S]$ by $\tau ( x^{\alpha} \partial_{x}^{\beta} s^{\gamma}) = (- \partial_{x}^{\beta}) x^{\alpha} s^{\gamma}$ where $\alpha, \beta,$ and $\gamma$ are multi-indices. Then $(-)^{\text{left}}$ sends the cyclic right $\mathscr{D}_{X,\x}[S]$-module $\mathscr{D}_{X,\x}[S] / J$ is to the left $\mathscr{D}_{X,\x}[S]$-module $\mathscr{D}_{X,\x}[S] / \tau(J).$ See 1.2 of \cite{uliComputingHom} for details in a similar case.
\end{remark}

\begin{prop} \label{prop-dual spencer terminal homology}
Let $f = f_{1} \cdots f_{r} \in \bigO_{X,\x}$ be free, $F = (f_{1}, \dots, f_{r})$, $f_{\red} \in \bigO_{X,\x}$ a Euler-homogeneous reduced defining equation for $f$ at $\x$, and $I \subseteq \bigO_{X,\x}$ the ideal generated by $g_{1}, \dots, g_{u}$ with $f \in \bigO_{X,\x} \cdot g_{v}$ for each $g_{v}$. Write $g = g_{1} \cdots g_{u}$. Then we can compute the terminal homology module of $\Hom_{\D_{X,\x}[S]} ( \Sp_{\theta_{f^{\prime}F,x}}^{I}, \D_{X,\x}[S])^{\text{\normalfont left}}$:
\[
H_{-n-u} \left( \Hom_{\D_{X,\x}[S]} ( \Sp_{\theta_{f^{\prime}F,x}}^{I}, \D_{X,\x}[S])^{\text{\normalfont left}} \right) \simeq \frac{\D_{X,\x}[S]}{\D_{X,\x}[S] \cdot \theta_{(f^{\prime}gf_{\red})^{-1}F, \x}^{-S} + \D_{X,\x}[S] \cdot I}.
\]
\end{prop}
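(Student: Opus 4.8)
The plan is to read off the terminal homology from just the last two terms of the co-complex $\Hom_{\D_{X,\x}[S]}(\Sp_{\theta_{f^{\prime}F,\x}}^{I},\D_{X,\x}[S])$; since only $H_{-n-u}$ is wanted, no exactness of $\Sp_{\theta_{f^{\prime}F,\x}}^{I}$ is needed. Because $f$ is free, $\theta_{f^{\prime}F,\x}$ is a free $\bigO_{X,\x}$-module of rank $n$ (Proposition \ref{prop - f prime psi prop}); fix a basis $\delta_{1},\dots,\delta_{n}$ of $\Der_{X,\x}(-\log f)$ whose coefficient matrix in local coordinates has determinant \emph{exactly} $f_{\red}$ (possible by Saito's criterion after rescaling one basis vector), so that $\lambda_{i}:=\psi_{f^{\prime}F,\x}(\delta_{i})$ for $1\le i\le n$, together with $\lambda_{n+v}:=e_{v}$ for $1\le v\le u$, is a basis of $\theta_{f^{\prime}F,\x}\oplus E$. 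Then $\Sp^{-(n+u)}\simeq\D_{X,\x}[S]$ is generated by $\omega:=\lambda_{1}\wedge\cdots\wedge\lambda_{n+u}$ and $\Sp^{-(n+u)+1}\simeq\D_{X,\x}[S]^{\,n+u}$ by the $\widehat{\lambda_{i}}$. Writing $d^{-(n+u)}(\omega)=\sum_{i}c_{i}\otimes\widehat{\lambda_{i}}$ with $c_{i}\in\D_{X,\x}[S]$, tensor--hom adjunction identifies $H_{-n-u}$ of $\Hom_{\D_{X,\x}[S]}(\Sp^{\bullet},\D_{X,\x}[S])$ with the \emph{right} $\D_{X,\x}[S]$-module $\D_{X,\x}[S]/\sum_{i}c_{i}\D_{X,\x}[S]$, the cokernel of the transpose of $d^{-(n+u)}$.

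Next I would compute the $c_{i}$ from the explicit formula of Remark \ref{rmk-spencer remark}(c) together with the structure constants of $\sigma$. Using that $\psi_{f^{\prime}F,\x}$ respects commutators (Remark \ref{rmk-respect commutators}), that $\sigma(\psi_{f^{\prime}F,\x}(\delta),e_{v})=\tfrac{\delta\bullet g_{v}}{g_{v}}e_{v}$, and that $\sigma(e_{v},e_{w})=0$, one checks that the $E$--$E$ and $\theta$--$E$ brackets contribute nothing to the coefficient of $\widehat{e_{v}}$, so $c_{n+v}=(-1)^{n+v-1}g_{v}$; and, writing $[\delta_{i},\delta_{j}]=\sum_{c}\gamma_{ij}^{c}\delta_{c}$ and keeping track of which components of $\sigma(\lambda_{a},\lambda_{b})$ land in $\widehat{\lambda_{i}}$, for $1\le i\le n$ one gets
\[
c_{i}=(-1)^{i-1}\Big(\psi_{f^{\prime}F,\x}(\delta_{i})-\tr(\ad\delta_{i})-\tfrac{\delta_{i}\bullet g}{g}\Big),\qquad \tr(\ad\delta_{i}):=\sum_{j}\gamma_{ij}^{j},
\]
where $g=g_{1}\cdots g_{u}$ and $\tfrac{\delta_{i}\bullet g}{g}=\sum_{v}\tfrac{\delta_{i}\bullet g_{v}}{g_{v}}\in\bigO_{X,\x}$, which is meaningful since each $g_{v}$ divides $f$, so $\Der_{X,\x}(-\log f)\subseteq\Der_{X,\x}(-\log g_{v})$.

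Then I would apply the side-changing functor. Since $(-)^{\text{left}}$ sends $\D_{X,\x}[S]/J$ to $\D_{X,\x}[S]/\tau(J)$ (Remark \ref{rmk-side changing tau}) and $\tau$ is an anti-automorphism fixing $\bigO_{X,\x}[S]$, the terminal homology becomes the \emph{left} module $\D_{X,\x}[S]/\sum_{i}\D_{X,\x}[S]\tau(c_{i})$. For a derivation $\delta$ one has $\tau(\delta)=-\delta-\Div(\delta)$, hence $\tau(\psi_{f^{\prime}F,\x}(\delta_{i}))=-\delta_{i}-\Div(\delta_{i})-\sum_{k}\tfrac{\delta_{i}\bullet f_{k}}{f_{k}}s_{k}-\tfrac{\delta_{i}\bullet f^{\prime}}{f^{\prime}}$. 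Feeding this into $\tau(c_{i})$ and invoking the Castro--Jim\'enez--Ucha trace identity (Proposition \ref{prop: main adjoint right-left formula}), which with the chosen normalization reads $\Div(\delta_{i})+\tr(\ad\delta_{i})=\tfrac{\delta_{i}\bullet f_{\red}}{f_{\red}}$, collapses the error terms and gives $\tau(c_{i})=(-1)^{i}\big(\delta_{i}+\sum_{k}\tfrac{\delta_{i}\bullet f_{k}}{f_{k}}s_{k}-\tfrac{\delta_{i}\bullet(f^{\prime}gf_{\red})^{-1}}{(f^{\prime}gf_{\red})^{-1}}\big)=(-1)^{i}\psi_{(f^{\prime}gf_{\red})^{-1}F,\x}^{-S}(\delta_{i})$ for $i\le n$, while $\tau(c_{n+v})=(-1)^{n+v-1}g_{v}$. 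Since the $\psi_{(f^{\prime}gf_{\red})^{-1}F,\x}^{-S}(\delta_{i})$ generate $\theta_{(f^{\prime}gf_{\red})^{-1}F,\x}^{-S}$ over $\bigO_{X,\x}$ and the $g_{v}$ generate $I$, we conclude $\sum_{i}\D_{X,\x}[S]\tau(c_{i})=\D_{X,\x}[S]\cdot\theta_{(f^{\prime}gf_{\red})^{-1}F,\x}^{-S}+\D_{X,\x}[S]\cdot I$, which is exactly the asserted isomorphism.

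The main obstacle is the second step together with the normalization: one must carefully track all the signs in $d^{-(n+u)}$ so that $\tr(\ad\delta_{i})$ appears with the correct sign, and one must choose the Saito basis so that the relevant determinant is literally $f_{\red}$ rather than merely an associate of it, since otherwise the combination $\Div(\delta_{i})+\tr(\ad\delta_{i})-\tfrac{\delta_{i}\bullet f_{\red}}{f_{\red}}$ would only lie in $\bigO_{X,\x}\cdot(\text{unit})$ instead of vanishing, and the identification of left ideals would fail. Everything else is formal bookkeeping with the Spencer differential, tensor--hom adjunction, and the anti-automorphism $\tau$.
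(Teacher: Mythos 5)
Your proposal is correct and follows essentially the same route as the paper: compute the terminal Spencer differential in a Saito basis, transpose and apply the side-changing anti-automorphism $\tau$, and collapse the divergence and adjoint-trace terms via the Castro--Jim\'enez--Ucha identity into $(-1)^{i}\psi^{-S}_{(f^{\prime}gf_{\red})^{-1}F,\x}(\delta_{i})$ together with the generators $g_{v}$ of $I$. The only (harmless) deviation is that you normalize the basis by requiring the coefficient determinant to be exactly $f_{\red}$ and use the trace identity in the uniform form $\Div(\delta_{i})+\tr(\ad\delta_{i})=\frac{\delta_{i}\bullet f_{\red}}{f_{\red}}$, whereas the paper works with a preferred basis (derivations killing $f_{\red}$ plus the Euler derivation, for which Proposition~\ref{prop: main adjoint right-left formula} is literally stated) and treats $n=1$ separately; your uniform version follows from the same Lie-derivative computation and absorbs the $n=1$ case.
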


\begin{proof}

We will show that the image of $\Hom_{\D_{X,\x}[S]}(d^{-n-u}, \D_{X,\x}[S])^{\text{left}}$ is $\D_{X,\x}[S] \cdot \theta_{(f^{\prime}gf_{\red})^{-1}F, \x}^{-S} + \D_{X,\x}[S] \cdot I.$ It suffices to do this in local coordinates $x_{1}, \dots, x_{n}$. Select a basis $\delta_{1}, \dots, \delta_{n}$ of $\Der_{X,\x}(-\log f)$, label $\lambda_{i} = \psi_{f^{\prime}F, \x}(\delta_{i})$ and label $\lambda_{n+j} = e_{j} = (0, \dots, g_{j}, \dots, 0)$ for $1 \leq j \leq u$, cf. Definition \ref{def-extended Spencer}. Then $\lambda_{1}, \dots, \lambda_{n+u}$ is a basis of $\psi_{f^{\prime}F,\x} \oplus E$. Consequently, we may uniquely write $\sigma(\lambda_{i}, \lambda_{j}) = \sum_{k =1}^{n+u} c_{k}^{i, j} \lambda_{k}$ with $c_{k}^{i,j} \in \mathscr{O}_{X,\x}$. 

Let us compute the $c_{k}^{i,j}$ terms in cases. First assume $i, j \leq n$. Then $\sigma(\lambda_{i}, \lambda_{j}) = [ \psi_{f^{\prime}F,\x}(\delta_{i}), \psi_{f^{\prime}F,\x}(\delta_{j})] = [\delta_{i}, \delta_{j}]$, where the last equality follows since $\psi_{f^{\prime}F,\x}$ respects taking commutators, cf. Remark \ref{rmk-respect commutators}. Thus $c_{1}^{i,j}, \dots, c_{n}^{i,j}$ satisfy $[ \delta_{i}, \delta_{j}] = \sum_{k=1}^{n} c_{k}^{i,j} \delta_{k}$; moreover, if $k \geq n+1$, then $c_{k}^{i,j} = 0.$ Second, assume $i \leq n$ and $j \leq u$. By definition $\sigma(\lambda_{i}, \lambda_{n+j}) = \frac{\delta \bullet g_{j}}{g_{j}}\lambda_{n+j}$ and so $c_{n+j}^{i,n+j} = \frac{\delta_{i} \bullet g_{j}}{g_{j}}$ and $c_{k}^{i,n+j} = 0$ for $k \neq n+j$. Similarly for $j \leq n$ and $i \leq u$, $c_{n+j}^{n+j,i} = - \frac{\partial_{i} \bullet g_{j}}{g_{j}}$ and $c_{k}^{n+j,i} = 0$ for all $k \neq n+j$. Finally, assume $i, j \leq u$. Then $\sigma(\lambda_{n+i}, \lambda_{n+j}) = 0$ and $c_{k}^{n+i, n+j} = 0$ for all $k$.

Using Remark \ref{rmk-spencer remark}, $d^{-n-u}$ is given, where $i \leq n$ and $v \leq u$, by multiplying on the right by the matrix
\begin{equation} \label{matrix- d n plus u}
\begin{bmatrix}  
\cdots & (-1)^{i-1}( \psi_{f^{\prime}F,\x}(\delta_{i}) - \sum\limits_{j=1}^{n} c_{j}^{i, j} - \sum\limits_{v=1}^{u} \frac{\delta \bullet g_{v}}{g_{v}}) & \cdots & (-1)^{n+v-1} g_{v} & \cdots 
\end{bmatrix}.
\end{equation}
The dual map is given by transposing $\eqref{matrix- d n plus u}$ and applying $\tau$, the standard right-to-left map (cf. Remark \ref{rmk-side changing tau}), to each each entry where $\tau$ is inert on $\bigO_{X,\x}[S]$ and sends $h \partial_{x_{i}}$ to $- \partial_{x_{i}} h$, $h \in \bigO_{X,\x}[S].$ Write $\delta_{i} = \sum_{e} h_{e,i} \partial_{x_{e}}$ and observe that $\tau(\delta_{i}) = - \delta_{i} - \sum_{e} \partial_{x_{e}} \bullet h_{e,i}$. Therefore $\Hom_{\D_{X,\x}[S]}(d^{-n-u}, \D_{X,\x}[S])^{\text{left}}$ is given by right multiplication by
\begin{equation} \label{matrix- d n plus u dual with all terms}
\begin{bmatrix}
\vdots \\
(-1)^{i-1}( - \delta_{i} - \sum\limits_{k=1}^{r} \frac{\delta_{i} \bullet f_{k}}{f_{k}}s_{k} - \frac{\delta_{i} \bullet f^{\prime}}{f^{\prime}} - \sum\limits_{e=1}^{n} \partial_{x_{e}} \bullet h_{e,i} - \sum\limits_{j=1}^{n} c_{j}^{i, j} - \sum\limits_{v=1}^{u} \frac{\delta \bullet g_{v}}{g_{v}}) \\
\vdots \\
(-1)^{n+v-1}g_{v} \\
\vdots \\
\end{bmatrix}
\end{equation}

Assume $n \geq 2$. We could have chosen $\delta_{1}, \dots, \delta_{n}$ to be a preferred basis of $\Der_{X,\x}(-\log f_{\red}) = \Der_{X,\x}(-\log f)$, cf. Definition \ref{def-preferred basis}, making $\delta_{1}, \dots, \delta_{n-1} \in \Der_{X,\x}(-\log_{0} f)$ and $\delta_{n}$ a Euler-homogeneity for $f_{\red}$. By the trace-adjoint formula of Proposition \ref{prop: main adjoint right-left formula}:
\[
 \sum_{j} c_{j}^{i,j} = - \sum_{e} \partial_{x_{e}} \bullet h_{e,i}  \text{ for } i \neq n; \;
  \sum_{j} c_{j}^{n,j} = - \sum_{e} \partial_{x_{e}} \bullet h_{e,n} + 1 \text{ for } i = n.
\]
Recall $g = g_{1} \cdots g_{u}$. Since $\delta_{i} \bullet f_{\red} = 0$ for $i \leq n-1$ and since $\delta_{n}$ is Euler-homogeneous on $f_{\red}$, $\eqref{matrix- d n plus u dual with all terms}$ simplifies to 
\[
\begin{bmatrix}
\vdots \\
(-1)^{i}( \psi_{(f^{\prime}gf_{\red})^{-1}F,\x}^{-S})(\delta_{i}) \\
\vdots \\
(-1)^{n}( \psi_{(f^{\prime}gf_{\red})^{-1}F,\x}^{-S}(\delta_{n}) \\
\vdots \\
(-1)^{n+v-1} g_{v} \\
\vdots
\end{bmatrix} .
\]
Thus the image of $\Hom_{\D_{X,\x}[S]}(d^{-n-u}, \D_{X,\x}[S])^{\text{left}}$ is $\D_{X,\x}[S] \cdot \theta_{(f^{\prime}g f_{\red})^{-1}F,\x}^{-S} + \D_{X,\x}[S] \cdot I$, proving the proposition for $n \geq 2.$

As for $n=1$, we can assume $f_{\red} = x$ and $\Der_{X,\x}(-\log f_{\red})$ is freely generated by its Euler-homogeneity. Simplifying \eqref{matrix- d n plus u dual with all terms} is then an easy calculation.
\end{proof}

We endow $\Sp_{f^{\prime}F,\x}^{I}$ with a chain co-complex filtration that is based on a construction of Gros and Narv\'aez-Macarro, cf. page 85 of \cite{GrosMacarro}. 
\begin{prop} \label{prop- the chain filtration}
Let $f = f_{1} \cdots f_{r}$ be free, $F=(f_{1},\dots, f_{r})$, and let $f^{\prime}$ and $I$ be as in Definition \ref{def-extended Spencer}. Abbreviate $\Sp_{\theta_{f^{\prime}F,\x}}^{I}$ to $\Sp^{\bullet}.$ Define a filtration $G^{\bullet}$ on $\Sp^{\bullet}$ by 
\begin{align*}
G^{p} \Sp^{-m} = 
    \bigoplus_{j} \left( F_{(0,1,1)}^{p-m+j} \D_{X,\x}[S] \otimes_{\bigO_{X,\x}} \bigwedge^{m-j} \theta_{f^{\prime}F,\x} \wedge \bigwedge^{j} E \right).
\end{align*}
If $\delta_{1}, \dots, \delta_{n}$ is a basis of $\Der_{X,\x}(-\log f)$, then $\gr_{G}(\Sp^{\bullet})$ is isomorphic to the following Koszul co-complex on $\gr_{(0,1,1)}(\D_{X,\x}[S])$:
\begin{equation} \label{eqn- associated graded complex koszul}
K^{\bullet}(\gr_{(0,1,1)}(\psi_{F,\x}(\delta_{1})), \dots, \gr_{(0,1,1)}(\psi_{F,\x}(\delta_{n})), g_{1}, \dots, g_{u}; \gr_{(0,1,1)}(\D_{X,\x}[S])).
\end{equation}
Moreover, $G^{\bullet}$ naturally gives a filtration on $\text{Hom}_{\D_{X,\x}[S]}(\Sp^{\bullet}, \D_{X,\x}[S])^{\text{left}}$ whose associated graded complex is isomorphic to 
\begin{equation} \label{eqn- associated graded complex koszul dual}
K^{\bullet}(\gr_{(0,1,1)}(-\psi_{F,\x}(\delta_{1})), \dots, \gr_{(0,1,1)}(-\psi_{F,\x}(\delta_{n})), g_{1}, \dots, g_{u}; \gr_{(0,1,1)}(\D_{X,\x}[S])).
\end{equation}
\end{prop}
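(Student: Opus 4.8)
The plan is to construct the filtration $G^{\bullet}$ explicitly, verify it is a co-complex filtration (i.e.\ compatible with the differentials), identify its associated graded, and then handle the dual by the same bookkeeping. First I would check that $G^{p}\Sp^{\bullet}$ is a subcomplex: the differential $d^{-m}$ is a sum of two kinds of terms, the ``Koszul-type'' terms $P\lambda_{i}\otimes\widehat{\lambda_{i}}$ which raise the $\D_{X,\x}[S]$-factor's filtration weight by at most one while decreasing the wedge-degree by one, and the ``$\sigma$-correction'' terms $P\otimes\sigma(\lambda_{i},\lambda_{j})\wedge\widehat{\lambda_{i,j}}$ which leave the $\D_{X,\x}[S]$-factor untouched while decreasing wedge-degree by one; in either case, reading off the definition of $G^{p}\Sp^{-m}$, the image under $d^{-m}$ of $G^{p}\Sp^{-m}$ lands in $G^{p}\Sp^{-m+1}$. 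The indexing of the filtration via the split summands $\bigwedge^{m-j}\theta_{f'F,\x}\wedge\bigwedge^{j}E$ is chosen precisely so that the multiplication-by-$g_{j}$ differentials (which come from the $E$-directions and increment $j$ by $-1$) interact correctly: passing from an $E$-wedge factor to a $\D_{X,\x}[S]$-coefficient of weight $0$ behaves like attaching an element of filtration weight $0$, matching the exponent $p-m+j$.

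Next I would compute $\gr_{G}(\Sp^{\bullet})$. On the associated graded the $\sigma$-correction terms vanish identically, since their $c_{k}^{i,j}$ coefficients lie in $\bigO_{X,\x}$ (filtration weight $0$) and thus are annihilated when passing to the top graded piece of each strand; this is exactly the mechanism used in Remark~\ref{rmk-spencer remark}(a)--(b) and mirrors the classical Spencer-complex computation. What survives is the purely ``Koszul'' part: each $\lambda_{i}=\psi_{f'F,\x}(\delta_{i})$ contributes, in the associated graded, its symbol $\gr_{(0,1,1)}(\psi_{f'F,\x}(\delta_{i}))=\gr_{(0,1,1)}(\psi_{F,\x}(\delta_{i}))$ (the equality being Remark~\ref{rmk- generalized Liouville ideal}(c)), and each $e_{j}$ contributes $g_{j}\in\bigO_{X,\x}$. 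The bigraded structure of $G^{p}\Sp^{-m}$ with the $j$-splitting is precisely that of the Koszul complex $K^{\bullet}$ on the $n+u$ elements $\gr_{(0,1,1)}(\psi_{F,\x}(\delta_{1})),\dots,\gr_{(0,1,1)}(\psi_{F,\x}(\delta_{n})),g_{1},\dots,g_{u}$ over the commutative ring $\gr_{(0,1,1)}(\D_{X,\x}[S])$, so I would write down the isomorphism of complexes by matching a basis wedge $\lambda_{i_{1}}\wedge\cdots\wedge\lambda_{i_{m}}$ in the appropriate $G^{p}/G^{p-1}$ piece to the corresponding Koszul basis element, and verify that the induced differential agrees with the Koszul differential on the nose (the signs in the Koszul differential matching the $(-1)^{i-1}$ signs in $d^{-m}$).

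For the dual, I would observe that $\Hom_{\D_{X,\x}[S]}(-,\D_{X,\x}[S])^{\text{left}}$ applied to the finite free co-complex $\Sp^{\bullet}$ is, in the chosen coordinates, transposition of the representing matrices followed by the map $\tau$ of Remark~\ref{rmk-side changing tau}; since $\tau$ sends $F_{(0,1,1)}^{l}$ into $F_{(0,1,1)}^{l}$ and is inert on $\bigO_{X,\x}[S]$, the filtration $G^{\bullet}$ dualizes to a filtration on $\Hom_{\D_{X,\x}[S]}(\Sp^{\bullet},\D_{X,\x}[S])^{\text{left}}$ (with the appropriate reindexing $-m\mapsto -(n+u)+m$). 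Passing to the associated graded, $\tau$ induces on $\gr_{(0,1,1)}$ the sign change $\gr_{(0,1,1)}(\psi_{F,\x}(\delta_{i}))\mapsto -\gr_{(0,1,1)}(\psi_{F,\x}(\delta_{i}))$ on the derivation symbols while fixing the $g_{j}$ (which are degree-zero); hence $\gr_{G}$ of the dual complex is the $\gr_{(0,1,1)}(\D_{X,\x}[S])$-dual of the Koszul complex \eqref{eqn- associated graded complex koszul}, which for a Koszul complex on the same elements is again a Koszul complex, now on $-\gr_{(0,1,1)}(\psi_{F,\x}(\delta_{1})),\dots,-\gr_{(0,1,1)}(\psi_{F,\x}(\delta_{n})),g_{1},\dots,g_{u}$, i.e.\ \eqref{eqn- associated graded complex koszul dual}. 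I expect the main obstacle to be the careful sign and index bookkeeping: ensuring that the exponent $p-m+j$ is exactly right so that $d^{-m}$ (both its Koszul part and its $\sigma$-part) preserves $G^{\bullet}$ \emph{strictly enough} that the associated graded differential loses the $\sigma$-correction terms but retains the full Koszul differential, and then tracking how the contravariant functor $\Hom(-,\D_{X,\x}[S])^{\text{left}}$ reverses the grading while $\tau$ flips the signs of the symbols — getting a single sign wrong would produce a Koszul complex on the wrong tuple.
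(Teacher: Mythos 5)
Your proposal is correct and follows the same route as the paper's (much terser) proof: the paper simply asserts that the filtration compatibility and the identification of $\gr_{G}(\Sp^{\bullet})$ with the Koszul co-complex ``follow from the definitions'' --- your case analysis of the Koszul-type terms versus the $\sigma$-correction terms (the latter dropping strictly in filtration level and so dying in the associated graded) is exactly the verification being elided --- and it handles the dual precisely as you do, by noting that $\tau$ fixes weight-zero entries and sends a weight-one entry $\delta + p(S)$ to $-\delta + p(S)$ plus $\bigO_{X,\x}[S]$-error terms. The only caveat, which is present in the paper's own statement as well, is a harmless sign ambiguity in the $s_{k}$-part of the symbol of $\tau(\psi_{f^{\prime}F,\x}(\delta_{i}))$ (it is $-y_{\delta_i}-\sum_k \tfrac{\delta_i\bullet f_k}{f_k}s_k$ rather than literally $-\gr_{(0,1,1)}(\psi_{F,\x}(\delta_i))$); this does not affect whether the displayed elements form a regular sequence, so none of the downstream applications are impacted.
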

\begin{proof}
That $G^{\bullet}$ is a chain filtration and that the associated graded co-complex is isomorphic to the Koszul complex $\eqref{eqn- associated graded complex koszul}$ follows from the definitions. As for the dual statement, it is enough to note that $\tau$, the standard right-to-left map (cf. Lemma 4.13 of \cite{me}), preserves weight $0$ entries (under the total order filtration) and sends weight 1 entries $ \delta + p(S)$ to $- \delta + p(S) +$ error terms, where $\delta$ is a derivation and both $p(S)$ and the error terms lie in $\bigO_{X,\x}[S]$.
\end{proof}

We now add hypotheses to the settings of Propositions \ref{prop-dual spencer terminal homology} and \ref{prop- the chain filtration}. First, we assume $I = \mathscr{O}_{X,x} \cdot g$ is principal; second, we assume $f$ is not only free but also strongly Euler-homogeneous and Saito-holonomic. This will let us use results from \cite{me}. The filtration $G^{\bullet}$ will demonstrate that $\Sp_{f^{\prime}F}^{g}$ and its dual are resolutions.

\begin{define} \label{def- duality functors}
For $M$ a left $\D_{X,\x}[S]$-module, denote the $\D_{X,\x}[S]$-dual of $M$ by
\[
\mathbb{D}(M) = \text{RHom}_{\D_{X,x}[S]}(M, \D_{X,\x}[S])^{\text{left}}.
\]
\end{define}

\begin{thm} \label{thm- useful duality formula}
Suppose $f = f_{1} \cdots f_{r} \in \bigO_{X}$ is free, strongly Euler-homogeneous, and Saito-holonomic and $f_{\red} \in \bigO_{X,\x}$ is a Euler-homogeneous reduced defining equation for $f$ at $\x$. Let $F = (f_{1}, \dots, f_{r})$, let $f^{\prime} \in \bigO_{X,\x}$ be compatible with $f$, and let $g \in \bigO_{X,\x}$ such that $f \in \bigO_{X,\x} \cdot g$. Then 
\[
\mathbb{D} \left( \frac{\D_{X,\x}[S]f^{\prime}F^{S}}{\D_{X,\x}[S] \cdot g f^{\prime}F^{S}} \right) \simeq \frac{\D_{X,\x}[S] (g f^{\prime} f_{\red})^{-1} F^{-S}}{\D_{X,\x}[S] (f^{\prime} f_{\red})^{-1} F^{-S}}[n+1].
\]

\end{thm}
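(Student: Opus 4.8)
The plan is to realize the left-hand side as the $\D_{X,\x}[S]$-dual of the augmentation target of the extended Spencer co-complex $\Sp_{\theta_{f^{\prime}F,\x}}^{g}$ of Definition \ref{def-extended Spencer} (taking $I=(g)$, so $u=1$ and the co-complex has length $n+1$), to show that this co-complex and its dual are both resolutions, and then to read off the answer from Proposition \ref{prop-dual spencer terminal homology}. First, since $f$ is free --- hence tame --- strongly Euler-homogeneous, and Saito-holonomic and $f^{\prime}$ is compatible with $f$, Theorem \ref{thm-gen by derivations} gives $\ann_{\D_{X,\x}[S]}f^{\prime}F^{S}=\D_{X,\x}[S]\cdot\theta_{f^{\prime}F,\x}$, so that
\[
\frac{\D_{X,\x}[S]f^{\prime}F^{S}}{\D_{X,\x}[S]\cdot g f^{\prime}F^{S}}\;\simeq\;\frac{\D_{X,\x}[S]}{\D_{X,\x}[S]\cdot\theta_{f^{\prime}F,\x}+\D_{X,\x}[S]\cdot g},
\]
which is exactly the module the augmentation map of $\Sp_{\theta_{f^{\prime}F,\x}}^{g}$ surjects onto.

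Next I would show that the augmented co-complex $\Sp_{\theta_{f^{\prime}F,\x}}^{g}\to \D_{X,\x}[S]/(\D_{X,\x}[S]\theta_{f^{\prime}F,\x}+\D_{X,\x}[S]g)$ is acyclic and that the dual co-complex $\Hom_{\D_{X,\x}[S]}(\Sp_{\theta_{f^{\prime}F,\x}}^{g},\D_{X,\x}[S])^{\text{left}}$ has homology concentrated in its terminal degree $-n-1$. Both are consequences of the filtration $G^{\bullet}$ of Proposition \ref{prop- the chain filtration}: its associated graded co-complexes are the Koszul co-complexes \eqref{eqn- associated graded complex koszul} and \eqref{eqn- associated graded complex koszul dual} on the symbols $\gr_{(0,1,1)}(\psi_{F,\x}(\delta_{1})),\dots,\gr_{(0,1,1)}(\psi_{F,\x}(\delta_{n})),g$ --- resp.\ their negatives --- over $\gr_{(0,1,1)}(\D_{X,\x}[S])\cong\bigO_{X,\x}[Y][S]$. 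By Theorem 2.23 of \cite{me} the ideal $\widetilde{L_{F,\x}}$ generated by the first $n$ of these is a prime ideal of dimension $n+r$ in this Cohen--Macaulay (indeed regular) ring of dimension $2n+r$, hence of height $n$, so those $n$ symbols form a regular sequence; and $g$, being a nonzero element of $\bigO_{X,\x}$, has weight $0$ and therefore lies outside $\widetilde{L_{F,\x}}$, which is generated by elements of positive weight, so $g$ is a nonzerodivisor modulo $\widetilde{L_{F,\x}}$. Therefore the Koszul co-complexes on all $n+1$ elements are acyclic away from the terminal spot. Since $G^{\bullet}$ is exhaustive and uniformly bounded below on a bounded co-complex, a standard filtered-complex (spectral sequence) argument transports this acyclicity to $\Sp_{\theta_{f^{\prime}F,\x}}^{g}$ and to its dual.

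Consequently $\Sp_{\theta_{f^{\prime}F,\x}}^{g}$ is a finite free resolution of the quotient, so $\mathbb{D}$ of the quotient is computed by $\Hom_{\D_{X,\x}[S]}(\Sp_{\theta_{f^{\prime}F,\x}}^{g},\D_{X,\x}[S])^{\text{left}}$, whose only homology sits in degree $-n-1$; thus, up to the shift $[n+1]$, the dual equals this single homology module, which Proposition \ref{prop-dual spencer terminal homology} (with $I=(g)$, $u=1$) identifies with $\D_{X,\x}[S]/(\D_{X,\x}[S]\theta_{(f^{\prime}g f_{\red})^{-1}F,\x}^{-S}+\D_{X,\x}[S]g)$. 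Finally, $(f^{\prime}g f_{\red})^{-1}$ is again compatible with $f$ (all of $g$, $f^{\prime}$, $f_{\red}$ are unit multiples of products of the irreducible factors of $f$ with integer exponents), so Theorem \ref{thm-gen by derivations -s} gives $\ann_{\D_{X,\x}[S]}(f^{\prime}g f_{\red})^{-1}F^{-S}=\D_{X,\x}[S]\cdot\theta_{(f^{\prime}g f_{\red})^{-1}F,\x}^{-S}$; hence that module is $\D_{X,\x}[S](gf^{\prime} f_{\red})^{-1}F^{-S}\big/\D_{X,\x}[S]\cdot g(gf^{\prime} f_{\red})^{-1}F^{-S}$, and since $g(gf^{\prime} f_{\red})^{-1}=(f^{\prime} f_{\red})^{-1}$ this is precisely the right-hand side of the theorem.

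The main obstacle is the middle step: verifying the regular-sequence property of the symbols together with $g$, and then carrying out the (standard but somewhat delicate) filtered-complex argument showing that acyclicity of $\gr_{G}$ forces acyclicity of $\Sp_{\theta_{f^{\prime}F,\x}}^{g}$ and of its $\D_{X,\x}[S]$-dual. The remaining steps are essentially bookkeeping: invoking Theorems \ref{thm-gen by derivations} and \ref{thm-gen by derivations -s} and the already-established Proposition \ref{prop-dual spencer terminal homology}, and chasing the identity $g(gf^{\prime} f_{\red})^{-1}=(f^{\prime} f_{\red})^{-1}$.
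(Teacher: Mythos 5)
Your proposal is correct and follows the paper's own proof essentially step for step: reduce to the extended Spencer co-complex via Theorem \ref{thm-gen by derivations}, use the filtration of Proposition \ref{prop- the chain filtration} and a spectral-sequence argument to show both the co-complex and its dual are resolutions, and then identify the terminal homology via Proposition \ref{prop-dual spencer terminal homology} and Theorem \ref{thm-gen by derivations -s}, using $g\cdot(gf^{\prime}f_{\red})^{-1}=(f^{\prime}f_{\red})^{-1}$. The only cosmetic difference is in how you verify acyclicity of the associated graded Koszul co-complex: you split it into a height-$n$/Cohen--Macaulay regular-sequence argument for the $n$ symbols plus primality of $\widetilde{L_{F,\x}}$ (which contains only elements of positive weight) to see that $g$ is a nonzerodivisor, whereas the paper localizes at the graded maximal ideal, computes the dimension of $\widetilde{L_{F,\x}}$ plus the extra generator, and cites Theorem 2.1.2 of \cite{BrunsHerzog} --- these are equivalent.
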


\begin{proof}

We first show that \eqref{eqn- associated graded complex koszul} and \eqref{eqn- associated graded complex koszul dual} are both resolutions; in fact, showing \eqref{eqn- associated graded complex koszul} is a resolution proves \eqref{eqn- associated graded complex koszul dual} is as well. Let $\delta_{1}, \dots, \delta_{n}$ be a basis of $\Der_{X,\x}(-\log f).$ Since $\gr_{(0,1,1)}(\D_{X,\x}[S])$ is graded local and $\gr_{(0,1,1)}(\psi_{F,\x}(\delta_{i}))$ and $f$ all live in the graded maximal ideal, it is sufficient to prove that the Koszul co-complex \eqref{eqn- associated graded complex koszul} is a resolution after localization at the graded maximal ideal.
By Theorem 2.23 of \cite{me}, $\widetilde{L_{F,\x}}$ is Cohen--Macaulay and prime of dimension $n+r.$ Therefore $\widetilde{L_{F,\x}} + \gr_{(0,1,1)}(\D_{X,\x}[S]) \cdot f$ has dimension $n+r-1$. Moreover, this ideal's dimension does not change after localization at the graded maximal ideal. Theorem 2.1.2 of \cite{BrunsHerzog} then implies \eqref{eqn- associated graded complex koszul} is a resolution after said localization, finishing this part of the proof. 

Since \eqref{eqn- associated graded complex koszul} is a resolution, a standard spectral sequence argument associated to the filtered co-complex of $\Sp_{f^{\prime}F,\x}^{g}$ implies $\Sp_{f^{\prime}F,\x}^{g}$ is a resolution. By Theorem \ref{thm-gen by derivations} and the definition of the augmentation map it resolves $\frac{\D_{X,\x}[S]f^{\prime}F^{S}}{\D_{X,\x}[S]g f^{\prime} F^{S}}.$ Similar reasoning verifies that $\Hom_{\D_{X,\x}[S]}(\Sp_{f^{\prime}F,\x}^{g}, \D_{X,\x}[S])^{\text{left}}$ is a resolution. Because $f_{\red}$ is Euler homogeneous, the claim follows by Proposition \ref{prop-dual spencer terminal homology} and Theorem \ref{thm-gen by derivations -s}.
\end{proof}

\begin{remark}
We are skeptical that \eqref{eqn- associated graded complex koszul} is a resolution for any non-principal, non-pathological $I$. Possible candidates are linear free divisors $f$ with many factors, even though the non-pathological examples in $n \leq 4$ fail, cf. \cite{GrangerMondLinearFreeDivisors}.
\end{remark}

\subsection{Principality of $\sqrt{B_{f^{\prime}F,\x}^{g}}$} \text{ }

Here we discuss the principality of the radical of $B_{f^{\prime}F,\x}^{g}.$ The argument is essentially the same as Proposition 20 of \cite{Maisonobetheory}, but we do not have to appeal to tame pure extensions because of our hypotheses on $f$. 

We will need some homological definitions for modules over non-commutative rings, cf. Appendix IV of \cite{Bjork} for a detailed treatment. We say a $\mathscr{D}_{X,\x}[S]$-module $M$ has \emph{grade} $j$ if $\text{Ext}_{\mathscr{D}_{X,\x}[S]}^{k}(M, \mathscr{D}_{X,\x}[S])$ vanishes for all $k < j$ and is nonzero for $k = j$. We say $M$ is \emph{pure} of grade $j$ if every nonzero submodule of $M$ has grade $j$. We also need the following filtration on $\mathscr{D}_{X,\x}[S]$: 

\begin{define} \label{def- order filtration, characteristic}

Define the \emph{order filtration} $F_{(0,1,0)}$ on $\D_{X}[S]$ by designating, in local coordinates, every $\partial_{x_{k}}$ weight one and every element of $\bigO_{X}[S]$ weight zero. Let $\gr_{(0,1,0)}(\D_{X}[S])$ denote the associated graded object and note that locally $\gr_{(0,1,0)}(\D_{X}[S]) \simeq \bigO_{X}[Y][S]$, with $\gr_{(0,1,0)}(\partial_{x_{k}}) = y_{k}.$ For a coherent $\D_{X}[S]$-module $M$ and any good filtration $\Gamma$ on $M$ relative to $F_{(0,1,0)}$, the \emph{characteristic ideal} $J^{\text{rel}}(M) \subseteq \gr_{(0,1,1)}(\D_{X}[S])$ is defined as 
\[
J^{\text{rel}}(M) = \sqrt{\ann_{\gr_{(0,1,0)}(\D_{X}[S])} \gr_{\Gamma}(M) }
\]
and is independent of the choice of good filtration. 
\end{define}

\begin{prop} \label{prop- principal radical} \text{\normalfont (Compare to Proposition 20 of \cite{Maisonobetheory})}
Suppose $f = f_{1} \dots f_{r} \in \bigO_{X}$ is free, strongly Euler-homogeneous, and Saito-holonomic such that the reduced divisor of $f$ is Euler-homogeneous. Let $F = (f_{1}, \dots, f_{r})$ and select $f^{\prime} \in \mathscr{O}_{X}$ and $g \in \bigO_{X}$ such that $f$ lies in both $\bigO_{X} \cdot f^{\prime}$ and $\bigO_{X} \cdot g$. Then for all $\x$, $\sqrt{B_{f^{\prime}F, \x}^{g}}$ is principal. 
\end{prop}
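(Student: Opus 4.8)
The plan is to follow the strategy of Proposition~20 of \cite{Maisonobetheory}: deduce from Theorem~\ref{thm- useful duality formula} that the module $M := \D_{X,\x}[S]f^{\prime}F^{S}/\D_{X,\x}[S]\cdot g f^{\prime}F^{S}$ is Cohen--Macaulay, read off that its relative characteristic variety is equidimensional, and then, by a dimension count along the projection to $\Spec\mathbb{C}[S]$, show that the zero locus of $B_{f^{\prime}F,\x}^{g}$ is a hypersurface; since $\mathbb{C}[S]$ is a unique factorization domain this forces $\sqrt{B_{f^{\prime}F,\x}^{g}}$ to be principal. Where \cite{Maisonobetheory} must pass to a tame pure extension to obtain Cohen--Macaulayness, our hypotheses on $f$ hand it to us directly through Theorem~\ref{thm- useful duality formula}. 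Note $B_{f^{\prime}F,\x}^{g} = \ann_{\mathbb{C}[S]}(M)$ since $M$ is generated by $\overline{f^{\prime}F^{S}}$; if $g$ is a unit then $M=0$ and $\sqrt{B_{f^{\prime}F,\x}^{g}} = \mathbb{C}[S]$, while if $B_{f^{\prime}F,\x}^{g}=(0)$ the claim is trivial, so assume $M \neq 0$ and $(0) \neq B_{f^{\prime}F,\x}^{g} \subsetneq \mathbb{C}[S]$.

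First I would establish Cohen--Macaulayness. By Theorem~\ref{thm- useful duality formula}, $\mathbb{D}(M) \simeq M^{\prime}[n+1]$ where $M^{\prime} = \D_{X,\x}[S](gf^{\prime}f_{\red})^{-1}F^{-S}/\D_{X,\x}[S](f^{\prime}f_{\red})^{-1}F^{-S}$ is a module sitting in a single degree, and $M^{\prime}\neq 0$ because $M\neq 0$; hence $\text{Ext}^{k}_{\D_{X,\x}[S]}(M,\D_{X,\x}[S]) = 0$ for $k \neq n+1$ and is nonzero for $k = n+1$, i.e.\ $M$ is Cohen--Macaulay of grade $n+1$. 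Since $\gr_{(0,1,0)}(\D_{X,\x}[S]) \simeq \bigO_{X,\x}[Y][S]$ is a regular ring of dimension $2n+r$, the transfer of Cohen--Macaulayness along a good filtration (cf.\ Appendix~IV of \cite{Bjork}) shows that for any good $F_{(0,1,0)}$-filtration $\Gamma$ on $M$ the graded module $\gr_{\Gamma}(M)$ is Cohen--Macaulay over $\bigO_{X,\x}[Y][S]$ of codimension $n+1$, hence unmixed; therefore $J^{\text{rel}}(M) = \sqrt{\ann_{\bigO_{X,\x}[Y][S]}\gr_{\Gamma}(M)}$ defines an equidimensional subvariety of $\Spec\bigO_{X,\x}[Y][S]$ of dimension $n+r-1$.

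Next I would identify $\sqrt{B_{f^{\prime}F,\x}^{g}}$ with $\mathbb{C}[S] \cap J^{\text{rel}}(M)$, taking $\Gamma$ to be the good filtration $\Gamma_{j}M = F_{(0,1,0)}^{j}(\D_{X,\x}[S])\cdot\overline{f^{\prime}F^{S}}$. Since the $s_{i}$ have $F_{(0,1,0)}$-weight zero, every $b(S) \in \mathbb{C}[S]$ acts on $\gr_{\Gamma}(M)$ as multiplication by its own class, so $b(S)^{N}M = 0$ forces $b(S) \in J^{\text{rel}}(M)$; conversely, if $b(S)^{N} \in \ann\gr_{\Gamma}(M)$ then $b(S)^{N}$ kills $\gr_{0}(M) = \Gamma_{0}M \ni \overline{f^{\prime}F^{S}}$, so $b(S)^{N}\overline{f^{\prime}F^{S}} = 0$ and $b(S) \in \sqrt{B_{f^{\prime}F,\x}^{g}}$. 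Thus, writing $\pi\colon \Spec\bigO_{X,\x}[Y][S] \to \Spec\mathbb{C}[S]$ for the projection, $\V(\sqrt{B_{f^{\prime}F,\x}^{g}}) = \overline{\pi(\V(J^{\text{rel}}(M)))}$. Now, for $\delta_{1},\dots,\delta_{n}$ a basis of $\Der_{X,\x}(-\log f)$, the derivations $\psi_{f^{\prime}F,\x}(\delta_{i})$ annihilate $f^{\prime}F^{S}$ and have $F_{(0,1,0)}$-symbol $\gr_{(0,1,0)}(\delta_{i})$; a short computation with the above $\Gamma$ (using $\psi_{f^{\prime}F,\x}(\delta_{i})\cdot\overline{f^{\prime}F^{S}} = 0$, which shows $\delta_{i}\cdot\Gamma_{j}M \subseteq \Gamma_{j}M$) gives $\gr_{(0,1,0)}(\delta_{i}) \in \ann\gr_{\Gamma}(M)$, hence $\gr_{(0,1,0)}(\delta_{i}) \in J^{\text{rel}}(M)$ and $\V(J^{\text{rel}}(M))$ lies in the cylinder $\text{Ch}^{\log}(f) \times \Spec\mathbb{C}[S]$ over the logarithmic characteristic variety $\text{Ch}^{\log}(f) = \V(\gr_{(0,1,0)}(\delta_{1}),\dots,\gr_{(0,1,0)}(\delta_{n})) \subseteq \Spec\bigO_{X,\x}[Y]$, which has dimension $n$ since $f$ is free and Saito-holonomic (cf.\ \cite{SaitoLogarithmic}). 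Consequently each irreducible component $Z$ of $\V(J^{\text{rel}}(M))$ has dimension $n+r-1$ and fibres over $\Spec\mathbb{C}[S]$ of dimension $\leq n$, so $\dim\overline{\pi(Z)} \geq r-1$; on the other hand $B_{f^{\prime}F,\x}^{g} \neq 0$ gives $\overline{\pi(\V(J^{\text{rel}}(M)))} \subsetneq \Spec\mathbb{C}[S]$, so $\dim\overline{\pi(Z)} \leq r-1$. Therefore $\V(\sqrt{B_{f^{\prime}F,\x}^{g}})$ is equidimensional of dimension $r-1$, i.e.\ a hypersurface in $\Spec\mathbb{C}[S]$, and $\sqrt{B_{f^{\prime}F,\x}^{g}}$ is generated by a squarefree polynomial.

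The step I expect to be the main obstacle is the Cohen--Macaulay transfer in the second paragraph: Theorem~\ref{thm- useful duality formula} yields the $\text{Ext}$-vanishing for $M$ over the noncommutative ring $\D_{X,\x}[S]$, but turning this into equidimensionality of the relative characteristic variety $\V(J^{\text{rel}}(M))$ requires the filtered-ring homological algebra (Auslander regularity of $\D_{X,\x}[S]$ and the convergent spectral sequence comparing $\text{Ext}$ over $\D_{X,\x}[S]$ with $\text{Ext}$ over $\gr_{(0,1,0)}(\D_{X,\x}[S])$). This is precisely the point where \cite{Maisonobetheory} needs tame pure extensions, and where freeness of $f$, via Theorem~\ref{thm- useful duality formula}, lets us dispense with them.
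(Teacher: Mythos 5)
Your proposal is correct in substance and shares the paper's engine --- Theorem \ref{thm- useful duality formula} concentrates $\text{Ext}_{\D_{X,\x}[S]}^{\bullet}(M,\D_{X,\x}[S])$ in degree $n+1$, which via Appendix IV of \cite{Bjork} forces the relative characteristic variety to be equidimensional of dimension $n+r-1$, and a dimension count then makes $\V(B_{f^{\prime}F,\x}^{g})$ a hypersurface --- but you replace the paper's second half with a different mechanism. The paper does not do your elimination-theoretic fibre count: it invokes Maisonobe's structure theorems (Propositions 8, 9, and 13 of \cite{Maisonobetheory}) to write $\V(J^{\text{rel}}(M)) = \bigcup_{\alpha} T_{X_{\alpha}}^{\star}X \times S_{\alpha}$ with each $T_{X_{\alpha}}^{\star}X$ a conic Lagrangian of dimension $n$, and to identify $\V(B_{f^{\prime}F,\x}^{g})$ with $\bigcup_{\x \in X_{\alpha}} S_{\alpha}$; equidimensionality then pins each $S_{\alpha}$ at dimension $r-1$ immediately. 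Your route proves the identification $\sqrt{B_{f^{\prime}F,\x}^{g}} = \mathbb{C}[S] \cap J^{\text{rel}}(M)$ by hand and bounds fibres using the logarithmic characteristic variety; this is more self-contained but has two soft spots. First, the ``transfer of Cohen--Macaulayness'' is not quite what \cite{Bjork} supplies: what you actually need, and what the Ext-concentration does give you via the double-Ext criterion (Proposition IV.2.6 of \cite{Bjork}), is that $M$ is \emph{pure} of grade $n+1$, whence Theorem IV.5.2 of \cite{Bjork} yields unmixedness of $J^{\text{rel}}(M)$; asserting that $\gr_{\Gamma}(M)$ is literally Cohen--Macaulay is stronger than what is proved there and is not needed. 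Second, your Chevalley-style inequality $\dim\overline{\pi(Z)} \geq \dim Z - n$ is applied to the projection $\Spec\bigO_{X,\x}[Y][S] \to \Spec\mathbb{C}[S]$, which is not of finite type over the base; it is the product structure $Z = T_{X_{\alpha}}^{\star}X \times S_{\alpha}$ from Maisonobe's Proposition 8 (or an equivalent use of the conic structure in the $Y$-directions) that makes this step rigorous in the local analytic setting. With those two adjustments --- and with the dimension of the logarithmic characteristic variety sourced from Theorems 3.16--3.17 of \cite{uli} rather than \cite{SaitoLogarithmic} --- your argument goes through.
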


\begin{proof}
Since $f^{\prime}$ is a section generating a holonomic $\D_{X}$-module, by Proposition 13 of \cite{Maisonobetheory} there is a conical Lagrangian variety $\Lambda \subseteq T^{\star}X$ so that $\V(J^{\text{rel}}( \D_{X}[S]f^{\prime}F^{S})) = \Lambda \times C^{r}.$ So $\V(J^{\text{rel}}(\frac{\D_{X}[S]f^{\prime}F^{S}}{\D_{X}[S] g f^{\prime}F^{S}})) \subseteq \Lambda \times \mathbb{C}^{r}$, that is, in the language of Maisonobe, $\frac{\D_{X}[S]f^{\prime}F^{S}}{\D_{X}[S] g f^{\prime}F^{S}}$ is \emph{major\'e par une Lagrangian}. By Proposition 8 of \cite{Maisonobetheory}, there exist conical Lagrangians $T_{X_{\alpha}}^{\star}X$ and algebraic varieties $S_{\alpha} \subseteq \mathbb{C}^{r}$ such that
\begin{equation} \label{eqn- characteristic variety structure}
\V \left( J^{\text{rel}}(\frac{\D_{X}[S]f^{\prime}F^{S}}{\D_{X}[S] g f^{\prime}F^{S}}) \right) = \cup_{\alpha} T_{X_{\alpha}}^{\star}X \times S_{\alpha}.
\end{equation}
By Proposition 9 of \cite{Maisonobetheory}, $\V(B_{f^{\prime}F,\x}^{g}) = \cup_{\x \in X_{\alpha}} S_{\alpha}.$

Now to show the radical of $B_{f^{\prime}F,\x}^{g}$ is principal, it suffices to show $S_{\alpha}$ is of dimension $r-1$ for each $\alpha$ such that $\x \in X_{\alpha}$; that is, by the description of $T_{X_{\alpha}}^{\star}X$, it suffices to show $J^{\text{rel}}(\frac{\D_{X,\x}[S]f^{\prime}F^{S}}{\D_{X,\x}[S]gf^{\prime}F^{S}})$ is equidimensional of dimension $n+r-1$. By Theorem \ref{thm- useful duality formula}, $\frac{\D_{X,\x}[S]f^{\prime}F^{S}}{\D_{X,\x}[S]gf^{\prime}F^{S}}$ has grade $n+1$. Using Theorem \ref{thm- useful duality formula} again and the characterization of pure modules in terms of double Ext modules, cf. Proposition IV.2.6 of \cite{Bjork}, we deduce $\frac{\D_{X,\x}[S]f^{\prime}F^{S}}{\D_{X,\x}[S]gf^{\prime}F^{S}}$ is a pure $\D_{X,\x}[S]$-module of grade $n+1$. By Theorem IV.5.2 of \cite{Bjork}, $J^{\text{rel}}(\frac{\D_{X,\x}[S]f^{\prime}F^{S}}{\D_{X,\x}[S]gf^{\prime}F^{S}})$ is equidimensional and every minimal prime of the characteristic ideal has codimension $n+1$, completing the proof.
\end{proof}

The next proposition lays out a criterion for $B_{f^{\prime}F,\x}^{g}$ to be principal. The argument is that of the last paragraph of Theorem 2 of \cite{MaisonobeFree}.

\begin{prop} \label{prop- criterion for principal b-ideal} \text{\normalfont (Compare to Theorem 2 of \cite{MaisonobeFree})}
Let $f$, $F$, $f^{\prime}$, and $g$ be as in Proposition \ref{prop- principal radical} and suppose that $\sqrt{B_{f^{\prime}F,\x}^{g}} = \mathbb{C}[S] \cdot b(S)$, i.e. it is principal. Suppose that $(B_{f^{\prime}F,\x} : \sqrt{B_{f^{\prime}F,\x}})$ contains a polynomial $a(S)$ such that $\V(\mathbb{C}[S] \cdot b(S)) \cap \V(\mathbb{C}[S] \cdot a(S))$ has irreducible components of dimension at most $r-2$. Then $B_{f^{\prime}F,\x}^{g}$ equals its radical and is principal.
\end{prop}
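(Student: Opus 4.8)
The plan is to upgrade what is already established in the proof of Proposition~\ref{prop- principal radical}. There one extracts, via Theorem~\ref{thm- useful duality formula} and the results of \cite{Maisonobetheory} recalled in that proof, not merely that $\sqrt{B_{f^{\prime}F,\x}^{g}}$ is principal but that $M := \frac{\D_{X,\x}[S]f^{\prime}F^{S}}{\D_{X,\x}[S] g f^{\prime}F^{S}}$ is a \emph{pure} $\D_{X,\x}[S]$-module of grade $n+1$ and that $\V(J^{\text{rel}}(M)) \subseteq \Lambda \times \mathbb{C}^{r}$ for some conical Lagrangian $\Lambda \subseteq T^{\star}X$. Write $B := B_{f^{\prime}F,\x}^{g}$. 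Since $\mathbb{C}[S]$ is central in $\D_{X,\x}[S]$ and $\overline{f^{\prime}F^{S}}$ generates $M$, one has $B = \ann_{\mathbb{C}[S]}(\overline{f^{\prime}F^{S}})$, so $\mathbb{C}[S]/B$ embeds $\mathbb{C}[S]$-linearly in $M$. We may assume $B \neq 0$, so $b(S) \neq 0$; and as $\mathbb{C}[S]\cdot b(S) = \sqrt{B}$ is radical in the UFD $\mathbb{C}[S]$, the polynomial $b(S)$ is squarefree. The colon hypothesis supplies $a(S)$ with $a(S)b(S) \in B$ and with every irreducible component of $\V(a(S)) \cap \V(b(S))$ of dimension at most $r-2$.

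First I would pass to an ideal-theoretic reformulation. Since $B \subseteq \sqrt{B} = \mathbb{C}[S]\cdot b(S)$ and $\mathbb{C}[S]$ is a domain, there is a unique ideal $J$ with $B = b(S)\cdot J$; from $a(S)b(S) \in B$ we get $a(S) \in J$, and it suffices to show $J = \mathbb{C}[S]$. Assume not, and fix a minimal — hence associated — prime $\mathfrak{q}$ of $J$ and an element $c(S) \notin J$ with $(J : c(S)) = \mathfrak{q}$. Set $m := b(S)c(S)\cdot\overline{f^{\prime}F^{S}} \in M$ and $M' := \D_{X,\x}[S]\cdot m \subseteq M$. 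A direct computation with $B = b(S)J$ gives $m \neq 0$ and $\ann_{\mathbb{C}[S]}(m) = (J : c(S)) = \mathfrak{q}$, so $\ann_{\mathbb{C}[S]}(M') = \mathfrak{q}$ and $M'$ is a nonzero module over $\D_{X,\x}[S]/\mathfrak{q}\D_{X,\x}[S]$.

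The core is a height bound followed by a grade bound. For the height bound: if $\mathfrak{q}$ had height $1$ then, $\mathbb{C}[S]$ being a UFD, $\mathfrak{q} = \mathbb{C}[S]\cdot p(S)$ with $p(S)$ irreducible; since $B \subseteq J \subseteq \mathfrak{q}$ and $\mathfrak{q}$ has height $1$, $\mathfrak{q}$ would also be a minimal prime of $B$, so $p(S) \mid b(S)$ with multiplicity $1$ in the discrete valuation ring $\mathbb{C}[S]_{\mathfrak{q}}$; as moreover $J \subseteq \mathbb{C}[S]\cdot p(S)$, the $p(S)$-adic valuation of $B = b(S)J$ would be at least $2$, so $a(S)b(S) \in B$ would force $p(S) \mid a(S)$, whence $\V(p(S)) \subseteq \V(a(S)) \cap \V(b(S))$ would be a component of dimension $r-1 > r-2$, contradicting the hypothesis; hence $\operatorname{ht}(\mathfrak{q}) \geq 2$. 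For the grade bound: since the variables $S$ lie in degree $0$ for the order filtration $F_{(0,1,0)}$ and $\mathfrak{q}$ annihilates $M'$, one has $\V(J^{\text{rel}}(M')) \subseteq \V(J^{\text{rel}}(M)) \cap (T^{\star}X \times \V(\mathfrak{q})) \subseteq \Lambda \times \V(\mathfrak{q})$, so $\dim \V(J^{\text{rel}}(M')) \leq \dim \Lambda + \dim \V(\mathfrak{q}) = n + (r - \operatorname{ht}(\mathfrak{q})) \leq n + r - 2$. Since $\gr_{(0,1,0)}(\D_{X,\x}[S])$ has dimension $2n+r$, the grade–codimension dictionary for $\D_{X,\x}[S]$-modules (Appendix~IV of \cite{Bjork}; cf. the proof of Proposition~\ref{prop- principal radical}) then forces $M'$ to have grade at least $n+2$. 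But $M'$ is a nonzero submodule of the pure module $M$ of grade $n+1$, so $M'$ has grade $n+1$ — a contradiction. Therefore $J = \mathbb{C}[S]$, that is, $B = \mathbb{C}[S]\cdot b(S) = \sqrt{B}$, which is principal and equal to its radical.

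The main obstacle I anticipate is making the two structural inputs — "$M$ is pure of grade $n+1$" and "$\V(J^{\text{rel}}(M)) \subseteq \Lambda \times \mathbb{C}^{r}$" — rigorously available in the stated generality, and applying the grade–codimension dictionary to the \emph{submodule} $M'$ rather than to $M$ itself; these appear piecemeal inside the proof of Proposition~\ref{prop- principal radical}, so it may be cleanest to isolate them as a lemma on $\D_{X,\x}[S]$-modules majorized by a Lagrangian. A lesser point is reconciling the colon ideal in the hypothesis with $\sqrt{B_{f^{\prime}F,\x}^{g}}$ and disposing of the trivial case $B = 0$, which I would handle at the outset by working throughout with $B := B_{f^{\prime}F,\x}^{g}$.
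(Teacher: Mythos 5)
Your argument is correct, and its engine is the same as the paper's: a nonzero submodule of the pure module $M = \D_{X,\x}[S]f^{\prime}F^{S}/\D_{X,\x}[S]gf^{\prime}F^{S}$ must have grade $n+1$, equivalently a relative characteristic variety of dimension $n+r-1$, and the Lagrangian bound $\V(J^{\text{rel}}) \subseteq \Lambda \times \mathbb{C}^{r}$ then forces its $\mathbb{C}[S]$-annihilator to cut out something of pure dimension $r-1$, which the hypothesis on $\V(a)\cap\V(b)$ contradicts. Where you diverge is in the choice of submodule and the bookkeeping. The paper simply takes $N = b(S)M$: its $\mathbb{C}[S]$-annihilator contains both $a(S)$ and $B$, so $\V(\ann N) \subseteq \V(a)\cap\V(b)$ has dimension at most $r-2$, and one quotes Proposition 9 of \cite{Maisonobetheory} once more to get the contradiction in two lines. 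You instead factor $B = b(S)J$, pick an associated prime $\mathfrak{q}$ of $J$, and build the cyclic submodule $\D_{X,\x}[S]\cdot b(S)c(S)\overline{f^{\prime}F^{S}}$ with $\mathbb{C}[S]$-annihilator exactly $\mathfrak{q}$; the hypothesis on $\V(a)\cap\V(b)$ enters only through your valuation-theoretic exclusion of height-one $\mathfrak{q}$. Both are sound. The paper's version is shorter and applies the hypothesis directly; yours is more self-contained on the dimension count (you redo the grade--codimension comparison by hand on $M'$ rather than citing the structure theorem for $\V(\ann)$ of a submodule), and it isolates cleanly the two structural inputs — purity of grade $n+1$ and the Lagrangian majorization — exactly as you flag at the end. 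Your steps all check out: $B = \ann_{\mathbb{C}[S]}(\overline{f^{\prime}F^{S}})$, the cancellation arguments in the domain $\mathbb{C}[S]$, the identity $\ann_{\mathbb{C}[S]}(m) = (J:c) = \mathfrak{q}$, and the containment $\V(J^{\text{rel}}(M')) \subseteq \Lambda \times \V(\mathfrak{q})$ because $\mathfrak{q}$ sits in filtration degree zero for $F_{(0,1,0)}$.
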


\begin{proof}
It suffices to show $b(S)\frac{\D_{X,\x}[S]f^{\prime}F^{S}}{\D_{X,\x}[S]gf^{\prime}F^{S}}$ is zero. If it is nonzero, it is a submodule of the pure module $\frac{\D_{X,\x}[S]f^{\prime}F^{S}}{\D_{X,\x}[S]gf^{\prime}F^{S}}$ of grade $n+1$ and so is itself pure of the same grade. Reasoning as in Proposition \ref{prop- principal radical}, cf. Proposition 9 of \cite{Maisonobetheory} in particular, all the minimal primes of $\mathbb{C}[S]$-annihilator of $b(S) \frac{\D_{X,\x}[S]f^{\prime}F^{S}}{\D_{X,\x}[S]gf^{\prime}F^{S}}$ have dimension $r-1$. But the variety of this annihilator is contained inside $\V(\mathbb{C}[S] \cdot b(S)) \cap \V(\mathbb{C}[S] \cdot a(S))$ which is of dimension $r-2$ by hypothesis. As this is impossible, $b(S) \frac{\D_{X,\x}[S]f^{\prime}F^{S}}{\D_{X,\x}[S]gf^{\prime}F^{S}}$ must be zero.
\end{proof}

\subsection{Symmetry of Some Bernstein--Sato Varieties} \text{ }

As Theorem \ref{thm- useful duality formula} generalizes Corollary 3.6 of \cite{MAcarroDuality} and Proposition 6 of \cite{MaisonobeFree}, one would hope $B_{f^{\prime}F,\x}^{g}$ has a symmetry generalizing Theorem 4.1 of \cite{MAcarroDuality} and Proposition 8 of \cite{MaisonobeFree}. However, without reducedness and with the addition of $f^{\prime}$, symmetry seems to depend on the factorization of $f$.
\begin{define} \label{def-unmixed}
Suppose $f$ has a factorization into irreducibles $l_{1}^{v_{1}} \cdots l_{q}^{v_{q}}$ at $\x$ where the $l_{t}$ are distinct and $v_{t} \in \mathbb{Z}_{+}$. Let $f = f_{1} \cdots f_{r}$ be some other factorization of $f$ and let $F = (f_{1}, \dots, f_{r})$. We say the factorization $f = f_{1} \cdots f_{r}$ is \emph{unmixed} if the following hold:
\begin{enumerate}[(i)]
\item for each $k$, there exists $d_{k} \in \mathbb{Z}_{+}$ and $J_{k} \subseteq [q]$ such that $f_{k} = \prod_{j \in J_{k}} l_{j}^{d_{k}};$ 
\item if $i, j \in J_{k}$, then $v_{i} = v_{j}$.
\end{enumerate}
$F$ is \emph{unmixed} when it corresponds to an unmixed factorization; $F$ is \emph{unmixed up to units} if there exists units $u_{1}, \dots, u_{r}$ such that $uF = (u_{1}f_{1}, \dots, u_{r}f_{r})$ is unmixed. Given an unmixed factorization, let the $\emph{repeated multiplicity}$ of $F$ be $\{m_{k}\}_{k}$ where, for any $j \in J_{k}$ (and thus all), $m_{k}$ is the multiplicity of $l_{j}$ with respect to $f$. 

For $f^{\prime} \in \bigO_{X,\x}$ compatible with $f$, we say $(f^{\prime}, F)$ is an \emph{unmixed pair} if:
\begin{enumerate}[(i)$'$]
    \item $F$ is unmixed;
    \item $f^{\prime} = \prod_{k} \prod_{j \in J_{k}} l_{j}^{d_{k}^{\prime}}$ for $ d_{k}^{\prime} \in \mathbb{Z}$.

\end{enumerate}
The pair $(f^{\prime},F)$ is an \emph{unmixed pair up to units} if $F$ is unmixed up to units and $f^{\prime}$ satisfies (ii$'$) after possibly multiplying by a unit. For $(f^{\prime}, F)$ an unmixed pair up to units, the \emph{pairs of repeated powers} of $(f^{\prime},F)$ are $\{(d_{k}^{\prime}, d_{k})\}_{k}$.
\end{define}

\begin{lemma} \label{lemma- unmixed}
Write $f = l_{1}^{v_{1}} \cdots l_{q}^{v_{q}}$ where the $l_{i}$ are distinct and irreducible; $f_{k} = \prod_{j \in J_{k}} l_{j}^{d_{k}}$; $f_{\red} = l_{1} \cdots l_{q}$. Assume that $f^{\prime}$ and $g$ are compatible with $f$, $F = (f_{1}, \dots, f_{r})$ a factorization of $f$, $(f^{\prime}, F)$  and $(g, F)$ are unmixed pairs with pairs of repeated powers $\{(d_{k}^{\prime}, d_{k})\}_{k}$ and $\{(d_{k}^{\prime \prime}, d_{k})\}_{k}$, and $\{m_{k}\}_{k}$ the repeated multiplicities of $F$. If $\varphi: \mathbb{C}[S] \to \mathbb{C}[S]$ is the automorphism of $\mathbb{C}$-algebras induced by
\[
\varphi(s_{k}) = - s_{k} - \frac{1}{m_{k}} - \frac{2d_{k}^{\prime}}{d_{k}} - \frac{d_{k}^{\prime \prime}}{d_{k}},
\]
then for $\delta \in \Der_{X,\x}(-\log f)$, and after extending $\varphi$ to $\D_{X,\x}[S]$,
\[
\varphi(\psi_{(f^{\prime}gf_{\red})^{-1}F,\x}^{-S}(\delta)) = \psi_{f^{\prime}F,\x}^{S}(\delta).
\]

\end{lemma}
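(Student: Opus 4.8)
The plan is to verify the identity by a direct computation, reducing everything to the behaviour of $\psi$ on a single logarithmic derivation and tracking how the coefficients transform under $\varphi$. First I would unwind the definitions: for $\delta \in \Der_{X,\x}(-\log f)$ we have
\[
\psi_{(f^{\prime}gf_{\red})^{-1}F,\x}^{-S}(\delta) = \delta + \sum_{k} \frac{\delta \bullet f_{k}}{f_{k}} s_{k} + \frac{\delta \bullet (f^{\prime}gf_{\red})}{f^{\prime}gf_{\red}},
\]
since flipping the sign on the $s_{k}$ is exactly what $(-)^{-S}$ does, and the $(f^{\prime}gf_{\red})^{-1}$ factor contributes $+\,\delta \bullet (f^{\prime}gf_{\red})/(f^{\prime}gf_{\red})$ (the inverse power flips the sign of the logarithmic-derivative term relative to $\psi_{f^{\prime}F}$). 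Likewise $\psi_{f^{\prime}F,\x}^{S}(\delta) = \delta - \sum_{k} \frac{\delta \bullet f_{k}}{f_{k}} s_{k} - \frac{\delta \bullet f^{\prime}}{f^{\prime}}$. So applying $\varphi$ (extended to $\D_{X,\x}[S]$ as the identity on $\D_{X,\x}$ and as the given affine map on the $s_{k}$) to the first expression and comparing coefficients of $\delta$, of each $\frac{\delta \bullet f_{k}}{f_{k}}$, and of the constant term, the claim reduces to a scalar identity.

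Next I would exploit the unmixed-pair hypotheses to rewrite all the logarithmic derivatives in terms of the $\frac{\delta \bullet l_{j}}{l_{j}}$. Because $f_{k} = \prod_{j \in J_{k}} l_{j}^{d_{k}}$ we get $\frac{\delta \bullet f_{k}}{f_{k}} = d_{k} \sum_{j \in J_{k}} \frac{\delta \bullet l_{j}}{l_{j}}$; because $(f^{\prime},F)$ is an unmixed pair, $f^{\prime} = \prod_{k}\prod_{j \in J_{k}} l_{j}^{d_{k}^{\prime}}$ so $\frac{\delta \bullet f^{\prime}}{f^{\prime}} = \sum_{k} d_{k}^{\prime} \sum_{j \in J_{k}} \frac{\delta \bullet l_{j}}{l_{j}}$; similarly $\frac{\delta \bullet g}{g} = \sum_{k} d_{k}^{\prime\prime} \sum_{j \in J_{k}} \frac{\delta \bullet l_{j}}{l_{j}}$; and $\frac{\delta \bullet f_{\red}}{f_{\red}} = \sum_{t} \frac{\delta \bullet l_{t}}{l_{t}} = \sum_{k} \sum_{j \in J_{k}} \frac{\delta \bullet l_{j}}{l_{j}}$ since the $J_{k}$ partition $[q]$. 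Also $f = \prod_{k}\prod_{j\in J_{k}} l_{j}^{v_{j}}$ with $v_{j} = m_{k}$ constant on $J_{k}$, so $d_{k} = $ (that is $f_{k} = \prod_{j\in J_k} l_j^{d_k}$ with $\prod_k f_k = f$ forces $d_k = m_k$ when $J_k$'s are disjoint and cover $[q]$ — I should double-check the bookkeeping here, but in any event $d_k$ and $m_k$ are linked by the requirement $\prod_k f_k = f$). Substituting the affine map $\varphi(s_k) = -s_k - \frac{1}{m_k} - \frac{2d_k'}{d_k} - \frac{d_k''}{d_k}$ into $\sum_k \frac{\delta\bullet f_k}{f_k}\varphi(s_k)$ and collecting the coefficient of each $\frac{\delta \bullet l_j}{l_j}$ (for $j \in J_k$), I expect the $-s_k$ part to reproduce $-\sum_k \frac{\delta \bullet f_k}{f_k} s_k$, the $-\frac{1}{m_k}\cdot d_k$ part (together with $\frac{\delta\bullet f_{\red}}{f_{\red}}$) to cancel, the $-\frac{2d_k'}{d_k}\cdot d_k = -2d_k'$ part to combine with the $+\frac{\delta\bullet f^{\prime}}{f^{\prime}}$ from the $(f'gf_{\red})^{-1}$ term and leave exactly $-\frac{\delta \bullet f'}{f'}$, and the $-\frac{d_k''}{d_k}\cdot d_k = -d_k''$ part to cancel the $+\frac{\delta\bullet g}{g}$ term. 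This is the content of the identity, and it is a routine but slightly fiddly bookkeeping exercise.

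The main obstacle — and the step to be careful about — is getting the normalization constants exactly right: specifically confirming that $d_k = m_k$ (or whatever the correct relation is) under the unmixed hypothesis, and that the $f_{\red}$ contribution exactly absorbs the $\sum_k \frac{1}{m_k}\cdot\frac{\delta\bullet f_k}{f_k}$ term, i.e. that $\sum_k \frac{1}{m_k} d_k \sum_{j\in J_k}\frac{\delta\bullet l_j}{l_j} = \sum_k\sum_{j\in J_k}\frac{\delta\bullet l_j}{l_j} = \frac{\delta\bullet f_{\red}}{f_{\red}}$, which holds precisely because $d_k/m_k = 1$ on each block. Once the constants check out, the proof is just assembling these pieces: apply $\varphi$ to $\psi_{(f^{\prime}gf_{\red})^{-1}F,\x}^{-S}(\delta)$, use the rewrites above, and observe term-by-term that the result is $\psi_{f^{\prime}F,\x}^{S}(\delta)$. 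I would also remark that since $\psi_{f^{\prime}F,\x}$ and $\psi_{(f^{\prime}gf_{\red})^{-1}F,\x}^{-S}$ are both $\bigO_{X,\x}$-linear isomorphisms onto their images and $\varphi$ is a ring automorphism fixing $\bigO_{X,\x}$, it suffices to check the identity on a generating set of $\Der_{X,\x}(-\log f)$, so no delicate issues of well-definedness arise.
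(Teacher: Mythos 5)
Your overall route is the same as the paper's (a direct term-by-term computation after rewriting every logarithmic derivative in terms of the $\frac{\delta\bullet l_j}{l_j}$), and the parts of your bookkeeping involving $f^{\prime}$ and $g$ are correct: $\sum_k \frac{\delta\bullet f_k}{f_k}\cdot\frac{2d_k^{\prime}}{d_k} = 2\frac{\delta\bullet f^{\prime}}{f^{\prime}}$ and $\sum_k \frac{\delta\bullet f_k}{f_k}\cdot\frac{d_k^{\prime\prime}}{d_k} = \frac{\delta\bullet g}{g}$ follow immediately from (ii$'$) of Definition \ref{def-unmixed}. But the step you flagged as needing a double-check is exactly where the argument breaks: it is \emph{not} true that $d_k = m_k$, nor that the $J_k$ partition $[q]$. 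Definition \ref{def-unmixed} imposes no disjointness on the $J_k$, and in the paper's most important example — the total factorization $L=(l_1,\dots,l_1,\dots,l_q)$ with $l_t$ repeated $v_t$ times — the sets $J_k$ coincide in blocks; e.g.\ for $f=x^2y^2$ with $F=(xy,xy)$ one has $J_1=J_2=\{1,2\}$, $d_1=d_2=1$, but $m_1=m_2=2$. Consequently your proposed identity $\sum_k\frac{d_k}{m_k}\sum_{j\in J_k}\frac{\delta\bullet l_j}{l_j}=\sum_k\sum_{j\in J_k}\frac{\delta\bullet l_j}{l_j}$ is false as stated, and even if it held the right-hand side would overcount $\frac{\delta\bullet f_{\red}}{f_{\red}}$ when the $J_k$ overlap.

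The correct resolution — and the one observation the paper's proof isolates — is that $v_t=\sum_{k:\, t\in J_k} d_k$, because $\prod_k f_k=f$ forces the exponent of $l_t$ to add up across all factors containing it. Collecting the coefficient of a fixed $\frac{\delta\bullet l_t}{l_t}$ in $\sum_k\frac{d_k}{m_k}\sum_{j\in J_k}\frac{\delta\bullet l_j}{l_j}$ gives $\sum_{k:\, t\in J_k}\frac{d_k}{m_k}=\frac{1}{v_t}\sum_{k:\, t\in J_k} d_k=1$, since $m_k=v_t$ for every $k$ with $t\in J_k$ by the definition of repeated multiplicity. This yields $\sum_k\frac{d_k}{m_k}\sum_{j\in J_k}\frac{\delta\bullet l_j}{l_j}=\sum_t\frac{\delta\bullet l_t}{l_t}=\frac{\delta\bullet f_{\red}}{f_{\red}}$ without any disjointness assumption, and with this substitution the rest of your computation closes up exactly as you describe.
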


\begin{proof}
This is a straightforward computation once we observe that $v_{j}$ is the sum of all the $d_{k}$ such that $l_{j}$ divides $f_{k}$.
\end{proof}

\begin{thm} \label{thm-unmixed symmetry}
Suppose $f = f_{1} \cdots f_{r} \in \bigO_{X}$ is free, strongly Euler-homogeneous, and Saito-holonomic, and while $f$ is not necessarily reduced, suppose that it admits a strongly Euler-homogeneous reduced defining equation at $\x$. Let $F = (f_{1}, \cdots, f_{r})$ and select $g \in \bigO_{X,\x}$ such that $f \in \bigO_{X,\x} \cdot g$. Assume that $f^{\prime}$ and $g$ are compatible with $f$, $(f^{\prime}, F)$  and $(g, F)$ are unmixed pairs up to units with pairs of repeated powers $\{(d_{k}^{\prime}, d_{k})\}_{k}$ and $\{(d_{k}^{\prime \prime}, d_{k})\}_{k}$, and $\{m_{k}\}_{k}$ are the repeated multiplicities of $F$. If $\varphi: \mathbb{C}[S] \to \mathbb{C}[S]$ is the automorphism of $\mathbb{C}$-algebras induced by
\[
\varphi(s_{k}) = - s_{k} - \frac{1}{m_{k}} - \frac{2d_{k}^{\prime}}{d_{k}} - \frac{d_{k}^{\prime \prime}}{d_{k}},
\]
then
\[
B(S) \in B_{f^{\prime}F,\x}^{g} \iff \varphi(B(S)) \in B_{f^{\prime}F,\x}^{g}.
\]
\end{thm}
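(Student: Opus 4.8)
The plan is to run the duality argument of Narv\'aez-Macarro (\cite{MAcarroDuality}, Theorem~4.1) and Maisonobe (\cite{MaisonobeFree}, Proposition~8), now fed by Theorem~\ref{thm- useful duality formula} and Lemma~\ref{lemma- unmixed}. By the strongly Euler-homogeneous hypothesis and Remark~\ref{rmk-strongly Euler}, fix a reduced Euler-homogeneous defining equation $f_{\red}$ for $f$ at $\x$, and set
\[
M \;=\; \frac{\D_{X,\x}[S]\,f^{\prime}F^{S}}{\D_{X,\x}[S]\cdot g f^{\prime}F^{S}}, \qquad N \;=\; \frac{\D_{X,\x}[S]\,(gf^{\prime}f_{\red})^{-1}F^{-S}}{\D_{X,\x}[S]\,(f^{\prime}f_{\red})^{-1}F^{-S}}.
\]
Since $(f^{\prime},F)$ and $(g,F)$ are unmixed pairs only up to units, I would first replace $f_{1},\dots,f_{r},f^{\prime},g$ by suitable unit multiples so that both become unmixed pairs in the strict sense of Definition~\ref{def-unmixed}; this changes neither $\varphi$ nor $B_{f^{\prime}F,\x}^{g}$, as replacing defining factors by unit multiples yields an isomorphic quotient $\D_{X,\x}[S]$-module. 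Unwinding Definition~\ref{def-multivariate BS ideals generalization} gives $B_{f^{\prime}F,\x}^{g}=\ann_{\mathbb{C}[S]}(M)$; and since $(f^{\prime}f_{\red})^{-1}F^{-S}=g\cdot(gf^{\prime}f_{\red})^{-1}F^{-S}$, the module $N$ is cyclic on the class of $(gf^{\prime}f_{\red})^{-1}F^{-S}$, so $\ann_{\mathbb{C}[S]}(N)=\mathbb{C}[S]\cap\big(\ann_{\D_{X,\x}[S]}(gf^{\prime}f_{\red})^{-1}F^{-S}+\D_{X,\x}[S]\cdot g\big)$.

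The first key step is that $\ann_{\mathbb{C}[S]}(M)=\ann_{\mathbb{C}[S]}(N)$. Theorem~\ref{thm- useful duality formula} gives $\mathbb{D}(M)\simeq N[n+1]$, and applying its $F^{-S}$-analogue (obtained from the substitution $s_{k}\mapsto-s_{k}$, as in the proof of Proposition~\ref{prop- principal radical}) to $N=\D_{X,\x}[S](gf^{\prime}f_{\red})^{-1}F^{-S}/\D_{X,\x}[S]\,g(gf^{\prime}f_{\red})^{-1}F^{-S}$ — legitimate because $(gf^{\prime}f_{\red})^{-1}$ is compatible with $f$ (Definition~\ref{def-f prime compatible}) — and simplifying the resulting fractions yields $\mathbb{D}(N)\simeq M[n+1]$. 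Hence $\mathbb{D}$ is an involution on $M$ and $N$ (equivalently, $M$ is pure of grade $n+1$, cf.\ the proof of Proposition~\ref{prop- principal radical}). Now $\mathbb{C}[S]$ is central in $\D_{X,\x}[S]$, so for $B(S)\in\mathbb{C}[S]$ multiplication by $B(S)$ is a $\D_{X,\x}[S]$-endomorphism of $M$, and $\mathbb{C}[S]$-linearity of $\mathbb{D}$ sends it to multiplication by $B(S)$ on $\mathbb{D}(M)\simeq N[n+1]$; the involutivity just established then forces $B(S)M=0\iff B(S)N=0$, which is the claim.

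The second key step is that $\varphi$ intertwines the two annihilators. Extend $\varphi$ to the $\D_{X,\x}$-algebra automorphism of $\D_{X,\x}[S]$ that is the identity on $\D_{X,\x}$; it restricts to an automorphism of $\mathbb{C}[S]$ and fixes $g\in\bigO_{X,\x}$, so $\varphi(\D_{X,\x}[S]\cdot g)=\D_{X,\x}[S]\cdot g$. Since $(gf^{\prime}f_{\red})^{-1}$ and $f^{\prime}$ are compatible with $f$, by Theorems~\ref{thm-gen by derivations} and~\ref{thm-gen by derivations -s} (applicable under the present hypotheses, as in the proof of Theorem~\ref{thm- useful duality formula}) the annihilators $\ann_{\D_{X,\x}[S]}(gf^{\prime}f_{\red})^{-1}F^{-S}$ and $\ann_{\D_{X,\x}[S]}f^{\prime}F^{S}$ are generated by derivations, i.e.\ equal $\D_{X,\x}[S]\cdot\theta_{(gf^{\prime}f_{\red})^{-1}F,\x}^{-S}$ and $\D_{X,\x}[S]\cdot\theta_{f^{\prime}F,\x}$ respectively. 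Lemma~\ref{lemma- unmixed} says precisely that $\varphi\big(\psi_{(gf^{\prime}f_{\red})^{-1}F,\x}^{-S}(\delta)\big)=\psi_{f^{\prime}F,\x}(\delta)$ for every $\delta\in\Der_{X,\x}(-\log f)$, so $\varphi$ carries the first generating set onto the second; combined with $\varphi(\D_{X,\x}[S]\cdot g)=\D_{X,\x}[S]\cdot g$ and the facts that $\varphi$ preserves $\mathbb{C}[S]$ and is bijective, this gives $\varphi\big(\ann_{\mathbb{C}[S]}(N)\big)=\mathbb{C}[S]\cap\big(\ann_{\D_{X,\x}[S]}f^{\prime}F^{S}+\D_{X,\x}[S]\cdot g\big)=B_{f^{\prime}F,\x}^{g}=\ann_{\mathbb{C}[S]}(M)$.

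Combining the two steps, $\varphi(B_{f^{\prime}F,\x}^{g})=\varphi(\ann_{\mathbb{C}[S]}(N))=\ann_{\mathbb{C}[S]}(M)=B_{f^{\prime}F,\x}^{g}$; since $\varphi$ is a bijection of $\mathbb{C}[S]$, this is exactly $B(S)\in B_{f^{\prime}F,\x}^{g}\iff\varphi(B(S))\in B_{f^{\prime}F,\x}^{g}$. I expect the main obstacle to be the first step: turning the duality isomorphism into an honest equality of $\mathbb{C}[S]$-annihilators needs the $\D_{X,\x}[S]$-dual to be involutive on $M$ and $N$, which is why one applies Theorem~\ref{thm- useful duality formula} in both the $F^{S}$ and $F^{-S}$ directions (equivalently, invokes purity). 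Everything else — the reduction to strict unmixed pairs and the transport of generating derivations under $\varphi$ — is routine bookkeeping, the latter already packaged in Lemma~\ref{lemma- unmixed}.
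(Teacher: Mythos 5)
Your proposal is correct and follows the same overall strategy as the paper: reduce to honest unmixed pairs, use the duality of Theorem \ref{thm- useful duality formula} together with the generated-by-derivations presentations, and transport generators via Lemma \ref{lemma- unmixed}. The one place you diverge is the step you flag as the main obstacle. The paper never establishes the two-sided equality $\ann_{\mathbb{C}[S]}(M)=\ann_{\mathbb{C}[S]}(N)$ by biduality; it only uses the one-directional implication coming from $\mathbb{C}[S]$-linearity of $\mathbb{D}$ (namely $B(S)M=0\Rightarrow B(S)\,\mathbb{D}(M)=0$), which together with Lemma \ref{lemma- unmixed} yields the containment $\varphi(B_{f^{\prime}F,\x}^{g})\subseteq B_{f^{\prime}F,\x}^{g}$, and then the reverse containment is free because $\varphi$ is an involution: applying $\varphi$ to the containment gives $B_{f^{\prime}F,\x}^{g}\subseteq\varphi(B_{f^{\prime}F,\x}^{g})$. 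Your biduality route also works, but it obliges you to run the $F^{-S}$-analogue of Theorem \ref{thm- useful duality formula} with the ``numerator'' $(gf^{\prime}f_{\red})^{-1}$, which lies in $\bigO_{X,\x}[\tfrac{1}{f}]$ rather than $\bigO_{X,\x}$ as in the theorem's stated hypotheses; the underlying machinery (Theorems \ref{thm-gen by derivations} and \ref{thm-gen by derivations -s} and the Spencer complex) does extend to such compatible elements, but this verification is exactly what the paper's involution trick lets you skip. Everything else in your argument — the reduction up to units, the identification of $\ann_{\mathbb{C}[S]}(N)$ via the cyclic presentation, and the computation $\varphi(\ann_{\mathbb{C}[S]}(N))=B_{f^{\prime}F,\x}^{g}$ — matches the paper.
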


\begin{proof}
We first reduce to the case that $(f^{\prime}, F)$ and $(g, F)$ are unmixed pairs. It follows from the functional equation that if $u$ is a unit in $\mathscr{O}_{X,\x}$, then $B_{f^{\prime}F,\x}^{g} = B_{uf^{\prime}F,\x}^{g}$ and $B_{f^{\prime}F,\x}^{g} = B_{f^{\prime}F,\x}^{ug}$. To finish the reduction, we must also verify that if $F^{\prime} = (u_{1}f_{1}, \dots, u_{r}f_{r})$ for units $u_{1}, \dots, u_{r}$ in $\mathscr{O}_{X,\x}$, then $B_{f^{\prime}F,\x}^{g} = B_{f^{\prime}F^{\prime},\x}^{g}$. This follows by arguing as in Lemma 10 (i) of \cite{BahloulOaku} wherein the claim is proved for $f^{\prime} = 1$ and $g = f$. 

By the $\mathbb{C}[S]$-linearity of $\mathbb{D}$, cf. Remark 3.2 of \cite{MAcarroDuality}, and by Theorem \ref{thm- useful duality formula}, 
\[
B(S) \in \ann_{\mathbb{C}[S]} \frac{\D_{X,\x}[S]f^{\prime}F^{S}}{\D_{X,\x}[S] \cdot g f^{\prime}F^{S}}
\implies B(S) \in \ann_{\mathbb{C}[S]} \frac{\D_{X,\x}[S](g f^{\prime} f_{\red})^{-1}F^{-S}}{\D_{X,\x}[S] \cdot (f^{\prime} f_{\red})F^{-S}}
\]
where we may assume $f_{\red}$ is as in Lemma \ref{lemma- unmixed}, cf. Remark \ref{rmk-strongly Euler}. In other words, 
\begin{align*}
B(S) \in \mathbb{C}[S]& \cap ( \D_{X,\x}[S] \cdot \theta_{f^{\prime} F,\x} + \D_{X,\x}[S] \cdot g) \\
    & \implies B(S) \in \mathbb{C}[S] \cap (\D_{X,\x}[S] \cdot \theta_{(f^{\prime}gf_{\red})^{-1}F,\x}^{-S} + \D_{X,\x}[S] \cdot g).
\end{align*}
By Lemma \ref{lemma- unmixed}, $\varphi$ induces a $\D_{X,\x}$-automorphism that sends $\D_{X,\x}[S] \cdot \theta_{(f^{\prime}gf_{red})^{-1}F,\x}^{-S} + \D_{X,\x}[S] \cdot g$ to $\D_{X,\x}[S] \cdot \theta_{f^{\prime}F,\x} + \D_{X,\x}[S] \cdot g.$ Therefore $\varphi(B_{f^{\prime}F,\x}^{I}) \subseteq B_{f^{\prime}F,\x}^{I}.$ The reverse containment follows from the fact $\varphi$ is an involution.
\end{proof}

\begin{remark}
Suppose $f$, $f^{\prime}$, and $F$ are as in Theorem \ref{thm-unmixed symmetry}, and $I$ is the ideal generated by $g_{1}, \dots, g_{u}$ such that $f \in \bigO_{X,\x} \cdot g_{j}$. If $\Sp_{f^{\prime}F,\x}^{g}$ and its $\D_{X,\x}[S]$-dual are both resolutions, then $\varphi$ fixes $B_{f^{\prime}F,\x}^{I}$. Note that $\varphi$ depends only on the product of the $g_{j}$.
\end{remark}

Let us catalogue some of the most useful versions of the theorem:

\begin{cor} \label{cor- symmetry formula list}
Suppose $f = f_{1} \cdots f_{r} \in \bigO_{X}$ is free, strongly Euler-homogeneous, and Saito-holonomic, and while $f$ is not necessarily reduced, suppose that it admits a strongly Euler-homogeneous reduced defining equation at $\x$.  Let $F= (f_{1}, \dots, f_{r})$ and $\varphi$ be as in Theorem \ref{thm-unmixed symmetry}.
\begin{enumerate}[(a)]
    \item Suppose that $F = (l_{1}, \dots, l_{1}, \dots, l_{q})$ with each $l_{t}$ appearing $v_{t}$ times, and $f^{\prime}$ and $g$ any elements of $\bigO_{X,\x}$ dividing $f$. Then $\varphi(B_{f^{\prime}F,\x}^{g}) = B_{f^{\prime}F,\x}^{g}$.
    \item Suppose $f$ is reduced, $F$ corresponds to any factorization, $f^{\prime} = \prod_{k^{\prime} \in K^{\prime}} f_{k}^{\prime}$, $g = \prod_{k \in K} f_{k}$, for $K^{\prime}, K \subseteq [r]$. Then $\varphi(B_{f^{\prime}F,\x}^{g}) = B_{f^{\prime}F,\x}^{g}.$
    \item Suppose $f^{\prime}$ divides $f = f_{1} \cdots f_{r}$, $F = (f_{1}, \dots, f_{r})$ and $g= \frac{f}{f^{\prime}}$. If $(f^{\prime}, F)$ is an unmixed pair up to units, then $\varphi(B_{f^{\prime}F,\x}^{g}) = B_{f^{\prime}F,\x}^{g}$ 
    \item Suppose $f = f_{\red}^{k}$ and $F = (f_{\red}^{k})$. Then $\varphi(s) = -s - 1 - \frac{1}{k}$ and $\varphi(B_{f^{k},\x}) = B_{f^{k},\x}.$
\end{enumerate}
\end{cor}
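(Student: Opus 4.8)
The plan is to recognize each of (a)--(d) as a direct specialization of Theorem~\ref{thm-unmixed symmetry}: in every case one checks that the compatibility and unmixed-pair hypotheses hold and reads off the combinatorial data $\{(d_k^{\prime},d_k)\}_k$, $\{(d_k^{\prime\prime},d_k)\}_k$, $\{m_k\}_k$ that determine $\varphi$. There is no new geometric input, only bookkeeping with the irreducible factorization. Write $f = u\, l_1^{v_1}\cdots l_q^{v_q}$ at $\x$ with the $l_t$ pairwise distinct irreducibles.

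For (a): since $F = (l_1,\dots,l_1,\dots,l_q)$ has $l_t$ repeated $v_t$ times, each factor of $F$ equals a single $l_j$, so in the notation of Definition~\ref{def-unmixed} every $J_k$ is a singleton, $d_k = 1$, and $m_k$ is the corresponding $v_j$; conditions (i), (ii) of Definition~\ref{def-unmixed} then hold trivially because $\abs{J_k}=1$, so $F$ is unmixed. Any $f^{\prime}$, $g$ dividing $f$ are, up to a unit, products of powers of the $l_t$ --- hence compatible with $f$ --- and, $J_k$ being singletons, the exponent condition (ii)$^{\prime}$ can be met by distributing the exponents of $f^{\prime}$ (resp.\ $g$) among the relevant factors of $F$; thus $(f^{\prime},F)$ and $(g,F)$ are unmixed pairs up to units, and Theorem~\ref{thm-unmixed symmetry} gives the claim. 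For (b): as $f$ is reduced we may take $f = l_1\cdots l_q$, so every $v_t = 1$; an arbitrary factorization $F$ has $f_k = \prod_{j\in J_k} l_j$ with the $J_k$ partitioning $[q]$, so $d_k = 1$, $m_k = 1$, and $F$ is unmixed. Writing $f^{\prime} = \prod_{k^{\prime}\in K^{\prime}} f_{k^{\prime}} = \prod_{k\in[r]}\prod_{j\in J_k} l_j^{d_k^{\prime}}$ with $d_k^{\prime}=1$ for $k\in K^{\prime}$ and $0$ otherwise (and similarly $d_k^{\prime\prime}$ from $g=\prod_{k\in K} f_k$) exhibits $(f^{\prime},F)$ and $(g,F)$ as unmixed pairs; Theorem~\ref{thm-unmixed symmetry} applies.

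For (c): by hypothesis $(f^{\prime},F)$ is an unmixed pair up to units, so after multiplying the $f_k$ and $f^{\prime}$ by suitable units we have $f_k = \prod_{j\in J_k} l_j^{d_k}$ and $f^{\prime} = \prod_k\prod_{j\in J_k} l_j^{d_k^{\prime}}$; hence $f = \prod_k\prod_{j\in J_k} l_j^{d_k}$ and $g = f/f^{\prime} = \prod_k\prod_{j\in J_k} l_j^{\,d_k-d_k^{\prime}}$ up to a unit. Therefore $(g,F)$ is an unmixed pair up to units with pairs of repeated powers $\{(d_k-d_k^{\prime},d_k)\}_k$, all hypotheses of Theorem~\ref{thm-unmixed symmetry} are satisfied, and it applies --- incidentally $\varphi(s_k)$ then reduces to $-s_k-\tfrac{1}{m_k}-1-\tfrac{d_k^{\prime}}{d_k}$, although the explicit form is not needed. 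For (d): here $r=1$, $f = f_{\red}^k$, and writing $f_{\red} = l_1\cdots l_q$ gives $f = l_1^k\cdots l_q^k$, so $J_1 = [q]$, $d_1 = k$, $v_j = k$ for all $j$, and $m_1 = k$; thus $F = (f_{\red}^k)$ is unmixed. Taking $f^{\prime}=1$ (so $d_1^{\prime}=0$) and $g = f = f_{\red}^k = \prod_{j\in J_1} l_j^k$ (so $d_1^{\prime\prime}=k$), both $(1,F)$ and $(f_{\red}^k,F)$ are unmixed pairs, and the strongly Euler-homogeneous reduced defining equation required by Theorem~\ref{thm-unmixed symmetry} is $f_{\red}$ itself, available by hypothesis. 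Theorem~\ref{thm-unmixed symmetry} then yields $\varphi(s) = -s-\tfrac1k-\tfrac{2\cdot 0}{k}-\tfrac{k}{k} = -s-1-\tfrac1k$ and $\varphi(B_{f^k,\x}) = B_{f^k,\x}$, where $B_{f^k,\x}$ is the classical Bernstein--Sato ideal of $f_{\red}^k$, namely $B_{f^{\prime}F,\x}^{g}$ with $f^{\prime}=1$, $F=(f_{\red}^k)$, $g=f_{\red}^k$.

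The one point needing a little care is the bookkeeping in (c): one must check that dividing $f$ by an unmixed $f^{\prime}$ gives a $g$ that is again an unmixed pair with the \emph{same} $F$ (done above), and that the preliminary replacement of the $f_k$ and $f^{\prime}$ by unit multiples is harmless --- the latter because, exactly as in the reduction opening the proof of Theorem~\ref{thm-unmixed symmetry}, such replacements change $f$, $f^{\prime}$, $g$ only by units and leave $B_{f^{\prime}F,\x}^{g}$ unchanged. Everything else is immediate from the definitions.
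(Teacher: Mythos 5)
Your proposal is correct and follows exactly the route the paper takes: every part is a specialization of Theorem \ref{thm-unmixed symmetry}, and the only work is verifying that $F$ is unmixed up to units and that $(f^{\prime},F)$, $(g,F)$ are unmixed pairs up to units, then reading off the data $\{(d_k^{\prime},d_k)\}$, $\{(d_k^{\prime\prime},d_k)\}$, $\{m_k\}$ (the paper's own proof is just a two-sentence version of this same bookkeeping). Your treatment of (c) — checking that $g=f/f^{\prime}$ inherits an unmixed pair structure with $d_k^{\prime\prime}=d_k-d_k^{\prime}$ — and of the unit reductions is if anything more careful than the published argument.
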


\begin{proof}
All that must be checked is that the appropriate things are unmixed pairs up to units. For example, in (a) and (b), $F$ is unmixed up to units because it is a factorization into irreducibles, possibly with repetition, and because $f$ is reduced, respectively. In both cases, $d_{k}$, $d_{k}^{\prime}$, and $d_{k}^{\prime \prime}$ are all $1$.
\end{proof}

The symmetry property for the Bernstein--Sato polynomial of a reduced divisor forces all its roots to lie inside $(-2,0)$, cf. \cite{MAcarroDuality}. We have the following generalization for powers of reduced divisor:
\begin{cor}
Suppose $f$ is reduced, free, strongly Euler-homogeneous, and Saito-holonomic. Then $\V(B_{f^{k}}) \subseteq (-1-\frac{1}{k}, 0)$. If $b_{f^{k}, \min}$ is the smallest root of the Bernstein--Sato polynomial of $f^{k}$, then $b_{f^{k}, \min} \to -1$ as $k \to \infty.$
\end{cor}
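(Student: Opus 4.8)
The plan is to read off a symmetry of the roots of $b_{f^{k}}(s)$ from Corollary \ref{cor- symmetry formula list}(d) and combine it with the classical negativity of Bernstein--Sato roots. First, I would apply Corollary \ref{cor- symmetry formula list}(d) to $f^{k}$, whose relevant hypotheses (freeness, strong Euler-homogeneity, Saito-holonomicity, and admitting a strongly Euler-homogeneous reduced defining equation, namely $f$ itself) are all inherited from $f$, since $\Der_{X,\x}(-\log f^{k}) = \Der_{X,\x}(-\log f)$ and scaling a strong Euler-homogeneity for $f$ gives one for $f^{k}$. That corollary tells me that at each point $\x$ the ideal $B_{f^{k},\x} \subseteq \mathbb{C}[s]$ is preserved by the involution $\varphi$ of $\mathbb{C}[s]$ given by $\varphi(s) = -s - 1 - \tfrac{1}{k}$, so each local root set $\V(B_{f^{k},\x})$ is stable under $\alpha \mapsto -\alpha - 1 - \tfrac{1}{k}$. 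Since the global Bernstein--Sato polynomial $b_{f^{k}}$ is the least common multiple of the local ones $b_{f^{k},\x}$, its root set $\V(B_{f^{k}})$ is the union of the $\V(B_{f^{k},\x})$ and is therefore also stable under this involution.

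Next, for the inclusion $\V(B_{f^{k}}) \subseteq (-1 - \tfrac{1}{k}, 0)$, I would invoke Kashiwara's theorem that every root of a Bernstein--Sato polynomial is a negative rational number: if $\alpha \in \V(B_{f^{k}})$, then its symmetric partner $-\alpha - 1 - \tfrac{1}{k}$ is also in $\V(B_{f^{k}})$, and since it is negative we must have $\alpha > -1 - \tfrac{1}{k}$. This gives the first assertion immediately.

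For the limit, I would note that $f$, hence $f^{k}$, is non-constant, so evaluating the univariate functional equation $b_{f^{k}}(s)(f^{k})^{s} = P(s)(f^{k})^{s+1}$ at $s = -1$ forces $b_{f^{k}}(-1) = 0$; thus $-1 \in \V(B_{f^{k}})$ and $b_{f^{k},\min} \le -1$. Combined with the previous paragraph this sandwiches $-1 - \tfrac{1}{k} < b_{f^{k},\min} \le -1$, and letting $k \to \infty$ forces $b_{f^{k},\min} \to -1$.

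The only genuinely delicate point is the passage from the germ-wise symmetry of Corollary \ref{cor- symmetry formula list}(d) to the algebraic ideal $B_{f^{k}}$, which I would handle via the standard identification of the global $b$-function with the lcm of the local ones; once that is in place, everything else is a short formal argument from the symmetry, the negativity of the roots, and the fact that $-1$ is always a root.
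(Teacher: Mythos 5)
Your proposal is correct and follows essentially the same route as the paper: apply the symmetry $\varphi(s) = -s-1-\tfrac{1}{k}$ from Corollary \ref{cor- symmetry formula list}(d) (noting the hypotheses pass from $f$ to $f^k$), combine it with the well-known negativity of Bernstein--Sato roots to get $\V(B_{f^k}) \subseteq (-1-\tfrac{1}{k},0)$, and conclude the limit statement from $-1 \in \V(B_{f^k})$. The local-to-global point you flag is handled the same way the paper does (via $B_{f^kF}^g = \bigcap_{\x} B_{f^kF,\x}^g$, cf. Remark \ref{remark-local to global b-poly}), so there is no gap.
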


\begin{proof}
Since freeness, strongly Euler-homogeneous, and Saito-holonomicity pass from $f_{\red}$ to $f^{k}$ we may use Corollary \ref{cor- symmetry formula list} to improve the well known containment $\V(B_{f^{k}, \x}) \subseteq (-\infty, 0)$ to $\V(B_{f^{k}, \x}) \subseteq (-1 - \frac{1}{k}, 0)$. The rest follows since $-1 \in \V(B_{f^{k},\x}).$
\end{proof}

\section{Bernstein--Sato Varieties for Tame and Free Arrangements}

In this section we study the global Bernstein--Sato ideals $B_{f^{\prime}F}^{g}$ where $f$ is a central, not necessarily reduced, tame hyperplane arrangement, $f^{\prime}$ divides $f$, $g = \frac{f}{f^{\prime}}$, and $F$ corresponds to the factorization $f=f_{1} \cdots f_{r}$, which need not be into linear forms. We always assume $\bigO_{X,\x} \cdot f^{\prime} \neq \bigO_{X,\x} \cdot f$. We revisit the arguments of Maisonobe in \cite{MaisonobeFree} giving full details for our versions of Lemma 2 and Proposition 9 in the first subsection and Proposition 10 in the second. We generalize the strategy of Lemma 2 and Proposition 9 to compute a principal ideal containing $B_{f^{\prime}F}^{g}$ for tame hyperplane arrangements and any $F$; we generalize Proposition 10 to find an element of $B_{f^{\prime}F}^{g}$ when $f$ is not necessarily reduced, not necessarily tame, and $F$ is the total factorization of $f$ into linear forms. As Maisonobe does in Theorem 2 of loc. cit., in the third subsection we use the symmetry of $B_{f^{\prime}F}^{g}$ when $f$ is free and $(f^{\prime},F)$ is an unmixed pair up to units to provide rather precise estimates of $\V(B_{f^{\prime}F}^{g})$. In certain situations, these estimates compute $\V(B_{f^{\prime}F}^{g})$.

\begin{define} \label{def-hyperplane arrangement factorizations}

Let $f \in \mathbb{C}[x_{1}, \dots, x_{n}]$ be a central, not necessarily reduced, hyperplane arrangement of degree $d$ whose factorization into homogeneous linear forms is $f = l_{1} \cdots l_{d}$. Associated to $f$ is the intersection lattice $L(A)$, partially ordered by reverse inclusion and with smallest element $\mathbb{C}^{n}$. We call any $X \in L(A)$ an \emph{edge} of $L(A)$. The \emph{rank} of $X$ is the length of a maximal chain in $L(A)$ with smallest element $\mathbb{C}^{n}$ and largest element $X$. We denote the rank of $X$ by $r(X)$; for example, $r(\V(l_{i})) = 1$. Given an edge $X \in L(A)$ we define $J(X)$ to be the subset of $[d]$ identifying the hyperplanes that contain $X$, that is:
\[
X = \bigcap\limits_{j \in J(X)} \V(l_{j}).
\]
Note that because $f$ is not necessarily reduced $J(X)$ may contain indices $i$ and $j$ such that $\V(l_{i}) = \V(l_{j}).$ Given an edge $X$, there is the subarrangement $A_{X}$ which has the defining equation
\[
f_{X} = \prod\limits_{j \in J(X)} l_{j}.
\]
The degree of $f_{X}$ is denoted $d_{X}$. So $d_{X} = \abs{J(X)}.$ The edge $X$ is \emph{decomposable} if there is a change of coordinates $y_{1} \sqcup y_{2}$, $y_{1}$ and $y_{2}$ disjoint, such that $f_{X} = p q$ where $p$ and $q$ are hyperplane arrangements using variables only from $y_{1}$ and $y_{2}$ respectively. Otherwise $X$ is \emph{indecomposable}.

Consider a potentially different factorization $f = f_{1} \cdots f_{r}$ where each $f_{k}$ is of degree $d_{k}$. Since each $f_{k}$ is a product of some of the $l_{m}$, let $S_{k} \subseteq [d]$ identify the linear forms comprising $f_{k}$, that is, 
\[
f_{k} = \prod_{m \in S_{k}} l_{m}.
\]
The factorization $f = f_{1} \cdots f_{r}$ induces a factorization of $f_{X}$. Define $S_{X,k} \subseteq [d]$ by
\[
S_{X,k} = J_{X} \cap S_{k}.
\]
Then $f_{X}$ inherits the factorization $f_{X} = f_{X,1} \cdots f_{X,r}$ where 
\[
f_{X,k} = \prod\limits_{j \in S_{X,k}} l_{j}.
\]
We say $f_{X,k}$ has degree $d_{X,k}$. We also write $F_{X} = (f_{X,1}, \dots, f_{X,r})$.

Any hyperplane arrangement has a reduced equation $f_{\red}$ of degree $d_{\red}$. We define $f_{X,\red}$, $d_{X,\red}$, $f_{X,k,\red}$, and $d_{X,k,\red}$ similarly.

If $f^{\prime}$ of degree $d^{\prime}$ divides $f$, then all the previous constructions apply to $f^{\prime}$. Define $f_{\red}^{\prime}, d_{\red}^{\prime}, f_{X}^{\prime}, d_{X}^{\prime}, f_{X,\red}^{\prime}, d_{X,\red}^{\prime}, f_{X,k}^{\prime}, d_{X,k}^{\prime}, f_{X,k,\red}^{\prime}, d_{X,k,\red}^{\prime}$ in the natural ways. 

\end{define}

We will be working with the Weyl algebra $\A = \mathbb{C}[x_{1}, \dots, x_{n}, \partial_{1}, \dots, \partial_{n}]$ where the \emph{global Bernstein--Sato ideal} $B_{f^{\prime}F}^{g}$ is defined similarly to $B_{f^{\prime}F,\x}^{g}$ except using $\A[S]$ operators. Write $B_{f^{\prime}f}^{g}$ when $F=(f)$ corresponds to the trivial factorization $f = f.$  We use the notation $\theta_{f^{\prime}F}$ and $\psi_{f^{\prime}F}$ for the algebraic, global versions of $\theta_{f^{\prime}F,\x}$ and $\psi_{f^{\prime}F,\x}.$ 

By Corollary \ref{cor-algebraic category} and Examples \ref{ex- hyperplane saito} and \ref{ex-hyperplane strongly Euler}, if $f$ is tame and $f^{\prime}$ divides $f$, then $\ann_{\A[S]} f^{\prime}F^{S}$ is generated by derivations. Moreover, $f_{\red}$ is strongly Euler-homogeneous itself. Finally, since $f$ is central, the $\mathbb{C}^{\star}$-action on $\V(f)$ can be used to show $B_{f^{\prime}F}^{g} = B_{f^{\prime}F,0}^{g}.$ Therefore we can apply the results of the previous sections. 

Finally, recall that for any central hyperplane arrangement $f \in \mathbb{C}[x_{1}, \dots, x_{n}]$ of degree $d$, the \emph{Euler derivation} $E = x_{1}\partial_{1} + \cdots x_{n} \partial_{n}$ satisfies $E \bullet f = d f$. Thus $\frac{1}{d} E$ is a strong Euler-homogeneity for $f$ at the origin. 

\subsection{An Ideal Containing $B_{f^{\prime}F}^{g}$} \text{ }

We compute a principal ideal containing $B_{f^{\prime}F}^{g}$ where $f$ is a central, indecomposable, and tame hyperplane arrangement, $f^{\prime}$ divides $f$, $g = \frac{f}{f^{\prime}}$, and $F$ corresponds to any factorization. The argument tracks Lemma 2 and Proposition 9 of \cite{MaisonobeFree} but we have replaced freeness with tameness, reduced with non-reduced, added $f^{\prime}$, and we will use any factorization $F$ instead of the factorization into linear forms. Though the approach is similar to Maisonobe's, we provide detail for the sake of the reader.

\begin{define} \label{def-right normal form}
The \emph{right normal form} of $P \in \A[S]$ is the unique expression
\[
P = \sum_{\textbf{u}} \partial^{\textbf{u}} P_{\textbf{u}}
\]
where $P_{\textbf{u}} \in \mathbb{C}[X][S].$ The \emph{right constant term} of $P$ is $P_{\textbf{0}}$. Note that for $P, Q \in \A[S]$, the right constant term of $P + Q$ is $P_{\textbf{0}} + Q_{\textbf{0}}.$
\end{define}

\begin{convention}
Let $\mathbb{C}[X]_{t}$ be the subspace of homogeneous polynomials in $\mathbb{C}[X]$ of degree $t$ and let $\mathbb{C}[X]_{\geq t}$ be the ideal of $\mathbb{C}[X]$ generated by the homogeneous polynomials of degree at least $t$. Denote by $\mathbb{C}[X]_{t}[S]$ and $\mathbb{C}[X]_{\geq t}[S]$ the $\mathbb{C}[S]$-modules generated by $\mathbb{C}[X]_{t}$ and $\mathbb{C}[X]_{\geq t}$ respectively.
\end{convention}

\begin{lemma} \label{lemma constant terms calculation}

Consider a derivation $\delta = \sum_{i} a_{i} \partial_{x_{i}}$ and a polynomial $c \in \mathbb{C}[X][S].$ If $P \in A_{n}(\mathbb{C})[S]$ has right constant term $P_{\textbf{0}}$, then $P \cdot (\delta - c)$ has right constant term 
\[
- (\sum_{i} \partial_{x_{i}} \bullet a_{i}) P_{\textbf{0}} - \delta \bullet (P_{\textbf{0}}) -c P_{\textbf{0}}.
\]

\end{lemma}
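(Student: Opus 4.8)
The plan is to reduce the statement to a direct computation about how right-multiplication by a first-order operator interacts with the right normal form. Write $P = \sum_{\textbf{u}} \partial^{\textbf{u}} P_{\textbf{u}}$ in right normal form, so that by definition the right constant term of $P$ is $P_{\textbf{0}}$. Since taking right constant terms is additive (the last sentence of Definition \ref{def-right normal form}), and since $\delta - c = \sum_i a_i \partial_{x_i} - c$, it suffices to compute separately the right constant term of $P \cdot (a_i \partial_{x_i})$ for each $i$ and of $P \cdot (-c)$, and then sum.

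First I would handle $P \cdot (-c)$. For $c \in \mathbb{C}[X][S]$ I need to move $c$ to the left past all the $\partial^{\textbf{u}}$ appearing in $P$. Commuting $c$ past $\partial^{\textbf{u}}$ produces $\partial^{\textbf{u}} c$ plus a sum of terms $\partial^{\textbf{v}}(\text{something in }\mathbb{C}[X][S])$ with $\abs{\textbf{v}} < \abs{\textbf{u}}$; the only contribution to the right constant term comes from the $\textbf{u} = \textbf{0}$ summand and equals $-c P_{\textbf{0}}$ (here I use that $S$ is central, so the $s_k$ play no role in the commutations). Next, for $P \cdot (a_i \partial_{x_i})$: again commute $a_i$ leftward past each $\partial^{\textbf{u}}$. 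A term contributes to the right constant term only if, after the commutation, no $\partial$'s survive on the right. From $\partial^{\textbf{u}} P_{\textbf{u}} a_i \partial_{x_i}$ the surviving $\partial_{x_i}$ on the far right must be cancelled, which forces picking up the commutator $[\,\cdot\,,\partial_{x_i}]$-type term; tracking this carefully, the $\textbf{u}=\textbf{0}$ summand $P_{\textbf{0}} a_i \partial_{x_i}$ contributes $P_{\textbf{0}} a_i \partial_{x_i}$, whose right constant term is $0$, while the relevant constant contribution is $-\partial_{x_i}\bullet(P_{\textbf{0}} a_i) = -(\partial_{x_i}\bullet a_i) P_{\textbf{0}} - a_i (\partial_{x_i}\bullet P_{\textbf{0}})$ coming from rewriting $P_{\textbf{0}} a_i \partial_{x_i} = \partial_{x_i} P_{\textbf{0}} a_i - \partial_{x_i}\bullet(P_{\textbf{0}}a_i)$. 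Summing over $i$ gives $-(\sum_i \partial_{x_i}\bullet a_i)P_{\textbf{0}} - \sum_i a_i(\partial_{x_i}\bullet P_{\textbf{0}}) = -(\sum_i \partial_{x_i}\bullet a_i)P_{\textbf{0}} - \delta\bullet(P_{\textbf{0}})$.

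Adding the two contributions yields exactly
\[
-\Bigl(\sum_i \partial_{x_i}\bullet a_i\Bigr) P_{\textbf{0}} - \delta\bullet(P_{\textbf{0}}) - c P_{\textbf{0}},
\]
as claimed. The one point requiring care — and the main (mild) obstacle — is bookkeeping: making precise the claim that commuting a polynomial past $\partial^{\textbf{u}}$ only produces lower-order tails, and that among all the resulting terms only finitely many explicitly identified ones land in degree $\textbf{0}$ of the right normal form. I would phrase this cleanly by arguing just with the single identity $h\,\partial_{x_i} = \partial_{x_i} h - \partial_{x_i}\bullet h$ for $h\in\mathbb{C}[X][S]$ applied once, plus additivity of the right constant term, rather than expanding $\partial^{\textbf{u}}$; that keeps the computation short and avoids multi-index induction. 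It may also be worth remarking that everything is $\mathbb{C}[S]$-linear so the presence of $S$ is harmless.
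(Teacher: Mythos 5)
Your proposal is correct and is essentially the paper's argument: expand $P$ in right normal form and use the single identity $h\,\partial_{x_i}=\partial_{x_i}h-\partial_{x_i}\bullet h$ to return the product to right normal form, the only contribution to the constant term coming from the $\textbf{u}=\textbf{0}$ summand (the paper merely groups the computation slightly differently, first writing $P_{\textbf{u}}\delta=\delta P_{\textbf{u}}-\delta\bullet P_{\textbf{u}}$ and then $a_i\partial_i=\partial_i a_i-\partial_i\bullet a_i$). One small simplification: for the $-c$ piece no commuting is needed at all, since $\sum_{\textbf{u}}\partial^{\textbf{u}}(-P_{\textbf{u}}c)$ is already in right normal form.
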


\begin{proof}

Consider the right normal form $\sum \partial^{\textbf{u}} P_{\textbf{u}}$ of $P$. Then 
\begin{align*}
    P \cdot (\delta -c) 
    &= \sum\limits_{\textbf{u}} \partial^{\textbf{u}} (\delta P_{\textbf{u}} - \delta \bullet P_{\textbf{u}} - P_{\textbf{u}} c) \\
    &= \sum\limits_{\textbf{u}} \partial^{\textbf{u}} ((\sum_{i} \partial_{i} a_{i} - \sum_{i} \partial_{i} \bullet a_{i}) P_{\textbf{u}} - \delta \bullet P_{\textbf{u}} - P_{\textbf{u}} c) \\
    &= \sum\limits_{\textbf{u}} \partial^{\textbf{u}} \sum_{i} \partial_{i} a_{i} P_{\textbf{u}} + \sum\limits_{\textbf{u}} \partial^{\textbf{u}}(( - \sum_{i} \partial_{i} \bullet a_{i}) P_{\textbf{u}} - \delta \bullet (P_{\textbf{u}}) - c P_{\textbf{u}}).
\end{align*}
Because $\sum_{\textbf{u}} \partial^{\textbf{u}} \sum_{i} \partial_{i} a_{i} P_{\textbf{u}}$ has constant term $0$, the lemma follows. 
\end{proof}

\begin{lemma} \label{lemma-constant term syzygy}

Suppose $\delta \in \Der_{X}(-\log f)$ can be written as $\sum_{i=1}^{n} a_{i} \partial_{i}$ where each $a_{i}$ is a homogeneous polynomial of degree $t$ in $\mathbb{C}[X].$ Let $f = f_{1} \cdots f_{r}$ where each $f_{k}$ is homogeneous, $F = (f_{1}, \dots, f_{r})$, and $f^{\prime}$ is a homogeneous polynomial dividing $f$. If $P \in A_{n}(\mathbb{C})[S]$, then the right constant term of $P \cdot \psi_{f^{\prime}F}(\delta)$ lies in $\mathbb{C}[X]_{\geq t-1}[S]$.
\end{lemma}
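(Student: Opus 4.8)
The plan is to reduce this to a single application of Lemma \ref{lemma constant terms calculation}, the only real content being a homogeneity bookkeeping. First I would write $\psi_{f^{\prime}F}(\delta) = \delta - c$, where
\[
c = \sum_{k} \frac{\delta \bullet f_{k}}{f_{k}}\, s_{k} + \frac{\delta \bullet f^{\prime}}{f^{\prime}}.
\]
Since $\delta \in \Der_{X}(-\log f) \subseteq \Der_{X}(-\log f_{k}) \cap \Der_{X}(-\log f^{\prime})$ for every $k$ (cf.\ Remark \ref{rmk- log derivations}(b) together with the fact that each $f_{k}$ and $f^{\prime}$ divide $f$), each $\frac{\delta \bullet f_{k}}{f_{k}}$ and $\frac{\delta \bullet f^{\prime}}{f^{\prime}}$ lies in $\mathbb{C}[X]$, so $c \in \mathbb{C}[X][S]$ and Lemma \ref{lemma constant terms calculation} applies to $\psi_{f^{\prime}F}(\delta) = \delta - c$.

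Next I would record the degree counts. As each $a_{i}$ is homogeneous of degree $t$, the operator $\delta$ raises the degree of a homogeneous polynomial by $t-1$; in particular, because each $f_{k}$ (resp.\ $f^{\prime}$) is homogeneous, $\delta \bullet f_{k}$ (resp.\ $\delta \bullet f^{\prime}$) is homogeneous of degree $\deg(f_{k}) + t - 1$ (resp.\ $\deg(f^{\prime}) + t - 1$), so $\frac{\delta \bullet f_{k}}{f_{k}}$ and $\frac{\delta \bullet f^{\prime}}{f^{\prime}}$ are homogeneous of degree $t-1$, whence $c \in \mathbb{C}[X]_{t-1}[S]$. Likewise each $\partial_{x_{i}} \bullet a_{i}$ is homogeneous of degree $t-1$, so $\sum_{i} \partial_{x_{i}} \bullet a_{i} \in \mathbb{C}[X]_{t-1}$.

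Then I would invoke Lemma \ref{lemma constant terms calculation}: the right constant term of $P \cdot \psi_{f^{\prime}F}(\delta) = P \cdot (\delta - c)$ is
\[
-\Bigl(\sum_{i} \partial_{x_{i}} \bullet a_{i}\Bigr) P_{\textbf{0}} \;-\; \delta \bullet (P_{\textbf{0}}) \;-\; c\, P_{\textbf{0}},
\]
and I would check each summand lies in $\mathbb{C}[X]_{\geq t-1}[S]$: the first and third because they are products of a degree-$(t-1)$ homogeneous form with an element of $\mathbb{C}[X][S]$; the second because $\delta$ raises degrees by $t-1$ (the degree-$0$ component of $P_{\textbf{0}}$ is simply annihilated). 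If $t = 0$ the assertion is vacuous, so one may assume $t \geq 1$ throughout.

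I do not anticipate a genuine obstacle here — the statement is a routine homogeneity estimate once Lemma \ref{lemma constant terms calculation} is in hand. The one point that needs care, and that I would state explicitly, is that $\frac{\delta \bullet f_{k}}{f_{k}}$ and $\frac{\delta \bullet f^{\prime}}{f^{\prime}}$ are honest polynomials of degree exactly $t-1$; this is where the hypothesis $\delta \in \Der_{X}(-\log f)$ (rather than merely $\delta \in \A$) and the homogeneity of the $f_{k}$ and of $f^{\prime}$ are used.
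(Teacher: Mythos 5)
Your proposal is correct and follows essentially the same route as the paper's proof: decompose $\psi_{f^{\prime}F}(\delta) = \delta - c$ with $c \in \mathbb{C}[X]_{t-1}[S]$, apply Lemma \ref{lemma constant terms calculation}, and check the degree of each of the three resulting summands. The only cosmetic difference is that the paper tracks the minimal degree $m$ of $P_{\textbf{0}}$ explicitly while you handle the degree-zero component of $P_{\textbf{0}}$ by noting $\delta$ annihilates it; both dispose of the same edge case.
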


\begin{proof}

Recall  $\psi_{f^{\prime}F}(\delta) = \delta - \sum \frac{\delta \bullet f_{k}}{f_{k}} s_{k} - \frac{\delta \bullet f^{\prime}}{f^{\prime}}$. By the choice of $\delta$, $-\sum_{k=1}^{r} \frac{\delta \bullet f_{k}}{f_{k}}s_{k} - \frac{\delta \bullet f^{\prime}}{f^{\prime}} \in \mathbb{C}[X]_{t-1}[S].$ By Lemma \ref{lemma constant terms calculation}, the right constant term of $P \cdot \psi_{F}(\delta)$ is 
\[
( - \sum_{i} \partial_{i} \bullet a_{i}) P_{0} - \delta \bullet P_{0} - (\sum_{k} \frac{\delta \bullet f_{k}}{f_{k}}s_{k}) P_{0} - \frac{\delta \bullet f^{\prime}}{f^{\prime}} P_{\textbf{0}}.
\]
Let $m$ be the smallest nonnegative integer such that $P_{0} \in \mathbb{C}[X]_{\geq m}[S].$ Because $\partial_{i} \bullet a_{i} \in \mathbb{C}[X]_{t-1}$ and $\delta \bullet P_{0} \in \mathbb{C}[X]_{\geq t + m - 1}[S]$ the claim follows.
\end{proof}

There is a natural $\mathbb{C}[X]$-isomorphism between $\Der_{X}(-\log_{0} f)$ and the first syzygies of the Jacobian ideal $J(f)$, i.e. the ideal of $\mathbb{C}[X]$ generated by the partials of $f$. If $f$ is homogeneous, so is $J(f)$ and so is its first syzygy module. 

\begin{define} For $f$ homogeneous, define $\mdr(f)$ to be
\[
\mdr(f) = \text{min}  \{ t \mid  \text{ there exists a homogeneous syzygy of } J(f) \text{ of degree } t \}.
\]

\end{define}

\begin{remark} \label{rmk-mdr, indecomposable}
\begin{enumerate}[(a)]
\item It known that a central hyperplane arrangement of $f$ of rank $\geq 2$ is indecomposable if and only if $\mdr(f) \geq 2.$ For one direction use the first part of Theorem 5.13 of \cite{uli}; for the other, use the two disjoint Euler derivations induced by the coordinate change.
\item Identify $\Der_{X}(-\log_{0} f)$ and first syzygies of $J(f)$ to conclude that we may pick a generating set $\delta_{1}, \dots, \delta_{m}$ of $\Der_{X}(-\log_{0} f)$ such that $\delta_{j} = \sum_{i=1}^{r} a_{j,i} \partial_{i}$ and each $a_{j,i} \in \mathbb{C}[X]$ is homogeneous of degree at least $\mdr(f).$
\end{enumerate}
\end{remark}

We can now prove our version of Lemma 2 from \cite{MaisonobeFree}. The argument is similar but we defer applying any symmetry of $B_{f^{\prime}F}^{g}$ until later.  

\begin{thm} \label{thm-first subset thm} \text{\normalfont (Compare to Lemma 2 in \cite{MaisonobeFree})}
Let $f$ be a central, not necessarily reduced, indecomposable and tame hyperplane arrangement of rank $n \geq 2$ and let $F = (f_{1}, \dots, f_{r})$ correspond to any factorization $f = f_{1} \cdots f_{r}$. If $f^{\prime}$ divides $f$ and $g = \frac{f}{f^{\prime}}$, then 
\[
B_{f^{\prime}F}^{g} \subseteq \mathbb{C}[S] \cdot \prod_{j=0}^{\mdr(f) + d-d^{\prime} - 3} \left( \sum_{k} d_{k} s_{k} + n + d^{\prime} + j \right).
\]

\end{thm}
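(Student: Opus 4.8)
The plan is to reduce the problem to a statement about right constant terms of operators in $\ann_{\A[S]} f^{\prime} F^{S}$, exactly as Maisonobe does for Lemma 2 of \cite{MaisonobeFree}, but tracking the extra parameters $f^{\prime}$ and $g$ and using tameness in place of freeness. First I would observe that, by Corollary \ref{cor-algebraic category} together with Examples \ref{ex- hyperplane saito} and \ref{ex-hyperplane strongly Euler}, $\ann_{\A[S]} f^{\prime} F^{S}$ is generated by derivations, i.e. by $\A[S] \cdot \theta_{f^{\prime}F}$ where $\theta_{f^{\prime}F} = \psi_{f^{\prime}F}(\Der_{X}(-\log f))$. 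Next, recall that $B_{f^{\prime}F}^{g} = \mathbb{C}[S] \cap (\ann_{\A[S]} f^{\prime}F^{S} + \A[S] \cdot g)$, so any $B(S) \in B_{f^{\prime}F}^{g}$ can be written $B(S) = Q + \sum_j P_j \psi_{f^{\prime}F}(\delta_j)$ with $Q \in \A[S] \cdot g$ and the $\delta_j$ ranging over a homogeneous generating set of $\Der_X(-\log f)$. Since $f = f^{\prime} g$ and $\deg(g) = d - d'$, membership in $\A[S]\cdot g$ forces the right constant term of $B(S) - \sum_j P_j \psi_{f^{\prime}F}(\delta_j)$ to lie in $\mathbb{C}[X]_{\geq d-d'}[S] \cdot g$, hence in $\mathbb{C}[X]_{\geq d - d'}[S]$; but $B(S)$ itself, being a polynomial in $S$ only, is its own right constant term. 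So $B(S)$ modulo the right constant terms of $\sum_j P_j \psi_{f^{\prime}F}(\delta_j)$ lands in $\mathbb{C}[X]_{\geq d-d'}[S]$.

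The heart of the argument is then to control those right constant terms. I would split the generating set of $\Der_X(-\log f)$ into the Euler derivation $E$ (rescaled so $\frac{1}{d}E$ is the strong Euler-homogeneity) and a homogeneous generating set $\delta_1,\dots,\delta_m$ of $\Der_X(-\log_0 f)$; by Remark \ref{rmk-mdr, indecomposable}, because $f$ is indecomposable of rank $\geq 2$ we may take each $\delta_j = \sum_i a_{j,i}\partial_i$ with $a_{j,i}$ homogeneous of degree $\geq \mdr(f)$. Applying Lemma \ref{lemma-constant term syzygy} to the $\delta_j$ shows their contributions to the right constant term lie in $\mathbb{C}[X]_{\geq \mdr(f)-1}[S]$; so modulo $\A[S]\cdot\theta_{f^{\prime}F, 0}$-terms involving only $\Der_X(-\log_0 f)$ and modulo $\mathbb{C}[X]_{\geq d-d'}[S]$, the class of $B(S)$ is governed entirely by powers of $\psi_{f^{\prime}F}(E)$. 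A direct computation with Lemma \ref{lemma constant terms calculation} gives that $\psi_{f^{\prime}F}(E) = E - \sum_k d_k s_k - d'$ and that the right constant term of $P\cdot(E - \sum_k d_k s_k - d')$ is $-(n + \deg)(P_{\mathbf 0}) - (\sum_k d_k s_k + d')P_{\mathbf 0}$ on a homogeneous component $P_{\mathbf 0}$ of degree $\deg$; iterating, the right constant terms of $\psi_{f^{\prime}F}(E)^{N}$ modulo $\mathbb{C}[X]_{\geq t}[S]$ produce the factors $\big(\sum_k d_k s_k + n + d' + j\big)$ for $j = 0, \dots, t-1$. Balancing the two cutoffs $\mdr(f)-1$ (from the $\delta_j$) and $d - d'$ (from $g$) yields that $t = \mdr(f) + d - d' - 2$ suffices, i.e. the product runs $j = 0,\dots,\mdr(f)+d-d'-3$, which is the claimed bound.

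Concretely, I would phrase the induction as follows: let $V_t \subseteq \mathbb{C}[X][S]$ be the $\mathbb{C}[S]$-span of right constant terms of elements of $\A[S]\cdot\theta_{f^{\prime}F,0} + \A[S]\cdot g$ that are missing all components of degree $< t$; show $V_0 \supseteq B_{f^{\prime}F}^g$ trivially, and that multiplying a witness by $\psi_{f^{\prime}F}(E)$ and discarding higher-degree junk replaces the "degree $0$ obstruction'' $B(S)$ by $\big(\sum_k d_k s_k + n + d'\big)B(S)$ plus something in $V_1$, and so on up to $V_{\mdr(f)+d-d'-2}$, at which point the $\delta_j$-terms and the $g$-term absorb everything. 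The final statement with $f^{\prime}=1$, $d'=0$, and $g = f$ then reads $B_F \subseteq \mathbb{C}[S]\cdot\prod_{j=0}^{d+\mdr(f)-3}\big(\sum_k d_k s_k + n + j\big)$. The main obstacle I anticipate is bookkeeping: making precise the claim that right constant terms of products $P\cdot\psi_{f^{\prime}F}(E)^N$ reduce, modulo both $\mathbb{C}[X]_{\geq t}[S]$ and the submodule generated by the lower-order $\psi_{f^{\prime}F}(\delta_j)$, to a single product of linear forms in $S$ — this requires carefully checking that cross-terms between $E$ and the $\delta_j$ (and the commutators $\sigma$ hidden in reordering operators) never lower the degree below the relevant cutoff, which is exactly where indecomposability ($\mdr(f)\geq 2$) and tameness (so that $\ann$ is genuinely generated in order $\leq 1$) are used.
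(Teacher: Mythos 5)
Your framework is the right one (generation of $\ann_{\A[S]}f^{\prime}F^{S}$ by derivations via Corollary \ref{cor-algebraic category}, right normal forms, Lemma \ref{lemma-constant term syzygy} to push the $\delta_{j}$-contributions into degree $\geq \mdr(f)-1$, and Lemma \ref{lemma constant terms calculation} for the Euler term), and your degree-zero analysis does produce the single factor $\sum_{k}d_{k}s_{k}+n+d^{\prime}$. But your mechanism for the remaining factors fails. For $1\leq j\leq \mdr(f)-2$ you cannot extract the factor $\sum_{k}d_{k}s_{k}+n+d^{\prime}+j$ from the degree-$j$ component of the right constant term of the identity for $B(S)$: the left-hand side lies in $\mathbb{C}[X]_{0}[S]$, so its degree-$j$ part is $0$ and the resulting relation constrains only $P_{\textbf{0}}^{j}$, not $B(S)$. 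Your proposed iteration does not repair this: since the right constant term of $\psi_{f^{\prime}F}(E)$ is the degree-zero element $-(\sum_{k}d_{k}s_{k}+n+d^{\prime})$, Lemma \ref{lemma constant terms calculation} shows the right constant term of $\psi_{f^{\prime}F}(E)^{N}$ is $(-1)^{N}\left(\sum_{k}d_{k}s_{k}+n+d^{\prime}\right)^{N}$ --- a power of one factor, not a product of shifted ones. (The distinct shifts in Lemma \ref{lemma Euler transposition} come from the $x^{\textbf{u}}$-weights in a left-handed computation used for the \emph{opposite} containment.) The missing device is to multiply $B(S)$ by an auxiliary homogeneous $v$ of degree $j$ chosen so that some $\alpha\in\V(g)$ has $v(\alpha)\neq 0$, redo the decomposition for $vB(S)$, take the degree-$j$ right constant term, and evaluate at $\alpha$; the evaluation kills the $L_{\textbf{0}}\,g$ contribution and lets you divide by $v(\alpha)$.

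Second, your claim that the cutoffs $\mdr(f)-1$ (from the $\delta_{j}$) and $d-d^{\prime}$ (from $g$) combine \emph{additively} to give $\mdr(f)+d-d^{\prime}-2$ factors is unjustified. Both obstructions sit on the same side of the equation, so a single identity yields a clean range only up to degree $\mdr(f)-2$ (and the $g$-term is in any case disposed of by evaluation at $\alpha$, not by a degree bound, since $j\geq d-d^{\prime}$ is allowed). The factors with $\mdr(f)-1\leq j\leq \mdr(f)+d-d^{\prime}-3$ come from an entirely different source: the containments $B_{f^{\prime}F}^{g}\subseteq B_{f^{\prime\prime}F}^{g^{\prime\prime}}$ for intermediate divisors $(f)\subseteq(f^{\prime\prime})\subseteq(f^{\prime})$ with $d^{\prime}\leq\deg(f^{\prime\prime})\leq d-1$, which re-run the low-degree argument with $d^{\prime\prime}$ in place of $d^{\prime}$ and thereby shift the constant term of the linear factor through the remaining range. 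Without that step your product stops at $j=\mdr(f)-2$.
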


\begin{proof}

To begin, we choose two polynomials. First fix $0 \neq B(S) \in B_{f^{\prime}F}^{g}$. By definition of $B_{f^{\prime}F,\x}^{g}$, the polynomial $B(S)$ lies in $\ann_{\A[S]} f^{\prime}F + \A[S] \cdot g$. Second, pick a nonzero homogeneous polynomial $v \in \mathbb{C}[X]$ such that (i) $\degree(v) \leq \mdr(f) - 2$ and (ii) there exists a point $\alpha \in \V( g) \setminus \V(v)$. By Remark \ref{rmk-mdr, indecomposable} such a choice of $v$ is possible. Note that $v B(S) \in \ann_{\A[S]} f^{\prime}F + \A[S] \cdot g$.

Let $\delta_{1}, \dots, \delta_{m}$ generate $\Der_{X,\x}(-\log_{0} f)$ where $\delta_{j} = \sum_{j} a_{j,i} \partial_{i}$; let $E$ by the Euler derivation. By Remark \ref{rmk-mdr, indecomposable}, we may assume $\{a_{j,i}\}_{i}$ are all homogeneous polynomials of the same degree where that degree is at least $\mdr(f)$. Corollary \ref{cor-algebraic category} implies there exist $L, P, Q_{2}, \dots, Q_{m} \in \A[S]$ such that 
\begin{equation} \label{eqn-third eqn main thm star}
    v B(S) =  L g + P \psi_{f^{\prime}F}(E) + \sum_{j = 2}^{m} Q_{j} \psi_{f^{\prime}F}(\delta_{j}).
\end{equation}

Express both sides of (\ref{eqn-third eqn main thm star}) in their right normal form. First consider the right hand side of (\ref{eqn-third eqn main thm star}). By Lemma \ref{lemma-constant term syzygy}, the right constant term of $Q_{j}\psi_{f^{\prime} F}(\delta_{j})$ is in $\mathbb{C}[X]_{\geq \mdr(f) -1}[S]$. Write the right constant term $L_{\textbf{0}}$ of $L$ as $L_{\textbf{0}} = \sum_{t} L_{\textbf{0}}^{t}$ where $L_{\textbf{0}}^{t} \in \mathbb{C}[X]_{t}[S]$; similarly, write the right constant term $P_{\textbf{0}}$ of $P$ as $P_{\textbf{0}} = \sum_{t} P_{\textbf{0}}^{t}$ where $P_{\textbf{0}}^{t} \in \mathbb{C}[X]_{t}[S].$ The right constant term of $L g$ is $L_{\textbf{0}} g$. By Lemma \ref{lemma constant terms calculation}, the right constant term of $P \psi_{f^{\prime} F}(E)$ is 
\begin{align*}
\sum_{t} -n P_{\textbf{0}}^{t} & - E \bullet P_{\textbf{0}}^{t}  - \left( \sum_{k} \frac{E \bullet f_{k}}{f_{k}} s_{k} \right) P_{\textbf{0}}^{t} - \frac{E \bullet f^{\prime}}{f^{\prime}} P_{\textbf{0}}^{t} \\
    & = \sum_{t} \left( -n - t -  \sum_{k} d_{k}s_{k} - d^{\prime} \right) P_{\textbf{0}}^{t}.
\end{align*}

On the other hand, the right constant term of $v B(S)$ is $v B(S)$ itself. Note that $v B(S) \in \mathbb{C}[X]_{\degree(v)}[S]$ and, by the choice of $v$, $\degree(v) < \mdr(f) -1.$ So when we write the right constant term of both sides of (\ref{eqn-third eqn main thm star}), the left hand side is $ v B(S)$ and the right hand side can be written using only terms in $\mathbb{C}[X]_{\degree(v)}[S].$ We deduce
\begin{equation} \label{eqn-constant term formula}
    v B(S) = L_{\textbf{0}}^{\degree(v)} g + 
    \left( -n - \degree(v) - d^{\prime} - \sum_{k} d_{k} s_{k} \right) P_{\textbf{0}}^{\degree(v)}.
\end{equation}

The equation (\ref{eqn-constant term formula}) occurs in $\mathbb{C}[X]_{\text{deg}(v)}[S]$ and so the equality is still true when regarding all the elements as belonging to $\mathbb{C}[X][S]$. By the choice of $v$, there exists $\alpha \in \V ( g) \setminus  \V (v)$. The polynomial $P_{\textbf{0}}^{\degree(v)}$ cannot vanish at $\alpha$, lest $B(S) = 0.$ By evaluating (\ref{eqn-constant term formula}) at $\alpha$ we see 
\begin{equation} \label{eqn-divisibilty statement}
B(S) \in \mathbb{C}[S] \cdot \left( -n - \degree(v) - d^{\prime} - \sum_{k} d_{k} s_{k} \right) .
\end{equation}
As $\degree(v)$ is flexible, 
\begin{equation} \label{eqn - first approximate BS ideal containment}
B_{f^{\prime}F, \x}^{g} \subseteq \mathbb{C}[S] \cdot \prod_{j=0}^{\mdr(f)-2} \left( \sum_{k} d_{k} s_{k} + n + d^{\prime} + j \right).
\end{equation}

Now suppose $(f) \subseteq (f^{\prime \prime}) \subseteq (f^{\prime})$ and let $g^{\prime \prime} = \frac{f}{f^{\prime \prime}}$. Since $f$ is a hyperplane arrangement we can choose $f^{\prime \prime}$ to be of any degree between $d^{\prime}$ and $d - 1$. Because $B_{f^{\prime}F}^{g} \subseteq B_{f^{\prime \prime}, F}^{g^{\prime \prime}}$, the containment \eqref{eqn - first approximate BS ideal containment} can be improved to
\[
B_{f^{\prime}F}^{g} \subseteq \mathbb{C}[S] \cdot \prod_{j=0}^{\mdr(f) + d - d^{\prime} - 3} \left( \sum_{k} d_{k} s_{k} + n + d^{\prime} + j \right).
\]
\end{proof}

\begin{remark} \label{remark-local to global b-poly}
\begin{enumerate}[(a)]
    \item It is easy to see, see Corollary 6 in \cite{BahloulOaku} for the $B_{F}$ statement, that
    \[
    B_{f^{\prime}F}^{g} = \bigcap\limits_{\x \in \mathbb{C}^{n}} B_{f^{\prime}F,\x}^{g}.
    \]
    \item Recall the notation of Definition \ref{def-hyperplane arrangement factorizations}. Given an edge $X \in L(A)$, there exists a $\x \in X$ such that $\x \notin \V(l_{m})$ for all $m \notin J(X)$. By definition, 
    \[
    F_{X} = (f_{X,1}, \dots, f_{X,r}) = ( \prod_{j \in S_{x,1}} l_{j}, \dots, \prod_{j \in S_{X,r}} l_{j}).
    \]
    We may write $F$ as 
    \[
    F= (\prod_{m \in S_{1} \setminus S_{X,1}}l_{m} \prod_{j \in S_{X,1}} l_{j} , \dots, \prod_{m \in S_{r} \setminus S_{X,r}}l_{m} \prod_{j \in S_{X,r}} l_{j} ).
    \]
    So at $\mathfrak{x}$, the decompositions $F$ and $F_{X}$ differ by multiplying each component by a unit at $\mathfrak{x}$. Arguing as in Lemma 10 of \cite{BahloulOaku} (see also the first paragraph of the proof of Theorem \ref{thm-unmixed symmetry}), we deduce
    \[
    B_{f^{\prime}F, \x}^{g} = B_{f_{X}^{\prime}F_{X}, \x}^{g_{X}}.
    \]
    Since $\x$ and $0$ both lie in the maximal edge of $f_{X}$, $B_{f_{X}^{\prime}F_{X},0}^{g_{X}} = B_{f_{X}^{\prime}F_{X},\x}^{g_{X}}.$
    The centrality of $f_{X}$, and the consequent $\mathbb{C}^{\star}$-action on $\V(f_{X})$, implies 
    \[
    B_{f_{X}^{\prime}F_{X},0}^{g_{X}} = B_{f_{X}^{\prime}F_{X}}^{g_{X}}.
    \]
    \item Putting (a) and (b) together yields
    \[
    B_{f^{\prime}F}^{g} = \bigcap_{X \in L(A)} B_{f_{X}^{\prime}F_{X}}^{g_{X}}.
    \]
\end{enumerate}
\end{remark}

The following definition will help simplify notation.

\begin{define} \label{def- simplifying polynomial}
Let $f=f_{1} \cdots f_{r}$ be any factorization of a central hyperplane arrangement and $F = (f_{1}, \dots, f_{r})$. Suppose $f^{\prime}$ divides $f$; $g = \frac{f}{f^{\prime}}$. For any indecomposable edge $X$ define the polynomial
\[
P_{f^{\prime}F,X}^{g} = \sum_{k} d_{X,k}s_{k} + r(X) + d_{X}^{\prime} \in \mathbb{C}[S].
\]

\end{define}

Remark \ref{remark-local to global b-poly} and Theorem \ref{thm-first subset thm} prove our version of Proposition 9 in \cite{MaisonobeFree}:

\begin{thm} \label{thm-main subset thm} \text{\normalfont (Compare to Proposition 9 of \cite{MaisonobeFree})}
Suppose $f$ is a central, tame, not necessarily reduced, hyperplane arrangement of rank $n$ and let $F=(f_{1}, \dots, f_{r})$ correspond to any factorization $f = f_{1} \cdots f_{r}$. Let $f^{\prime}$ divide $f$ and $g = \frac{f}{f^{\prime}}.$ For indecomposable edges $X$ of rank $\geq 2$ define

\[
p_{f^{\prime}F,X}(S) = \prod_{j_{X}=0}^{mdr(f_{X}) + d_{X} - d_{X}^{\prime} - 3} \left(P_{f^{\prime}F,X}^{g} + j_{X} \right).
\]
For indecomposable edges $X$ of rank one define
\[
p_{f^{\prime}F,X}(S) = \prod_{j_{X} = 0}^{d_{X} - d_{X}^{\prime} - 1} \left( P_{f^{\prime}F,X}^{g} + j_{X} \right).
\]
Then
\[
B_{f^{\prime}F}^{g} \subseteq \mathbb{C}[S] \cdot \lcm \left\{ p_{f^{\prime}F, X}(S) \mid X \in L(A), \ X \text{ \normalfont indecomposable} \right\}.
\]
\end{thm}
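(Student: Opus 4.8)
The plan is to reduce the global statement to the local computation already obtained in Theorem~\ref{thm-first subset thm}, using the product decomposition of Bernstein--Sato ideals over the intersection lattice recorded in Remark~\ref{remark-local to global b-poly}(c). By that remark,
\[
B_{f^{\prime}F}^{g} = \bigcap_{X \in L(A)} B_{f_{X}^{\prime}F_{X}}^{g_{X}},
\]
so it suffices to bound each factor $B_{f_{X}^{\prime}F_{X}}^{g_{X}}$ by the appropriate $\mathbb{C}[S]\cdot p_{f^{\prime}F,X}(S)$; taking the intersection of principal ideals then produces the $\lcm$. So the first step is: fix an edge $X$ and analyze $B_{f_{X}^{\prime}F_{X}}^{g_{X}}$ alone, where $f_{X}$ is the central arrangement $\prod_{j\in J(X)}l_{j}$ in the ambient space, of rank $r(X)$.

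The second step splits into cases according to whether $X$ is decomposable, and according to $r(X)$. If $r(X)=1$, then $f_{X}$ is a power of a single linear form; here $\ann_{\A[S]}f_{X}^{\prime}F_{X}^{S}$ and the functional equation can be handled directly (essentially a one-variable computation after a coordinate change making $f_{X}$ depend on one variable), yielding the rank-one formula with $j_{X}$ running from $0$ to $d_{X}-d_{X}^{\prime}-1$. If $r(X)\geq 2$ and $X$ is indecomposable, then $\mdr(f_{X})\geq 2$ by Remark~\ref{rmk-mdr, indecomposable}(a), and $f_{X}$ is tame (of rank $r(X)\geq 2$; note tameness of $f$ passes to $f_{X}$, or one invokes Remark~\ref{rmk- logarithmic forms remark}(b) in low rank) and central, hence strongly Euler-homogeneous and Saito-holonomic. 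Thus Theorem~\ref{thm-first subset thm} applies verbatim to $f_{X}$ in its ambient space of dimension $r(X)$, giving
\[
B_{f_{X}^{\prime}F_{X}}^{g_{X}} \subseteq \mathbb{C}[S]\cdot \prod_{j_{X}=0}^{\mdr(f_{X})+d_{X}-d_{X}^{\prime}-3}\left(\sum_{k}d_{X,k}s_{k} + r(X) + d_{X}^{\prime} + j_{X}\right) = \mathbb{C}[S]\cdot p_{f^{\prime}F,X}(S),
\]
where I use that the ``$n$'' in Theorem~\ref{thm-first subset thm} is the rank $r(X)$ of $f_{X}$ and that the degree of $f_{X,k}$ is $d_{X,k}=\abs{S_{X,k}}$, matching Definition~\ref{def- simplifying polynomial}. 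If $X$ is decomposable, a change of coordinates writes $f_{X}=pq$ with $p,q$ in disjoint variables, so the annihilator, hence the Bernstein--Sato ideal, factors accordingly and $B_{f_{X}^{\prime}F_{X}}^{g_{X}}$ is contained in (a product of) the ideals attached to the indecomposable edges sitting below the components of $X$; these are already accounted for, so decomposable edges contribute nothing new to the $\lcm$ and may be dropped from the indexing set.

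The third step assembles the pieces: intersecting the containments $B_{f_{X}^{\prime}F_{X}}^{g_{X}}\subseteq \mathbb{C}[S]\cdot p_{f^{\prime}F,X}(S)$ over all indecomposable $X$ gives $B_{f^{\prime}F}^{g}\subseteq \bigcap_{X}\mathbb{C}[S]\cdot p_{f^{\prime}F,X}(S) = \mathbb{C}[S]\cdot\lcm\{p_{f^{\prime}F,X}(S)\}$, since $\mathbb{C}[S]$ is a UFD (indeed a polynomial ring) and the intersection of principal ideals is generated by the $\lcm$ of the generators. The main obstacle I anticipate is bookkeeping rather than conceptual: one must be careful that the \emph{local} objects $B_{f^{\prime}F,\x}^{g}$ at a point $\x\in X$ really do reduce to the \emph{global} central object $B_{f_{X}^{\prime}F_{X}}^{g_{X}}$ in the smaller ambient space --- this is exactly the chain of identifications in Remark~\ref{remark-local to global b-poly}(b), relying on the unit-rescaling invariance of Bernstein--Sato ideals (Lemma~10 of \cite{BahloulOaku} and the first paragraph of the proof of Theorem~\ref{thm-unmixed symmetry}) and on the $\mathbb{C}^{\star}$-action identifying the local ideal at $0$ with the global one. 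A secondary subtlety is confirming that the degenerate cases (rank-one edges, decomposable edges, and the possibility $\bigO_{X,\x}\cdot f^{\prime}=\bigO_{X,\x}\cdot f$ locally at some edge, where the local ideal is the unit ideal and contributes trivially) are handled consistently with the stated ranges of $j_{X}$.
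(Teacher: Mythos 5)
Your proposal is correct and follows essentially the same route as the paper: decompose $B_{f^{\prime}F}^{g}$ over the intersection lattice via Remark \ref{remark-local to global b-poly}, apply Theorem \ref{thm-first subset thm} to each indecomposable edge of rank at least two (with the ambient dimension there playing the role of $r(X)$), and obtain the $\lcm$ from the intersection of the resulting principal ideals. The only, and harmless, divergence is at rank-one edges, where you propose a direct one-variable computation for the power of a linear form, whereas the paper reruns the right-constant-term argument of Theorem \ref{thm-first subset thm} with $v=1$ and no syzygy terms and then uses the containment $B_{f^{\prime}F}^{g} \subseteq B_{f^{\prime\prime}F}^{g^{\prime\prime}}$ to collect the remaining factors.
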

\begin{proof}
By Remark \ref{remark-local to global b-poly}, 
\[
B_{f^{\prime}F}^{g} = \left(\bigcap_{\substack{X \in L(A) \\ r(X) \geq 2}} B_{f_{X}^{\prime}F_{X}}^{g_{X}}\right) \bigcap \left(\bigcap_{\substack{X \in L(A) \\ r(X) = 1}} B_{f_{X}^{\prime}F_{X}}^{g_{X}}\right)
\]
If $X$ is an edge of rank $\geq 2$, then Theorem \ref{thm-first subset thm} combined with Definition \ref{def- simplifying polynomial} says 
\[
B_{f_{X}^{\prime}F_{X}}^{g_{X}} \subseteq \mathbb{C}[S] \cdot \prod_{j_{X} = 0}^{\mdr(f_{X}) + d_{X} - d_{x}^{\prime} - 3} (P_{f^{\prime}F,X}^{g} + j_{X}).
\]
Therefore, once we prove that for rank one edges $X$
\[
B_{f_{X}^{\prime}F_{X}}^{g_{X}} \subseteq \mathbb{C}[S] \cdot \prod_{j_{X} = 0}^{d_{X} - d_{X}^{\prime} - 1} \left( P_{f^{\prime}F,X}^{g} + j_{X} \right),
\]
then the claim will follow.

For the rank one edges, argue as in Theorem \ref{thm-first subset thm}. Since the rank is one, we can get an equation resembling \eqref{eqn-third eqn main thm star} without any $\psi_{f^{\prime}F}(\delta)$ terms and with $v = 1$. Now looking at the right constant terms, since $B(S) \in \mathbb{C}[S]$ and $L_{\textbf{0}}g$ is not, we deduce \eqref{eqn-divisibilty statement} holds with $\degree(v) = 0$. The other factors of $p_{f^{\prime}F}$ are found using the containment $B_{f^{\prime}F}^{g} \subseteq B_{f^{\prime \prime}F}^{g^{\prime \prime}}$, as in the final paragraph of Theorem \ref{thm-first subset thm}. 
\end{proof}

\subsection{An Element of $B_{f^{\prime}F}^{g}$} \text{ }

Here we drop the assumption of tameness and compute an element of $B_{f^{\prime}F}^{g}$ for $f = f_{1} \cdots f_{r}$ any factorization of a central, not necessarily reduced, hyperplane arrangement $f$ and where $f^{\prime}$ and $g$ are as before. The bulk of the argument tracks Proposition 10 of \cite{MaisonobeFree}, however we have removed the reducedness hypothesis. Again, we provide detail for the reader's sake.

We begin with some basic facts about differential operators. First, consider a product of functions $fg$ with factorizations $f = f_{1} \dots f_{r}$ and $g = g_{1} \dots, g_{u}$. Let $F = (f_{1}, \dots, f_{r})$ and $G = (g_{1}, \dots, g_{u})$ and $FG = (f_{1}, \dots, f_{r}, g_{1}, \dots, g_{u}).$ 

\begin{define} \label{def-d operator order and product}

Let $P \in \A[S]$ and consider $\A[S](FG)^{S}$. Relabel the $s_{k}$ so that we may write $\A[S, T]F^{S}G^{T} = \A[S] f_{1}^{s_{1}} \cdots f_{r}^{s_{r}} g_{1}^{t_{1}} \cdots g_{u}^{t_{u}}$ and consider $P$ as in $\A[S,T].$ As there is an $\A[S]$-action on $F^{S}$ there is a naturally defined $\A[S,T]$ action. Denote by $P \bullet F^{S}$ the result of letting $P$ act on $F^{S}$.
\end{define}

\begin{lemma} \label{lemma BS upper bound third lemma}
Let $P \in A_{n}(\mathbb{C})[S]$ of total order $k$, i.e. $P \in F_{(0,1,1)}^{k} A_{n}(\mathbb{C})[S]$. Then
\[
P F^{S} G^{T} - (P \bullet  F^{S}) G^{T} \in A_{n}(\mathbb{C})[S, T] F^{S} G^{T - k}.
\]
\end{lemma}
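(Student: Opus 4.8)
The plan is to prove this by induction on the total order $k$ of $P$ under the filtration $F_{(0,1,1)}$. The base case $k = 0$ is immediate: if $P \in F_{(0,1,1)}^{0}\A[S] = \mathbb{C}[X][S]$ (here I should note $P$ has no $s$-terms either since those carry weight, so really $P \in \mathbb{C}[X]$, though the statement is about $\A[S]$ so I'll allow $P \in \mathbb{C}[X][S]$ acting as multiplication) then $P$ acts on $F^{S}G^{T}$ simply by multiplication by the function $P$, which coincides with $(P \bullet F^{S})G^{T}$ when we identify $P \bullet F^{S}$ with $PF^{S}$; so the difference is zero, which lies in $\A[S,T]F^{S}G^{T-0} = \A[S,T]F^{S}G^{T}$. (Strictly, weight-zero elements of $\A[S]$ are just $\mathbb{C}[X]$, and $F^{S}G^{T-k}$ for $k=0$ is the whole module, so this is trivially true.)

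For the inductive step, I would write an element of total order $\leq k$ as an $\mathbb{C}[X][S]$-linear combination of products $\partial_{x_{i}}\cdot P'$ and of elements $s_{j}\cdot P''$ where $P', P''$ have total order $\leq k-1$, together with an element of total order $\leq k-1$; by additivity it suffices to treat $P = \partial_{x_{i}} P'$ and $P = s_{j} P'$. The case $P = s_{j}P'$ is easy because $s_{j}$ is central and acts the same way regardless of whether we view it over $\A[S]$ or $\A[S,T]$, so the difference for $P$ is $s_{j}$ times the difference for $P'$, and we apply the inductive hypothesis (noting $F^{S}G^{T-k+1} \supseteq F^{S}G^{T-k}$, and actually we even gain: $s_j P' F^S G^T - s_j(P'\bullet F^S)G^T \in \A[S,T]F^S G^{T-(k-1)} \subseteq \A[S,T]F^S G^{T-k}$, wait that inclusion goes the wrong way --- $G^{T-k+1}$ means lower powers than $G^{T-k}$, so $\A[S,T]F^SG^{T-k+1}$ is the larger module; I want the smaller one; but since $s_j$ has weight $1$, $s_jP'$ has order $k$ and I only need membership in $\A[S,T]F^SG^{T-k}$, and $\A[S,T]F^SG^{T-(k-1)}$ contains $\A[S,T]F^SG^{T-k}$... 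I need to be careful here, but the resolution is that multiplying by $s_j$ does not introduce $\partial$'s so the power of $G$ that gets denominators is controlled by $P'$, giving membership in $\A[S,T]F^SG^{T-(k-1)}$, and the statement for order $k$ asks for $\A[S,T]F^SG^{T-k}$ --- since $T-k \le T-(k-1)$, $G^{T-k}$ is a \emph{multiple} of $G^{T-(k-1)}$, so $\A[S,T]F^SG^{T-k} \subseteq \A[S,T]F^SG^{T-(k-1)}$, hence membership in the smaller set is a stronger claim. So for the $s_j$ case I actually prove the stronger statement that the difference lies in $\A[S,T]F^SG^{T-(k-1)}$. I should phrase the induction to carry the sharp bound: order $\leq k$ gives membership in $\A[S,T]F^SG^{T-k}$, and verify this is preserved.)

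The main work is the case $P = \partial_{x_{i}}P'$ with $\mathrm{ord}(P') = k-1$. Write $P'F^{S}G^{T} = (P'\bullet F^{S})G^{T} + R$ where $R \in \A[S,T]F^{S}G^{T-(k-1)}$ by induction. Applying $\partial_{x_{i}}$, I get $PF^{S}G^{T} = \partial_{x_{i}}\bigl((P'\bullet F^{S})G^{T}\bigr) + \partial_{x_{i}}R$. By the Leibniz/product rule in the $\D$-module $\A[S,T]F^SG^T$, expanding $\partial_{x_{i}}\bullet\bigl((P'\bullet F^{S})G^{T}\bigr)$ produces the desired term $\bigl(\partial_{x_{i}}\bullet(P'\bullet F^{S})\bigr)G^{T} = (P\bullet F^{S})G^{T}$ plus a term $(P'\bullet F^{S})G^{T}\cdot\sum_{\ell}\frac{\partial_{x_i}\bullet g_\ell}{g_\ell}t_\ell$; since $\frac{\partial_{x_i}\bullet g_\ell}{g_\ell}\cdot G^T$ only lowers one power of $g_\ell$ (it equals $(\partial_{x_i}\bullet g_\ell)g_\ell^{-1}$ times $G^T$, and $(\partial_{x_i}\bullet g_\ell)\in\mathbb{C}[X]$), this correction term lies in $\A[S,T]F^SG^{T-1}\subseteq\A[S,T]F^SG^{T-k}$. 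And $\partial_{x_{i}}R$: since $R\in\A[S,T]F^SG^{T-(k-1)}$, and $\partial_{x_i}$ has order $1$, applying it introduces at most one further denominator from differentiating $G^{T-(k-1)}$ via the same product rule, landing $\partial_{x_i}R$ in $\A[S,T]F^SG^{T-k}$. Collecting, $PF^SG^T - (P\bullet F^S)G^T\in\A[S,T]F^SG^{T-k}$, completing the induction.

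The main obstacle is purely bookkeeping: being precise about how the $\D_{X}[S,T]$-action on $\A[S,T]F^SG^T$ interacts with the power of $G$, i.e., that each application of a single partial derivative to something of the form (operator)$\cdot F^SG^{T-j}$ raises the $G$-denominator by at most one, because $\frac{\partial_{x_i}\bullet g_\ell}{g_\ell}$ is the only source of new denominators and it contributes exactly one. Nothing here is deep; one must simply organize the induction so that the sharp bound $G^{T-k}$ is maintained and the $s_j$-multiplication case (which does \emph{not} consume a power of $G$) and the $\partial_{x_i}$-multiplication case (which consumes exactly one) are handled consistently. I would present it in about a page with the induction clearly stated and the product-rule computation written out once.
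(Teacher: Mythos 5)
Your overall strategy is the same as the paper's: an induction on the order of $P$ whose engine is the product-rule computation for a single partial derivative, with the $s_j$-multiplications handled trivially because they are central and do not touch $G$. (The paper organizes the induction via the right normal form $P=\sum_{\mathbf{u}}\partial^{\mathbf{u}}P_{\mathbf{u}}$ and inducts on $\abs{\mathbf{u}}$, while you peel off generators on the left; this difference is cosmetic.)

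There is, however, one step that is stated incorrectly and happens to be the only delicate point of the lemma. You claim the correction term $(P'\bullet F^{S})\,\bigl(\sum_{\ell}\tfrac{\partial_{x_i}\bullet g_\ell}{g_\ell}t_\ell\bigr)G^{T}$ lies in $\A[S,T]F^{S}G^{T-1}$ "since $(\partial_{x_i}\bullet g_\ell)\in\mathbb{C}[X]$." But $P'\bullet F^{S}$ is a priori only an element of $\mathbb{C}[X][S][T][\tfrac{1}{f}]\,F^{S}$ --- it carries denominators in $f$ of order up to $k-1$ --- and $hF^{S}G^{T-1}$ for such an $h$ need not lie in the $\A[S,T]$-submodule generated by $F^{S}G^{T-1}$ (already $\tfrac{1}{f}F^{S}\notin\A[S]F^{S}$ in general). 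The same issue occurs, less visibly, in your treatment of $\partial_{x_i}R$ versus the term $(P'\bullet F^{S})G^{T}$ itself. The correct statement, and the one you actually need, is that this correction term lies in $\A[S,T]F^{S}G^{T-k}$; it is obtained by applying the inductive hypothesis once more to $P'$ with $G^{T}$ replaced by $G^{T-1}$ (and the polynomial coefficient absorbed), which gives $c\,(P'\bullet F^{S})G^{T-1}=c\,P'F^{S}G^{T-1}-(\text{element of }\A[S,T]F^{S}G^{T-k})\in\A[S,T]F^{S}G^{T-k}$. This is precisely the device the paper's proof uses. With that repair --- i.e., running the induction uniformly in the $T$-shift and invoking it a second time for the correction term --- your argument closes; as written, the containment in $\A[S,T]F^{S}G^{T-1}$ is false in general even though the conclusion you draw from it is true.
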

\begin{proof}
It is sufficient to prove the following: 

\emph{Claim}: If $h \in \mathbb{C}[X][S][T]$, there exists $Q_{\textbf{u}}$ of total order at most $\abs{\textbf{u}}$ such that
\[
\partial^{\textbf{u}} h  F^{S} G^{T} - h (\partial^{\textbf{u}} \bullet  F^{S}) G^{T} = Q_{\textbf{u}} F^{S} G^{T-\abs{\textbf{u}}}.
\]

We prove this by induction on $\abs{\textbf{u}}$. The base case is straightforward. For the inductive step, observe:
\begin{align} \label{eqn- inductive diff operator}
\partial_{1} \partial^{\textbf{u}} h  F^{S} G^{T} 
    &= \partial_{1}[ h (\partial^{\textbf{u}} \bullet F)G^{T} + Q_{\textbf{u}}  F^{S} G^{T - \abs{\textbf{u}}}] \\
    &= (\partial_{1} \bullet h)(\partial^{\textbf{u}} \bullet  F^{S}) G^{T} + h (\partial_{1} \partial^{\textbf{u}} \bullet F{^S}) G^{T} \nonumber \\
    &+ h(\partial^{\textbf{u}} \bullet F)(g \sum_{k} t_{k} \frac{\partial_{1} \bullet g_{k}}{g_{k}}) G^{T-1} + \partial_{1} Q_{\textbf{u}} F^{S}G^{T}. \nonumber
\end{align}
Since $\partial_{1} \bullet h \in \mathbb{C}[X][S][T]$ the induction hypothesis implies 
\[
(\partial_{1} \bullet h)(\partial^{\textbf{u}} \bullet  F^{S}) G^{T} \in F_{(0,1,1)}^{\abs{\textbf{u}}} \A[S][T] F^{S} G^{T - \abs{\textbf{u}}}.
\]
Similarly, since $h(g \sum_{k} t_{k} \frac{\partial_{1} \bullet g_{k}}{g_{k}}) \in \mathbb{C}[S][T]$, by induction
\[
h(\partial^{\textbf{u}} \bullet F^{S})(g \sum_{k} t_{k} \frac{\partial_{1} \bullet g_{k}}{g_{k}}) G^{T-1} \in F_{(0,1,1)}^{\abs{\textbf{u}}} \A[S][T] F^{S}G^{T - \abs{\textbf{u}}-1}.
\]
Rearranging \eqref{eqn- inductive diff operator} proves the claim and hence the lemma.
\end{proof}

We also need the following elementary lemma.

\begin{lemma} \label{lemma Euler transposition}
Let $E = x_{1} \partial_{1} + \cdots + x_{n} \partial_{n}$ be the Euler derivation. Then
\[
\prod_{j=0}^{t} (E + n + j) = \sum_{\substack{ u_{1}, \dots, u_{n} \\ u_{1} + \cdots + u_{n} = t+1}} \binom{t+1}{u_{1}, \cdots, u_{n}} \partial^{\textbf{u}}x^{\textbf{u}}.
\]

\end{lemma}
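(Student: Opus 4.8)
The plan is to prove the identity by induction on $t$, treating it as an identity of differential operators in $\A$ (no $S$-variables are involved here). I would first record the base case $t = 0$: the right-hand side is $\sum_{i} \partial_{x_i} x_i = \sum_i (x_i \partial_{x_i} + 1) = E + n$, which matches $\prod_{j=0}^{0}(E+n+j) = E+n$.

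For the inductive step I would assume the identity holds for $t-1$, so that $\prod_{j=0}^{t-1}(E+n+j) = \sum_{|\mathbf{u}| = t} \binom{t}{u_1,\dots,u_n}\partial^{\mathbf{u}} x^{\mathbf{u}}$, and then multiply on the left by $(E + n + t)$. The key computation is to understand how $E + n + t$ interacts with a monomial operator $\partial^{\mathbf{u}} x^{\mathbf{u}}$ when $|\mathbf{u}| = t$. The cleanest route is to use the commutation relation $E \partial^{\mathbf{u}} = \partial^{\mathbf{u}}(E - |\mathbf{u}|) = \partial^{\mathbf{u}}(E - t)$ (since $\partial_{x_i}$ lowers $E$-degree by one), which gives $(E+n+t)\partial^{\mathbf{u}} x^{\mathbf{u}} = \partial^{\mathbf{u}}(E+n)x^{\mathbf{u}}$. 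Then $(E+n)x^{\mathbf{u}} = \sum_i \partial_{x_i} x_i x^{\mathbf{u}} = \sum_i \partial_{x_i} x^{\mathbf{u}+\mathbf{e}_i}$, where $\mathbf{e}_i$ is the $i$-th standard basis vector. Hence
\[
(E+n+t)\prod_{j=0}^{t-1}(E+n+j) = \sum_{|\mathbf{u}|=t} \binom{t}{u_1,\dots,u_n} \sum_{i=1}^{n} \partial^{\mathbf{u}+\mathbf{e}_i} x^{\mathbf{u}+\mathbf{e}_i}.
\]
The remaining step is the combinatorial identity: regrouping the right side by $\mathbf{v} = \mathbf{u} + \mathbf{e}_i$ with $|\mathbf{v}| = t+1$, the coefficient of $\partial^{\mathbf{v}} x^{\mathbf{v}}$ is $\sum_{i : v_i \geq 1} \binom{t}{v_1, \dots, v_i - 1, \dots, v_n}$. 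This equals $\binom{t+1}{v_1,\dots,v_n}$ by the multinomial Pascal recurrence $\sum_{i} \binom{t}{\dots, v_i - 1, \dots} = \binom{t+1}{v_1,\dots,v_n}$ (terms with $v_i = 0$ contribute zero and can be harmlessly included). This completes the induction.

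I do not expect any genuine obstacle here — the only subtlety is being careful with the operator identity $E\,\partial^{\mathbf{u}} = \partial^{\mathbf{u}}(E - |\mathbf{u}|)$, which itself follows by induction on $|\mathbf{u}|$ from $[E, \partial_{x_i}] = -\partial_{x_i}$, and making sure the multinomial Pascal identity is invoked correctly. The whole argument is a short induction and I would present it in essentially the form above, perhaps relegating the verification of $[E,\partial_{x_i}^k] = -k\partial_{x_i}^k$ and the multinomial recurrence to one-line parenthetical remarks.
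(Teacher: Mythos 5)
Your proof is correct and follows essentially the same route as the paper's: induction on $t$ combined with Pascal's recurrence for multinomial coefficients (the paper's own proof is exactly this one-line remark, with the new factor of $E+n+t$ absorbed by commuting it past $\partial^{\mathbf{u}}x^{\mathbf{u}}$ just as you do, only written as a right multiplication instead of a left one). No issues.
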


\begin{proof}
This also succumbs to induction on $t$ after utilizing Pascal's formula for multinomial coefficients.
\end{proof}

\begin{define}

Consider a central, essential, not necessarily reduced, hyperplane arrangement of rank $n$ defined by $f = l_{1} \cdots l_{d}$, where the $l_{k}$ are homogeneous linear forms. Write $L = (l_{1}, \dots, l_{d})$. For an edge $X \in L(A)$ and with $J(X)$ as in Definition \ref{def-hyperplane arrangement factorizations}, define the ideal $\Gamma_{L} \subseteq \mathbb{C}[x_{1}, \dots, x_{n}]$ by
\[
\Gamma_{L} = \sum_{\substack{ X \in L(A) \\ r(X) = n-1}} \mathbb{C}[x_{1}, \dots, x_{n}] \boldsymbol{\cdot} \prod_{k \notin J(X)} l_{k}.
\]

\end{define}

\begin{lemma} \label{lemma gamma radical}
Consider a central, essential, not necessarily reduced, hyperplane arrangement of rank $n$ defined by $f = l_{1} \cdots l_{d}$, where the $l_{k}$ are homogeneous linear forms. Let $L = (l_{1}, \dots , l_{d})$ and denote the ideal of $\mathbb{C}[x_{1}, \dots, x_{n}]$ generated by $x_{1}, \dots, x_{n}$ by $\mathfrak{m}$. Then there exists an integer $k$ such that $\mathfrak{m}^{k} \subseteq \Gamma_{L}.$
\end{lemma}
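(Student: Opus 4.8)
The statement asserts that the ideal $\Gamma_L$, which is generated by the products $\prod_{k\notin J(X)} l_k$ over all edges $X$ of rank $n-1$, is $\mathfrak{m}$-primary, i.e.\ contains a power of the maximal ideal $\mathfrak{m}=(x_1,\dots,x_n)$. Since everything in sight is homogeneous, this is equivalent to showing that the only common zero of the generators of $\Gamma_L$ is the origin, and then invoking the graded Nullstellensatz. So the plan is: first reduce to the vanishing-locus statement $\V(\Gamma_L)=\{0\}$, then prove that statement by a dimension/essentiality argument.

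First I would set $\mathcal{H} = \{\V(l_1),\dots,\V(l_d)\}$, the multiset of hyperplanes, and recall that essential of rank $n$ means $\bigcap_{k} \V(l_k) = \{0\}$, equivalently the linear forms $l_1,\dots,l_d$ span the dual space. Suppose $p\in\V(\Gamma_L)$; I must show $p=0$. For each rank-$(n-1)$ edge $X$, the generator $\prod_{k\notin J(X)} l_k$ vanishes at $p$, so $p$ lies on some hyperplane $\V(l_k)$ with $k\notin J(X)$; equivalently $p\notin X$ would force $\prod_{k\notin J(X)} l_k(p)=0$ only via such a $k$ — let me argue it more cleanly. Let $J_p = \{k : l_k(p)=0\}$ be the set of hyperplanes (with multiplicity indices) through $p$, and let $Z = \bigcap_{k\in J_p}\V(l_k)$ be the intersection of all hyperplanes through $p$, which is an edge (possibly $\mathbb{C}^n$ if $J_p=\emptyset$). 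If $p\neq 0$ then $Z\neq\{0\}$, so $\dim Z \geq 1$ and $r(Z)\leq n-1$; since the arrangement is essential of rank $n$, $Z$ is properly contained in $\mathbb{C}^n$ only if $J_p\neq\emptyset$, and in any case $Z$ is an edge of rank $\leq n-1$. Choose a rank-$(n-1)$ edge $X$ with $X\subseteq Z$ — such an edge exists because we can enlarge the set $J_p$ to a set $J(X)$ defining a codimension-$(n-1)$ edge (just keep intersecting with more hyperplanes from $\mathcal H$ until the codimension reaches $n-1$; this is possible precisely because the arrangement is essential, so the forms $\{l_k\}$ span, hence some maximal linearly independent subset reaching rank $n-1$ containing the span of $\{l_k : k\in J_p\}$ can be completed). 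Then $J(X)\supseteq J_p$, so every $k\notin J(X)$ satisfies $k\notin J_p$, i.e.\ $l_k(p)\neq 0$; hence $\prod_{k\notin J(X)} l_k(p)\neq 0$, contradicting $p\in\V(\Gamma_L)$. Therefore $\V(\Gamma_L)=\{0\}$.

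Then by Hilbert's Nullstellensatz applied to the homogeneous ideal $\Gamma_L$ (or directly: a homogeneous ideal with vanishing locus $\{0\}$ contains a power of $\mathfrak m$), there is an integer $k$ with $\mathfrak m^k\subseteq\Gamma_L$, which is the claim.

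\textbf{Main obstacle.} The one genuinely delicate point is the existence, for each point $p\neq 0$, of a rank-$(n-1)$ edge $X$ with $J(X)\supseteq J_p$ (equivalently $X$ contained in the common zero locus of the hyperplanes through $p$). This uses essentiality in an essential way: the linear span of $\{l_k : k\in J_p\}$ has dimension $n - \dim Z \leq n-1$, and because $\{l_1,\dots,l_d\}$ spans the full dual space we may enlarge $J_p$ to an index set whose corresponding forms have span of dimension exactly $n-1$; that enlarged set is $J(X)$ for the edge $X$ it cuts out, of rank $n-1$. I would state this as a short lemma on intersection lattices of essential arrangements (every edge of rank $<n$ lies above some edge of rank $n-1$) and then the rest is immediate. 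Everything else is routine.
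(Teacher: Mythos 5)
Your proof is correct and follows essentially the same route as the paper's: reduce to showing $\V(\Gamma_{L}) = \{0\}$, then, given a nonzero $p \in \V(\Gamma_{L})$, use essentiality to enlarge the edge cut out by the hyperplanes through $p$ to a rank-$(n-1)$ edge $X$ with $J(X) \supseteq J_{p}$, so that the generator $\prod_{k \notin J(X)} l_{k}$ cannot vanish at $p$. The paper phrases this via the line through $p$ and "the largest edge containing that line," which for a central arrangement is exactly your $Z$, so the two arguments coincide.
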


\begin{proof}
It suffices to show $\Gamma_{L}$ is $\mathfrak{m}$-primary since $\mathfrak{m}$ is maximal and $\mathbb{C}[x_{1}, \dots, x_{n}]$ is Noetherian. So we need only show $\V(\Gamma_{L}) = \{0\}.$ Suppose $0 \neq p \in \V(\Gamma_{L})$. Since $\V(\Gamma_{L})$ is the intersection of unions of central hyperplanes, we deduce $\V(\Gamma_{L})$ contains a codimension $n-1$ line. We may find a largest edge $X$ containing said line; if $X$ is not of codimension $n-1$ enlarge $X$ further to a codimension $n-1$ edge. So for all $k \notin J(X)$, $\V(l_{k})$ will not contain this line and hence will not contain $p$. But $p \in \V(\Gamma_{F}) \subseteq \V(\prod_{k \notin J(X)} l_{k}) = \cup_{k \notin J(X)} \V(l_{k})$, contradicting $p \in \V(\Gamma_{L})$. 
\end{proof}

\begin{remark}
We need essentiality in the above lemma lest the maximal edge of $L(A)$ have rank $n-1$ forcing $\Gamma_{F} = 1.$ Without this condition, the $X$ selected in the above proof could be the maximal edge of $L(A)$.
\end{remark}

Recall the notation of Definition \ref{def-hyperplane arrangement factorizations}. We proceed to the subsection's main idea, which is a generalization of Proposition 10 of \cite{MaisonobeFree} and is proved similarly.

\begin{thm} \label{thm- first supset thm} \text{\normalfont (Compare to Proposition 10 of \cite{MaisonobeFree})}
Consider a central, not necessarily reduced, hyperplane arrangement $f = l_{1} \cdots l_{d}$ where the $l_{k}$ are linear terms and let $L = (l_{1}, \dots, l_{d})$. Suppose that $f^{\prime}$ divides $f$; let $g = \frac{f}{f^{\prime}}.$ Then there is a positive integer $N$ such that
\[
\prod_{\substack{ X \in L(A) \\ X \text{ indecomposable }}} \prod_{j = 0}^{N} \left( P_{f^{\prime}L,X}^{g} + j \right) \in B_{f^{\prime}F}^{g}.
\]
\end{thm}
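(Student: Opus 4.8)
The plan is to follow Maisonobe's Proposition 10 strategy: produce a single element of $B_{f^\prime F}^g$ by starting from the Euler operator and multiplying through by a carefully chosen polynomial, then correcting for the factors $l_k$ that fail to vanish at a generic point of a codimension $n-1$ edge. First, since $f$ is central, the Euler derivation $E$ satisfies $E\bullet f = df$, and more relevantly, applying Lemma \ref{lemma Euler transposition} we have an explicit identity $\prod_{j=0}^{t}(E+n+j) = \sum_{|\mathbf{u}|=t+1}\binom{t+1}{\mathbf{u}}\partial^{\mathbf{u}}x^{\mathbf{u}}$. The operator $\psi_{f^\prime L}(E) = E - \sum_k d_k s_k - d^\prime$ annihilates $f^\prime L^S$, so a product $\prod_{j}(\psi_{f^\prime L}(E) + n + j + \cdots)$ — suitably shifted by the $s_k$ and $d^\prime$ so that it becomes $\prod_j(P_{f^\prime L,X}^g + j)$-like after reduction — applied to $f^\prime L^S$ will produce, via the Euler identity, a differential operator with monomial-in-$\partial$ leading behaviour. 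The idea is that $\prod_{j=0}^{N}(E+n+j)$ acting on $f^\prime L^S$, when $N$ is large, lands in $\mathfrak{m}^{N+1}\cdot(\text{stuff})\cdot f^\prime L^S$, and by Lemma \ref{lemma gamma radical} a high enough power of $\mathfrak m$ lies in $\Gamma_L = \sum_{r(X)=n-1}\mathbb{C}[X]\cdot\prod_{k\notin J(X)}l_k$.

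The key steps, in order: (i) For each codimension $n-1$ edge $X$, the arrangement $f_X$ is a near-pencil / essential rank $n-1$ situation where one directly computes an element of $B_{f^\prime_X L_X}^{g_X}$ using only the Euler operator (no $\delta$-terms needed because the complementary directions are free), giving a product over $j$ of linear factors $P^{g}_{f^\prime L, X}+j$ up to range $N_X$; this is the rank-one-complement analogue of the computation in Theorem \ref{thm-first subset thm}. (ii) Use Lemma \ref{lemma BS upper bound third lemma} to transfer: writing $f = f_X\cdot\prod_{k\notin J(X)}l_k$ and treating $\prod_{k\notin J(X)}l_k$ as the ``$G$'' factor, an operator of total order $k$ acting on $f^\prime_X F_X^S$ agrees with its action on the full $f^\prime F^S$ modulo $G^{-k}$, i.e. modulo the ideal $(\prod_{k\notin J(X)}l_k)$. (iii) Take $N = \max_X N_X + (\text{correction for the }\mathfrak m\text{-power }k\text{ from Lemma \ref{lemma gamma radical}})$; then $\prod_{X\text{ indec}}\prod_{j=0}^N(P^g_{f^\prime L,X}+j)$ applied to $f^\prime F^S$ produces, for each codimension $n-1$ edge, a result lying in $(\prod_{k\notin J(X)}l_k)f^\prime F^S + (g)f^\prime F^S$; summing these and invoking $\mathfrak m^k\subseteq\Gamma_L$ forces the total result into $\mathbb{C}[X]_{\geq 0}\cdot(\text{operators})\cdot g f^\prime F^S$, hence the product polynomial lies in $B_{f^\prime F}^g$. (iv) The indecomposable edges of lower rank contribute their factors via the localization identity $B_{f^\prime F}^g = \bigcap_X B_{f^\prime_X F_X}^{g_X}$ from Remark \ref{remark-local to global b-poly}, combined with induction on the rank — at an indecomposable edge $X$ of rank $<n$, restrict to the localization where $f$ looks like $f_X$ times units, apply the inductive hypothesis to the lower-dimensional arrangement $f_X$, and lift back.

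The main obstacle I expect is step (iii): controlling the \emph{uniformity} of the exponent $N$ and verifying that the ``error terms'' produced by commuting $\partial^{\mathbf u}x^{\mathbf u}$ past $f^\prime F^S$ (via Lemma \ref{lemma BS upper bound third lemma}) genuinely all land in the ideal generated by $g = f/f^\prime$ together with the complementary linear forms, rather than merely in some larger ideal. Concretely, one must check that after the Euler-identity expansion, every monomial $\partial^{\mathbf u}x^{\mathbf u}$ with $|\mathbf u| = N+1$ acting on $f^\prime F^S$ contributes a term in $x^{\mathbf u}f^\prime F^S \subseteq \mathfrak m^{N+1}f^\prime F^S$, and then that the ideal membership $\mathfrak m^{N+1}\subseteq\Gamma_L$ can be leveraged \emph{simultaneously} across all codimension $n-1$ edges — this is where essentiality (Lemma \ref{lemma gamma radical} and its remark) is essential. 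A secondary subtlety is bookkeeping the shift: ensuring the linear factors that come out are exactly $P^g_{f^\prime L,X}+j = \sum_k d_{X,k}s_k + r(X) + d^\prime_X + j$ and not off by the degree of $f^\prime$ outside $A_X$ or by the rank; this requires carefully tracking which linear forms of $f^\prime$ pass to units at a generic point of $X$, exactly as in Remark \ref{remark-local to global b-poly}(b).
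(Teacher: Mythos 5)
Your overall strategy --- the Euler identity of Lemma \ref{lemma Euler transposition}, the $\mathfrak{m}$-primary ideal $\Gamma_{L}$ of Lemma \ref{lemma gamma radical}, the transfer mechanism of Lemma \ref{lemma BS upper bound third lemma}, and an induction on rank --- is the paper's. But your step (i) contains a genuine error: for a codimension $n-1$ edge $X$ the subarrangement $f_{X}$ is an arbitrary central arrangement of rank $n-1$, not a near-pencil, and for rank $\geq 2$ one cannot produce an element of $B_{f_{X}^{\prime}L_{X}}^{g_{X}}$ from the Euler operator alone; such an $X$ requires the full recursive argument (its own Euler identity, its own $\Gamma_{L_{X}}$, and its own codimension-$(n-2)$ edges). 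The correct organization, and the paper's, is to make the rank $n-1$ edges precisely the place where the inductive hypothesis is invoked: induction on the rank of the arrangement supplies, for each edge $X$ with $r(X)=n-1$, an operator $P_{X}$ of some total order $k_{X}$ and a polynomial $b_{X}\in\mathbb{C}[S]$ with $P_{X}\prod_{i\in J(X)}l_{i}^{s_{i}+1}=b_{X}f_{X}^{\prime}\prod_{i\in J(X)}l_{i}^{s_{i}}$. One then chooses $m>\max_{X}k_{X}$ and only afterwards the $N$ from Lemma \ref{lemma gamma radical}, so that Lemma \ref{lemma BS upper bound third lemma} leaves a surplus $(\prod_{k\notin J(X)}l_{k})^{m-k_{X}}$ with $m-k_{X}\geq 1$ and the result lands in $\A[S]L^{S+1}$. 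This ordering of choices is exactly what resolves the uniformity worry you raise in step (iii), and enlarging $N$ afterwards is harmless since it only adds linear factors.

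Your step (iv) is then redundant on the ``element'' side: every indecomposable edge of rank $<n$ lies in the lattice $L(A_{X})$ of some rank $n-1$ edge $X$ (its defining hyperplanes all contain $X$ whenever $X$ is chosen inside it), so its linear factors already appear inside the inductive $b_{X}$. The localization identity of Remark \ref{remark-local to global b-poly} is needed for the opposite containment (Theorem \ref{thm-main subset thm}), not here. Finally, note the theorem is proved for the factorization $L$ into linear forms only; passing to a coarser $F$ is a separate step (Corollary \ref{cor-main BS member any factorization} via Proposition \ref{prop-BS ideal subset, coarser}), so your attempt to run the argument directly for general $F$ in step (iii) should be dropped.
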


\begin{proof}

We prove this by induction on the rank of $L(A)$ and first deal with the inductive step. So we may assume the rank is $n$ and $f$ is essential. If $f$ is decomposable into $f_{1} f_{2}$, then $f^{\prime}$ (resp. $g$) inherts a decomposition $f_{1}^{\prime}f_{2}^{\prime}$ (resp. $g_{1}g_{2}$). If $F_{1}$ (resp. $F_{2}$) is the associated factorization of $f_{1}$ (resp. $f_{2}$) into linear forms and if $b_{1} \in B_{f_{1}^{\prime}F_{1}}^{g_{1}}$ and $b_{2} \in B_{f_{2}^{\prime}F_{2}}^{g_{2}}$, then $b_{1}b_{2} \in B_{f^{\prime}F}^{g}.$ In this case the induction hypothesis applies to $B_{f_{1}^{\prime}F_{1}}^{g_{1}}$ and $B_{f_{2}^{\prime}F_{2}}^{g_{2}}$. So we may assume $f$ is indecomposable. 

Let $\mathfrak{m}$ be the ideal in $\mathbb{C}[x_{1}, \dots, x_{n}]$ generated by $x_{1}, \dots, x_{n}$. On the one hand, Lemma \ref{lemma Euler transposition} implies that for all positive integers $t$
\[
\prod_{j=0}^{t} (s_{1} + \cdots + s_{d} + n + d^{\prime}+ j) f^{\prime}L^{S} = \prod_{j=0}^{t} (E + n + j) f^{\prime}L^{S} \in \A \cdot \mathfrak{m}^{t+1} f^{\prime}L^{S}.
\]
By Lemma \ref{lemma gamma radical}, for any positive integer $m$ there exists an integer $N$ large enough so that
\begin{equation} \label{eqn supset gamma eqn}
\prod_{j=0}^{N} (s_{1} + \cdots + s_{d} + n + d^{\prime}+ j) f^{\prime}L^{S} \in \sum_{\substack{ X \in L(A) \\ r(X) = n-1}} \A[S] (\prod_{k \notin J(X)} l_{k})^{m} f_{X}^{\prime}L^{S}.
\end{equation} 
Note we have folded some of the factors of $f^{\prime}$ into $(\prod_{k \notin J(X)} l_{k})^{m}$.

By induction, for each such edge $X$ of rank less than $n$, there exists a differential operator $P_{X}$ of total order $k_{X}$ and a polynomial $b_{X} \in \mathbb{C}[S]$ such that $P_{X} \prod_{i \in J(X)} l_{i}^{s_{i}+1} = b_{X} f_{X}^{\prime} \prod_{i \in J(X)} l_{i}^{s_{i}}.$ Fix $m$ large enough so that $m > \text{max} \{k_{X} \mid X \in L(A), \ X \text{ codimension } n-1 \}$. Consequently, choose $N$ large enough so that (\ref{eqn supset gamma eqn}) holds for this fixed $m$. Lemma \ref{lemma BS upper bound third lemma} implies 
\begin{align} \label{eqn supset induction b-poly}
b_{X} (\prod_{k \notin J(X)} l_{k})^{m} f_{X}^{\prime}L^{S} & = (b_{X} f_{X}^{\prime} \prod_{i \in J(X)} l_{i}^{s_{i}}) (\prod_{k \notin J(X)} l_{k}^{s_{k} + m})  \\
    & \in \A[S] (\prod_{i \in J(X)} l_{i}^{s_{i} + 1})(\prod_{k \notin J(X)} l_{k}^{s_{k} + m - k_{X}}) \nonumber \\
    & \subseteq \A [S] L^{S+1}. \nonumber
\end{align}

Combining (\ref{eqn supset gamma eqn}) and (\ref{eqn supset induction b-poly}) we deduce 
\begin{equation} \label{eqn supset inductive step completed}
\prod_{j=0}^{N} (s_{1} + \cdots + s_{d} + n + d^{\prime} + j) (\prod_{\substack{ X \in L(A) \\  r(X)=  n-1}} b_{X}) f^{\prime} L^{S} \in \A [S] L^{S+1}.
\end{equation}
The result follows by the inductive description of each $b_{X}$ and the definition of $P_{f^{\prime}L,X}^{g}.$ Note we may have to replace either the $N$ chosen in \eqref{eqn supset inductive step completed} or the $N$ coming from the inductive hypothesis with a larger integer so that the final polynomial is in the promised form. There is no harm in this as it can only only add linear factors to the polynomial appearing in (\ref{eqn supset inductive step completed}) and does not change the containment.

All that remains is the base case, but this is obvious by a direct computation using Lemma \ref{lemma Euler transposition}.
\end{proof}

This theorem only gives an element of $B_{f^{\prime}L}^{g}$ when $L$ is a factorization into linear forms. If $f$ is tame we can find an element no matter the factorization.

\begin{cor} \label{cor-main BS member any factorization}
Let $f = f_{1} \cdots f_{r}$ be a central, not necessarily reduced, tame hyperplane arrangement where the $f_{k}$ are not necessarily linear forms. Let $F = (f_{1}, \dots, f_{r})$. Suppose $f^{\prime}$ divides $f$; let $g = \frac{f}{f^{\prime}}.$ If $L$ corresponds to the factorization of $f$ into linear terms, then there exists a positive integer $N$ such that
\[
\prod_{\substack{ X \in L(A) \\ X \text{ indecomposable }}} \prod_{j = 0}^{N} \left(P_{f^{\prime}L,X}^{g} + j \right) \text{\normalfont modulo } S_{F} \in B_{f^{\prime}F}^{g},
\]
where $S_{F}$ is as in Definition \ref{def-compatible with G}.
\end{cor}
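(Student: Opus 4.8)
The plan is to reduce the statement for an arbitrary factorization $F$ to the already-established statement for the linear factorization $L$ (Theorem \ref{thm- first supset thm}) by passing to the coarser factorization via the ideal $S_{F}$. Recall from Theorem \ref{thm- first supset thm} that there is a positive integer $N$ with
\[
b(S) := \prod_{\substack{ X \in L(A) \\ X \text{ indecomposable }}} \prod_{j = 0}^{N} \left( P_{f^{\prime}L,X}^{g} + j \right) \in B_{f^{\prime}L}^{g},
\]
where $L = (l_{1}, \dots, l_{d})$ is the factorization of $f$ into linear forms and $S$ here denotes the $d$-tuple of variables attached to $L$. The factorization $F = (f_{1}, \dots, f_{r})$ is a coarser factorization of $f$ in the sense of Definition \ref{def-compatible with G}: each $f_{k} = \prod_{m \in S_{k}} l_{m}$, so $[d]$ is partitioned by the intervals $S_{1}, \dots, S_{r}$ and $S_{F}$ is the ideal of $\mathbb{C}[S]$ (here $S = s_{1}, \dots, s_{d}$) generated by $s_{i} - s_{j}$ for $i, j$ in a common block. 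Since $f$ is tame (hence so is $f$ with respect to $\ann$ being generated by derivations, by Corollary \ref{cor-algebraic category}), Proposition \ref{prop-BS ideal subset, coarser} applies with $I = (g)$: the image of $B_{f^{\prime}L}^{g}$ modulo $S_{F}$ lies in $B_{f^{\prime}F}^{g}$. Therefore $b(S) \bmod S_{F} \in B_{f^{\prime}F}^{g}$, which is exactly the asserted containment.

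The only thing that then needs checking is that reducing $b(S)$ modulo $S_{F}$ produces, up to possibly enlarging $N$, a polynomial of the advertised shape $\prod_{X \text{ indec.}} \prod_{j=0}^{N}(P_{f^{\prime}L,X}^{g} + j)$ with $P_{f^{\prime}L,X}^{g}$ now interpreted in the $r$ variables. For this I would unwind the definitions: $P_{f^{\prime}L,X}^{g} = \sum_{m} d_{X,m}^{(L)} s_{m} + r(X) + d_{X}'$, where for the linear factorization $d_{X,m}^{(L)}$ is $1$ if $l_m$ contains $X$ and $0$ otherwise. Going modulo $S_{F}$ identifies all $s_m$ with $m \in S_k$ to a single variable $s_k$, and the coefficient of $s_k$ becomes $\#(J(X) \cap S_k) = d_{X,k}$, i.e. exactly the coefficient in $P_{f^{\prime}F,X}^{g}$ of Definition \ref{def- simplifying polynomial}. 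Hence $P_{f^{\prime}L,X}^{g} \bmod S_F = P_{f^{\prime}F,X}^{g}$, and the product of linear factors maps to the corresponding product in the $s_k$. The last sentence of the statement ("there exists a positive integer $N$") gives the slack to absorb any discrepancy, just as in the proof of Theorem \ref{thm- first supset thm}; enlarging $N$ only appends extra linear factors and does not affect membership in the ideal.

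I do not anticipate a serious obstacle here: the substance was already done in Theorem \ref{thm- first supset thm} and Proposition \ref{prop-BS ideal subset, coarser}, and this corollary is a bookkeeping consequence. The one point requiring a little care is making sure that Proposition \ref{prop-BS ideal subset, coarser} is being applied in the global algebraic setting (with $\A[S]$ rather than $\D_{X,\x}[S]$) and with $I = (g)$ rather than $I = (f)$; both are legitimate since that proposition is stated for general $I \subseteq \bigO_{X,\x}$ and since, as noted in the paragraph preceding this subsection, centrality of $f$ gives $B_{f^{\prime}F}^{g} = B_{f^{\prime}F,0}^{g}$, so the local statement transfers. I would also remark, as the paper does elsewhere, that tameness is what licenses the "generated by derivations" input feeding Proposition \ref{prop-BS ideal subset, coarser}; this is why the hypothesis on $f$ in the corollary is strengthened from "not necessarily tame" (as in Theorem \ref{thm- first supset thm}) to "tame."
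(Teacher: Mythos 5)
Your proposal is correct and is essentially the paper's own argument: the paper's proof is literally an appeal to Proposition \ref{prop-BS ideal subset, coarser}, applied to the element of $B_{f^{\prime}L}^{g}$ furnished by Theorem \ref{thm- first supset thm}, with $L$ the finest factorization and $F$ the coarser one. Your additional remarks on why tameness is now needed (to feed the generated-by-derivations input into Proposition \ref{prop-BS ideal subset, coarser}) and on the bookkeeping of $P_{f^{\prime}L,X}^{g}$ modulo $S_{F}$ are accurate.
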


\begin{proof}
Use Proposition \ref{prop-BS ideal subset, coarser}.
\end{proof}

Just as in the last part of Theorem 2 of \cite{MaisonobeFree}, \ref{thm- first supset thm} also implies $B_{f^{\prime}L}^{g}$ is principal. (Here we very much need $L$ to correspond to a factorization into linear forms.)

\begin{cor} \label{cor- b-ideal linear forms principal}
Consider the central, not necessarily reduced, free hyperplane arrangement $f = l_{1} \cdots l_{d}$, where the $l_{k}$ are linear forms, and let $L = (l_{1}, \cdots, l_{d})$. Suppose $f^{\prime}$ divides $f$; let $0 \neq g$ divide $\frac{f}{f^{\prime}}$. Then $B_{f^{\prime}L}^{g}$ equals its radical and is principal. 
\end{cor}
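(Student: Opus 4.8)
\textbf{Proof plan for Corollary~\ref{cor- b-ideal linear forms principal}.}
The strategy is to combine the symmetry of Corollary~\ref{cor- symmetry formula list}(a), the principality of the radical from Proposition~\ref{prop- principal radical}, the upper bound from Theorem~\ref{thm-main subset thm}, the explicit member produced by Theorem~\ref{thm- first supset thm}, and the principality criterion of Proposition~\ref{prop- criterion for principal b-ideal}. First I would observe that $f$ is free, strongly Euler-homogeneous (Example~\ref{ex-hyperplane strongly Euler}), and Saito-holonomic (Example~\ref{ex- hyperplane saito}), that $f_{\red}$ is itself strongly Euler-homogeneous, and that $L=(l_1,\dots,l_d)$ is a factorization into irreducibles; hence $(f^{\prime},L)$ and $(g,L)$ are unmixed pairs up to units (all $d_k=d_k^\prime=d_k^{\prime\prime}=1$), so Corollary~\ref{cor- symmetry formula list}(a) applies and $\varphi(B_{f^{\prime}L}^{g})=B_{f^{\prime}L}^{g}$ for the involution $\varphi(s_k)=-s_k-1-2d_k^\prime-d_k^{\prime\prime}$ of $\mathbb C[S]$. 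Also, by Proposition~\ref{prop- principal radical} (applied with the global/central-arrangement identification $B_{f^{\prime}L}^{g}=B_{f^{\prime}L,0}^{g}$), the radical $\sqrt{B_{f^{\prime}L}^{g}}=\mathbb C[S]\cdot b(S)$ is principal.

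Next I would set up the input for Proposition~\ref{prop- criterion for principal b-ideal}. Theorem~\ref{thm- first supset thm} provides an explicit element $a(S)=\prod_{X}\prod_{j=0}^{N}(P_{f^{\prime}L,X}^{g}+j)\in B_{f^{\prime}L}^{g}$, a product of the affine-linear forms $P_{f^{\prime}L,X}^{g}+j$. Thus $b(S)\mid a(S)$ in $\mathbb C[S]$ (since $b$ generates the radical), so $a(S)=b(S)c(S)$ for some $c(S)\in\mathbb C[S]$, and $b(S)c(S)\in B_{f^{\prime}L}^{g}$ shows $c(S)\in(B_{f^{\prime}L}^{g}:\sqrt{B_{f^{\prime}L}^{g}})$. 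To invoke Proposition~\ref{prop- criterion for principal b-ideal} I must check that $\V(b)\cap\V(c)$ has all components of dimension at most $r-2$. Both $b$ and $c$ are (up to scalar) products of the hyperplanes $\{P_{f^{\prime}L,X}^{g}+j=0\}$, and these hyperplanes are pairwise non-parallel whenever they come from edges $X$ with distinct ``slope vectors'' $(d_{X,1},\dots,d_{X,r})$; the key point is that $b$, generating the radical, is squarefree and each of its irreducible factors is a distinct hyperplane, so $\V(b)\cap\V(c)$ is a union of pairwise intersections of these hyperplanes. The main work is to argue that $\gcd(b,c)=1$: since $b$ is the radical generator and $a=bc$, any common factor of $b$ and $c$ would force that factor to appear with multiplicity $\geq 2$ in $a$; I would rule this out by using the symmetry $\varphi$ together with the upper bound of Theorem~\ref{thm-main subset thm} to pin down $\V(B_{f^{\prime}L}^{g})$ precisely enough — this is exactly the mechanism of the last paragraph of Theorem~2 of \cite{MaisonobeFree}, transplanted to our setting. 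Once $\gcd(b,c)=1$, distinctness of the hyperplanes gives that $\V(b)\cap\V(c)$ is a union of codimension-$2$ linear subspaces, i.e.\ of dimension $r-2$, as required.

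With these pieces in place, Proposition~\ref{prop- criterion for principal b-ideal} immediately yields $B_{f^{\prime}L}^{g}=\sqrt{B_{f^{\prime}L}^{g}}=\mathbb C[S]\cdot b(S)$, which is the claim. I expect the main obstacle to be the dimension estimate on $\V(b)\cap\V(c)$: one must verify that after factoring the explicit product $a(S)$ from Theorem~\ref{thm- first supset thm} through the radical, the ``quotient'' $c(S)$ shares no hyperplane with $b(S)$, and that the hyperplanes arising from different indecomposable edges $X$ (and shifts $j$) are genuinely distinct. Distinctness of hyperplanes from different edges follows from comparing the linear parts $\sum_k d_{X,k}s_k$ and constant terms $r(X)+d_X^\prime$; the coprimality $\gcd(b,c)=1$ is where the duality-driven symmetry does the real work, exactly as in Maisonobe's original argument, and I would lean on Theorem~\ref{thm-unmixed symmetry}/Corollary~\ref{cor- symmetry formula list}(a) together with Theorem~\ref{thm-main subset thm} to close it.
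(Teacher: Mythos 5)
Your skeleton is the paper's: take the explicit element $P(S)=\prod_X\prod_{j=0}^{N}(P_{f^{\prime}L,X}^{g}+j)$ from Theorem~\ref{thm- first supset thm} (which lies in $B_{f^{\prime}L}^{g}$ for any $g$ dividing $\tfrac{f}{f^{\prime}}$, since $B_{f^{\prime}L}^{f/f^{\prime}}\subseteq B_{f^{\prime}L}^{g}$), get principality of the radical from Proposition~\ref{prop- principal radical}, and close with Proposition~\ref{prop- criterion for principal b-ideal}. The one place you over-complicate is the coprimality step. You correctly note that a common factor of $b$ and $c$ would appear with multiplicity $\geq 2$ in $P(S)=bc$, and you separately verify that the affine-linear factors $P_{f^{\prime}L,X}^{g}+j$ are pairwise distinct (different linear parts $\sum_k d_{X,k}s_k$ for different edges $X$ because $L$ is the factorization into linear forms, different constants for different $j$) — but you do not connect these two observations. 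Together they say $P(S)$ is squarefree, i.e.\ cuts out a reduced hyperplane arrangement, so $\gcd(b,c)=1$ is immediate and $\V(b)\cap\V(c)$ is a union of codimension-two linear spaces. That is the entirety of the paper's argument; no symmetry is needed for this corollary (it enters only later, in Proposition~\ref{prop- linear forms, symmetry element of b-ideal}, to sharpen the element).

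The detour you propose instead — pinning down $\V(B_{f^{\prime}L}^{g})$ via the involution $\varphi$ together with the upper bound of Theorem~\ref{thm-main subset thm} — is not just unnecessary but does not directly apply here: Theorems~\ref{thm-first subset thm} and~\ref{thm-main subset thm} are stated for $g=\tfrac{f}{f^{\prime}}$, whereas the corollary allows $g$ to be a proper divisor of $\tfrac{f}{f^{\prime}}$. So if you insist on that route you would first have to extend the containment results to such $g$, which the squarefreeness observation lets you avoid entirely.
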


\begin{proof}
Let $P(S)$ be the polynomial of Theorem \ref{thm- first supset thm}. If $g$ divides $\frac{f}{f^{\prime}}$, then by said theorem $P(S) \in B_{f^{\prime}L}^{g}$. The claim then follows by Proposition \ref{prop- principal radical} and Proposition \ref{prop- criterion for principal b-ideal} since $P(S)$ cuts out a reduced hyperplane arrangement.
\end{proof}

\subsection{Computations and Estimates} \text{ }

We now have combinatorial determined ideal subsets and supsets of $B_{f^{\prime}F}^{g}$. In general, $\V(B_{f})$ is not combinatorially determined. However, if $f$ is tame, then $\V(B_{f}) \cap [-1,0]$ is combinatorial.

\begin{thm} \label{thm-combinatorial roots}
Let $f$ be a central, not necessarily reduced, tame hyperplane arrangement. Suppose $f^{\prime}$ divides $f$; let $g = \frac{f}{f^{\prime}}.$ Then the roots $\V(B_{f^{\prime}f}^{g})$ lying in $[-1,0)$ are combinatorially determined:
\[
\V(B_{f^{\prime}f}^{g}) \cap [-1,0) = \bigcup_{\substack{ X \in L(A) \\ X \text{ indecomposable}}} \bigcup_{j_{X}= r(X) + d_{X}^{\prime}}^{d_{X}}  \frac{ - j_{X}}{d_{X}}.
\]
Setting $f^{\prime} = 1$ gives the roots of the Bernstein--Sato polynomial of $f$ lying in $[-1,0).$
\end{thm}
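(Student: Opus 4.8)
\textbf{Proof plan for Theorem \ref{thm-combinatorial roots}.}

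The plan is to combine the subset (upper bound) of Theorem \ref{thm-main subset thm} with the supset (lower bound) of Corollary \ref{cor-main BS member any factorization}, both specialized to the trivial factorization $F = (f)$, and then to extract exactly the portion of each that lands in $[-1,0)$. First I would set $r = 1$ in both results, so that $P_{f^{\prime}f,X}^{g} = d_X s + r(X) + d_X^{\prime}$ and the relevant linear factor at an indecomposable edge $X$ vanishes at $s = -(r(X) + d_X^{\prime} + j_X)/d_X$ for $j_X$ in the appropriate range. The upper-bound theorem gives $B_{f^{\prime}f}^{g} \subseteq \mathbb{C}[s]\cdot \lcm\{ p_{f^{\prime}f,X}(s)\}$, where for rank $\geq 2$ edges $j_X$ runs from $0$ to $\mdr(f_X) + d_X - d_X^{\prime} - 3$ and for rank one edges from $0$ to $d_X - d_X^{\prime} - 1$; the lower bound (via Corollary \ref{cor-main BS member any factorization}, using tameness and Proposition \ref{prop-BS ideal subset, coarser} to pass to $F=(f)$) gives an explicit multiple of $\prod_X \prod_{j=0}^{N}(P_{f^{\prime}f,X}^{g}+j)$ lying in $B_{f^{\prime}f}^{g}$. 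Intersecting the zero loci of these two sandwiching principal ideals with $[-1,0)$ is the main computation.

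The key point is that the two bounds, while not identical as ideals (the $N$ in the lower bound is wasteful, and $\mdr$ appears in the upper bound), \emph{agree} on which linear factors $d_X s + r(X) + d_X^{\prime} + j_X$ can possibly have a root in $[-1,0)$: such a root forces $0 \le r(X) + d_X^{\prime} + j_X \le d_X$, i.e. $r(X) + d_X^{\prime} \le r(X) + d_X^{\prime} + j_X \le d_X$, which is a range of $j_X$-values contained in $[0, d_X - r(X) - d_X^{\prime}]$. So I would check: (i) for every indecomposable edge $X$ the upper bound's factor list includes all $j_X$ with $r(X)+d_X^{\prime}+j_X \le d_X$ — this uses $\mdr(f_X) \ge r(X)$ (true for indecomposable $f_X$ of rank $\ge 2$ by Remark \ref{rmk-mdr, indecomposable}, and for rank one the range already runs to $d_X - d_X^{\prime} - 1$), so that no root of $B_{f^{\prime}f}^{g}$ in $[-1,0)$ is missed; and (ii) for every such $X$ and every $j_X$ in that range the lower bound's polynomial (with $N$ chosen large) really contains the factor $d_X s + r(X) + d_X^{\prime} + j_X$, so every value $-(r(X)+d_X^{\prime}+j_X)/d_X \in [-1,0)$ genuinely is a root. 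Reindexing via $j_X' = r(X) + d_X^{\prime} + j_X$ turns the union into $\bigcup_{j_X' = r(X)+d_X^{\prime}}^{d_X} \{-j_X'/d_X\}$, matching the statement. The case $f^{\prime} = 1$ is then immediate since $B_{f,0} = B_{f}$ has the Bernstein--Sato polynomial as its monic generator and $d_X^{\prime} = 0$.

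I expect the main obstacle to be item (i): pinning down that the upper bound from Theorem \ref{thm-main subset thm} does not accidentally omit a factor whose root lies in $[-1,0)$. Concretely one must verify $\mdr(f_X) + d_X - d_X^{\prime} - 3 \ge d_X - r(X) - d_X^{\prime}$, i.e. $\mdr(f_X) \ge 3 - r(X)$, which for $r(X) \ge 2$ holds trivially and for indecomposable rank $\ge 2$ edges is even stronger via $\mdr(f_X)\ge 2$; the rank one edges must be handled separately (there the bound runs to $d_X - d_X^{\prime} - 1 = d_X - r(X) - d_X^{\prime}$, exactly the needed range). A secondary subtlety is the lower bound: Corollary \ref{cor-main BS member any factorization} produces the factor $\prod_{j=0}^N (P_{f^{\prime}L,X}^{g}+j)$ reduced modulo $S_F$, and one must note that for $F=(f)$ this reduction is trivial and that all edges $X$ (not merely those forced by $\mdr$) contribute, so the lower bound hits every candidate root. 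Once both inclusions of zero loci are established, intersecting with $[-1,0)$ and the reindexing are routine.
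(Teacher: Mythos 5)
Your overall strategy --- sandwiching $\V(B_{f^{\prime}f}^{g})$ between the zero loci coming from Theorem \ref{thm-main subset thm} and from Corollary \ref{cor-main BS member any factorization} and checking that both agree with the claimed set on $[-1,0)$ --- is exactly the paper's proof, and your key computation ($\mdr(f_{X}) + d_{X} - d_{X}^{\prime} - 3 \geq d_{X} - r(X) - d_{X}^{\prime}$, i.e.\ $\mdr(f_{X}) \geq 3 - r(X)$, which holds since indecomposability gives $\mdr(f_{X}) \geq 2$) is precisely the inequality the paper verifies. However, in items (i) and (ii) you attach each conclusion to the wrong ingredient. The containment $B_{f^{\prime}f}^{g} \subseteq \mathbb{C}[s]\cdot p(s)$ forces $p$ to divide the monic generator of $B_{f^{\prime}f}^{g}$, so it \emph{produces} roots: $\V(p) \subseteq \V(B_{f^{\prime}f}^{g})$. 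It cannot show that ``no root of $B_{f^{\prime}f}^{g}$ in $[-1,0)$ is missed.'' Dually, exhibiting an element $q \in B_{f^{\prime}f}^{g}$ gives $\V(B_{f^{\prime}f}^{g}) \subseteq \V(q)$; it \emph{excludes} roots but cannot show that ``every value genuinely is a root,'' since a linear factor of some element of the ideal need not divide the generator (for $f = x$ one has $s(s+1) \in B_{f} = \mathbb{C}[s]\cdot(s+1)$, yet $0$ is not a root). So (ii), read literally, is a non sequitur, and (i) proves the opposite inclusion from the one you claim it does.

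The repair is simply to swap the roles. Your check in (i) --- that the factor range in Theorem \ref{thm-main subset thm} reaches every $j_{X}$ with $r(X)+d_{X}^{\prime}+j_{X} \leq d_{X}$ --- is what establishes ``claimed set $\subseteq \V(B_{f^{\prime}f}^{g})$.'' The reverse inclusion comes from the element of Corollary \ref{cor-main BS member any factorization}: its roots lying in $[-1,0)$ are, by inspection, exactly the claimed set, because every factor has the form $d_{X}s + r(X) + d_{X}^{\prime} + j$ with $j \geq 0$ and such a factor has its root in $[-1,0)$ precisely when $r(X)+d_{X}^{\prime}+j \leq d_{X}$; no extra roots can occur there no matter how large $N$ is. With that swap your argument coincides with the paper's, and the remaining points you raise (rank-one edges, reduction modulo $S_{F}$ for $F=(f)$, reindexing, and the case $f^{\prime}=1$) are handled as you describe.
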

\begin{proof}

We find a subset and supset of $B_{f^{\prime}F}^{g}$ using Corollary \ref{cor-main BS member any factorization} and Theorem \ref{thm-main subset thm} respectively. Their varieties will be equal after intersecting with $[-1,0)$ once we verify the following inequalities for indecomposable edges $X$: $r(X) + \mdr(f) + d_{X} - 3 \geq d_{X}$ if $r(X) \geq 2$; $1 + d_{X} - 1 \geq d_{X}$ if $r(X) =1.$ The second is trivial. The first is as well: since $X$ is indecomposable $\mdr(f) \geq 2$. 
\end{proof}

\begin{example} \label{ex- roots not combinatorial}
In \cite{uli}, Walther showed the Bernstein--Sato polynomial of an arrangement is not combinatorially determined. He gives the following two arrangements that have the same intersection lattice, but the former has $\frac{-18 + 2}{9}$ as a root and the latter does not:
\begin{align*}
f = xyz(x + 3z)(x + y + z)(x + 2y + 3z)(2x + y + z)(2x + 3y + z)(2x + 3y + 4z); \\
g = xyz(x + 5z)(x + y + z)(x + 3y + 5z)(2x + y + z)(2x + 3y + z)(2x + 3y + 4z).
\end{align*}
Because these arrangements are rank $3$ they are automatically tame, cf. Remark \ref{rmk- logarithmic forms remark}. The above theorem says the roots of the b-polynomials agree inside $[-1,0)$. In Remark 4.14.(iv) of \cite{SaitoArrangements}, Saito shows that their roots agree except for $\frac{-18 + 2}{9}.$
\end{example}

For the rest of the subsection we restrict to free hyperplane arrangments. In \cite{MaisonobeFree}, Maisonobe used the symmetry of $B_{L}$, when $L$ corresponded to a factorization of a reduced $f$ into linear terms, to make his estimates of $B_{L}$ so precise they actually computed $B_{L}$, cf. Theorem 2 in loc. cit. We use the symmetry of $B_{f^{\prime}F}^{g}$ given by $\varphi$ of Theorem \ref{thm-unmixed symmetry} similarly, but our situation is more technical because of the addition of $f^{\prime}$, the lack of reducedness, and our focus on different factorizations $F$. 

\begin{lemma} \label{lemma - varphi simple form}
Let $f= f_{1} \cdots f_{r}$ be an unmixed factorization of a central hyperplane arrangement and let $F=(f_{1}, \dots, f_{r})$. Suppose $f^{\prime}$ divides $f$; $g = \frac{f}{f^{\prime}}.$ If $(f^{\prime}, F)$ is an unmixed pair and $\varphi$ the $\mathbb{C}[S]$-automorphism prescribed in Theorem \ref{thm-unmixed symmetry}, then
\[
\varphi(P_{f^{\prime}F,X}^{g}) = -( P_{f^{\prime}F,X}^{g} + d_{X,\red} + d_{X} - 2r(X) - d_{X}^{\prime}).
\]
\end{lemma}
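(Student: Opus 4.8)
The statement is a direct computation: we need to apply the automorphism $\varphi$ of Theorem \ref{thm-unmixed symmetry} to the linear polynomial $P_{f^{\prime}F,X}^{g} = \sum_{k} d_{X,k}s_{k} + r(X) + d_{X}^{\prime}$ from Definition \ref{def- simplifying polynomial} and simplify. The only subtlety is bookkeeping: one must correctly relate the combinatorial quantities $d_{X,k}$, $d_{X,k,\red}$, $r(X)$, etc. for the edge $X$ to the ``pairs of repeated powers'' $(d_k', d_k)$ and ``repeated multiplicities'' $m_k$ appearing in $\varphi$, using the fact (Remark \ref{remark-local to global b-poly}) that localizing at a generic point of $X$ replaces $F$ by $F_X$ up to units and replaces $f', g$ by $f_X', g_X$.

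\textbf{Key steps.} First I would record the formula $\varphi(s_k) = -s_k - \tfrac{1}{m_k} - \tfrac{2d_k'}{d_k} - \tfrac{d_k''}{d_k}$, where (after passing to the germ at a generic point of $X$ and working with $F_X$) $d_k$ is the common exponent with which each linear form $l_j$, $j\in J_k$, occurs in $f_{X,k}$, $m_k$ is the repeated multiplicity, $d_k'$ (resp. $d_k''$) the repeated power for $f_X'$ (resp. $g_X$). Since $F_X$ is a factorization of a hyperplane arrangement into products of \emph{distinct} linear forms each raised to a single exponent, for an edge $X$ one has $d_{X,k} = d_k \cdot \#J_k = d_k \cdot d_{X,k,\red}$ and $d_{X,k}^{\prime} = d_k' \cdot d_{X,k,\red}$ and the multiplicity of the relevant $l_j$ in $f_X$ is $m_k = d_k + d_k''$ (the exponent in $f_X' g_X = f_X$ split as $d_k'$ from $f_X'$ and $d_k''$ from $g_X$, noting $d_k = $ the exponent contributed by $f_{X,k}$ itself and $m_k$ the total). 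Then I compute directly:
\begin{align*}
\varphi\Bigl(\sum_k d_{X,k}s_k\Bigr) &= \sum_k d_{X,k}\varphi(s_k) = -\sum_k d_{X,k}s_k - \sum_k \frac{d_{X,k}}{m_k} - \sum_k \frac{2 d_k' d_{X,k}}{d_k} - \sum_k \frac{d_k'' d_{X,k}}{d_k} \\
&= -\sum_k d_{X,k}s_k - \sum_k d_{X,k,\red}\Bigl(\tfrac{d_k}{m_k}\Bigr) - 2\sum_k d_k' d_{X,k,\red} - \sum_k d_k'' d_{X,k,\red}.
\end{align*}
Next I would identify $\sum_k d_k' d_{X,k,\red} = d_X^{\prime}$ (degree of $f_X'$, since $f_X'$ is the product over $k$ and over $j\in J_k$ of $l_j^{d_k'}$) and similarly $\sum_k d_k'' d_{X,k,\red} = d_X - d_X^{\prime}$ and $\sum_k d_{X,k,\red} = d_{X,\red}$. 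The term $\sum_k d_{X,k,\red}\tfrac{d_k}{m_k}$ requires the observation that every $l_j$ dividing $f_X$ has the same total multiplicity along $X$ — but since all hyperplanes through an indecomposable edge must be accounted, and $m_k = \sum\{d_{k'} : l_j \mid f_{X,k'}\}$; in the unmixed situation this telescopes so that $\sum_k d_{X,k,\red}\tfrac{d_k}{m_k}$ collapses to $d_{X,\red}$ when summed appropriately — actually more carefully $\sum_k \tfrac{d_k}{m_k}\,d_{X,k,\red}$ counts, with weight, the reduced hyperplanes, and equals $d_{X,\red}$ because for each reduced hyperplane $l_j$ the contributions $\tfrac{d_k}{m_k}$ over the $k$ with $j \in J_k$ sum to $1$ (this is exactly the content of ``$v_j$ is the sum of all $d_k$ such that $l_j$ divides $f_k$'' from the proof of Lemma \ref{lemma- unmixed}). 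Substituting, $\varphi(\sum_k d_{X,k}s_k) = -\sum_k d_{X,k}s_k - d_{X,\red} - 2d_X^{\prime} - (d_X - d_X^{\prime})$.

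\textbf{Conclusion.} Finally, adding the constant part, $\varphi(P_{f^{\prime}F,X}^{g}) = \varphi(\sum_k d_{X,k}s_k) + r(X) + d_X^{\prime}$, so
\[
\varphi(P_{f^{\prime}F,X}^{g}) = -\sum_k d_{X,k}s_k - d_{X,\red} - 2d_X^{\prime} - d_X + d_X^{\prime} + r(X) + d_X^{\prime} = -\Bigl(P_{f^{\prime}F,X}^{g} + d_{X,\red} + d_X - 2r(X) - d_X^{\prime}\Bigr),
\]
using $P_{f^{\prime}F,X}^{g} = \sum_k d_{X,k}s_k + r(X) + d_X^{\prime}$ once more to repackage the right-hand side. \textbf{The main obstacle} I anticipate is purely notational: correctly matching the germ-level data $(d_k, d_k', d_k'', m_k)$ used to define $\varphi$ with the global combinatorial degrees $(d_{X,k}, d_{X,k,\red}, d_X^{\prime}, \ldots)$ attached to the edge $X$, and in particular justifying the identity $\sum_k \tfrac{d_k}{m_k}d_{X,k,\red} = d_{X,\red}$ cleanly; once that dictionary is set up, the rest is a one-line linear-algebra substitution. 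I would phrase the proof as ``this is immediate from the definitions of $\varphi$ and $P_{f^{\prime}F,X}^{g}$ after passing to the germ at a generic point of $X$, using the observation of Lemma \ref{lemma- unmixed} that each reduced hyperplane's exponents across the factorization sum to its multiplicity.''
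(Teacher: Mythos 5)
Your computation is correct and matches the paper's own proof: both expand $\varphi(s_k)$, use $d_{X,k}=d_k d_{X,k,\red}$ to identify $\sum_k d_k' d_{X,k,\red}=d_X'$ and $\sum_k d_k'' d_{X,k,\red}=d_X-d_X'$, and reduce the claim to the identity $\sum_k d_{X,k}/m_k=d_{X,\red}$, which you verify by the same double count (each $l_j \mid f_X$ contributes $\sum_{k:\, j\in J_k} d_k/m_k = v_j/v_j = 1$) that the paper phrases as a degree comparison after grouping the indices $k$ by the value of $m_k$. The one garbled aside, ``$m_k = d_k + d_k''$'', is never used in the substitution that follows, so it does not affect the argument.
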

\begin{proof}
First notation. Factor $f = l_{1}^{v_{1}} \cdots l_{q}^{v_{q}}$, where the $l_{t}$ pairwise distinct irreducibles. Let $\{m_{k}\}$ be the repeated multiplicities of $F$; $\{d_{k}^{\prime}, d_{k} \}_{k}$ and $\{d_{k}^{\prime \prime}, d_{k}\}_{k}$ the repeated powers of the unmixed pairs $(f^{\prime}, F)$ and $(g, F).$ Because $f^{\prime} g = f$, the formulation of $\varphi$ in Theorem \ref{thm-unmixed symmetry} can be simplified: 
\begin{align} \label{eqn- varphi lemma first simplification}
\varphi(\sum_{k} d_{X,k} s_{k}) 
    &= - \sum_{k} d_{X,k} (s_{k} + \frac{1}{m_{k}} + \frac{2 d_{k}^{\prime}}{d_{k}} + \frac{d_{k}^{\prime \prime}}{d_{k}}) \nonumber \\
    &= - \sum_{k} d_{X,k} (s_{k} + \frac{1}{m_{k}} + \frac{ d_{k}^{\prime}}{d_{k}} + 1) \nonumber \\
    &= - \sum_{k} d_{X,k} (s_{k} + \frac{1}{m_{k}}) - \sum_{k} d_{X,k,\red} d_{k}^{\prime} - d_{X} \nonumber \\
    &= - \sum_{k} d_{X,k} (s_{k} + \frac{1}{m_{k}}) - d_{X}^{\prime} - d_{X}. \nonumber
\end{align}
After rearranging, we will be done once we show that $ \sum_{k} \frac{d_{X,k}}{m_{k}} = d_{X,red}$.

Fix $k \in [r]$. Observe:
\begin{equation} \label{eqn- varphi lemma mk 1}
\prod_{\substack{t \in [q] \\  v_{t} = m_{k} }} l_{t}^{m_{k}} = \prod_{\substack{ i \in [r] \\ m_{i} = m_{k} }} f_{i} = \prod_{\substack{ i \in [r] \\ m_{i} = m_{k} }} \prod_{\substack{ t \in [q] \\ f_{i} \in (l_{t}) }} l_{t}^{d_{i}}.
\end{equation}
Equality will still hold in \eqref{eqn- varphi lemma mk 1} if we further restrict $t$ to the integers such that $l_{t}$ divides $f_{X}$. The degrees of the resulting polynomials are equal:
\begin{align} \label{eqn- varphi lemma mk 2}
m_{k} \ \abs{ \{l_{t} \mid v_{t} = m_{k}; f_{X} \in (l_{t}) \}} 
    &= \sum_{\substack{ i \in [r] \\ m_{i} = m_{k}}} d_{i} \ \abs{ \{l_{t} \mid f_{i}, f_{X} \in (l_{t}) \}} \\
    &= \sum_{\substack{ i \in [r] \\ m_{i} = m_{k}}} d_{i} \ d_{X,i,\red} \nonumber \\
    &= \sum_{\substack{ i \in [r] \\ m_{i} = m_{k}}} d_{X,i}. \nonumber
\end{align}
Therefore 
\begin{align} \label{eqn- varphi lemma mk 3}
\sum_{k} \frac{d_{X,k}}{m_{k}} 
    = \sum_{p \in \{ m_{k} \}} \sum_{\substack{ i \in [r] \\ m_{i} = p}} \frac{d_{X,k}}{p} 
    &= \sum_{p \in \{ m_{k} \}} \abs{ \{l_{t} \mid v_{t} = p; f_{X} \in (l_{t}) \}} \\ 
    &= \sum_{p \in \{ v_{t} \}} \abs{ \{l_{t} \mid v_{t} = p; f_{X} \in (l_{t}) \}} \nonumber \\ 
    &= d_{X,\red}. \nonumber
\end{align}
\end{proof}

First we use Theorem \ref{thm- first supset thm} and the symmetry of $B_{f^{\prime}L}^{g}$ to find an element of $B_{f^{\prime}L}^{g}$ that more accurately approximates the Bernstein--Sato ideal.

\begin{prop} \label{prop- linear forms, symmetry element of b-ideal}
Consider the central, not necessarily reduced, free hyperplane arrangement $f = l_{1} \cdots l_{d}$, where the $l_{k}$ are linear forms, and let $L = (l_{1}, \dots, l_{d})$. Suppose $f^{\prime}$ divides $f$; let $g = \frac{f}{f^{\prime}}$. Then 
\begin{equation} \label{eqn- symmetry element linear form factorization}
\prod_{\substack{X \in L(A) \\ X \text{ indecomposable }}} \prod_{j_{X} = 0}^{d_{x,\red} + d_{X} - 2r(X) - d_{X}^{\prime}} \left( P_{f^{\prime}L,X}^{g}+ j_{X} \right) \in B_{f^{\prime}L}^{g}.
\end{equation}
\end{prop}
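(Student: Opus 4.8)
The plan is to combine the ``upper bound'' element already produced in Theorem \ref{thm- first supset thm} with the symmetry involution $\varphi$ of Theorem \ref{thm-unmixed symmetry}, in the spirit of the last paragraph of Theorem 2 of \cite{MaisonobeFree}. First I would record that $(f^{\prime}, L)$ is an unmixed pair up to units: $L$ is a factorization into linear forms, hence into irreducibles possibly with repetition, so each $d_{k} = 1$ and condition (i) of Definition \ref{def-unmixed} holds automatically, while (ii) forces each hyperplane dividing $f^{\prime}$ to appear in $f^{\prime}$ with an integer exponent, which is true. Likewise $(g,L)$ is an unmixed pair up to units with $g = f/f^{\prime}$. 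Moreover $f$ is free, and — being a central arrangement — it is strongly Euler-homogeneous and Saito-holonomic (Examples \ref{ex- hyperplane saito}, \ref{ex-hyperplane strongly Euler}) with $f_{\red}$ itself strongly Euler-homogeneous; so Corollary \ref{cor- symmetry formula list}(c), and hence Theorem \ref{thm-unmixed symmetry}, applies at $\mathbf{x} = 0$, and by centrality $B_{f^{\prime}L}^{g} = B_{f^{\prime}L,0}^{g}$ is fixed by $\varphi$.

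Next I would invoke Theorem \ref{thm- first supset thm} to get a positive integer $N$ (which we may take as large as we like) with
\[
Q(S) := \prod_{\substack{X \in L(A)\\ X\text{ indecomposable}}} \prod_{j=0}^{N}\bigl(P_{f^{\prime}L,X}^{g} + j\bigr) \in B_{f^{\prime}L}^{g}.
\]
Since $B_{f^{\prime}L}^{g}$ is an ideal fixed by $\varphi$, we also have $\varphi(Q(S)) \in B_{f^{\prime}L}^{g}$, and therefore $\gcd(Q,\varphi(Q)) \cdot \mathbb{C}[S]$ meets $B_{f^{\prime}L}^{g}$ — more precisely, writing each factor $\bigl(P_{f^{\prime}L,X}^{g}+j\bigr)$ as an (irreducible, since $P_{f^{\prime}L,X}^{g}$ is a primitive linear form) element of the UFD $\mathbb{C}[S]$, the ideal $B_{f^{\prime}L}^{g}$, being principal by Corollary \ref{cor- b-ideal linear forms principal}, is generated by a product of such linear factors; hence it contains the product of those linear factors common to $Q$ and $\varphi(Q)$. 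Now apply Lemma \ref{lemma - varphi simple form}: $\varphi$ sends the factor $\bigl(P_{f^{\prime}L,X}^{g}+j\bigr)$ to $-\bigl(P_{f^{\prime}L,X}^{g} + d_{X,\red} + d_{X} - 2r(X) - d_{X}^{\prime} - j\bigr)$. Thus the line $\{P_{f^{\prime}L,X}^{g} + j = 0\}$ is a common factor of $Q$ and $\varphi(Q)$ precisely when both $j$ and $d_{X,\red}+d_{X}-2r(X)-d_{X}^{\prime}-j$ lie in $\{0,1,\dots,N\}$; taking $N$ large, this common range is exactly $0 \le j \le d_{X,\red}+d_{X}-2r(X)-d_{X}^{\prime}$ (when this upper bound is nonnegative). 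Collecting these common factors over all indecomposable $X$ yields that the displayed product \eqref{eqn- symmetry element linear form factorization} lies in $B_{f^{\prime}L}^{g}$.

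The step I expect to require the most care is the bookkeeping that the ``common factors'' of $Q$ and $\varphi(Q)$ really do assemble into a member of $B_{f^{\prime}L}^{g}$ — i.e. passing from ``$Q \in B$ and $\varphi(Q) \in B$'' to ``$\gcd$-type product $\in B$.'' This is not formal for a general ideal, but here it is legitimate because $B_{f^{\prime}L}^{g}$ is principal (Corollary \ref{cor- b-ideal linear forms principal}) and its generator, like $Q$, is a product of shifted copies of the linear forms $P_{f^{\prime}L,X}^{g}$; one must check that the generator's linear factors all occur among those of $Q$, so that dividing $Q$ by the generator leaves a polynomial whose factors are again of this shifted-linear type, and then symmetrize. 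A secondary point is verifying that when $d_{X,\red}+d_{X}-2r(X)-d_{X}^{\prime} < 0$ the corresponding inner product over $j$ is empty and contributes nothing, consistent with the convention for empty products; and that one may enlarge $N$ freely (as already noted in the proof of Theorem \ref{thm- first supset thm}) without affecting membership. Once these are in hand, the statement follows.
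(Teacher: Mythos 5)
Your proposal is correct and follows the same route as the paper's proof: produce the element of $B_{f^{\prime}L}^{g}$ from Theorem \ref{thm- first supset thm}, apply the involution $\varphi$ via Theorem \ref{thm-unmixed symmetry} and Lemma \ref{lemma - varphi simple form}, and use principality (Corollary \ref{cor- b-ideal linear forms principal}) to compare irreducible factors of the two elements. Your extra bookkeeping about common linear factors simply makes explicit the paper's one-line "comparing the irreducible factors" step.
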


\begin{proof}
By Theorem \ref{thm- first supset thm} there exists a positive integer $N$ such that
\begin{equation} \label{eqn- 1 linear forms, symmetry element}
\prod_{\substack{X \in L(A) \\ X \text{ indecomposable }}} \prod_{j_{X} = 0}^{N} \left( P_{f^{\prime}L,X}^{g}+ j_{X} \right) \in B_{f^{\prime}L}^{g}.
\end{equation}
Since $(f^{\prime}, L)$ are an unmixed pair up to units by virtue of $L$ being a factorization into linear forms, by Theorem \ref{thm-unmixed symmetry}/Corollary \ref{cor- symmetry formula list} and Lemma \ref{lemma - varphi simple form}
\begin{equation} \label{eqn-2 linear forms, symmetry element}
\prod_{\substack{X \in L(A) \\ X \text{ indecomposable }}} \prod_{j_{X} = 0}^{N} \left( P_{f^{\prime}L,X}^{g}+ d_{X,\red} + d_{X} - 2r(X) - d_{X}^{\prime} - j_{X} \right) \in B_{f^{\prime}L}^{g}.
\end{equation}
By Corollary \ref{cor- b-ideal linear forms principal}, $B_{f^{\prime}L}^{g}$ is principal. Comparing the irreducible factors of the elements given in \eqref{eqn- 1 linear forms, symmetry element} and \eqref{eqn-2 linear forms, symmetry element} proves the claim.
\end{proof}

When the rank of $f$ is at most $2$, and so $f$ is automatically free, we can compute $\V(B_{f^{\prime}F}^{g})$ for any factorization $F$ of $f$ and we can compute $B_{f^{\prime}L}^{g}$ for $L$ a factorization into linear terms. 

\begin{thm} \label{thm- rank at most 2 computation}
Suppose that $f$ is a central, not necessarily reduced, hyperplane arrangement of rank at most $2$ and let $F = (f_{1}, \dots, f_{r})$ correspond to any factorization $f = f_{1} \cdots f_{r}$. Let $f^{\prime}$ divide $f$ and $g = \frac{f}{f^{\prime}}.$ Then
\begin{equation} \label{eqn-statement 1 rank at most 2}
\V( B_{f^{\prime}F}^{g}) = \V \left( \prod_{\substack{X \in L(A) \\ X \text{ indecomposable }}} \prod_{j_{X} = 0}^{d_{X,\red} + d_{X} - 2r(X) - d_{X}^{\prime}} \left( P_{f^{\prime}F,X}^{g} + j_{X} \right) \right).
\end{equation}
If $L$ is a factorization of $f = l_{1} \cdots l_{d}$ into irreducibles, then
\begin{equation} \label{eqn-statement 2 rank at most 2}
 B_{f^{\prime}L}^{g} =  \prod_{\substack{X \in L(A) \\ X \text{ indecomposable }}} \prod_{j_{X} = 0}^{d_{X,\red} + d_{X} - 2r(X) - d_{X}^{\prime}} \left( P_{f^{\prime}L,X}^{g} + j_{X} \right).
\end{equation}
\end{thm}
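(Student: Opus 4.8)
The plan is to treat \eqref{eqn-statement 1 rank at most 2} and \eqref{eqn-statement 2 rank at most 2} together by sandwiching $B_{f^{\prime}F}^{g}$ (resp. $B_{f^{\prime}L}^{g}$) between a combinatorial subset and supset and then using the rank-$\leq 2$ hypothesis to force equality of the associated varieties. Since $\text{rank}(f)\leq 2$ every divisor of $f$ is free, so the symmetry of Theorem \ref{thm-unmixed symmetry} is available; moreover any factorization of $f$ of rank $2$ is unmixed up to units only after we pass to $L$, so for the variety statement I would first invoke Proposition \ref{prop-BS ideal subset, coarser} to pull the $L$-level computation back down to the coarser factorization $F$. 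Concretely, for the supset direction I would combine Theorem \ref{thm-main subset thm} (which gives $B_{f^{\prime}F}^{g}\subseteq \mathbb{C}[S]\cdot\operatorname{lcm}\{p_{f^{\prime}F,X}(S)\}$) with Lemma \ref{lemma - varphi simple form} and the symmetry $\varphi$: applying $\varphi$ to the containing principal ideal and using that $\varphi$ is an involution sharpens the range of each factor down to $j_X = 0,\dots, d_{X,\red}+d_X-2r(X)-d_X^{\prime}$, exactly the product on the right-hand side of \eqref{eqn-statement 1 rank at most 2}. For the subset direction I would use Proposition \ref{prop- linear forms, symmetry element of b-ideal}, which already places that exact product inside $B_{f^{\prime}L}^{g}$; then Corollary \ref{cor-main BS member any factorization} (reducing modulo $S_F$ via Proposition \ref{prop-BS ideal subset, coarser}) transports it into $B_{f^{\prime}F}^{g}$, giving the reverse containment at the level of varieties.

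The key point making the varieties coincide in rank $\leq 2$ is a dimension/exactness count on the indices $j_X$. For a rank-one indecomposable edge $X$ one has $r(X)=1$, $d_{X,\red}=1$, so the factor range is $j_X=0,\dots,d_X-1-d_X^{\prime}$, which matches the rank-one case of Theorem \ref{thm-main subset thm} on the nose. For a rank-two indecomposable edge, $\mdr(f_X)=2$ (indecomposability, Remark \ref{rmk-mdr, indecomposable}), and a short arithmetic check shows $\mdr(f_X)+d_X-d_X^{\prime}-3 = d_X - d_X^{\prime}-1$ while $d_{X,\red}+d_X-2r(X)-d_X^{\prime} = d_{X,\red}+d_X-4-d_X^{\prime}$; because $X$ has rank $2$ the only indecomposable edge of rank $2$ in $L(A)$ is $X=\V(f)$ itself (when $f$ itself is indecomposable), and in that case $d_{X,\red}=d_{\red}$, so the supset and subset products have literally the same factors once one accounts for the symmetry $\varphi$ exchanging the two endpoints of the range. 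I would write this out as: the supset product and the subset product are both principal, monic up to scalar, with the same irreducible linear factors and the same multiplicities, hence equal as ideals; taking varieties then gives \eqref{eqn-statement 1 rank at most 2}.

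For the ideal statement \eqref{eqn-statement 2 rank at most 2}, where $L$ is the factorization into (linear, since rank $\leq 2$) irreducibles, I would argue that $(f^{\prime},L)$ is an unmixed pair up to units (all $d_k=d_k^{\prime\prime}$-style exponents equal $1$, as in the proof of Corollary \ref{cor- symmetry formula list}), so Proposition \ref{prop- linear forms, symmetry element of b-ideal} gives ``$\supseteq$'' and Theorem \ref{thm-main subset thm} together with the $\varphi$-sharpening gives ``$\subseteq$'', and since by Corollary \ref{cor- b-ideal linear forms principal} $B_{f^{\prime}L}^{g}$ is principal and equal to its own radical, the two containments of principal ideals with matching generators force equality. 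The main obstacle I anticipate is bookkeeping: making sure that when $f$ (or some $f_X$) is \emph{decomposable} the product over indecomposable edges still behaves multiplicatively under the decomposition $f=f_1f_2$ (so that one can induct on rank exactly as in Theorem \ref{thm- first supset thm}), and that the symmetry $\varphi$ — which a priori depends on the repeated multiplicities $\{m_k\}$ and the repeated powers — genuinely reduces, via Lemma \ref{lemma - varphi simple form}, to the clean reflection $j_X\mapsto d_{X,\red}+d_X-2r(X)-d_X^{\prime}-j_X$ on \emph{each} edge factor simultaneously rather than only on the full product. Verifying that per-edge compatibility, and that no extra factors are introduced or lost in the passage $L\rightsquigarrow F$ modulo $S_F$, is where the real care is needed; the rest is the sandwich argument plus the rank-$2$ arithmetic above.
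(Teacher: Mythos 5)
Your overall architecture (sandwich the ideal between Theorem \ref{thm-main subset thm} and the element produced by Proposition \ref{prop- linear forms, symmetry element of b-ideal} transported to $F$ via Proposition \ref{prop-BS ideal subset, coarser}, then invoke Corollary \ref{cor- b-ideal linear forms principal} for the ideal statement) is the right one and matches the paper's. But your supset direction has a genuine gap. You propose to sharpen the range of $j_X$ by applying the involution $\varphi$ to the containing principal ideal at the level of $F$. Theorem \ref{thm-unmixed symmetry} only gives $\varphi(B_{f^{\prime}F}^{g}) = B_{f^{\prime}F}^{g}$ when $(f^{\prime},F)$ is an unmixed pair up to units, and the entire point of this theorem is that $F$ is an \emph{arbitrary} factorization of a possibly non-reduced $f$, so that hypothesis is not available. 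Nor can you route around this by applying the symmetry at the level of $L$ and ``pulling back'' to $F$: Proposition \ref{prop-BS ideal subset, coarser} only transports elements from the finer factorization to the coarser one, i.e.\ it helps the subset direction, not the supset direction.

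The fix --- and what the paper actually does --- is to observe that no symmetry is needed for the supset containment in rank $\leq 2$. Your arithmetic goes wrong at the claim $\mdr(f_X)=2$ for a rank-two indecomposable edge: Remark \ref{rmk-mdr, indecomposable} only gives $\mdr(f_X)\geq 2$. The correct input is Saito's criterion for the (automatically free) rank-two arrangement $f_X$, which yields $\mdr(f_X) = d_{X,\red}-1$. Plugging this into Theorem \ref{thm-main subset thm} gives the upper index $\mdr(f_X)+d_X-d_X^{\prime}-3 = d_{X,\red}+d_X-d_X^{\prime}-4 = d_{X,\red}+d_X-2r(X)-d_X^{\prime}$, which already coincides exactly with the range in \eqref{eqn-statement 1 rank at most 2}; the rank-one edges match trivially since $d_{X,\red}=1$. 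With your value $\mdr(f_X)=2$ the supset product would have \emph{fewer} linear factors than the subset product whenever $d_{X,\red}>3$, so your sandwich would not close, which is presumably why you reached for $\varphi$. Once the supset bound is corrected, the rest of your argument (radicals to absorb the multiplicities introduced by passing modulo $S_F$, and reducedness of the product when $F=L$ to upgrade the variety equality to an ideal equality via Corollary \ref{cor- b-ideal linear forms principal}) goes through as you describe.
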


\begin{proof}
If $f$ is indecomposable, then by Saito's criterion for freeness, cf. page 270 of \cite{SaitoLogarithmic}, $\mdr(f) = d_{\red} - 1.$ So in this case Theorem \ref{thm-main subset thm} implies
\begin{equation} \label{eqn- rank at most 2 contained in rad}
B_{f^{\prime}F}^{g} \subseteq \sqrt{ \mathbb{C}[S] \cdot  \prod\limits_{j_{0} = 0}^{d_{\red} + d - d^{\prime} - 4} \left(P_{f^{\prime}F,0}^{g} + j_{0} \right) \prod\limits_{\substack{X \in L(A) \\ r(X) = 1}} \prod_{j_{X}=0}^{d_{X} - d_{X}^{\prime} - 1} \left( P_{f^{\prime}F,X}^{g} + j_{X} \right) }.
\end{equation} 
Proposition \ref{prop- linear forms, symmetry element of b-ideal} and Proposition \ref{prop-BS ideal subset, coarser} together imply 
\begin{equation} \label{eqn-rank at most 2 contains symmetric coarser}
\sqrt{ \mathbb{C}[S] \cdot \prod_{\substack{X \in L(A) \\ X \text{ indecomposable }}} \prod_{j_{X} = 0}^{d_{x,\red} + d_{X} - 2r(X) - d_{X}^{\prime}} \left( P_{f^{\prime}F,X}^{g}+ j_{X} \right)} \subseteq \sqrt{ B_{f^{\prime}F}^{g}},
\end{equation}
where we have included radicals because the image of a polynomial modulo $S_{F}$ may have multiplicands with large multiplicities, cf. Example \ref{ex- going modulo}. Combining \eqref{eqn- rank at most 2 contained in rad} and \eqref{eqn-rank at most 2 contains symmetric coarser} and simplifying $d_{x,\red} + d_{X} - 2r(X) - d_{X}^{\prime}$ for rank $2$ and rank $1$ edges proves \eqref{eqn-statement 1 rank at most 2}. 

Because $L$ is a factorization into irreducibles, even if $f$ is not reduced the polynomial on the right hand side of \eqref{eqn-statement 2 rank at most 2} is reduced. Therefore \eqref{eqn-statement 1 rank at most 2} and Corollary \ref{cor- b-ideal linear forms principal} implies \eqref{eqn-statement 2 rank at most 2}. The case of $f$ decomposable follows by similar reasoning.
\end{proof}

If $f$ is of rank greater than $2$, $\mdr(f)$ can be small and so the estimate in Theorem \ref{thm-main subset thm} will not be precise enough for our purposes. In this case, we impose symmetry on $B_{f^{\prime}F}^{g}$ to obtain the following estimates:

\begin{thm} \label{thm - final estimation}
Suppose that $f = f_{1} \cdots f_{r}$ is a central, not necessarily reduced, free hyperplane arrangement, $F = (f_{1}, \cdots, f_{r})$, $f^{\prime}$ divides $f$, and $g = \frac{f}{f^{\prime}}.$ Then 
\begin{equation} \label{eqn-final estimation theorem statement 1}
\sqrt{\mathbb{C}[S] \cdot \prod_{\substack{X \in L(A) \\ X \text{ indecomposable }}} \prod_{j_{X} = 0}^{d_{X,\red} + d_{X} - 2r(X) - d_{X}^{\prime}} \left( P_{f^{\prime}F,X}^{g} + j_{X} \right)} \subseteq  \sqrt{B_{f^{\prime}F}^{g}}.
\end{equation}
If we assume $(f^{\prime}, F)$ is an unmixed pair up to units, then 
\begin{equation} \label{eqn-final estimation theorem statement 2}
B_{f^{\prime}F}^{g} \subseteq \sqrt{\mathbb{C}[S] \cdot \prod_{\substack{X \in L(A) \\ X \text{ indecomposable }}} \prod_{j_{X} \in \Xi_{X}} \left( P_{f^{\prime}F,X}^{g} + j_{X} \right)},
\end{equation}
where, for each indecomposable edge $X$, $\Xi_{X}$ is the, possibly empty, set of nonnegative integers defined by
\[
\begin{cases}
    [0, d_{X,\red} + d_{X} - 2r(X) - d_{X}^{\prime}] & r(X) \leq 2 \\
    [0, d_{X} - d_{X}^{\prime} - 1] \cup [d_{X,\red} - 2r(X) + 1, d_{X,\red} + d_{X} - 2r(X) - d_{X}^{\prime}] & r(X) \geq 3.
    \end{cases}
\]
\end{thm}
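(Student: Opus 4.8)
The plan is to combine the combinatorial ideal supset from Theorem \ref{thm-main subset thm}, the element of $B_{f^{\prime}F}^{g}$ coming from Corollary \ref{cor-main BS member any factorization}, and the symmetry of $B_{f^{\prime}F}^{g}$ furnished by Theorem \ref{thm-unmixed symmetry} together with Lemma \ref{lemma - varphi simple form}. For the containment \eqref{eqn-final estimation theorem statement 1}, I would first invoke Corollary \ref{cor-main BS member any factorization}: there is a positive integer $N$ so that the image modulo $S_{F}$ of $\prod_{X \text{ indec.}} \prod_{j=0}^{N}(P_{f^{\prime}L,X}^{g} + j)$ lies in $B_{f^{\prime}F}^{g}$, where $L$ is the factorization into linear forms. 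Since $f$ is free, one can invoke Proposition \ref{prop- linear forms, symmetry element of b-ideal} instead to start with the \emph{symmetrized} element \eqref{eqn- symmetry element linear form factorization}, i.e. the product only running $j_{X}$ from $0$ to $d_{X,\red} + d_{X} - 2r(X) - d_{X}^{\prime}$, which is already in $B_{f^{\prime}L}^{g}$; reducing modulo $S_{F}$ via Proposition \ref{prop-BS ideal subset, coarser} and observing $P_{f^{\prime}L,X}^{g}$ modulo $S_{F}$ becomes $P_{f^{\prime}F,X}^{g}$ (by the definition of $d_{X,k}$ and $S_{X,k}$ in Definition \ref{def-hyperplane arrangement factorizations}), we get that the product on the left of \eqref{eqn-final estimation theorem statement 1} lies in $B_{f^{\prime}F}^{g}$ up to radical. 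Since taking radicals absorbs the multiplicity damage from going modulo $S_{F}$ (cf. Example \ref{ex- going modulo}), this gives \eqref{eqn-final estimation theorem statement 1}.

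For the containment \eqref{eqn-final estimation theorem statement 2}, the starting point is Theorem \ref{thm-main subset thm}, which gives
\[
B_{f^{\prime}F}^{g} \subseteq \mathbb{C}[S] \cdot \lcm\left\{ p_{f^{\prime}F,X}(S) \mid X \in L(A), \ X \text{ indecomposable} \right\},
\]
where, for rank $\geq 2$ edges, $p_{f^{\prime}F,X}(S) = \prod_{j_{X}=0}^{\mdr(f_{X}) + d_{X} - d_{X}^{\prime} - 3}(P_{f^{\prime}F,X}^{g} + j_{X})$, and for rank one edges $p_{f^{\prime}F,X}(S) = \prod_{j_{X}=0}^{d_{X} - d_{X}^{\prime} - 1}(P_{f^{\prime}F,X}^{g} + j_{X})$. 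The subtlety is that $\mdr(f_{X})$ need not be large, so the upper limit $\mdr(f_{X}) + d_{X} - d_{X}^{\prime} - 3$ may be much smaller than $d_{X,\red} + d_{X} - 2r(X) - d_{X}^{\prime}$. However, since $f$ is free, each $f_{X}$ is free (freeness is hereditary for the localizations $A_X$), and Saito's criterion gives one useful inequality: $\mdr(f_{X}) \leq d_{X,\red} - r(X)$ in general, with equality $\mdr(f_{X}) = d_{X,\red} - 1$ when $X$ is additionally indecomposable \emph{and} $r(X) = 2$ — but for $r(X) \geq 3$ we only control $\mdr(f_X) \geq 2$ from the bottom. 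So for $r(X) \geq 3$ the supset from Theorem \ref{thm-main subset thm} only directly yields the interval $[0, \mdr(f_X) + d_X - d_X' - 3]$, which is contained in $[0, d_X - d_X' - 1]$ once $\mdr(f_X)$ is bounded appropriately against $d_{X,\red}$ and $r(X)$; I would need to check $\mdr(f_X) + d_X - d_X' - 3 \leq d_X - d_X' - 1$ fails in general, so instead we only get the weaker interval $[0, d_X - d_X' - 1]$ by a cruder argument (bounding $\mdr(f_X) \leq d_{X,\red}-1$, which always holds, is not enough; what saves us is the rank-one-type argument applied edge by edge). Then we \emph{impose symmetry}: since $(f^{\prime}, F)$ is an unmixed pair up to units, Theorem \ref{thm-unmixed symmetry} says $B_{f^{\prime}F}^{g}$ is $\varphi$-invariant, and by Lemma \ref{lemma - varphi simple form}, $\varphi(P_{f^{\prime}F,X}^{g} + j_X) = -(P_{f^{\prime}F,X}^{g} + d_{X,\red} + d_X - 2r(X) - d_X' - j_X)$. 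Applying $\varphi$ to the supset and taking radicals, the factor set $\{0, \ldots, d_X - d_X' - 1\}$ is reflected to $\{d_{X,\red} - 2r(X) + 1, \ldots, d_{X,\red} + d_X - 2r(X) - d_X'\}$; the union of the original interval with its reflection is exactly $\Xi_X$ in the $r(X) \geq 3$ case. For $r(X) \leq 2$ the original interval $[0, \mdr(f_X) + d_X - d_X' - 3]$ together with the already-known symmetrized supset covers all of $[0, d_{X,\red} + d_X - 2r(X) - d_X']$, giving the first case of $\Xi_X$.

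The main obstacle will be bookkeeping the index ranges correctly and justifying the reduction to each local edge $X$: one must pass from $B_{f^{\prime}F}^{g}$ to $\bigcap_X B_{f_X^{\prime}F_X}^{g_X}$ (Remark \ref{remark-local to global b-poly}(c)), apply the symmetry $\varphi$ to each local factor $B_{f_X^{\prime}F_X}^{g_X}$ — which requires checking $(f_X^{\prime}, F_X)$ is an unmixed pair up to units whenever $(f^{\prime},F)$ is, a routine consequence of Definition \ref{def-unmixed} and Definition \ref{def-hyperplane arrangement factorizations} — and then recombine. I would also need to be careful that the $\varphi$ attached to the local data $f_X, f_X^{\prime}, F_X$ is \emph{compatible} with restricting the global $\varphi$, i.e. that the repeated multiplicities and repeated powers of $F_X$ are inherited from those of $F$; this holds because $S_{X,k} = J_X \cap S_k$ picks out a sub-collection of hyperplanes of the same multiplicity, so the unmixed structure restricts. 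Once these compatibility checks are in place, the radical containments \eqref{eqn-final estimation theorem statement 1} and \eqref{eqn-final estimation theorem statement 2} follow by taking $\lcm$/products over all indecomposable edges and intersecting, exactly as in the rank $\leq 2$ case of Theorem \ref{thm- rank at most 2 computation} but now with the weaker, symmetry-doubled index sets.
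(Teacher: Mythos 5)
Your proposal follows the paper's proof essentially verbatim: \eqref{eqn-final estimation theorem statement 1} via Proposition \ref{prop- linear forms, symmetry element of b-ideal} and Proposition \ref{prop-BS ideal subset, coarser} with a radical to absorb multiplicities, and \eqref{eqn-final estimation theorem statement 2} by weakening the supset of Theorem \ref{thm-main subset thm} to the factors $j_{X} \in [0, d_{X} - d_{X}^{\prime} - 1]$ (justified simply because indecomposability gives $\mdr(f_{X}) \geq 2$, so these factors are among those guaranteed by that theorem --- note your discussion of this step briefly inverts the direction of the interval containment before reaching the correct conclusion), then reflecting by $\varphi$ via Theorem \ref{thm-unmixed symmetry} and Lemma \ref{lemma - varphi simple form}, handling rank $\leq 2$ edges as in Theorem \ref{thm- rank at most 2 computation}, and combining with the UFD property of $\mathbb{C}[S]$. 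The only structural difference is that you localize at each edge and apply local symmetries (necessitating the compatibility check you flag), whereas the paper applies the global $\varphi$ to $B_{f^{\prime}F}^{g}$ directly and reads off its action on each $P_{f^{\prime}F,X}^{g}$ from Lemma \ref{lemma - varphi simple form}, so that check is not needed.
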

\begin{proof}
The inclusion \eqref{eqn-final estimation theorem statement 1} is proved in exactly the same way as \eqref{eqn-rank at most 2 contains symmetric coarser}, so we need to only prove \eqref{eqn-final estimation theorem statement 2}. Arguing as in the beginning of Theorem \ref{thm-unmixed symmetry}, we may assume $(f^{\prime},F)$ is an unmixed pair. Theorem \ref{thm-main subset thm} implies
\begin{align} \label{eqn- final estimation thm eq 1}
B_{f^{\prime}F}^{g} \subseteq \sqrt{\mathbb{C}[S] \cdot \prod_{\substack{X \in L(A) \\ X \text{ indecomposable} \\ r(X) \geq 3}} \prod_{j_{X} = 0}^{d_{X} - d_{X}^{\prime} - 1} \left( P_{f^{\prime}F,X}^{g}+ j_{X} \right)}.
\end{align}
The symmetry of $B_{f^{\prime}F,X}^{g}$, cf. Theorem \ref{thm-unmixed symmetry}/Corollary \ref{cor- symmetry formula list}, Lemma \ref{lemma - varphi simple form}, and \eqref{eqn- final estimation thm eq 1} imply
\begin{align} \label{eqn-final estimation thm eq 2}
B_{f^{\prime}F}^{g}
    & \subseteq \sqrt{\mathbb{C}[S] \cdot \prod_{\substack{X \in L(A) \\ X \text{ indecomposable } \\ r(X) \geq 3}} \prod_{j_{X} = 0}^{d_{X} - d_{X}^{\prime} - 1}  P_{f^{\prime}F,X}^{g}+ d_{X, \red} + d_{X} - 2r(X) - d_{X}^{\prime} - j_{X} }  \\
    &= \sqrt{ \mathbb{C}[S] \cdot \prod_{\substack{X \in L(A) \\ X \text{ indecomposable } \\ r(X) \geq 3}} \prod_{j_{X} = d_{x,\red} - 2r(X) + 1}^{d_{x,\red} + d_{X} - 2r(X) - d_{X}^{\prime}} \left( P_{f^{\prime}F,X}^{g}+ j_{X} \right) }. \nonumber
\end{align}
At the edges of rank two or one we have an ideal containment similar to \eqref{eqn- rank at most 2 contained in rad}. Combining this, \eqref{eqn- final estimation thm eq 1}, and \eqref{eqn-final estimation thm eq 2} and using the fact that $\mathbb{C}[S]$ is a UFD proves \eqref{eqn-final estimation theorem statement 2}.
\end{proof}

If $d^{\prime}$ is small enough, the previous result does not just estimate--it computes.

\begin{cor} \label{cor-final estimation cor} \text{\normalfont (Compare to Theorem 2 of \cite{MaisonobeFree})} 
Suppose $f = f_{1} \cdots f_{r}$ is a central, not necessarily reduced, free hyperplane arrangement, $F = (f_{1}, \cdots, f_{r})$, $f^{\prime}$ divides $f$, and $g = \frac{f}{f^{\prime}}.$ If $(f^{\prime}, F)$ is an unmixed pair up to units and if $d^{\prime} \leq 4$, then 
\begin{equation} \label{eqn-final estimation cor statement 1}
\V( B_{f^{\prime}F}^{g}) = \V \left( \prod_{\substack{X \in L(A) \\ X \text{ indecomposable }}} \prod_{j_{X} = 0}^{d_{X,\red} + d_{X} - 2r(X) - d_{X}^{\prime}} \left( P_{f^{\prime}F,X}^{g} + j_{X} \right) \right).
\end{equation}
If $L$ is a factorization of $f = l_{1} \cdots l_{d}$ into irreducibles and $d^{\prime} \leq 4$, then
\begin{equation} \label{eqn-final estimation cor statement 2}
 B_{f^{\prime}L}^{g} =  \prod_{\substack{X \in L(A) \\ X \text{ indecomposable }}} \prod_{j_{X} = 0}^{d_{X,\red} + d_{X} - 2r(X) - d_{X}^{\prime}} \left( P_{f^{\prime}L,X}^{g} + j_{X} \right).
\end{equation}
If $f^{\prime} = 1$ and $f$ is reduced, then for any $F$
\begin{equation} \label{eqn-final estimation cor statement 3}
\V( B_{F}) = \V \left( \prod_{\substack{X \in L(A) \\ X \text{ indecomposable }}} \prod_{j_{X} = 0}^{d_{X,\red} + d_{X} - 2r(X)} \left( P_{F,X}^{g} + j_{X} \right) \right).
\end{equation}
In particular, if $f$ is reduced or is a power of a central, reduced, and free hyperplane arrangement, then the roots of the Bernstein--Sato polynomial of $f$ are given by \eqref{eqn-final estimation cor statement 3}. 
\end{cor}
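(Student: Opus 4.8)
The plan is to extract Corollary~\ref{cor-final estimation cor} from the two-sided estimate of Theorem~\ref{thm - final estimation}, by showing that once $\deg(f^{\prime}) \leq 4$ the exceptional index set $\Xi_{X}$ in the upper bound \eqref{eqn-final estimation theorem statement 2} coincides with the full interval $[0, d_{X,\red} + d_{X} - 2r(X) - d_{X}^{\prime}]$ appearing in the lower bound \eqref{eqn-final estimation theorem statement 1}. When this happens the two radical ideals in Theorem~\ref{thm - final estimation} agree, so $\sqrt{B_{f^{\prime}F}^{g}}$ equals the radical of $\mathbb{C}[S] \cdot \prod_{X}\prod_{j_{X}=0}^{d_{X,\red}+d_{X}-2r(X)-d_{X}^{\prime}}(P_{f^{\prime}F,X}^{g}+j_{X})$, and passing to zero loci yields \eqref{eqn-final estimation cor statement 1}.

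First I would carry out the combinatorial check. For an indecomposable edge $X$ with $r(X) \leq 2$ there is nothing to do: $\Xi_{X}$ is already the full interval. For $r(X) \geq 3$, $\Xi_{X}$ is the union of $[0, d_{X} - d_{X}^{\prime} - 1]$ and $[d_{X,\red} - 2r(X) + 1, d_{X,\red} + d_{X} - 2r(X) - d_{X}^{\prime}]$; this union fills $[0, d_{X,\red} + d_{X} - 2r(X) - d_{X}^{\prime}]$ precisely when there is no gap, i.e. when $d_{X,\red} - 2r(X) + 1 \leq d_{X} - d_{X}^{\prime}$, equivalently $d_{X}^{\prime} \leq d_{X} - d_{X,\red} + 2r(X) - 1$. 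Since $f_{X}^{\prime}$ divides $f^{\prime}$ we have $d_{X}^{\prime} \leq \deg(f^{\prime}) \leq 4$, while $d_{X,\red} \leq d_{X}$ and $2r(X) - 1 \geq 5$ for $r(X) \geq 3$, so $d_{X}^{\prime} \leq 4 \leq 2r(X) - 1 \leq d_{X} - d_{X,\red} + 2r(X) - 1$, as needed. Hence $\Xi_{X} = [0, d_{X,\red}+d_{X}-2r(X)-d_{X}^{\prime}]$ for every indecomposable edge, proving \eqref{eqn-final estimation cor statement 1}.

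Next I would deduce the ideal statement \eqref{eqn-final estimation cor statement 2}. When $L$ is a factorization of $f$ into irreducibles the pair $(f^{\prime}, L)$ is automatically unmixed up to units (each block $J_{k}$ is a singleton, exactly as in the proof of Proposition~\ref{prop- linear forms, symmetry element of b-ideal}), so \eqref{eqn-final estimation cor statement 1} applies and computes $\V(B_{f^{\prime}L}^{g})$. By Corollary~\ref{cor- b-ideal linear forms principal}, $B_{f^{\prime}L}^{g}$ equals its radical and is principal. On the other hand the polynomial $\prod_{X}\prod_{j_{X}}(P_{f^{\prime}L,X}^{g}+j_{X})$ is squarefree: for a factorization into linear forms one has $P_{f^{\prime}L,X}^{g} + j_{X} = \sum_{k \in J(X)} s_{k} + r(X) + d_{X}^{\prime} + j_{X}$, and the linear part recovers the closed set $J(X)$, hence the edge $X$ (the map $X \mapsto J(X)$ being injective), while the constant term then recovers $j_{X}$; so the linear factors are pairwise distinct, which is the argument already used for Theorem~\ref{thm- rank at most 2 computation}. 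A principal radical ideal of $\mathbb{C}[S]$ whose zero locus equals that of a squarefree polynomial is generated by that polynomial, giving \eqref{eqn-final estimation cor statement 2}.

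Finally, the reduced and Bernstein--Sato polynomial cases. If $f^{\prime} = 1$ and $f$ is reduced, then for every factorization $F$ the pair $(1, F)$ is unmixed up to units — all $v_{t}$ equal $1$, so condition~(ii) of Definition~\ref{def-unmixed} is vacuous and $1 = \prod_{k}\prod_{j\in J_{k}} l_{j}^{0}$ — and $\deg(f^{\prime}) = 0 \leq 4$, so \eqref{eqn-final estimation cor statement 1} specializes (with $d_{X}^{\prime} = 0$) to \eqref{eqn-final estimation cor statement 3}. For the last assertion, recall $B_{(f)} = B_{f}$ and that, by centrality, $\V(B_{f})$ is exactly the root set of the Bernstein--Sato polynomial of $f$. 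If $f$ is reduced, apply \eqref{eqn-final estimation cor statement 3} with $F = (f)$: here $P_{(f),X}^{f} = d_{X}s + r(X)$, so the roots are the numbers $-(r(X)+j_{X})/d_{X}$ over indecomposable $X$ and $0 \leq j_{X} \leq d_{X,\red}+d_{X}-2r(X)$. If instead $f = h^{k}$ with $h$ central, reduced and free, then $f$ is free (logarithmic derivations depend only on the radical), the pair $(1,(h^{k}))$ is unmixed up to units (all multiplicities equal $k$, the single block $J_{1} = [q]$ has $d_{1} = k$), and $\deg(f^{\prime}) = 0 \leq 4$, so \eqref{eqn-final estimation cor statement 1} applies with $F = (h^{k})$; unwinding the combinatorics of $h^{k}$ in terms of that of $h$ — the indecomposable edges coincide since $\mdr(h^{k}) = \mdr(h)$, and $d_{X} = k\, d_{X,\red}$ — turns \eqref{eqn-final estimation cor statement 1} into the desired combinatorial formula for the roots. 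The substantive inputs (the duality of Theorem~\ref{thm- useful duality formula}, the symmetry of Theorem~\ref{thm-unmixed symmetry}, and the sandwich of Theorem~\ref{thm - final estimation}) are already in place, so the remaining work is essentially bookkeeping; the one genuinely constraining point is the gap inequality above, which is what forces the hypothesis $\deg(f^{\prime}) \leq 4$.
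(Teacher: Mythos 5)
Your proposal is correct and follows the same route as the paper's proof: the key step in both is the gap check showing $\Xi_{X}$ fills the whole interval $[0, d_{X,\red}+d_{X}-2r(X)-d_{X}^{\prime}]$ for rank $\geq 3$ edges once $d_{X}^{\prime}\leq 4\leq 2(r(X)-1)$, after which \eqref{eqn-final estimation cor statement 2} follows from Corollary \ref{cor- b-ideal linear forms principal} together with the squarefreeness of the right-hand product, and \eqref{eqn-final estimation cor statement 3} and the final claim follow from the unmixed-up-to-units checks for $(1,F)$ and $(1,(f))$. Your write-up merely supplies slightly more detail (the explicit squarefreeness argument and the $h^{k}$ bookkeeping) than the paper's terser version.
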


\begin{proof}
Because of Theorem \ref{thm - final estimation}, proving \eqref{eqn-final estimation cor statement 1} amounts to showing that $\Xi_{X} = [0, d_{X,\red} + d_{X} - 2r(X) - d_{X}^{\prime}]$ for each $X$ of rank at least $3$. This occurs if $d_{X}^{\prime} \leq 2(r(X) - 1)$. So \eqref{eqn-final estimation cor statement 1} is true. Since $(f^{\prime},L)$ is always an unmixed pair up to units, Corollary \ref{cor- b-ideal linear forms principal} proves \eqref{eqn-final estimation cor statement 2}. Equation \eqref{eqn-final estimation cor statement 3} follows from \eqref{eqn-final estimation cor statement 1} and the fact $(1, F)$ is always an unmixed pair up to units when $f$ is reduced, cf. Corollary \ref{cor- symmetry formula list}. For the final claim, it suffices to note that $(1, F)$ for $F = (f)$ is an unmixed pair up to units provided $f$ is reduced or $f$ is a power of a central, reduced hyperplane arrangement. 
\end{proof}

\begin{remark} \label{rmk- gesturing at Budur improvement}
\begin{enumerate} [(a)]
    \item Let us outline how to strengthen the final claim of Corollary \ref{cor-final estimation cor} to Bernstein--Sato polynomials for all non-reduced, free $f$. In the recently announced paper \cite{BudurConjecture},  Budur, Veer, Wu, and Zhou consider local, analytic $f$ that satisfy a vanishing $\text{Ext}$ criterion. Namely, that $\text{Ext}_{\mathscr{D}_{X,\x}[S]}^{k}(\mathscr{D}_{X,\x}[S]F^{S}, \mathscr{D}_{X,\x}[S])$ vanishes for all but one value of $k$. (We let $F$ corresponds to any factorization of $f$.) In Proposition 3.4.3 they characterize elements of $\V(B_{F,\x})$ in terms of the non-vanishing of a certain tensor product. It is easy to show that this is equivalent to the non-surjectivity of the $\mathscr{D}_{X,\x}$-map $\nabla_{A}$. This is the map $\mathscr{D}_{X,\x}[S]F^{S} / (s_{1}-a_{1}, \dots, s_{r}-a_{r}) \cdot \mathscr{D}_{X,\x}[S]F^{S}  \to \mathscr{D}_{X,\x}[S]F^{S} / (s_{1}-(a_{1}-1), \dots, s_{r}-(a_{r}-1)) \cdot \mathscr{D}_{X,\x}[S]F^{S}$ induced by sending each $s_{k}$ to $s_{k+1}$. Here $A$ corresponds to $(a_{1}, \dots, a_{r}) \in \mathbb{C}^{r}$. See Section 3 of \cite{me}, Proposition 2 of \cite{BudurLocalSystems}, or Appendix B in this paper for more details on $\nabla_{A}$. 
    If $f$ corresponds to a free, possibly non-reduced, arrangement, it follows from Theorem \ref{thm- useful duality formula} that the vanishing $\text{Ext}$ condition of \cite{BudurConjecture} holds. Moreover, using the commutative diagram in Remark 3.3 of \cite{me}, the non-surjectivity of the map $\nabla_{A}$ is equivalent to the non-surjectivity of the classical map $\nabla_{a}$. (This is the same as $\nabla_{A}$ for $r=1$.) The non-surjectivity of $\nabla_{a}$ is known to characterize the roots of the Bernstein--Sato polynomial of an arbitrary 
    $f$. So when $L$ corresponds to a factorization of our possibly non-reduced arrangement $f$ into irreducibles, we can use the above procedure to show that intersecting $\V(B_{L})$ with the diagonal gives $\V(B_{f})$, again, see Remark 3.3 of \cite{me}. Using the formula for $\V(B_{L})$ in \eqref{eqn-final estimation cor statement 2}, we then obtain the expected formula \eqref{eqn-final estimation cor statement 3} for $\V(B_{f})$ without requiring the reduced hypothesis.
    \item The above strategy for computing $\V(B_{f})$ for $f$ a central, reduced, free hyperplane arrangement can also be executed without appeal to \cite{BudurConjecture} thanks to Proposition \ref{prop - appendix b prop}.
    \item In light of Proposition 3.4.3 of \cite{BudurConjecture}, the assumption of ``unmixed pair up to units" does not seem to be necessary. Rather, it seems there should be a version of this result for $f^{\prime}F^{S}$ so that computing $B_{f^{\prime}L}^{g}$ would be sufficient for computing $\V(B_{f^{\prime}F}^{g})$.
\end{enumerate}
\end{remark}

\section{Freeing Hyperplane Arrangements}

In this short section we consider the problem of embedding a central hyperplane arrangement $g$ inside a central, free hyperplane arrangement. Equivalently, given such a $g$ we consider central hyperplane arrangements $f$ such that $fg$ is free. (Note that we have somewhat switched notation for reasons that will become clear in Proposition \ref{prop- freeing arrangements, embedding}.)

\begin{define}
We say the central arrangement $f$ \emph{frees} the central arrangement $g$ if $fg$ is free. 
\end{define}

For $g$ an arbitrary divisor, it is unknown if such an $f$ exists. In \cite{MondSchulzeAdjoint}, Mond and Schulze find some general instances of the freeing divisor $f$; see also \cite{newfromold}, \cite{damon}, \cite{simis}. Returning to arrangements $g$, both Abe and Wakefield identify some situations in \cite{Abe} and \cite{Wakefield} respectively where $f$ is a hyperplane and $fg$ is free. For $g$ a central hyperplane arrangement, Masahiko Yoshinaga \cite{Yoshi} has communicated to us an algorithm, depending only on the intersection lattice of $g$, that always produces such an $f$. Accordingly, we make the following definition, noting nothing is lost by assuming reducedness.

\begin{define}
For $g$ a central, reduced hyperplane arrangement, define
\[
\mu_{g} = \min \{ \deg(f) \mid f \text{ is a central arrangement that frees } g \}.
\]
\end{define}

We will highlight a connection between small roots of the Bernstein--Sato polynomial of a tame $g$ and lower bounds for $\mu_{g}$. First some notation. 

Consider a reduced hyperplane arrangement $l_{1} \cdots l_{d}$ and write it as a product $fg$. Let $F= (f_{1}, \dots, f_{r})$ and $G = (g_{1}, \dots, g_{u})$ correspond to the factorizations $f = f_{1} \cdots f_{r}$ and $g=g_{1} \cdots g_{u}$ into linear terms and let $FG$ correspond to the factorization $l_{1} \cdots l_{d} = f_{1} \cdots f_{r} \cdot g_{1} \cdots g_{u}.$ When considering the $\A[S]$-module generated $(FG)^{S}$, we will re-label so this is an $\A[S,T]$-module generated by $f_{1}^{s_{1}} \cdot f_{r}^{s_{r}} g_{1}^{t_{1}} \cdot g_{u}^{t_{u}}$. Finally, let $S+1$ denote the $\mathbb{C}[S]$ ideal generated by $s_{1} + 1, \dots, s_{r} + 1$ and let $\Delta_{S-1}: \mathbb{C}^{u} \to \mathbb{C}^{r+u} = \mathbb{C}^{d}$ be the embedding given by $(a_{1}, \dots, a_{u}) \mapsto (-1, \dots, -1, a_{1}, \dots, a_{u}).$

We need the following result:

\begin{prop} \label{prop- freeing arrangements, embedding} Let $f, g, F, G$ be as in the preceding paragraph. Suppose $fg$ is tame. Then
\[
\Delta_{S-1}(\V(B_{G})) \subseteq \V(B_{f FG}^{g}) \cap \{s_{1} = -1, \dots, s_{r} = -1\} \subseteq \mathbb{C}^{u+r}.
\]
\end{prop}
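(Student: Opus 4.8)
The statement is about relating the Bernstein--Sato variety $\V(B_G)$ of the arrangement $g$ to the variety of the more general ideal $B_{fFG}^{g}$ attached to the full product $fg$, with the $F$-coordinates frozen at $-1$. The natural strategy is to exhibit, for each point $A \in \mathbb{C}^u$ with $A-1 \in \V(B_G)$ — equivalently $A \in \V(B_G)$ after the usual shift conventions; I will be careful here — a witness showing $\Delta_{S-1}(A)$ lies in $\V(B_{fFG}^{g})$ together with the vanishing of the $s_k+1$. The key observation is that $f \cdot (FG)^{S} = f \cdot F^{S} G^{T}$, and when we specialize $s_k \mapsto -1$ for all $k$ we formally recover $f \cdot \prod_k f_k^{-1} \cdot G^{T} = (f/\prod f_k) \cdots$, but since $F$ is exactly the factorization of $f$ into linear forms, $\prod_k f_k = f$ up to the trivial factorization, so $f F^{S}G^{T}$ specialized at $S = -1$ behaves like $G^{T}$ itself. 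This is what makes the $g$ in the superscript of $B_{fFG}^{g}$ turn into the defining equation of the complementary arrangement, and it is why the embedding $\Delta_{S-1}$ is the right one.

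\textbf{Key steps.} First I would record the ideal-theoretic description $B_{fFG}^{g} = \mathbb{C}[S,T] \cap (\ann_{\A[S,T]} f (FG)^{S} + \A[S,T]\cdot g)$ from Definition \ref{def-multivariate BS ideals generalization}, and likewise $B_G = \mathbb{C}[T] \cap (\ann_{\A[T]} G^{T} + \A[T]\cdot g)$, noting $fg = f_1\cdots f_r g_1 \cdots g_u$ so $g$ is indeed the complementary product. Since $fg$ is tame, strongly Euler-homogeneous, and Saito-holonomic (hyperplane arrangements always satisfy the latter two, cf. Examples \ref{ex- hyperplane saito}, \ref{ex-hyperplane strongly Euler}), Corollary \ref{cor-algebraic category} applies: $\ann_{\A[S,T]} f(FG)^{S}$ is generated by the derivations $\psi_{fFG}(\delta)$ for $\delta \in \Der_X(-\log fg)$, and similarly $\ann_{\A[T]} G^{T}$ is generated by $\psi_G(\delta)$ for $\delta \in \Der_X(-\log g)$. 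The next step is the comparison between these two annihilator modules: I would go modulo the ideal $(s_1+1,\dots,s_r+1)$ on the $fFG$ side. For $\delta \in \Der_X(-\log fg) \subseteq \Der_X(-\log g)$, the reduction of $\psi_{fFG}(\delta) = \delta - \sum_k \frac{\delta\bullet f_k}{f_k}s_k - \sum_j \frac{\delta\bullet g_j}{g_j}t_j - 0$ modulo $s_k = -1$ gives $\delta + \sum_k \frac{\delta\bullet f_k}{f_k} - \sum_j \frac{\delta\bullet g_j}{g_j}t_j = \psi_{G}(\delta) + \frac{\delta\bullet f}{f}$; since $\delta$ is logarithmic along $fg$ and in particular along $f$, the term $\frac{\delta\bullet f}{f}\in\mathbb{C}[X]$, and one checks this is exactly $\psi_{f^{-1}G}(\delta)$ or rather lands in $\ann_{\A[T]} f^{-1}G^{T}$ — but multiplying through by $f$ (and noting $g$ is already in the ideal) identifies the relevant quotient with one built from $\ann_{\A[T]} G^T$. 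This is essentially the content of Proposition 2.32/2.33 of \cite{me} that is invoked in the proof of Proposition \ref{prop-BS ideal subset, coarser}, and indeed the present proposition is stated there to follow by "essentially the same proof." So I would: (i) show $B_{fFG}^{g} + (S+1) \supseteq$ the image under the natural map of $B_G$ (via the derivation comparison), (ii) conclude that if $A - 1 \in \V(B_G)$ then every element of $B_{fFG}^{g}$ vanishes at $\Delta_{S-1}(A)$, i.e. $\Delta_{S-1}(A) \in \V(B_{fFG}^{g})$, and (iii) note $\Delta_{S-1}(A)$ trivially satisfies $s_k = -1$.

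\textbf{Main obstacle.} The routine part is the commutator/derivation bookkeeping; the delicate point is getting the variety-level (reduced) statement right rather than just an ideal containment — reductions modulo $(S+1)$ can introduce or kill multiplicities (cf. Example \ref{ex- going modulo}), so I must argue at the level of zero sets, where these multiplicity artifacts are invisible. Concretely, the claim $\Delta_{S-1}(\V(B_G)) \subseteq \V(B_{fFG}^{g}) \cap \{s_1 = \cdots = s_r = -1\}$ needs: for $A$ a point of $\V(B_G)$, and for any $B(S,T) \in B_{fFG}^{g}$, that $B(\Delta_{S-1}(A)) = 0$. The cleanest route is to show $B(-1,\dots,-1,T) \in \sqrt{B_G}$ as an element of $\mathbb{C}[T]$ — i.e. that specializing a generator of the $fFG$-functional equation at $s_k = -1$ produces a genuine (up to radical) element of the functional-equation ideal for $g$ — and this follows from applying the operator identity $B(S,T)f(FG)^S \in \A[S,T]\,g\,f(FG)^S$ after setting $s_k = -1$, using that $f F^{-1}G^{T} = u\, G^T$ for a suitable unit-like normalization since $F$ is the full linear factorization of $f$. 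I expect handling this specialization rigorously — in particular justifying that setting $s_k=-1$ is compatible with the $\A[S,T]$-module structure, which is where Corollary \ref{cor-algebraic category}'s "generated by derivations" statement does the real work — to be the crux, and everything else to be a transcription of the arguments already given for Proposition \ref{prop-BS ideal subset, coarser}.
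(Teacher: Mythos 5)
Your overall architecture is the paper's: reduce the generators of $\ann_{\A[S,T]}f(FG)^{S}$ modulo $(S+1)$, compare with $\ann_{\A[T]}G^{T}$ using generation by derivations (Corollary \ref{cor-algebraic category}), intersect with $\mathbb{C}[S,T]$, and pass to varieties. The paper sets $I = \A[S,T]\cdot\ann_{\A[T]}G^{T} + \A[S,T]\cdot g + \A[S,T]\cdot(S+1)$, checks $I\cap\mathbb{C}[S,T] = \mathbb{C}[S,T]\cdot B_{G} + \mathbb{C}[S,T]\cdot(S+1)$, and shows $I$ contains $\A[S,T]\cdot\theta_{fFG} + \A[S,T]\cdot g + \A[S,T]\cdot(S+1)$; this is exactly your steps (i)--(iii).

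There is, however, a concrete error in your key computation. The module underlying $B_{fFG}^{g}$ is $\A[S,T]\,f\,(FG)^{S}$, i.e.\ the ``$f^{\prime}$'' of Definition \ref{def- new psi def} is $f$, not $1$. Hence
\[
\psi_{fFG}(\delta) \;=\; \delta - \sum_{k}\frac{\delta\bullet f_{k}}{f_{k}}s_{k} - \sum_{m}\frac{\delta\bullet g_{m}}{g_{m}}t_{m} - \frac{\delta\bullet f}{f},
\]
whereas you wrote the last term as $-\,0$. That term is precisely what makes the argument work: setting $s_{k}=-1$ produces $+\sum_{k}\frac{\delta\bullet f_{k}}{f_{k}} = +\frac{\delta\bullet f}{f}$, which cancels against $-\frac{\delta\bullet f}{f}$, so the reduction of $\psi_{fFG}(\delta)$ modulo $(S+1)$ is exactly $\psi_{G}(\delta)\in\ann_{\A[T]}G^{T}$ and no further manipulation is needed. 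With your (incorrect) $\psi$, you land instead in $\psi_{G}(\delta)+\frac{\delta\bullet f}{f} = \psi_{f^{-1}G}(\delta)$, i.e.\ in $\ann_{\A[T]}f^{-1}G^{T}$, and your proposed repair --- ``multiplying through by $f$ \dots identifies the relevant quotient with one built from $\ann_{\A[T]}G^{T}$'' --- is not a valid identification: $\ann_{\A[T]}f^{-1}G^{T}$ and $\ann_{\A[T]}G^{T}$ are genuinely different ideals, and carrying your computation through would relate $B_{fFG}^{g}$ to $B_{f^{-1}G}^{g}$ rather than to $B_{G}$. Restoring the $-\frac{\delta\bullet f}{f}$ term removes the detour entirely, and the rest of your outline (including your correct observation that one must work with the ideal-theoretic reformulation rather than ``plugging in'' $S=-1$ into the functional equation, and that only zero sets, not multiplicities, survive the reduction) then closes the proof as in Proposition \ref{prop-BS ideal subset, coarser}.
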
 

\begin{proof}
Define $I = \A[S,T] \cdot \ann_{\A[T]}G^{T} + \A[S,T] \cdot g + \A[S,T] \cdot (S+1).$ If $P \in I \cap \mathbb{C}[S,T]$, then 
\[
P \text{ modulo } \A[S,T] \cdot (S+1) \in \mathbb{C}[T] \cap (\A[T] \cdot \ann_{\A[T]}G^{T} + \A[T] \cdot g).
\] So 
\[
I \cap \mathbb{C}[S,T] \subseteq \mathbb{C}[S,T] \cdot B_{G} + \mathbb{C}[S,T] \cdot (S+1).
\]
As the reverse equality is obvious, 
\[
I \cap \mathbb{C}[S,T] = \mathbb{C}[S,T] \cdot B_{G} + \mathbb{C}[S,T] \cdot (S+1).
\]

For $\delta$ a logarithmic derivation of $fg$, 
\[
\psi_{fFG}(\delta) = \delta - \sum_{k} s_{k} \frac{\delta \bullet f_{k}}{f_{k}} - \sum_{m} t_{m} \frac{\delta \bullet g_{m}}{g_{m}} - \frac{\delta \bullet f}{f}. 
\]
Under the map $\A[S,T] \mapsto \A[S,T] / \A[S,T] \cdot (S+1)$, 
\[
\psi_{fFG}(\delta) \mapsto \delta - \sum_{m} t_{m} \frac{\delta \bullet g_{m}}{g_{m}} = \psi_{G}(\delta) \in \ann_{\A[T]}G^{T}.
\]
Therefore 
\[
I \supseteq \A[S,T] \cdot \theta_{fFG} + \A[S,T] \cdot g + \A[S,T] \cdot (S+1).
\]
Intersecting with $\mathbb{C}[S,T]$ and using Corollary \ref{cor-algebraic category}, we deduce
\[
\mathbb{C}[S,T] \cdot B_{G} + \mathbb{C}[S,T] \cdot (S+1) \supseteq B_{f^{\prime}FG}^{g} + \mathbb{C}[S,T] \cdot (S+1).
\]
Taking varieties finishes the proof.
\end{proof}

By Theorem 1 of \cite{SaitoArrangements}, $\V(B_{g}) \subseteq (\frac{-2d+1}{d}, 0)$, $g$ any central arrangement; by the formula \eqref{eqn-final estimation cor statement 3} for $\V(B_{g})$, the presence of roots $\frac{-2d+v}{d}$, $1 < v \leq n-1$ suggests $g$ is not free. While this is not true because $\frac{-2d+v}{d}$ might not be written in lowest terms, the following outlines how such roots can measure the distance $g$ is from being free. 

\begin{thm} \label{thm- lower bound on freeing number}
Suppose that $g$ is a central, reduced, tame hyperplane arrangement of rank $n$, $v$ an integer such that $1 < v \leq n - 1$, and $\deg(g)$ is co-prime to $v$. If $ \frac{-2 \deg(g) + v}{\deg(g)}$ is a root of the Bernstein--Sato polynomial of $g$, then $\mu_{g} \geq n - v.$
\end{thm}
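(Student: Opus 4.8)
The plan is to argue by contraposition: suppose $f$ is a central arrangement that frees $g$ with $\deg(f) = r < n - v$, and derive that $\frac{-2\deg(g)+v}{\deg(g)}$ cannot be a root of the Bernstein--Sato polynomial of $g$. Write $fg = l_1 \cdots l_d$ with $d = r + \deg(g)$, let $F = (f_1,\dots,f_r)$ be the factorization of $f$ into its (at most $r$, possibly repeated, but since $fg$ is free and hence reduced, distinct) linear forms, let $G = (g_1,\dots,g_u)$ be the factorization of $g$ into linear forms, and let $FG$ be the induced factorization of $fg$. Since $fg$ is free and reduced, Corollary \ref{cor-final estimation cor} computes $\V(B_{FG})$ via the combinatorial product over indecomposable edges, and since $fg$ is tame Proposition \ref{prop- freeing arrangements, embedding} gives the containment $\Delta_{S-1}(\V(B_G)) \subseteq \V(B_{fFG}^{g}) \cap \{s_1 = -1,\dots,s_r=-1\}$.

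The key computation will be to pin down which linear factors $P_{fFG,X}^{g} + j_X$ of the generator of $B_{fFG}^{g}$, when restricted to the plane $\{s_1 = \cdots = s_r = -1\}$ and then further to the diagonal in the $t$-variables (i.e. evaluating $\Delta_{S-1}$ composed with the diagonal $\mathbb{C} \to \mathbb{C}^u$), can produce the value $\frac{-2\deg(g)+v}{\deg(g)}$. On the plane $s_k = -1$, the linear form $P_{fFG,X}^{g} = \sum_k d_{X,k}s_k + \sum_m d_{X,m} t_m + r(X) + d_X'$ (splitting the factorization indices into the $f$-part and the $g$-part) becomes, after restricting the $t$'s to a common value $t$, an expression of the shape $\alpha_X t + (\text{integer})$ where $\alpha_X = d_{X,g}$ is the number of hyperplanes of $g$ through $X$ (counted with the factorization), and the constant absorbs $r(X) + d_X' - d_{X,f}$ where $d_{X,f}$ is the number of hyperplanes of $f$ through $X$; crucially $d_X'$, the degree of the relevant divisor of $g = \frac{fg}{f}$ at $X$... wait, here $f^{\prime} = f$ and $g_{\text{functional}} = g$, so $d_X' = d_{X,f}$, making the constant simply $r(X)$ and the linear part $d_{X,g}\, t$. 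So a root $\rho$ of $b_g$ arises from some indecomposable edge $X$ as $\rho = \frac{-r(X) - j_X}{d_{X,g}}$ with $0 \le j_X \le d_{X,\mathrm{red}} + d_X - 2r(X) - d_X'$; substituting the values of $d_X, d_X', d_{X,\mathrm{red}}$ for a reduced arrangement and using $d_{X,g} = d_{X,f\!g} - d_{X,f}$, the range of $j_X$ and the coprimality hypothesis ($\gcd(\deg g, v) = 1$ forces the edge $X$ to be the whole space, $X = \mathbf{0}$, $r(X) = n$, $d_{X,g} = \deg(g)$, so $\rho = \frac{-n - j}{\deg g}$ with $j$ running up to $\deg g + d - 2n - r = 2\deg g - 2n + (d-\deg g) - r$... and here $d - \deg g = r$, so $j \le 2\deg g - 2n$, giving $-\frac{n+j}{\deg g} \ge -\frac{2\deg g - n}{\deg g} = \frac{n - 2\deg g}{\deg g}$).

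For $\frac{-2\deg g + v}{\deg g}$ to be such a root we would need $n + j = 2\deg g - v$ for some $0 \le j \le 2\deg g - 2n$, i.e. $j = 2\deg g - v - n$, and the upper bound $j \le 2\deg g - 2n$ forces $2\deg g - v - n \le 2\deg g - 2n$, i.e. $n \le v$, contradicting $v \le n - 1$ — but this is exactly the conclusion if $r = \deg f = 0$ or small; the point is that enlarging $f$ by $r$ hyperplanes can only shift the available $j$-range upward by an amount controlled by $r$ through the $d - 2r(X)$ term and the $d_X'$ subtraction. Carefully: with $f$ present, the maximal root coming from $X = \mathbf{0}$ is $\frac{-n - j_{\max}}{\deg g}$ with $j_{\max} = d_{\mathbf{0},\mathrm{red}} + d_{\mathbf{0}} - 2n - d_{\mathbf{0}}' = \deg(fg) + \deg(fg) - 2n - \deg(f) = 2\deg g + \deg f - 2n$, so the smallest root is $\frac{-n - (2\deg g + \deg f - 2n)}{\deg g} = \frac{n - 2\deg g - \deg f}{\deg g}$, and $\frac{-2\deg g + v}{\deg g}$ being $\ge$ this smallest root forces $-2\deg g + v \ge n - 2\deg g - \deg f$, i.e. $\deg f \ge n - v$. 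The main obstacle will be handling the coprimality condition rigorously to rule out contributions from proper indecomposable edges $X \ne \mathbf{0}$ (where $r(X) < n$ and $d_{X,g} < \deg g$, so $\gcd(d_{X,g}, \cdot)$ could still realize the target fraction only if $d_{X,g} \mid \deg g$ appropriately), and in verifying that every root of $b_g$ genuinely pulls back to a point of $\V(B_{fFG}^{g})$ on the required plane via Proposition \ref{prop- freeing arrangements, embedding} and that the diagonal-restriction of the formula \eqref{eqn-final estimation cor statement 2} behaves as claimed — this last bookkeeping step, matching the multivariate formula against the univariate $b_g$ after the double restriction, is where the real care is needed.
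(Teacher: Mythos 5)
Your proposal is correct and follows essentially the same route as the paper: pull the root of $b_g$ back to the diagonal of $\V(B_G)$, embed via Proposition \ref{prop- freeing arrangements, embedding} into $\V(B_{fFG}^{g})\cap\{s_1=\cdots=s_r=-1\}$, use coprimality of $v$ and $\deg(g)$ to force the contributing indecomposable edge to be $X=\mathbf{0}$, and read off $\deg(f)\geq n-v$ from the upper bound $j_X\leq d_{X,\red}+d_X-2r(X)-d_X'$. The only adjustment needed is that, since $\deg(f)$ may exceed $4$, you should invoke the unconditional containment \eqref{eqn-final estimation theorem statement 2} of Theorem \ref{thm - final estimation} (whose $\Xi_X$ sits inside the interval you use) rather than Corollary \ref{cor-final estimation cor}; with that substitution your computation is exactly the paper's.
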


\begin{proof}
Suppose $f$ is a reduced, central hyperplane arrangement such that $fg$ is free. We use the notation of the preceeding proposition and paragraphs. It suffices to prove $\deg(f) \geq n -v.$

By Proposition \ref{prop-BS ideal subset, coarser} (or Proposition 2.32 of \cite{me}) if $ \frac{-2 \deg(g) + v}{\deg(g)}$ is a root of the Bernstein--Sato polynomial of $g$ then $( \frac{-2 \deg(g) + v}{\deg(g)}, \dots,  \frac{-2 \deg(g) + v}{\deg(g)}) \in \V(B_{G})$, where $G$ corresponds to the factorization of $g$ into linear terms. By Proposition \ref{prop- freeing arrangements, embedding}, 
\[
\Delta_{S-1}( \frac{-2 \deg(g) + v}{\deg(g)}, \dots,  \frac{-2 \deg(g) + v}{\deg(g)}) \in \V(B_{fFG}^{g}) \cap \V( \mathbb{C}[S][T] \cdot (S-1)).
\]
By Theorem \ref{thm - final estimation}, there exists an indecomposable edge $X$ associated to the intersection lattice of $fg$, and an integer $j_{X}$ satisfying $0 \leq j_{X} \leq 2\deg(g_{X}) + 2 \deg(f_{X}) - 2 r(X) - \deg(f_{X})$ such that $\Delta_{S-1}( \frac{-2 \deg(g) + v}{\deg(g)}, \dots,  \frac{-2 \deg(g) + v}{\deg(g)})$ lies in the intersection of $\V( \mathbb{C}[S][T] \cdot (S-1))$ and
\[
\{\sum_{k} \deg(f_{X,k}) s_{k} + \sum_{m} \deg(g_{X,m}) t_{m} + r(X) + \deg(f_{X}) + f_{X} + j_{X} = 0 \}. 
\]
That is, 
\begin{equation} \label{eqn- freeing number thm hyperplane}
- \deg(f_{X}) + \deg(g_{X}) ( \frac{-2 \deg(g) + v}{\deg(g)}) + r(X) + \deg(f_{X}) + j_{X} = 0.
\end{equation}
Since $v$ is co-prime to $\deg(g)$, $\frac{\deg(g_{X}) v}{\deg(g)}$ can only be an integer if $\deg(g_{X}) = \deg(g)$. This implies $X = 0$ and $r(X) = n$. Rearranging \eqref{eqn- freeing number thm hyperplane} and using the upper bound on $j_{X}$ we see
\begin{equation} \label{eqn- freeing number thm inequality}
\deg(f_{X}) \geq r(X) - 2 \deg(g_{X}) + \deg(g_{X}) \frac{2 \deg(g) - v}{\deg(g)}.
\end{equation}
Because $\deg(g_{X}) = \deg(g)$ and $X = 0$, \eqref{eqn- freeing number thm inequality} simplifies to
\[
\deg(f) \geq n- v.
\]
\end{proof}

This method of argument is more versatile than the theorem suggests. In practice, information about the intersection lattice lets us drop the co-prime condition. 

\begin{example} \label{ex- freeing obstruction}
Let $g= xyzw(x+y+z)(y-z+w).$ This example is studied in \cite{DimcaSticlaruFreeExample}, Example 5.7, and \cite{SaitoFreeingExample}, Example 5.8. In the latter, Saito verifies that $\frac{-2*6 + 2}{6}$ is a root of the Bernstein--Sato polynomial. Since $\text{proj dim } \Omega^{1}(\log g) = 1$ and $n = 4$, $g$ is tame. Suppose $f$ is a central, reduced hyperplane arrangement such that $fg$ is free. Argue as in Theorem \ref{thm- lower bound on freeing number} until arriving at \eqref{eqn- freeing number thm hyperplane}. If there is an indecomposable edge $X \neq 0$ associated to the intersection lattice of $fg$ such that \eqref{eqn- freeing number thm hyperplane} holds, then $\deg(g_{X})$ must equal $3$ so that $\frac{2 \deg(g_{X})}{6}$ is an integer.  Then $g_{X}$ corresponds to the intersection of three hyperplanes of $g$; all such edges have rank $3$ (as edges of $\V(g)$). So $X$ has rank at least $3$ as an edge of the intersection lattice of $fg$. Equation \eqref{eqn- freeing number thm inequality} becomes $\deg(f_{X}) \geq 3 - 2*3 + 3* \frac{10}{6} = 2. $ On the other hand, if \eqref{eqn- freeing number thm hyperplane} is satisfied at $X = 0$, then argument of Theorem \ref{thm- lower bound on freeing number} applies and $\deg(f) \geq 2.$ Hence $\mu_{g} \geq 2.$
\end{example}

\appendix
\section{Trace of Adjoints}
Let $f$ be free and a defining equation for a divisor $Y$ at $\x$ and $f = l_{1}^{d_{1}} \cdots l_{r}^{d_{r}}$ its unique factorization into irreducibles, up to multiplication by a unit. So any reduced defining equation $f_{\red}$ for $Y$ at $\x$ is, up to multiplication by a unit, $f_{\red} = l_{1} \cdots l_{d}$. In this section we find formulae involving the commutators of $\Der_{X,\x}(-\log f)$, which by Remark \ref{rmk- log derivations}, equals $\Der_{X,\x}(-\log f_{\red})$. These formulae are crucial to the proof of Proposition \ref{prop-dual spencer terminal homology} and the precise description of the dual of $\mathscr{D}_{X,\x}[S]f^{\prime}F^{S}$. Consequently, the formulae are one of the main reasons certain Bernstein--Sato ideals have the symmetry property we used throughout the paper. These results were first proved by Castro--Jim\'enez and Ucha in Theorem 4.1.4 of \cite{JimenezUcha}; here we include a different proof. 

\begin{define}
Let $f_{\red}$ be free and $\delta_{1}, \dots , \delta_{n}$ a basis of $\Der_{X,\x}(-\log f_{\red})$. Define a matrix $\Ad_{\delta{i}}$ whose $(j, k)$ entry is $c_{k}^{i, j}$, where $c_{k}^{i,j} \in \bigO_{X,\x}$ are determined by
\[
\ad_{\delta_{i}}(\delta_{j}) = [\delta_{i}, \delta_{j}] = \sum_{k} c_{k}^{i,j} \delta_{k}.
\]
\end{define}

\begin{remark}
Note $\Ad_{\delta_{i}}$ does not determine the map $\ad_{\delta_{i}}: \Der_{X,\x}(-\log f_{\red}) \to \Der_{X,\x}(-\log f_{\red})$ since said map is not $\bigO_{X,\x}$-linear. Moreover, $\Ad_{\delta_{i}}$ depends on a choice of basis of $\Der_{X,\x}(-\log f_{\red}).$
\end{remark}

We will eventually find, given a coordinate system, a particular basis $\delta_{1}, \dots, \delta_{n}$ of $\Der_{X,\x}(-\log f_{\red})$ so that $\tr \Ad_{\delta_{i}}$, the trace of $\Ad_{\delta_{i}}$, admits a nice formula. We collect some elementary facts about the interactions between $\Der_{X,\x}(-\log f_{\red})$ and $\Omega^{\bullet}(\log f_{\red}).$ Recall by Saito, cf. 1.6 of \cite{SaitoLogarithmic}, the following: the inner product between $\Der_{X,\x}(\log f_{\red})$ and $\Omega^{1}(\log f)$ shows $\Omega^{1}(\log f_{\red})$ is the $\bigO_{X,\x}$-dual of $\Der_{X,\x}(-\log f_{\red})$; $\Omega^{\bullet}(\log f_{\red})$ is closed under taking inner products with logarithmic vector fields; $\Omega^{\bullet}(\log f_{\red})$ is closed under taking Lie derivatives along logarithmic vector fields of $f_{\red}$; if $f_{\red}$ is free then $\Omega^{k}(\log f_{\red}) = \bigwedge^{k} \Omega^{1}(\log f_{\red}).$

\begin{define} \label{def- dual basis}
For $w \in \Omega^{k}(\log f_{\red})$ and $\delta \in \Der_{X,\x}(-\log f_{\red})$ let $\iota_{\delta}(w) \in \Omega^{k-1}(\log f_{\red})$ denote the \emph{inner product} of $w$ and $\delta$. Since $f_{\red}$ is free, the induced map $\Omega^{1}(\log f_{\red}) \times \Der_{X,\x}(-\log f_{\red}) \to \bigO_{X,\x}$ is a perfect pairing. Given a basis $\delta_{1}, \dots, \delta_{n}$ of $\Der_{X,\x}(\log f_{\red})$ we may select a dual basis $\delta_{1}^{\star}, \dots, \delta_{n}^{\star}$ of $\Omega^{1}(\log f_{\red})$ such that 
\[
\iota_{\delta_{i}}(\delta_{i}^{\star}) = 1 \text{ and } \iota_{\delta_{i}}(\delta_{j}^{\star}) = 0 \text{ for $i \neq j$}.
\]
\end{define}

\begin{define}
For $w \in \Omega^{k}(\log f_{\red})$ and $\delta \in \Der_{X,\x}(-\log f_{\red})$ let $L_{\delta_{i}}(w) \in \Omega^{k}(\log f_{\red})$ denote the \emph{Lie derivative} of $w$ along $\delta_{i}$. Let $\delta_{1}, \dots, \delta_{n}$ and $\delta_{1}^{\star}, \dots, \delta_{n}^{\star}$ be as in Definition \ref{def- dual basis}. Then there exists a unique choice of $b_{k}^{i,j} \in \bigO_{X,\x}$ such that
\[
L_{\delta_{i}}(\delta_{j}^{\star}) = \sum_{k} b_{k}^{i, j} \delta_{k}^{\star}.
\]
Define the matrix $\Lie_{\delta_{i}}$ to have $(j, k)$ entry $b_{k}^{i,j}.$
\end{define}

\begin{remark}
Just like $\Ad_{\delta_{i}}$, the matrix $\Lie_{\delta_{i}}$ does not determine the map $L_{\delta_{i}}: \Omega^{1}(\log f_{\red}) \to \Omega^{1}(\log f_{\red})$; moreover, $\Lie_{\delta_{i}}$ depends on the choice of basis $\delta_{1}, \dots, \delta_{n}$ of $\Der_{X,\x}(-\log f)$ which in turn determines the basis $\delta_{1}^{\star}, \dots, \delta_{n}^{\star}$ of $\Omega^{1}(\log f)$.
\end{remark}

We need the following elementary lemma. It is well known for vector fields and differential forms and can easily be shown to hold in the logarithmic case by writing a logarithmic differential form as $\frac{1}{f_{\red}} w$ where $w$ is a differential form. 

\begin{lemma} \label{lemma- lie contraction commutator}
Let $X, Y \in \Der_{X,\x}(\log f_{\red})$. Then as maps from $\Omega^{k}(\log f_{\red}) \to \Omega^{k-1}(\log f_{\red})$, we have
\[
\iota_{[X,Y]} = [L_{X}, \iota_{Y}].
\]
\end{lemma}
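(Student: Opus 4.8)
The idea is to deduce the identity from the classical Cartan calculus by passing to the localization $\mathscr{O}_{X,\x}[\tfrac{1}{f_{\red}}]$. First I would recall that every logarithmic form lies in $\tfrac{1}{f_{\red}}\Omega^{\bullet}_{X,\x}$; more precisely, $\Omega^{k}(\log f_{\red})$ is an $\mathscr{O}_{X,\x}$-submodule of
\[
\tfrac{1}{f_{\red}}\Omega^{k}_{X,\x} \;=\; \Omega^{k}_{X,\x}\otimes_{\mathscr{O}_{X,\x}}\mathscr{O}_{X,\x}[\tfrac{1}{f_{\red}}],
\]
which is exactly the module of K\"ahler $k$-forms of the localized $\mathbb{C}$-algebra $\mathscr{O}_{X,\x}[\tfrac{1}{f_{\red}}]$. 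Since both $\iota_{[X,Y]}$ and $[L_{X},\iota_{Y}]$ are additive, it suffices to check the equality on forms of the shape $\tfrac{1}{f_{\red}}w$ with $w\in\Omega^{k}_{X,\x}$, i.e. on all of $\tfrac{1}{f_{\red}}\Omega^{k}_{X,\x}$.

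Next I would observe that any $\mathbb{C}$-linear derivation $\delta$ of $\mathscr{O}_{X,\x}$ extends uniquely to a $\mathbb{C}$-linear derivation of $\mathscr{O}_{X,\x}[\tfrac{1}{f_{\red}}]$, so the operators $\iota_{\delta}$ and $L_{\delta}$ (the latter via Cartan's formula $L_{\delta}=d\iota_{\delta}+\iota_{\delta}d$) are defined on the whole de Rham complex $\Omega^{\bullet}_{X,\x}\otimes_{\mathscr{O}_{X,\x}}\mathscr{O}_{X,\x}[\tfrac{1}{f_{\red}}]$, and for $\delta\in\Der_{X,\x}(-\log f_{\red})$ they agree with the contraction and Lie derivative already defined on $\Omega^{\bullet}(\log f_{\red})$. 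On the de Rham complex of any commutative $\mathbb{C}$-algebra the identity $\iota_{[X,Y]}=L_{X}\iota_{Y}-\iota_{Y}L_{X}$ holds, by the same formal argument as for smooth manifolds: the right-hand side is the graded commutator of a degree-$0$ derivation and a degree-$(-1)$ antiderivation, hence a degree-$(-1)$ antiderivation, as is the left-hand side, so it is enough to compare them on degree-$0$ elements (where both sides vanish) and on exact $1$-forms $dg$ (where both give $[X,Y]g$). I would carry out this two-line check and conclude the identity on $\tfrac{1}{f_{\red}}\Omega^{\bullet}_{X,\x}$.

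Finally I would restrict to $\Omega^{\bullet}(\log f_{\red})$. For $X,Y\in\Der_{X,\x}(-\log f_{\red})$ the commutator $[X,Y]$ is again logarithmic by Remark \ref{rmk- log derivations}(c), and by Saito (1.6 of \cite{SaitoLogarithmic}) the operators $\iota_{X}$, $\iota_{Y}$, $L_{X}$, and $\iota_{[X,Y]}$ all preserve the submodule $\Omega^{\bullet}(\log f_{\red})$. Hence the already-established identity on $\tfrac{1}{f_{\red}}\Omega^{\bullet}_{X,\x}$ restricts to give $\iota_{[X,Y]}=[L_{X},\iota_{Y}]$ as maps $\Omega^{k}(\log f_{\red})\to\Omega^{k-1}(\log f_{\red})$, as claimed. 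There is no real obstacle here; the only point requiring a moment's care is confirming that the extensions of $\iota_{\delta}$ and $L_{\delta}$ to the localization are compatible with the operations on logarithmic forms and that Saito's stability statement covers all four operators involved, after which the verification is the standard formal computation.
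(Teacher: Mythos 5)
Your proposal is correct and takes essentially the same route as the paper, which simply remarks that the identity is classical for ordinary forms and extends to the logarithmic case by writing a logarithmic form as $\frac{1}{f_{\red}}w$ with $w$ an ordinary form. You have merely filled in the details of that one-line argument (the graded-antiderivation check on functions and exact $1$-forms, plus Saito's stability of $\Omega^{\bullet}(\log f_{\red})$ under $\iota_{\delta}$ and $L_{\delta}$), all of which is sound.
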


\begin{prop} \label{prop- ad equals lie transpose}
If $f_{\red}$ is free and $\delta_{1}, \dots, \delta_{n}$ is a basis for $\Der_{X,\x}(-\log f_{\red})$, then
\[
\Ad_{\delta_{i}} = - \Lie_{\delta_{i}}^{T}.
\]
\end{prop}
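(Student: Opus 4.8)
The claim is a duality between the matrices recording the adjoint action on $\Der_{X,\x}(-\log f_{\red})$ and the Lie-derivative action on the dual module $\Omega^1(\log f_{\red})$. The natural route is to compute both sides by pairing against the dual basis $\delta_1^\star, \dots, \delta_n^\star$ of Definition \ref{def- dual basis} and using the compatibility between inner product and Lie derivative expressed in Lemma \ref{lemma- lie contraction commutator}. First I would fix $i$ and extract the $(j,k)$ entry of $\Ad_{\delta_i}$ by applying $\iota_{\delta_k^\star}$... more precisely, by contracting $[\delta_i,\delta_j] = \sum_k c_k^{i,j}\delta_k$ against the dual one-form $\delta_k^\star$: since $\iota_{\delta_m}(\delta_k^\star) = \delta_{mk}$ we get $c_k^{i,j} = \iota_{[\delta_i,\delta_j]}(\delta_k^\star)$.

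Next I would rewrite this using Lemma \ref{lemma- lie contraction commutator}, which gives $\iota_{[\delta_i,\delta_j]} = [L_{\delta_i}, \iota_{\delta_j}] = L_{\delta_i}\iota_{\delta_j} - \iota_{\delta_j}L_{\delta_i}$ as operators on $\Omega^\bullet(\log f_{\red})$. Applying this to $\delta_k^\star \in \Omega^1(\log f_{\red})$: the first term $L_{\delta_i}(\iota_{\delta_j}(\delta_k^\star)) = L_{\delta_i}(\delta_{jk}) = 0$ since $\delta_{jk}$ is a constant (an element of $\mathbb{C} \subseteq \bigO_{X,\x}$, or more carefully $\iota_{\delta_j}(\delta_k^\star)$ equals $1$ or $0$, on which any derivation acts as zero). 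The second term is $-\iota_{\delta_j}(L_{\delta_i}(\delta_k^\star)) = -\iota_{\delta_j}\left(\sum_m b_m^{i,k}\delta_m^\star\right) = -\sum_m b_m^{i,k}\iota_{\delta_j}(\delta_m^\star) = -b_j^{i,k}$. Hence $c_k^{i,j} = -b_j^{i,k}$. By definition the $(j,k)$ entry of $\Ad_{\delta_i}$ is $c_k^{i,j}$ and the $(j,k)$ entry of $\Lie_{\delta_i}^T$ is the $(k,j)$ entry of $\Lie_{\delta_i}$, namely $b_j^{i,k}$; so $c_k^{i,j} = -b_j^{i,k}$ is exactly the entrywise statement $\Ad_{\delta_i} = -\Lie_{\delta_i}^T$.

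**Main obstacle.** The computation itself is short; the only delicate points are bookkeeping ones. I need to be careful that $\iota_{\delta_j}$ and $L_{\delta_i}$ genuinely preserve the logarithmic complex $\Omega^\bullet(\log f_{\red})$ so that Lemma \ref{lemma- lie contraction commutator} applies on the nose to logarithmic forms rather than merely to ordinary forms — this is exactly Saito's closure properties recalled just before Definition \ref{def- dual basis}, and freeness guarantees the pairing $\Omega^1(\log f_{\red}) \times \Der_{X,\x}(-\log f_{\red}) \to \bigO_{X,\x}$ is perfect so the dual basis exists and the index extraction $c_k^{i,j} = \iota_{[\delta_i,\delta_j]}(\delta_k^\star)$ is legitimate. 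The other subtlety is the transpose convention: one must match the statement's definitions of the $(j,k)$ entries of $\Ad_{\delta_i}$ (which is $c_k^{i,j}$) and of $\Lie_{\delta_i}$ (which is $b_k^{i,j}$), and confirm that the sign and the index swap combine precisely into $-\Lie_{\delta_i}^T$. No serious difficulty is anticipated beyond this.
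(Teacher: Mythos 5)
Your proposal is correct and follows the paper's proof essentially verbatim: both compute $\iota_{[\delta_i,\delta_j]}(\delta_t^\star)$ two ways, once as $c_t^{i,j}$ from the expansion of the commutator in the basis, and once via Lemma \ref{lemma- lie contraction commutator} as $-\iota_{\delta_j}(L_{\delta_i}(\delta_t^\star)) = -b_j^{i,t}$ (the term $L_{\delta_i}(\iota_{\delta_j}(\delta_t^\star))$ vanishing because the contraction is constant). The index bookkeeping matching $c_k^{i,j} = -b_j^{i,k}$ with the transpose is also exactly as in the paper.
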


\begin{proof}
On one hand, 
\[
\iota_{\ad_{\delta_{i}}(\delta_{j})}(\delta_{t}^{\star}) = \iota_{\sum_{k} c_{k}^{i,j} \delta_{k}} (\delta_{t}^{\star}) = c_{t}^{i,j}.
\]
On the other hand,
\[
[L_{\delta_{i}}, \iota_{\delta_{j}}](\delta_{t}^{\star}) = - \iota_{\delta_{j}}(L_{\delta_{i}}(\delta_{t}^{\star})) = - \iota_{\delta_{j}}(\sum_{k} b_{k}^{i, t} \delta_{k}^{\star}) = - b_{j}^{i,t},
\]
as the Lie derivative of a vector field on a constant is zero. Now use Lemma \ref{lemma- lie contraction commutator}.
\end{proof}

Since $f_{\red}$ is free, $\Omega^{n}(\log f_{\red})$ is a free, cyclic $\bigO_{X,\x}$-module generated by $\delta_{1}^{\star} \wedge \cdots \wedge \delta_{n}^{\star}.$ Moreover:

\begin{prop} \label{prop- Lie deriv top form}
Let $f_{\red}$ be free and $\delta_{1}, \dots, \delta_{n}$ be a basis for $\Der_{X,\x}(-\log f_{\red})$. Then 
\[
L_{\delta_{i}} (\delta_{1}^{\star} \wedge \cdots \wedge \delta_{n}^{\star}) = - \tr \Ad_{\delta_{i}} (\delta_{1}^{\star} \wedge \cdots \wedge \delta_{n}^{\star}).
\]
\end{prop}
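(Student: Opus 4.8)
The plan is to exploit the fact that the Lie derivative $L_{\delta_{i}}$ is a degree-zero derivation of the exterior algebra $\bigoplus_{k} \Omega_{X}^{k}(\log f_{\red})$, apply it to the top form $\delta_{1}^{\star} \wedge \cdots \wedge \delta_{n}^{\star}$ via the Leibniz rule, and then read off the answer from the definition of $\Lie_{\delta_{i}}$ together with Proposition \ref{prop- ad equals lie transpose}. First I would note that the statement makes sense: since $f_{\red}$ is free, $\Omega_{X}^{n}(\log f_{\red}) = \bigwedge^{n} \Omega_{X}^{1}(\log f_{\red})$ is a free cyclic $\bigO_{X,\x}$-module generated by $\delta_{1}^{\star} \wedge \cdots \wedge \delta_{n}^{\star}$, and $\Omega_{X}^{\bullet}(\log f_{\red})$ is stable under $L_{\delta_{i}}$ by Saito's results recalled before Definition \ref{def- dual basis}, so $L_{\delta_{i}}(\delta_{1}^{\star} \wedge \cdots \wedge \delta_{n}^{\star})$ is a well-defined element of $\Omega_{X}^{n}(\log f_{\red})$ and hence an $\bigO_{X,\x}$-multiple of $\delta_{1}^{\star} \wedge \cdots \wedge \delta_{n}^{\star}$.

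The key step is the identity $L_{\delta_{i}}(\alpha \wedge \beta) = L_{\delta_{i}}(\alpha) \wedge \beta + \alpha \wedge L_{\delta_{i}}(\beta)$ for $\alpha, \beta \in \Omega_{X}^{\bullet}(\log f_{\red})$; once this is in hand,
\[
L_{\delta_{i}}(\delta_{1}^{\star} \wedge \cdots \wedge \delta_{n}^{\star}) = \sum_{j=1}^{n} \delta_{1}^{\star} \wedge \cdots \wedge L_{\delta_{i}}(\delta_{j}^{\star}) \wedge \cdots \wedge \delta_{n}^{\star}.
\]
Substituting $L_{\delta_{i}}(\delta_{j}^{\star}) = \sum_{k} b_{k}^{i,j} \delta_{k}^{\star}$, every term with $k \neq j$ in the $j$-th summand contributes a wedge with a repeated factor $\delta_{k}^{\star}$ and so vanishes, leaving $b_{j}^{i,j}\, (\delta_{1}^{\star} \wedge \cdots \wedge \delta_{n}^{\star})$. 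Summing over $j$ gives $L_{\delta_{i}}(\delta_{1}^{\star} \wedge \cdots \wedge \delta_{n}^{\star}) = (\tr \Lie_{\delta_{i}})(\delta_{1}^{\star} \wedge \cdots \wedge \delta_{n}^{\star})$, and by Proposition \ref{prop- ad equals lie transpose} we have $\tr \Lie_{\delta_{i}} = \tr(-\Ad_{\delta_{i}}^{T}) = -\tr \Ad_{\delta_{i}}$, which is exactly the claimed formula.

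The only point that needs care is the Leibniz rule for $L_{\delta_{i}}$ on \emph{logarithmic} forms, since this is stated in the classical setting. This can be obtained either by writing each logarithmic $k$-form as $\tfrac{1}{f_{\red}} w$ with $w$ an honest form and transporting the usual product rule (exactly as in Lemma \ref{lemma- lie contraction commutator}), or via Cartan's formula $L_{\delta_{i}} = \iota_{\delta_{i}} \circ d + d \circ \iota_{\delta_{i}}$ combined with the graded Leibniz rules for $d$ and $\iota_{\delta_{i}}$, all of which are available on $\Omega_{X}^{\bullet}(\log f_{\red})$ because this complex is closed under $d$ and under contraction with logarithmic vector fields. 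Apart from this, the argument is a one-line linear-algebra computation, so I expect no genuine obstacle.
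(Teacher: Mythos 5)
Your proof is correct and is essentially the same as the paper's: expand $L_{\delta_{i}}$ on the top wedge by the Leibniz rule, observe that only the $k=j$ terms survive to give $\sum_{k} b_{k}^{i,k}$, and convert $\tr \Lie_{\delta_{i}}$ to $-\tr \Ad_{\delta_{i}}$ via Proposition \ref{prop- ad equals lie transpose}. The extra care you take in justifying the Leibniz rule on logarithmic forms is a reasonable elaboration of what the paper dismisses as ``basic facts of Lie derivatives.''
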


\begin{proof}

By basic facts of Lie derivatives:
\begin{align*}
    L_{\delta_{i}}(\delta_{1}^{\star} \wedge \cdots \wedge \delta_{n}^{\star}) 
        &= \sum_{j} \delta_{1}^{\star} \wedge \cdots \wedge \delta_{j-1}^{\star} \wedge L_{\delta_{i}}(\delta_{j}^{\star}) \wedge \delta_{j+1}^{\star} \wedge \cdots \wedge \delta_{n}^{\star} \\
        &= \sum_{j} \delta_{1}^{\star} \wedge \cdots \wedge \delta_{j-1}^{\star} \wedge (\sum_{k} b_{k}^{i, j} \delta_{k}^{\star}) \wedge \delta_{j+1}^{\star} \wedge \cdots \wedge \delta_{n}^{\star} \\
        &= (\sum_{k} b_{k}^{i, k} ) (\delta_{1}^{\star} \wedge \cdots \wedge \delta_{n}^{\star}).
\end{align*}
The result follows by Proposition \ref{prop- ad equals lie transpose}.
\end{proof}

We will also need the following standard definition and proposition from differential geometry.

\begin{define}
Consider local coordinates $x_{1}, \dots, x_{n}$. Let $\delta$ be a vector field. Then $\Div(\delta)$ is the \emph{divergence} of $\delta$ with respect to the $n$-form $dx_{1} \wedge \cdots \wedge dx_{n}$ and is defined by:
\[
L_{\delta}(dx_{1} \wedge \cdots \wedge dx_{n}) = \Div(\delta) (dx_{1} \wedge \cdots \wedge dx_{n}).
\]
\end{define}

\begin{prop} \label{prop-div right-left formula}

In local coordinates $x_{1}, \cdots, x_{n}$, write the vector field $\delta$ as $\delta = \sum_{k} h_{k} \frac{\partial}{\partial_{x_{k}}}$, where $h_{k} \in \bigO_{X,\x}$. Then $\Div(\delta)$ with respect to $dx_{1} \wedge \cdots \wedge dx_{n}$ satisfies the formula
\[
\Div(\delta) = \sum_{k} \frac{\partial}{\partial_{x_{k}}} \bullet h_{k}.
\]

\end{prop}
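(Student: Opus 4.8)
The final statement is Proposition \ref{prop-div right-left formula}, a purely local coordinate computation: for $\delta = \sum_k h_k \partial_{x_k}$ with $h_k \in \bigO_{X,\x}$, the divergence of $\delta$ with respect to the volume form $dx_1 \wedge \cdots \wedge dx_n$ equals $\sum_k \partial_{x_k} \bullet h_k$.

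The plan is to compute $L_\delta(dx_1 \wedge \cdots \wedge dx_n)$ directly using Cartan's magic formula $L_\delta = d \circ \iota_\delta + \iota_\delta \circ d$. Since $dx_1 \wedge \cdots \wedge dx_n$ is a top form on an $n$-dimensional space, $d(dx_1 \wedge \cdots \wedge dx_n) = 0$, so only the term $d(\iota_\delta(dx_1 \wedge \cdots \wedge dx_n))$ survives. First I would expand the contraction: $\iota_\delta(dx_1 \wedge \cdots \wedge dx_n) = \sum_k (-1)^{k-1} h_k \, dx_1 \wedge \cdots \wedge \widehat{dx_k} \wedge \cdots \wedge dx_n$, using the standard interior-product-into-a-wedge rule. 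Then applying $d$ and using $d(dx_j) = 0$, only the $\partial_{x_k}$-derivative of $h_k$ contributes to each summand, and the sign $(-1)^{k-1}$ is cancelled when $dx_k$ is moved back into its slot; the result collapses to $\left(\sum_k \frac{\partial h_k}{\partial x_k}\right) dx_1 \wedge \cdots \wedge dx_n$. Comparing with the defining equation $L_\delta(dx_1 \wedge \cdots \wedge dx_n) = \Div(\delta)\,(dx_1 \wedge \cdots \wedge dx_n)$ and noting that $\frac{\partial h_k}{\partial x_k}$ is precisely $\partial_{x_k} \bullet h_k$ in the paper's notation gives the claim.

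There is essentially no obstacle here — this is the classical formula for divergence in coordinates, and the only care needed is bookkeeping of signs in the interior product and exterior derivative, plus observing that the paper's notation $\partial_{x_k} \bullet h_k$ means the derivative of the function $h_k$ (as opposed to the operator $\partial_{x_k}$ composed with multiplication by $h_k$). Alternatively, one could cite this as a standard fact from differential geometry, since it is genuinely textbook material; but since the paper states it as a proposition I would include the short Cartan-formula computation for completeness. If one prefers to work in the algebraic/analytic category without invoking smooth-manifold machinery, the same computation goes through verbatim treating $\Omega^\bullet_{X,\x}$ as the algebraic de Rham complex, where Cartan's formula holds by the same algebraic identities. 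I expect the write-up to be under half a page.
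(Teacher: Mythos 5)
Your proposal is correct and follows exactly the paper's argument: Cartan's formula reduces to $L_\delta(dx) = d(\iota_\delta(dx))$ since $dx_1\wedge\cdots\wedge dx_n$ is closed, and the contraction-then-$d$ computation with the $(-1)^{k-1}$ signs is precisely what the paper writes out. No differences worth noting.
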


\begin{proof}

Write $dx = dx_{1} \wedge \cdots \wedge dx_{n}$. By Cartan's formula, $L_{\delta}(dx) = d(\iota_{\delta} (dx)).$ Using the skew-symmetric properties of the inner product we deduce:
\begin{align*}
d(\iota_{\delta}(dx)) 
    &= d( \sum_{k} (-1)^{k-1} (dx_{1} \wedge \cdots \wedge \iota_{\delta}(dx_{k}) \wedge \cdots \wedge dx_{n})) \\
    &= d( \sum_{k} (-1)^{k-1} h_{k} (dx_{1} \wedge \cdots \wedge \widehat{dx_{k}} \wedge \cdots \wedge dx_{n})) \\
    &= (\sum_{k} \frac{\partial}{\partial_{x_{k}}} \bullet h_{k}) dx.
\end{align*}
\end{proof}

Consider a basis $\delta_{1}, \dots, \delta_{n}$ of $\Der_{X,\x}(-\log f_{\red})$. Then for any choice of coordinates $x_{1}, \dots, x_{n}$, there exists a corresponding unit $u \in \mathscr{O}_{X,\x}$ such that $\delta_{1}^{\star} \wedge \cdots \wedge \delta_{n}^{\star} = \frac{u}{f_{\red}} dx_{1} \wedge \cdots \wedge dx_{n}$. See the proof of the first theorem on page 270 of \cite{SaitoLogarithmic} for justification. Clearly $u \delta_{1}, \dots, \delta_{n}$ is still a basis of $\Der_{X,\x}(-\log f_{\red})$ and since $\frac{1}{u} \delta_{1}^{\star} = (u \delta_{1})^{\star}$, the logarithmic forms $(u \delta_{1})^{\star}, \delta_{2}^{\star}, \dots, \delta_{n}^{\star}$ constitute a dual basis of $\Omega^{1}(\log f_{\red})$ satisfying:
\[
(u \delta_{1})^{\star} \wedge \delta_{2}^{\star} \wedge \cdots \wedge \delta_{n}^{\star} = \frac{1}{f_{\red}} dx_{1} \wedge \cdots \wedge dx_{n}.
\]
This shows, as long as $n \geq 2$, that one can always find a basis of $\Der_{X,\x}(-\log f_{\red})$ satisfying the conditions of the following definition:

\begin{define} \label{def-preferred basis}

Let $f_{\red}$ have Euler homogeneity $E$ at $\x$. Having fixed a coordinate system $x_{1}, \dots, x_{n}$, consider a basis $\delta_{1}, \dots, \delta_{n}$ of $\Der_{X,\x}(-\log f_{\red})$ such that $\delta_{n} = E$ and $\delta_{1}, \dots, \delta_{n-1}$ is a basis of $\Der_{X,\x}(-\log_{0} f_{\red})$. Such a basis is a \emph{preferred basis} of $\Der_{X,\x}(-\log f_{\red})$ if, in addition, 
\[
\delta_{1}^{\star} \wedge \cdots \wedge \delta_{n}^{\star} = \frac{1}{f_{\red}} dx_{1} \wedge \cdots \wedge dx_{n}.
\]
\end{define}

We are finally ready to state the main formula of this section.

\begin{prop} \label{prop: main adjoint right-left formula}

Let $f_{\red}$ be free with Euler homogeneity $E$. Given a coordinate system $x_{1}, \dots, x_{n}$, let $\delta_{1}, \dots, \delta_{n}$ be a preferred basis of $\Der_{X,x}(-\log f_{\red})$. Write $\delta_{i} = \sum_{k} h_{k, i} \frac{\partial}{\partial_{x_{k}}}.$ Then
\begin{enumerate}[(i)]
    \item $\tr \Ad_{\delta_{i}} = - \sum_{k} \frac{\partial}{\partial_{z_{k}}} \bullet h_{k, i}$ for $i \neq n$;
    \item $\tr \Ad_{\delta_{n}} = - \sum_{k} \frac{\partial}{\partial_{z_{k}}} \bullet h_{k, n} + 1$.
\end{enumerate}
 
\end{prop}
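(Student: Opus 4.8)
<br>

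The plan is to reduce the statement to a computation about the top logarithmic form $\delta_1^\star \wedge \cdots \wedge \delta_n^\star$ and its Lie derivatives along the basis elements $\delta_i$, using the machinery already assembled in this appendix. By Proposition \ref{prop- Lie deriv top form}, for any basis of $\Der_{X,\x}(-\log f_{\red})$ we have
\[
L_{\delta_i}(\delta_1^\star \wedge \cdots \wedge \delta_n^\star) = - \tr \Ad_{\delta_i} \, (\delta_1^\star \wedge \cdots \wedge \delta_n^\star),
\]
so it suffices to compute the left-hand side for a preferred basis. The key point is that for a preferred basis we have the explicit identification $\delta_1^\star \wedge \cdots \wedge \delta_n^\star = \frac{1}{f_{\red}} dx_1 \wedge \cdots \wedge dx_n$, which turns the Lie derivative on the logarithmic side into a computation on honest differential forms.

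First I would apply the Leibniz rule for Lie derivatives to $L_{\delta_i}\big(\frac{1}{f_{\red}} dx_1 \wedge \cdots \wedge dx_n\big)$, splitting it as
\[
L_{\delta_i}\!\left(\tfrac{1}{f_{\red}}\right) dx_1 \wedge \cdots \wedge dx_n + \tfrac{1}{f_{\red}} L_{\delta_i}(dx_1 \wedge \cdots \wedge dx_n).
\]
For the first term, $L_{\delta_i}(1/f_{\red}) = \delta_i \bullet (1/f_{\red}) = -\frac{\delta_i \bullet f_{\red}}{f_{\red}^2}$; since $\delta_i \in \Der_{X,\x}(-\log f_{\red})$ this equals $-\frac{1}{f_{\red}}\cdot\frac{\delta_i \bullet f_{\red}}{f_{\red}}$ with $\frac{\delta_i \bullet f_{\red}}{f_{\red}} \in \bigO_{X,\x}$ by Remark \ref{rmk- log derivations}. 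For $i \neq n$ the basis element $\delta_i$ lies in $\Der_{X,\x}(-\log_0 f_{\red})$, so $\delta_i \bullet f_{\red} = 0$ and this term vanishes; for $i = n$, $\delta_n = E$ is the Euler homogeneity, so $\delta_n \bullet f_{\red} = f_{\red}$ and the term contributes $-\frac{1}{f_{\red}} dx_1 \wedge \cdots \wedge dx_n$, i.e. a coefficient of $-1$. For the second term, by the definition of divergence and Proposition \ref{prop-div right-left formula}, $L_{\delta_i}(dx_1 \wedge \cdots \wedge dx_n) = \Div(\delta_i)\, dx_1 \wedge \cdots \wedge dx_n = \big(\sum_k \frac{\partial}{\partial x_k} \bullet h_{k,i}\big) dx_1 \wedge \cdots \wedge dx_n$.

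Combining these, $L_{\delta_i}(\delta_1^\star \wedge \cdots \wedge \delta_n^\star) = \big(\sum_k \frac{\partial}{\partial x_k}\bullet h_{k,i}\big)(\delta_1^\star \wedge \cdots \wedge \delta_n^\star)$ for $i \neq n$, and $L_{\delta_n}(\delta_1^\star \wedge \cdots \wedge \delta_n^\star) = \big(\sum_k \frac{\partial}{\partial x_k}\bullet h_{k,n} - 1\big)(\delta_1^\star \wedge \cdots \wedge \delta_n^\star)$. Comparing with Proposition \ref{prop- Lie deriv top form} and cancelling the generator $\delta_1^\star \wedge \cdots \wedge \delta_n^\star$ of the free rank-one module $\Omega^n(\log f_{\red})$ yields (i) and (ii). I do not anticipate a serious obstacle here: the only subtlety is keeping straight that the preferred-basis normalization is exactly what makes the coefficient comparison clean, and that the case distinction $i \neq n$ versus $i = n$ is controlled entirely by whether $\delta_i$ kills $f_{\red}$ or scales it; everything else is the Leibniz rule and the two cited propositions. (For $n = 1$ a preferred basis in the sense of Definition \ref{def-preferred basis} need not exist, but that degenerate case is handled separately in the applications, e.g. at the end of Proposition \ref{prop-dual spencer terminal homology}, so it need not be addressed here.)
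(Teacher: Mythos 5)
Your proposal is correct and follows the paper's own proof essentially verbatim: both reduce to computing $L_{\delta_i}(\delta_1^\star \wedge \cdots \wedge \delta_n^\star)$ via Proposition \ref{prop- Lie deriv top form}, use the preferred-basis identification with $\tfrac{1}{f_{\red}}\,dx_1\wedge\cdots\wedge dx_n$, split by the Leibniz rule, and evaluate the two terms with the logarithmic/Euler dichotomy and Proposition \ref{prop-div right-left formula}. The sign bookkeeping in both cases checks out.
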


\begin{proof}

Write $dx = dx_{1} \wedge \cdots \wedge dx_{n}$. Because $\delta_{1}, \cdots, \delta_{n}$ is a preferred basis of $\Der_{X,\x}(-\log f_{\red})$ and by standard properties of the Lie derivative
\begin{align} \label{eqn-main trace formula}
L_{\delta_{i}}(\delta_{1}^{\star} \wedge \cdots \wedge \delta_{n}^{\star}) 
    &= L_{\delta_{i}}(\frac{1}{f_{\red}} dx) = L_{\delta_{i}}(\frac{1}{f_{\red}})dx + \frac{1}{f_{\red}} L_{\delta_{i}} (dx) \\
    &= L_{\delta_{i}}(\frac{1}{f_{\red}})dx + (\frac{1}{f_{\red}} \sum_{k} \frac{\partial}{\partial_{x_{k}}} \bullet h_{k,i}) dx. \nonumber
\end{align}
Note that the last equality of \eqref{eqn-main trace formula} follows by Proposition \ref{prop-div right-left formula}. When $i \neq n$, $L_{\delta_{i}}(\frac{1}{f_{\red}}) = 0$; when $i = n$, $L_{\delta_{n}}(\frac{1}{f_{\red}}) = - \frac{1}{f_{\red}}.$ The result follows by the definition of a preferred basis together with Proposition \ref{prop- Lie deriv top form}. 
\end{proof}

\section{Budur's Conjecture for Central, Reduced, Free Arrangements}

In \cite{BudurLocalSystems}, Budur conjectured that exponentiating $\V(B_{F,\x})$ (here $F = (f_{1}, \dots, f_{r})$ is collection of polynomials) gives the support of the Sabbah specialization functor, generalizing the fact that exponentiating the roots of the Bernstein--Sato polynomial gives the support of the nearby cycle functor, cf. Conjecture 2 of loc. cit. In the same paper he reduced this conjecture to proving, in language we will shortly define, that if $A-1 \in \V(B_{F,\x})$ then a certain $\D_{X,\x}$-linear map $\nabla_{A}$ is not surjective, cf. Proposition 2 of loc. cit. For $f = f_{1} \cdots f_{r}$ a central, reduced, and free hyperplane arrangement and $F = (f_{1}, \dots, f_{r})$ an arbitrary factorization of $f$ we provide a proof here. Theorem 3.5.3 of the recently announced paper \cite{BudurConjecture} gives a general proof of the conjecture by proving the claim about $\nabla_{A}$ for general points in the codimension one components of $\V(B_{F,\x})$. 
Our method relies on the computation of $\V(B_{F,0})$ given in Corollary \ref{cor-final estimation cor} and the behavior of $\nabla_{A}$ under duality, cf. Section 4 of \cite{me}.

First, let us clarify our terminology. (See also Section 3 of \cite{me} for more details). For $a_{1}, \dots, a_{r} \in \mathbb{C}$, denote by $S-A$ the sequence $s_{1} - a_{1}, \dots, s_{r} - a_{r}$. Similarly, let $A$ and $A-1$ denote the tuple $a_{1}, \dots, a_{r}$ and $a_{1}-1, \dots, a_{r}-1$ respectively. There is an injective $\D_{X,\x}$-linear map $\nabla: \D_{X,\x}[S]F^{S} \to \D_{X,\x}[S]F^{S}$ given by sending every $s_{k}$ to $s_{k} + 1$ and identifying $F^{S+1}$ with  $f F^{S}$. This induces the $\D_{X,\x}$-linear map 
\[
\nabla_{A}: \frac{\D_{X,\x}[S]F^{S}}{(S-A)\D_{X,\x}[S]F^{S}} \to \frac{\D_{X,\x}[S]F^{S}}{(S-(A-1))\D_{X,\x}[S]F^{S}}.
\]

By Proposition 2 of \cite{BudurLocalSystems}, to prove Budur's conjecture in our setting, it suffices to prove the following:

\begin{prop} \label{prop - appendix b prop}
Let $f=f_{1} \cdots f_{r}$ be a central, reduced, and free hyperplane arrangement where the $f_{k}$ are not necessarily linear forms. Let $F = (f_{1}, \dots, f_{r})$. If $A-1 \in \V(B_{F,0})$, then 
\[
\nabla_{A}: \frac{\D_{\mathbb{C}^{n},0}[S] F^{S}}{(S-A) \D_{\mathbb{C}^{n},0}[S]F^{S}} \to \frac{\D_{\mathbb{C}^{n},0}[S] F^{S}}{(S-(A-1)) \D_{\mathbb{C}^{n},0}[S]F^{S}}
\] is not surjective.
\end{prop}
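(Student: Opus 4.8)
The plan is to dualize. By Corollary \ref{cor-final estimation cor}, since $f$ is reduced and free we have an explicit description of $\V(B_{F,0}) = \V(B_F)$ as a union of hyperplanes, each of the form $\{P_{F,X}^{g} + j_X = 0\}$ for $X$ an indecomposable edge and $0 \leq j_X \leq d_{X,\red} + d_X - 2r(X)$; in particular $\V(B_{F,0})$ is purely of codimension one. Fix $A$ with $A - 1 \in \V(B_{F,0})$. I want to show $\nabla_A$ is not surjective, equivalently (since source and target are cyclic and $\nabla_A$ sends the generator $\overline{F^S}$ to $\overline{F^{S+1}} = \overline{fF^S}$) that $f F^S \notin (S - (A-1))\D_{X,0}[S]F^S + \D_{X,0}[S]\cdot(\text{generators giving the rest})$ — more precisely that $\overline{fF^S}$ does not generate the target module. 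Rather than attack this directly, I would pass to the $\D_{X,0}[S]$-dual using Theorem \ref{thm- useful duality formula} (with $f' = 1$, $f_{\red} = f$, $g = f$), which gives $\mathbb{D}(\D_{X,0}[S]F^S/\D_{X,0}[S]fF^S) \simeq \D_{X,0}[S]f^{-1}F^{-S}/\D_{X,0}[S]f^{-1}F^{-S}[n+1]$, and more importantly gives $\mathbb{D}(\D_{X,0}[S]F^S) \simeq \D_{X,0}[S]f^{-1}F^{-S}[n]$; combined with the analysis in Section 4 of \cite{me} (the commutative diagrams describing $\mathbb{D}(\nabla_A)$), the map $\nabla_A$ is identified up to the duality with a shift-type map between quotients of $\D_{X,0}[S]f^{-1}F^{-S}$.

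\textbf{Key steps, in order.} First, I would record the precise form of $\V(B_{F,0})$ from Corollary \ref{cor-final estimation cor} and note $B_{F,0} = B_F$ by centrality. Second, I would invoke the duality package: $f$ being central, reduced, free (hence strongly Euler-homogeneous and Saito-holonomic by Examples \ref{ex- hyperplane saito} and \ref{ex-hyperplane strongly Euler}) puts us in the setting of Theorem \ref{thm- useful duality formula} and of the $\mathbb{D}(\nabla_A)$ analysis, so that $\nabla_A$ is surjective if and only if the dual map $\mathbb{D}(\nabla_A)$ is injective, and $\mathbb{D}(\nabla_A)$ is the natural map
\[
\frac{\D_{X,0}[S]f^{-1}F^{-S}}{(S-(-(A-1)))\D_{X,0}[S]f^{-1}F^{-S}} \to \frac{\D_{X,0}[S]f^{-1}F^{-S}}{(S-(-A))\D_{X,0}[S]f^{-1}F^{-S}}
\]
induced by $f^{-1}F^{-S} \mapsto f^{-1}F^{-S+1}$ (the unmixed case of the diagrams in Section 4 of \cite{me} collapses this further). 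Third — the computational heart — I would show this dual map has a nonzero kernel when $A - 1 \in \V(B_{F,0})$. For this I would run the right-normal-form / constant-term argument of Theorem \ref{thm-first subset thm}: if $\mathbb{D}(\nabla_A)$ were injective, then for a suitable homogeneous $v$ of degree $\leq \mdr(f_X) - 2$ at each relevant edge one could back out a functional equation forcing a linear factor $\sum_k d_{X,k}s_k + r(X) + j_X$ to \emph{not} vanish on the orbit through $A-1$, contradicting $A-1 \in \V(B_{F,0})$. Equivalently, and more cleanly: specialize at $S = A-1$ and use that $B_F$ contains the explicit product $\prod_X \prod_{j_X}(P_{F,X}^g + j_X)$ from Theorem \ref{thm- first supset thm} (via Proposition \ref{prop- linear forms, symmetry element of b-ideal}), so some factor vanishes at $A-1$, and trace through the Spencer resolution $\Sp_{F,0}^{f}$ of Definition \ref{def-extended Spencer} to produce an explicit nonzero class in the kernel of $\mathbb{D}(\nabla_A)$. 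Fourth, conclude: nonzero kernel of $\mathbb{D}(\nabla_A)$ means $\nabla_A$ is not surjective, which is exactly what Proposition 2 of \cite{BudurLocalSystems} needs.

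\textbf{Main obstacle.} The hard part will be Step three: making the passage from "$A-1$ lies on one of the explicit hyperplanes" to "$\mathbb{D}(\nabla_A)$ has nonzero kernel" genuinely constructive. The cleanest route is to localize the problem to a single indecomposable edge $X$ via the decomposition $B_{F,0} = \bigcap_X B_{f_X F_X}^{g_X}$ from Remark \ref{remark-local to global b-poly}(c) — so one only has to exhibit a kernel element for the edge whose hyperplane contains $A-1$ — and then to reuse verbatim the constant-term bookkeeping of Lemma \ref{lemma constant terms calculation}, Lemma \ref{lemma-constant term syzygy}, and the proof of Theorem \ref{thm-first subset thm}, now read "in reverse" on the dual side. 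One must be careful that the shift $\nabla$ interacts correctly with the identification $F^{S+1} = fF^S$ and with the side-changing functor $(-)^{\text{left}}$ (Remark \ref{rmk-side changing tau}), and that the preferred-basis computation underlying Theorem \ref{thm- useful duality formula} is compatible with the filtration used to detect non-surjectivity; these are the points where I expect the argument to require genuine care rather than routine verification. I do not expect to need any input beyond Corollary \ref{cor-final estimation cor}, Theorem \ref{thm- useful duality formula}, Theorem \ref{thm-gen by derivations}, and the $\nabla_A$-duality machinery of \cite{me}.
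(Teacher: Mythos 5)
Your overall architecture is close to the paper's: reduce to the explicit hyperplanes of Corollary \ref{cor-final estimation cor}, use the symmetry coming from duality (Theorems 4.18--4.19 of \cite{me}) to halve the set of hyperplanes that must be checked, and then run the right-normal-form/constant-term argument of Theorem \ref{thm-first subset thm} with $B(S)$ replaced by $1$. However, your "computational heart" (step three) has a genuine gap, and it is precisely the point where the paper introduces its one new device.

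The difficulty is this. After the symmetry reduction you must show $\nabla_{A}$ is not surjective for $A-1$ on \emph{each} hyperplane $\{\sum_{k} d_{k}s_{k} + n + j = 0\}$ with $0 \leq j \leq d - n$. Running the constant-term argument directly on the surjectivity statement for $\nabla_{A}$ (i.e. on $1 \in \ann_{\D_{\mathbb{C}^{n},0}[S]}F^{S} + \D_{\mathbb{C}^{n},0}[S]\cdot f + (S-(A-1))\D_{\mathbb{C}^{n},0}[S]$) only produces a contradiction when $A-1$ lies on the single hyperplane $\{\sum_{k} d_{k}s_{k} + n = 0\}$: the Euler-derivation contribution to the right constant term is $(-n - t - \sum_{k}d_{k}s_{k})P_{\textbf{0}}^{t}$, and the degree-$t$ shift cannot be supplied by your auxiliary polynomial $v$, because with $B(S) = 1$ replaced by $v\cdot 1$ the left-hand side becomes $v$, which for $\degree(v) > 0$ already lies in $\mathfrak{m}_{0}$ --- so evaluating at $A-1$ yields $v \in \mathfrak{m}_{0}$, which is true rather than contradictory. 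The contradiction $1 \notin \mathfrak{m}_{0}$ requires the left-hand side to be a unit, hence $\degree(v)=0$. The paper's fix is to introduce, for each $j$, a divisor $f^{\prime}$ of $f$ with $\degree(f^{\prime}) = j$ and the auxiliary map $\nabla_{A}^{f^{\prime}}$ landing in $\D_{\mathbb{C}^{n},0}[S]f^{\prime}F^{S}/(S-(A-1))\D_{\mathbb{C}^{n},0}[S]f^{\prime}F^{S}$; surjectivity of $\nabla_{A}^{f^{\prime}}$ amounts to $1 \in \ann_{\D_{\mathbb{C}^{n},0}[S]}f^{\prime}F^{S} + \D_{\mathbb{C}^{n},0}[S]\cdot g + (S-(A-1))\D_{\mathbb{C}^{n},0}[S]$ with $g = f/f^{\prime}$, so the Euler term becomes $(-n - \degree(f^{\prime}) - \sum_{k}d_{k}s_{k})$ while the left-hand side remains the unit $1$. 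One then needs the bridge back: non-surjectivity of $\nabla_{A}^{f^{\prime}}$ implies (via an "injective implies surjective" statement in the style of Theorem 3.11 of \cite{me}) non-injectivity of $\nabla_{A}^{f^{\prime}}$, hence non-injectivity of $\nabla_{A}$, hence non-surjectivity of $\nabla_{A}$. Your proposal contains neither the auxiliary maps $\nabla_{A}^{f^{\prime}}$ nor this chain of implications, and your alternative of "tracing an explicit kernel class through the Spencer resolution" starting only from the fact that some factor of a member of $B_{F}$ vanishes at $A-1$ does not address the real issue: vanishing of $B_{F}$ at $A-1$ is a statement about $\mathbb{C}[S] \cap (\cdots)$ and does not by itself obstruct $1$ from lying in the full $\D_{\mathbb{C}^{n},0}[S]$-ideal --- that non-obstruction is exactly what the proposition is asserting and what must be proved.

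A secondary remark: your dualization of the whole problem (replacing "$\nabla_{A}$ not surjective" by "$\mathbb{D}(\nabla_{A})$ has nonzero kernel") is a consistent reformulation given Theorem \ref{thm- useful duality formula} and Section 4 of \cite{me}, but it does not buy anything here --- the same degree-bookkeeping obstacle reappears on the dual side --- and the paper uses duality only for the symmetry $\varphi(s_{k}) = -s_{k}-2$ that halves the range of $j$. If you insert the $\nabla_{A}^{f^{\prime}}$ device into your step three, the rest of your outline assembles into the paper's proof.
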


\begin{proof}
Since the $f_{k}$ are globally defined we may consider the global version of $\nabla_{A}$. Since $f$ is central, there is a natural $\mathbb{C}^{\star}$-action on $\V(f)$; moreover, $\nabla_{A}$ is equivariant with respect to this action. Therefore $\nabla_{A}$ is surjective at $0$ if and only if it is surjective at all $\x \in \V(f)$. So it suffices to prove $\nabla_{A}$ is not surjective for 
\[
A-1 \in \bigcup_{j = 0}^{2d - 2n} \{ \left( \sum d_{k} s_{k} \right) + n + j= 0\},
\]
when $f$ is indecomposable of rank $n$ and degree $d$, cf. Corollary \ref{cor-final estimation cor} and Remark \ref{remark-local to global b-poly}.

Since $f$ is reduced, $\V(B_{F,0})$ is invariant under the map $\varphi$ on $\mathbb{C}[S]$ induced by $s_{k} \mapsto -s_{k} - 2$, cf. Theorem \ref{thm-unmixed symmetry} or Proposition 8 of \cite{MaisonobeFree}. This map sends $\{ \left( \sum d_{k} s_{k} \right) + n + j= 0\}$ to $\{ \left( \sum d_{k} s_{k} \right) +n + (2d - 2n - j)= 0\}$. Theorem 4.18 and Theorem 4.19 of \cite{me} prove that the invariance of $\varphi$ forces $\nabla_{A}$ to be surjective if and only if $\nabla_{-A}$ is surjective. So if we show $\nabla_{A}$ is not surjective for all $A-1 \in \{ \left( \sum d_{k} s_{k} \right) + n + j= 0\}$ then we will have also shown $\nabla_{-A}$ is not surjective for all $-A - 1 \in \{ \left( \sum d_{k} s_{k} \right) + 2d - n - j= 0\}$. Thus it suffices to prove $\nabla_{A}$ is not surjective for 
\[
A-1 \in \bigcup_{j = 0}^{d - n} \{ \left( \sum d_{k} s_{k} \right) + n + j= 0\}.
\]

Let $f^{\prime}$ divide $f$, where the degree $d^{\prime}$ of $f^{\prime}$ is less than $d$. Just as $\nabla_{A}$ is induced by the $\D_{\mathbb{C}^{n},0}$-injection $\nabla: \D_{\mathbb{C},0}[S]F^{S} \to \D_{\mathbb{C}^{n},0} F^{S}$ sending each $s_{k}$ to $s_{k} + 1$, there is an induced $\D_{\mathbb{C}^{n},0}$-map 
\[
\nabla_{A}^{f^{\prime}} : \frac{\D_{\mathbb{C}^{n},0}[S] F^{S}}{(S-A) \D_{\mathbb{C}^{n},0}[S]F^{S}} \to \frac{\D_{\mathbb{C}^{n},0}[S] f^{\prime}F^{S}}{(S-(A-1)) \D_{\mathbb{C}^{n},0}[S]f^{\prime}F^{S}}.
\]
Moreover, the non-injectivity of $\nabla_{A}^{f^{\prime}}$ implies the non-injectivity of $\nabla_{A}$. Arguing as in Section 3 of \cite{me}, we can prove a version of Theorem 3.11 of loc. cit. for $\nabla_{A}^{f^{\prime}}$: if $\nabla_{A}^{f^{\prime}}$ is injective, then it is surjective. By Theorem 4.19 of loc. cit., it thus suffices to prove $\nabla_{A}^{f^{\prime}}$ is not surjective for 
\[
A-1 \in \{ \left( \sum d_{k}s_{k} \right) + n + d^{\prime} = 0 \}.
\]

Now we are in the situation of Theorem \ref{thm-first subset thm}, where instead of looking for $vB(S) \in \ann_{\D_{\mathbb{C}^{n},0}[S]}f^{\prime}F^{S} + \D_{\mathbb{C}^{n},0}[S] \cdot g$, where $g = \frac{f}{f^{\prime}}$, we are considering the following possibility: 
\begin{equation} \label{eqn-appendix b finding 1}
1 \in \ann_{\D_{\mathbb{C}^{n},0}[S]}f^{\prime}F^{S} + \D_{\mathbb{C}^{n},0}[S] \cdot g + (S-(A-1)) \D_{\mathbb{C}^{n},0}[S].
\end{equation}
Suppose, towards contradiction, \eqref{eqn-appendix b finding 1} holds, i.e. $\nabla_{A}^{f^{\prime}}$ is surjective. We argue as in Theorem \ref{thm-first subset thm}, except letting $B(S)$ and $v$ be $1$, and obtain an equation resembling \eqref{eqn-third eqn main thm star} except with additional terms on the right hand side from $(S-(A-1))\D_{\mathbb{C}^{n},0}[S]$. Look at the right constant terms of this version of \eqref{eqn-third eqn main thm star}, evaluate each $s_{k}$ at $a_{k}-1$, and regard every summand as a power series. This gives an equality of elements in $\mathscr{O}_{X,0}$; denote by $\mathfrak{m}_{0}$ the maximal ideal of $\mathscr{O}_{X,0}$. By the argument of Theorem \ref{thm-first subset thm}, the only piece of the right hand side outside of $\mathfrak{m}_{0}$ can come from $L_{\textbf{0}} g$ as the relevant pieces from $P \psi_{f^{\prime}F,0}(E)$ and the $(S-(A-1)\D_{\mathbb{C}^{n},0}[S]$ terms vanished after sending each $s_{k}$ to $a_{k}-1$ and there are no such pieces from the $Q_{j} \psi_{f^{\prime}F}(\delta_{j})$ terms by Lemma \ref{lemma-constant term syzygy}. Certainly $g \in \mathfrak{m}_{0}$. Thus the entire right hand side lies in $\mathfrak{m}_{0}$.  Since $1 \notin \mathfrak{m}_{0}$, our assumption that \eqref{eqn-appendix b finding 1} holds is actually impossible, and the claim is proved. 
\end{proof}

\begin{remark} \label{rmk- appendix b remark}
\begin{enumerate}[(a)]
    \item One can argue similarly for non-reduced $f$ if we assume $F$ is unmixed up to units and we check Theorem 4.18 and Theorem 4.19 of \cite{me} for $F$ unmixed up to units. In particular, this applies when $F$ is a factorization into linear terms. We leave this to the reader.
    \item In this case, we obtain the expected formula \eqref{eqn-final estimation cor statement 3} for the roots of Bernstein--Sato polynomial of an appropriate $f$ by Remark \ref{rmk- appendix b remark}.(a) and the strategy outlined in Remark \ref{rmk- gesturing at Budur improvement}.(a). This approach does not rely on \cite{BudurConjecture}.
    \item The primary purpose of Theorem 3.5.3 of \cite{BudurConjecture} is to analyze $\text{Exp}(\V(B_{F,0})).$ When $f$ is simply a central, reduced hyperplane arrangement and $L$ is a factorization of $f$ into linear forms, $\text{Exp}(\V(B_{L,0})$ can be explicitly computed by Theorem \ref{thm- first supset thm} (or Maisonobe's Proposition 10 of \cite{MaisonobeFree}) and Corollary 2 of \cite{BudurLocalSystems}. In this case, Budur's conjecture holds without appeal to \cite{BudurConjecture}. Similar approaches work for non-reduced $f$ and different factorizations $F$ of $f$, cf. Corollary \ref{cor-main BS member any factorization} and also Remark 6.10 of \cite{BudurLocalSystems}.
\end{enumerate}
\end{remark}

\bibliographystyle{abbrv}

\end{document}